\numberwithin{paragraph}{section}
\numberwithin{equation}{section}
\newcommand{\N}{\ensuremath{\mathbb{N}}}
\newcommand{\Z}{\ensuremath{\mathbb{Z}}}
\newcommand{\Q}{\ensuremath{\mathbb{Q}}}
\newcommand{\R}{\ensuremath{\mathbb{R}}}
\newcommand{\Nn}{\ensuremath{\mathbb{N}_{> 0}}}
\newcommand{\Spf}{\textnormal{Spf}}
\newcommand{\val}{\textnormal{val}}
\newcommand{\str}{\textnormal{str}}
\newcommand{\trop}{\textnormal{trop}}
\newcommand{\Spec}{\textnormal{Spec}}
\newcommand{\red}{\textnormal{red}}
\newcommand{\Nor}{\textnormal{Nor}}
\newcommand{\irr}{\textnormal{irr}}
\newcommand{\im}{\textnormal{im}}
\newcommand{\id}{\textnormal{id}}
\newcommand{\coker}{\textnormal{coker}}
\newcommand{\an}{\textnormal{an}}
\newcommand{\et}{\'{e}tale }
\newcommand{\etNS}{\'{e}tale} % etale no space
\DeclareTextFontCommand{\textbf}{\upshape\bfseries} % Damit fette Buchstaben in kursiven Umgebungen gerade bleiben
\newcommand{\bracket}[2]{\left\{ #1 ~ \big| ~ #2 \right\}}
\newcommand{\dotsbra}[2]{\{#1, \dots, #2\}}
\newcommand{\dotsroundbra}[2]{(#1, \dots, #2)}
\newtheorem{theorem}{Theorem}[subsection] % schreibt man "section" statt "chapter", so nummeriert er Abschnittsweise, also z.B 1.2.1 statt 1.9
\newtheorem{mainthm}{Theorem}
\newtheorem{lemma}[theorem]{Lemma}
\newtheorem{proposition}[theorem]{Proposition}
\newtheorem{corollary}[theorem]{Corollary}
\newtheorem*{lemma*}{Lemma}
\theoremstyle{definition}
\newtheorem{definition}[theorem]{Definition}
\newtheorem{remark}[theorem]{Remark}
\newtheorem*{remark*}{Remark}
\newtheorem{example}[theorem]{Example}
\newtheorem{noname}[theorem]{}
\title{Extended skeletons of poly-stable pairs}
\author[T.~Fenzl]{Thomas Fenzl}
\address{T. Fenzl, Mathematik, Universit{\"a}t 
Regensburg, 93040 Regensburg, Germany}
\email{thomas-fenzl@web.de}
\thanks{The author was partially supported by the collaborative research 
center SFB 1085 \emph{Higher Invariants - Interactions between Arithmetic Geometry and Global Analysis} 
funded by the Deutsche Forschungsgemeinschaft.}
\begin{document}

\begin{abstract}
We introduce the notion of poly-stable pairs of formal schemes over the 
valuation ring of a non-archimedean field. 
For such pairs we define and investigate the dual intersection complex. 
We proceed to develop the so called extended skeleton of a poly-stable pair
via an approximation process using the
classical skeletons from \cite[§\,5]{Ber99}. This is essentially a generalization of \cite[§§\,3--4]{GRW16} 
from the strictly semi-stable case to the arbitrary poly-stable case
and we extend their results.
Our exposition follows closely the ideas from \cite{Ber99} and \cite{GRW16}. 
\end{abstract}

\keywords{{Non-archimedean analytic geometry, Berkovich spaces, skeletons, dual intersection complexes}}
\subjclass{{Primary 32P05; Secondary 14G22}}

\maketitle

\tableofcontents

\section*{Introduction}

The magnificent results and fruitful applications of complex analysis made it desirable to establish an analogous theory
for non-archimedean fields. Today non-archimedean analytic geometry is a thriving branch of
mathematics at the heart of modern research. There are three well-known approaches to this topic.
First there is the language of \emph{rigid analytic spaces} initiated by John Tate
in the early 1960s. His main motivation was to formulate a uniformization theory 
for elliptic curves with bad reduction over non-archimedean fields.
Then there are the \emph{adic spaces} by Roland Huber, which gained recent attention since they provide
the framework for Peter Scholze's celebrated theory of \emph{perfectoid spaces}.
Finally there are the \emph{Berkovich analytic spaces}. One of the most striking advantages of these
is their particularly nice topology, which allows to access methods of real and tropical geometry.
Regarding this, the concept of the so called \emph{skeletons} plays an important role. 
In the present work these skeletons will be our primary focus and our goal is to explore and develop
a new and more general notion.

%Berkovich's notion
Skeletons already appear in Vladimir Berkovich's monumental work \cite{Ber90}. There in §\,4.3 he starts with a 
smooth geometrically connected projective curve $X$ of genus $g \geq 1$ over a non-archimedean field and constructs 
a closed subset $\Delta^{\an}(X)$ of the analytification $X^{\an}$, 
which is a strong deformation retract of $X^{\an}$. He calls it the \emph{analytic skeleton}. 
This allows him to show local contractibility
of analytic curves. In his effort to prove local contractibility of smooth analytic spaces of 
arbitrary dimension, he broadly generalizes and systematizes this concept in \cite{Ber99} and \cite{Ber04}.
There we are given a poly-stable formal scheme $\mathfrak{X}$
over the valuation ring $K^{\circ}$, where $K$ is a complete field with respect to a non-archimedean absolute value. 
Then by \cite[Theorem 5.2]{Ber99} there exists a canonical closed subset 
$S(\mathfrak{X})$ of the generic fiber $\mathfrak{X}_{\eta}$ and a proper strong deformation retraction
$\Phi: \mathfrak{X}_{\eta} \times [0, 1] \to \mathfrak{X}_{\eta}$ onto $S(\mathfrak{X})$. This skeleton $S(\mathfrak{X})$
has a piecewise linear structure as indicated by \cite[Theorem 5.2]{Ber99}. It states that the skeleton is canonically homeomorphic
to the geometric realization of a certain poly-simplicial set associated to $\mathfrak{X}$, which one can think of
as the dual intersection complex of the special fiber $\mathfrak{X}_s$. 
Roughly speaking it looks like a polyhedral complex whose faces are poly-simplices and 
correspond to certain \emph{strata} of $\mathfrak{X}_s$. The strata, which Berkovich employs in his paper,
form a disjoint locally finite cover of $\mathfrak{X}_s$ by locally closed subsets, which are obtained by iteratively
considering the irreducible components of the normality locus and taking the complement.
The astounding consequence is, that the homotopy type of the generic fiber of $\mathfrak{X}$ is determined by its special fiber.
Additionally, Berkovich expands his construction to a broader class of formal schemes called \emph{pluri-stable} via the introduction 
of \emph{poly-stable fibrations}.
It has not been known for quite some time whether every log smooth scheme over $K^{\circ}$
admits a poly-stable modification. This question was answered affirmatively in \cite{ALPT19},
which further emphasizes the relevance of the poly-stable case in the present work. 
In fact, many aspects of our exposition can be formulated and understood from the viewpoint
of \emph{log geometry}, however we decided to stick with the classical language from \cite{Ber99} and \cite{Ber04}.

After Berkovich's pioneering treatise on the skeleton,
a multitude of similar notions has been created by different authors 
and many applications, not only for non-archimedean geometry, have been found.
The list of contributions to this topic is long, therefore we will limit ourselves to a brief survey
of the developments most relevant for our work.
%without claiming completeness.

%Tyomkin and BPR
One case that has been extensively explored is that of curves. 
For instance let $X$ be a smooth projective curve over $K$ and suppose we have a
formal scheme $\mathfrak{X}$ with $\mathfrak{X}_{\eta} = X^{\an}$. % stable reduction theorem
The associated skeleton $S(\mathfrak{X})$ is then a metric graph whose underlying graph
is the incidence graph of $\mathfrak{X}_s$. The segments appearing in this skeleton all have finite lengths. 
With the tropicalization of $X$ in mind, which also features unbounded faces, Ilya Tyomkin associates in
\cite{Tyo12} in a canonical way a tropical curve to a pair $(X, H)$, where $H$ consists of some marked points
on $X$. This tropical curve can be interpreted as the classical skeleton $S(\mathfrak{X})$ extended by some infinite
rays in the direction of each marked point of $H$. In his setting he additionally assumes that 
$K$ is discretely valued and the residue field $\widetilde{K}$ is algebraically closed. 
The formal scheme $\mathfrak{X}$ is here the formal completion of the stable model for $(X, H)$.

A similar construction was performed by Matthew Baker, Sam Payne and Joseph Rabinoff in \cite{BPR12} resp. \cite{BPR13}.
In their situation, $K$ is algebraically closed and non-trivially valued and $X$ is a smooth connected curve over $K$.
They consider the smooth completion $\widehat{X}$ of $X$ and the set $D := \widehat{X} \setminus X$ of punctures
takes the role of the set of marked points in Tyomkins work. The analytification $X^{\an}$ 
decomposes into a semi-stable vertex set $V$ and
a disjoint union of open balls and finitely many generalized open annuli. They use this to define the skeleton $\Sigma(X, V)$
as the union of $V$ and the skeletons of each generalized open annuli in the semi-stable decomposition. 
The set $D$ is in bijective correspondence with the set of punctured open balls in the above decomposition
and each associated skeleton of these is an infinite ray, in accordance with Tyomkins ideas. 

%Gubler, Rabinoff, Werner
This concept was taken a step further in the paper \cite{GRW16} by Walter Gubler, Joseph Rabinoff and Annette Werner. 
They introduce the notion of a \emph{formal strictly semistable pair} $(\mathfrak{X}, H)$ consisting 
of a connected quasicompact admissible formal $K^{\circ}$-scheme $X$ and a Cartier divisor $H$ on $\mathfrak{X}$
of a certain form. The field $K$ is here algebraically closed and non-trivially valued.
Locally $\mathfrak{X}$ admits an \et morphism to a standard scheme $\Spf(K^{\circ}\{T_0,\dots,T_d\}/(T_0 \cdots T_r - \pi))$ 
with $\pi \in K^{\circ \circ} \setminus \{0\}$
and the divisor $H$ can be written as a sum of effective divisors, such that each summand has irreducible support
on the generic fiber and locally is trivial or the pullback of a coordinate function $T_j$ with $j > r$. 
Motivated by the explicit form of the strata in the strictly poly-stable case, see \cite[Proposition 2.5]{Ber99},
they establish a notion of \emph{vertical} and \emph{horizontal strata} of the pair $(\mathfrak{X}, H)$.
The authors continue to define the skeleton of a standard pair via an approximation method applied
to the classical skeletons. To get a rough idea, one can imagine that the punctured unit balls,
which appear when taking the complement of the divisor $H$, 
get exhausted by annuli of outer radius $1$ and decreasing inner radius. The classical skeleton
of these annuli are segments whose length increases as the inner radius tends towards $0$.
In the limit this produces an infinite ray.
After that the skeleton is introduced for a so called \emph{building block} of a stratum. 
The building block skeletons form the “faces” from which later the skeleton of a strictly semi-stable pair is put together.
The authors call these faces \emph{canonical polyhedra}, because they are canonically homeomorphic 
to a polyhedron via the tropicalization map up to a permutation of the coordinates.

%My work
This leads us to the content of the paper. The main task was to generalize the construction 
in \cite[§\,4]{GRW16} from strictly semi-stable pairs to arbitrary poly-stable pairs. 
As a first challenge, one had to come up with a suitable notion of poly-stability
for pairs $(\mathfrak{X}, \mathfrak{H})$, where $\mathfrak{H}$ should be a closed formal subscheme
of $\mathfrak{X}$ of a certain form. 
We achieve this by a straightforward generalization of standard pairs, 
where we take a product of the usual standard pairs from above and also allow degenerate factors with $\pi=0$.
One can now define a pair as \emph{strictly poly-stable}, if locally it is an \et pullback 
of a standard pair. Then \emph{poly-stable} pairs should be the ones,
which admit a surjective \et morphism from a strictly poly-stable pair to them.
Moreover we make only the same assumptions about our ground field $K$ as Berkovich in \cite{Ber99},
in particular $K$ does not have to be algebraically closed and its absolute value may be trivial.
Our strategy is to deal with the non-trivially valued case first
and later extend the results to the trivially valued case.
In the non-trivially valued case, we have the approximation procedure from \cite[§\,4]{GRW16} 
at our disposal. Then using techniques from \cite[§\,5]{Ber99}
we can associate an \emph{extended skeleton} $S(\mathfrak{X}, \mathfrak{H})$ to each poly-stable pair $(\mathfrak{X}, \mathfrak{H})$.
Instead of “extended skeleton” we will most of the time just say “skeleton” for brevity.
However we do not only want to understand the skeleton as a subset of $Z := \mathfrak{X}_{\eta} \setminus \mathfrak{H}_{\eta}$,
but also how we can describe its piecewise linear structure.  
After all it is still made up from building blocks as before, which provide canonical polyhedra.
For the poly-stable case it is not so clear what “canonical up to a permutation of the coordinates” means, though.
In order to give a precise formulation, we devote the second section to developing a notion 
of the dual intersection complex $C(\mathfrak{X}, \mathfrak{H})$ and its faces, where we follow the ideas of \cite[§§\,3--4]{Ber99}.
Roughly speaking, $C(\mathfrak{X}, \mathfrak{H})$ is made up of the faces $\Delta(x)$, which are extended poly-simplices
associated to each generic point $x$ of a stratum of $\mathfrak{X}_s$ or $\mathfrak{H}_s$.
The way these faces intersect each other is reflected by how the closures of the strata contain one another.
An important role is also played by the open faces $\Delta^{\circ}(x)$, 
which can be seen as the “interiors” of the faces $\Delta(x)$  
and which form a disjoint cover of $C(\mathfrak{X}, \mathfrak{H})$.  
We can then formulate our main theorem, see Theorem~\ref{MainThmA}, as follows:
\begin{mainthm} \label{MainA}
	Let $(\mathfrak{X}, \mathfrak{H})$ be a poly-stable pair and $Z := \mathfrak{X}_{\eta} \setminus \mathfrak{H}_{\eta}$.
	The extended skeleton $S(\mathfrak{X}, \mathfrak{H})$ is a subset of $Z$ satisfying the following properties:
	\begin{enumerate}
		\item $S(\mathfrak{X}, \mathfrak{H})$ is a closed subset of $Z$.
		\item There is a canonical proper strong deformation retraction $\Phi: Z \times [0, 1] \to Z$ onto $S(\mathfrak{X}, \mathfrak{H})$.
		\item $S(\mathfrak{X}, \mathfrak{H})$ is canonically homeomorphic to the
			dual intersection complex $C(\mathfrak{X}, \mathfrak{H})$.
		\item For every generic point $x \in \str(\mathfrak{X}_s, \mathfrak{H}_s)$ of a stratum this canonical homeomorphism
			restricts to a homeomorphism $S(\mathfrak{X}, \mathfrak{H}) \cap \red^{-1}_{\mathfrak{X}}(x) \xrightarrow{\sim} \Delta^{\circ}(x)$.  
	\end{enumerate} 
\end{mainthm}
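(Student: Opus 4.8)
The plan is to reduce everything to the local model of a standard pair and then glue, following the architecture of \cite{Ber99} and \cite{GRW16}. First I would treat the non-trivially valued case, since there the approximation machinery of \cite[§\,4]{GRW16} is available in the strictly semi-stable situation and needs only to be upgraded to the strictly poly-stable one. For a standard pair, i.e.\ a product of the usual semi-stable standard factors together with degenerate factors having $\pi = 0$, the set $Z = \mathfrak{X}_\eta \setminus \mathfrak{H}_\eta$ decomposes as a product, and the extended skeleton should be defined factorwise: on each non-degenerate semi-stable factor one takes the Berkovich skeleton, on each ``horizontal'' coordinate removed by $\mathfrak{H}$ one runs the exhaustion of the punctured ball by annuli of outer radius $1$ and inner radius tending to $0$, whose segment skeletons limit to an infinite ray, and on degenerate factors one gets the corresponding trivially valued contribution. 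One checks directly from the explicit coordinates that this subset of $Z$ is closed, that it is canonically homeomorphic (up to a coordinate permutation) to the extended poly-simplex $\Delta(x)$ via $\trop$, and that the obvious factorwise retraction — Berkovich's on the semi-stable factors, the GRW retraction collapsing annuli onto their skeleton on the horizontal ones — assembles into a proper strong deformation retraction $\Phi$ of $Z$ onto it. This gives properties (i)--(iv) in the standard-pair case, with the open face $\Delta^\circ(x)$ appearing as $S \cap \red_\mathfrak{X}^{-1}(x)$ by the same computation that identifies strata in \cite[Proposition 2.5]{Ber99}.

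Next I would pass to a strictly poly-stable pair by covering $\mathfrak{X}$ with formal opens admitting \'etale morphisms to standard pairs. The key point, exactly as in \cite[§\,5]{Ber99}, is that the skeleton, its retraction, and the reduction map are all compatible with \'etale morphisms: an \'etale morphism induces isomorphisms on completed local rings, hence carries skeletons to skeletons and intertwines the retractions, so the local pieces glue to a well-defined closed subset $S(\mathfrak{X}, \mathfrak{H}) \subseteq Z$ together with a canonical $\Phi$. The dual intersection complex $C(\mathfrak{X}, \mathfrak{H})$ constructed in §\,2 is by design glued from the same local data — its faces $\Delta(x)$ are the extended poly-simplices attached to generic points of strata of $\mathfrak{X}_s$ and $\mathfrak{H}_s$, with incidences recording closure relations among strata — so the factorwise homeomorphisms of the previous step patch to a canonical homeomorphism $S(\mathfrak{X}, \mathfrak{H}) \xrightarrow{\sim} C(\mathfrak{X}, \mathfrak{H})$ restricting on each $\red_\mathfrak{X}^{-1}(x)$ to $\Delta^\circ(x)$. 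Finally, to reach a general poly-stable pair one takes a surjective \'etale morphism $f\colon (\mathfrak{X}', \mathfrak{H}') \to (\mathfrak{X}, \mathfrak{H})$ from a strictly poly-stable pair and descends: $S(\mathfrak{X}, \mathfrak{H})$ is the image of $S(\mathfrak{X}', \mathfrak{H}')$, and one checks this image is independent of the chosen cover (using that any two such are dominated by a common refinement), closed, and that the retraction and the homeomorphism with $C(\mathfrak{X}, \mathfrak{H})$ descend — here one uses that $\Phi'$ is canonical, hence equivariant for the relevant groupoid of \'etale self-maps, so it factors through the quotient. The trivially valued case is then obtained at the end by the reduction indicated in the introduction, e.g.\ base-changing to a non-trivially valued extension or treating it as the degenerate limit already built into the standard pairs.

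The main obstacle I expect is the gluing/descent step in the genuinely poly-stable (non-strict) case: one must verify that the extended skeleton and especially the strong deformation retraction $\Phi$ are \emph{canonical} enough to be insensitive to the choice of strictly poly-stable \'etale cover, which amounts to a careful compatibility check of the GRW approximation procedure with \'etale base change, uniformly in the approximating annuli — the classical skeletons behave well under \'etale maps by Berkovich, but one needs the \emph{limit} (the extended rays) to behave well too, and the properness of $\Phi$ must survive the descent. A secondary technical point is bookkeeping for the degenerate factors with $\pi = 0$ and the interaction of horizontal and vertical strata of the pair, so that the combinatorics of $C(\mathfrak{X}, \mathfrak{H})$ in §\,2 matches the geometry on the nose; but this is the sort of thing that, once the standard-pair case is nailed down, follows the template of \cite[§§\,3--5]{Ber99} and \cite[§\,4]{GRW16} closely.
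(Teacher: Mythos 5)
Your plan follows the paper's architecture closely: standard pairs via the $\varepsilon$-approximation, building blocks via \'etale morphisms, strictly poly-stable pairs by gluing building-block skeletons, descent along a surjective \'etale cover for the general case, and a base change to a non-trivially valued field for the trivial case. The concern you flag about the descent step is exactly where the real work lies, and two specific lemmas you would need there are not yet in your plan. First, you need that the generic fiber map of a surjective \'etale morphism of admissible formal $K^{\circ}$-schemes is a topological \emph{quotient map}; the paper derives this from Berkovich's quasi-\'etale coverings (\cite[§§\,2--3]{Ber96}, \cite[Lemma 5.11]{Ber99}). Without it, the observation that $\Phi'$ ``factors through the quotient'' is only set-theoretic and does not give continuity of $\Phi$ on $Z$, nor does it identify the topology on $S(\mathfrak{X}, \mathfrak{H})$ with the one needed to compare with $C(\mathfrak{X}, \mathfrak{H})$. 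Second, you need the preimage statement $\psi_{\eta}^{-1}\bigl(S(\mathfrak{X}, \mathfrak{H})\bigr) = S(\mathfrak{Y}, \mathfrak{G})$: the inclusion $\supseteq$ is immediate, but for $\subseteq$ the paper shows that the $\ast$-product flow $t \mapsto \Phi_{\mathfrak{U}}(y,t)$ is eventually injective on a point $y$ outside the skeleton and then invokes zero-dimensionality of fibers of quasi-\'etale maps to rule out such a $y$ lying over a skeleton point. This preimage equality is what realizes $S(\mathfrak{X}, \mathfrak{H})$ as the coequalizer of $S(\mathfrak{Z}, \mathfrak{F}) \rightrightarrows S(\mathfrak{Y}, \mathfrak{G})$, matching the coequalizer defining $C(\mathfrak{X}, \mathfrak{H})$ and yielding parts (iii)--(iv); closedness, by contrast, the paper obtains directly from properness of $\Phi$ together with local compactness of $Z$, which is closer to what you gesture at.

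A secondary note: the alternative you mention for the trivially valued case, ``treating it as the degenerate limit already built into the standard pairs,'' does not work, because the $\varepsilon$-approximation requires a sequence of radii in $|K^{*}|$ tending to $0$, which is unavailable when the valuation is trivial. Only the base-change route goes through: pass to $F = K_r \cong K(\!(t)\!)$ as in Boucksom--Jonsson, define $S(\mathfrak{X}, \mathfrak{H})$ as the image of $S(\mathfrak{X}', \mathfrak{H}')$ under the projection $\pi: \mathfrak{X}'_{\eta} \to \mathfrak{X}_{\eta}$, and use the canonical section $\sigma$ (available since $F$ is peaked over $K$) to transport $\Phi'$; one then checks independence of $r$ by embedding two choices of $K_r$ into a common extension.
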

\noindent Assertion (iv) together with Lemma \ref{FaceFacts} (ii) tells us, 
that the strata of $(\mathfrak{X}_s, \mathfrak{H}_s)$, which give a disjoint decomposition of $\mathfrak{X}_s$,
are in bijective correspondence with the open faces of the dual intersection complex,
which in turn can canonically be associated with the formal fibers of $\mathfrak{X}$ intersected with the skeleton 
and make up a disjoint decomposition of $S(\mathfrak{X}, \mathfrak{H})$.
Moreover the codimension of the stratum is equal to the dimension of the associated face.
This very satisfying result is occasionally referred to as the \emph{stratum-face-correspondence}
and fortunately it is also true in the context of our notions.
	
Furthermore we consider the \emph{closure}
$\overline{S}(\mathfrak{X}, \mathfrak{H}) := S(\mathfrak{X}, \mathfrak{H}) \cup S(\mathfrak{H})$
and show in analogy to \cite[Theorem 4.13]{GRW16} the following result:
\begin{mainthm} \label{MainB}
		The closure $\overline{S}(\mathfrak{X}, \mathfrak{H})$ is equal to
		the topological closure of $S(\mathfrak{X}, \mathfrak{H})$ in $\mathfrak{X}_{\eta}$.
		There exists a canonical proper strong deformation retraction 
		$\Phi: \mathfrak{X}_{\eta} \times [0, 1] \to \mathfrak{X}_{\eta}$ 
		onto the closure $\overline{S}(\mathfrak{X}, \mathfrak{H})$. 
\end{mainthm}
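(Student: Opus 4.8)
The plan is to reduce both assertions to an explicit computation on standard pairs and then to assemble the global statement using the \et-local nature of the skeleton and the stratum-by-stratum construction of \cite[§\,5]{Ber99}, in close analogy with the proof of \cite[Theorem 4.13]{GRW16}. As $\mathfrak{H}$ is again poly-stable (as recorded in the previous sections), Berkovich's theorem \cite[Theorem 5.2]{Ber99} supplies the skeleton $S(\mathfrak{H}) \subseteq \mathfrak{H}_{\eta}$ together with a canonical proper strong deformation retraction $\Phi^{\mathfrak{H}} \colon \mathfrak{H}_{\eta} \times [0,1] \to \mathfrak{H}_{\eta}$ onto it, while Theorem~\ref{MainA}(ii) gives the retraction $\Phi^{Z} \colon Z \times [0,1] \to Z$ onto $S(\mathfrak{X},\mathfrak{H})$. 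These two canonical retractions are the pieces to be glued, and the gluing will be performed fibre by fibre over the reduction map, so that the construction is automatically canonical.

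For the retraction onto $\overline{S}(\mathfrak{X},\mathfrak{H})$ I would proceed as follows. The formal fibres $\red_{\mathfrak{X}}^{-1}(x)$, with $x$ ranging over the generic points of the strata of $(\mathfrak{X}_{s},\mathfrak{H}_{s})$, form a disjoint decomposition of $\mathfrak{X}_{\eta}$. If $x$ is a vertical generic point, then $x \notin \mathfrak{H}_{s}$, hence $\red_{\mathfrak{X}}^{-1}(x) \subseteq Z$ and one uses $\Phi^{Z}$ on this fibre; its image there is $\Delta^{\circ}(x)$ by Theorem~\ref{MainA}(iv). If $x$ is a horizontal generic point, then $\red_{\mathfrak{X}}^{-1}(x)$ meets both $Z$ and $\mathfrak{H}_{\eta}$, with $\red_{\mathfrak{X}}^{-1}(x) \cap \mathfrak{H}_{\eta} = \red_{\mathfrak{H}}^{-1}(x)$, and on this fibre one must combine the ray--retraction coming from $\Phi^{Z}$ (retracting the part in $Z$ onto the unbounded faces of $S(\mathfrak{X},\mathfrak{H})$ meeting the fibre) with $\Phi^{\mathfrak{H}}$ on $\red_{\mathfrak{H}}^{-1}(x)$, the target being $\overline{S}(\mathfrak{X},\mathfrak{H}) \cap \red_{\mathfrak{X}}^{-1}(x)$, i.e. the topological closure inside the fibre of the relevant open faces. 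The \et-local reduction to standard pairs, valid by the gluing of building blocks and the $\red_{\mathfrak{X}}$-compatibility from the previous section, provides an explicit formula for this combined retraction — essentially Berkovich's retraction of an open ball or generalized open annulus onto a half-open segment together with its endpoint at infinity — and shows that it preserves $\mathfrak{H}_{\eta}$, restricts to $\Phi^{\mathfrak{H}}$ on $\mathfrak{H}_{\eta}$, is a strong deformation retraction onto the stated target, and is proper; patching over all $x$ then yields $\Phi$, whose properness and deformation-retraction axioms are inherited from the pieces.

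Granting this $\Phi$, the first assertion follows formally. Since $\Phi$ preserves $\mathfrak{H}_{\eta}$ and restricts there to $\Phi^{\mathfrak{H}}$, the image of $\Phi(\,\cdot\,,1)$ is $\Phi^{Z}(Z,1) \cup \Phi^{\mathfrak{H}}(\mathfrak{H}_{\eta},1) = S(\mathfrak{X},\mathfrak{H}) \cup S(\mathfrak{H}) = \overline{S}(\mathfrak{X},\mathfrak{H})$; being the fixed-point set of a continuous self-map of a Hausdorff space, this set is closed, so it contains the topological closure of $S(\mathfrak{X},\mathfrak{H})$. Conversely $Z$ is dense in $\mathfrak{X}_{\eta}$ because $\mathfrak{H}_{\eta}$ is nowhere dense, and continuity of $\Phi(\,\cdot\,,1)$ gives $\overline{S}(\mathfrak{X},\mathfrak{H}) = \Phi(\overline{Z},1) \subseteq \overline{\Phi^{Z}(Z,1)} = \overline{S(\mathfrak{X},\mathfrak{H})}$; hence the two closures coincide. (Alternatively, the same identity can be read off directly from the standard-pair model, where $\trop$ carries $S(\mathfrak{X},\mathfrak{H})$ onto a product of a poly-simplex with copies of $\R_{\geq 0}$ coming from the horizontal coordinates, and $S(\mathfrak{H})$ onto the corresponding faces at infinity, obtained by letting those coordinates tend to $0$.)

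The main obstacle is the continuity of the patched map $\Phi$, and precisely its continuity where $Z$ meets $\mathfrak{H}_{\eta}$ inside $\mathfrak{X}_{\eta}$: a point $z \in Z$ close to $h \in \mathfrak{H}_{\eta}$ is pushed by $\Phi^{Z}$ far out along an unbounded face of $S(\mathfrak{X},\mathfrak{H})$, and one must verify that as $z \to h$ these images converge — in $\mathfrak{X}_{\eta}$, not merely in $Z$ — to $\Phi^{\mathfrak{H}}(h,\,\cdot\,)$, so that $\Phi^{Z}$ admits a continuous extension over $\mathfrak{H}_{\eta}$ matching $\Phi^{\mathfrak{H}}$ and, across fibres, one horizontal fibre's retraction converges to that of a lower stratum. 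I expect this to come out of the explicit description of $\Phi^{Z}$ on standard pairs produced by the approximation procedure of \cite[§\,4]{GRW16} (the exhaustion of punctured balls by annuli of inner radius tending to $0$), combined with the way the unbounded extended poly-simplices $\Delta(x)$ of $C(\mathfrak{X},\mathfrak{H})$ are attached along their faces at infinity to the faces of the dual complex of $\mathfrak{H}$, together with Berkovich's local finiteness of the stratification from \cite[§\,5]{Ber99}; once this continuity is established everything else is bookkeeping.
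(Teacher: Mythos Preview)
Your overall plan matches the paper's: define $\Phi$ by setting $\Phi|_{Z} = \Phi^{Z}$ and $\Phi|_{\mathfrak{H}_{\eta}} = \Phi^{\mathfrak{H}}$, verify continuity on standard pairs, transport to building blocks, then descend along a surjective \et morphism from a strictly poly-stable pair using the quotient-map properties of $\psi_{\eta}$. You have also correctly isolated the one non-formal step, namely continuity of the glued map across the closed subset $\mathfrak{H}_{\eta}$.

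Two remarks. First, the fibre-by-fibre framing over $\red_{\mathfrak{X}}$ is unnecessary overhead: $\Phi^{Z}$ and $\Phi^{\mathfrak{H}}$ are already globally defined, so there is nothing to patch over strata, only across the decomposition $\mathfrak{X}_{\eta} = Z \sqcup \mathfrak{H}_{\eta}$. The paper does not decompose into formal fibres here. Second, you expect the continuity to fall out of the $\varepsilon$-approximation, and it does, but the paper names the precise mechanism: the affinoid-torus action on the annulus $\mathfrak{T}(1,b_{\varepsilon})_{\eta}$ used in the construction of $\Phi^{Z}$ extends to an action on the whole ball $B$ (multiplication by the $T_{1}$-coordinate), and the resulting action on $\mathfrak{S}_{\eta}$ restricts on $\mathfrak{G}_{\eta}$ to the action defining $\Phi^{\mathfrak{H}}$ (via \cite[Proposition 5.2.8(ii)]{Ber90}). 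This single observation on standard pairs is what makes the glued map continuous; everything else is bookkeeping along the lines you sketch.

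Your derivation of the closure identity from the existence of $\Phi$ is a legitimate alternative to the paper's route (direct limit computation on standard pairs, then quotient-map transport). One small point: the step $\overline{S}(\mathfrak{X},\mathfrak{H}) = \Phi(\mathfrak{X}_{\eta},1) \subseteq \overline{\Phi(Z,1)}$ needs $Z$ dense in $\mathfrak{X}_{\eta}$, which is true but worth stating explicitly. Finally, you have not addressed the trivially-valued case; the paper treats it separately by base-changing to a non-trivially-valued $F$ and using the section $\sigma$ as in §\ref{TrivialSection}.
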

\noindent This can be found in Theorem \ref{MainThmB} and is a nice supplement to the above, since the 
closure of the extended skeleton is in a natural way a deformation retract of $\mathfrak{X}_{\eta}$ instead of $Z$.
Each of our constructions and results can be extended to trivially valued fields $K$ 
and the technique for doing so is inspired by \cite{BJ18}.
		
%Outline of the structure
Next we want to give a brief outline of the structure of the paper. Section $1$ is dedicated to some preparatory
work for later use. In particular Subsections $1.1$ and $1.2$ recall the fundamental aspects 
of formal geometry and Berkovich analytic geometry and introduce all necessary notations.
In Subsection $1.3$ we give the definition of standard schemes, standard pairs, strictly poly-stable
pairs and poly-stable pairs. We also give examples and state some easy first observations.
The next Subsection $1.4$ deals with the notion of strata of pairs. 
We extend the definition by Berkovich such that basic results from \cite[§\,2]{Ber99} still hold in our situation
and explain the partial order of the strata.
Also an explicit description of the strata of $(\mathfrak{X}_s, \mathfrak{H}_s)$ for a 
strictly poly-stable pair $(\mathfrak{X}, \mathfrak{H})$ is given.  
The final Subsection $1.5$ of the first section is about building blocks.
In the strictly poly-stable case, we can find open affine neighborhoods around every point, which
admit an \et morphism to a standard pair such that the special fibers share the same intersection
behavior of their respective irreducible components. This situation is particularly easy to handle
and it provides the pieces of which later the dual intersection complex and the skeleton 
are put together, as the name “building block” suggests. We also shed some light on the structure
of $\mathfrak{H}_s$, which will be important for later application.
		
In Section $2$ we construct the dual intersection complex of a poly-stable pair.
We begin with Subsection $2.1$, where the language of extended poly-simplices 
and their geometric realizations is introduced in analogy to \cite[§§\,3--4]{Ber99}. 
Note that throughout the paper we will use the additive notion to describe simplices
as opposed to the multiplicative notion employed by Berkovich. 
These are the tools which allow us in
Subsection $2.2$ to introduce canonical polyhedra to each stratum of $(\mathfrak{X}_s, \mathfrak{H}_s)$ for a 
strictly poly-stable pair $(\mathfrak{X}, \mathfrak{H})$. 
The idea is, that whenever we consider a building block in $x \in \str(\mathfrak{X}_s, \mathfrak{H}_s)$,
which is the set of generic points of the strata,
then the associated standard pair is determined by $x$ 
up to a “change of coordinates”, which is made precise by the term
“isomorphism of extended poly-simplices”. In order to get rid of the choice
of coordinates, we identify all the geometric realizations of these possibilities 
and get a geometric object which we call the canonical polyhedron.
As it turns out, the faces of the canonical polyhedron associated to $x$ can be interpreted
as the canonical polyhedra of $y$ for all $y \in \str(\mathfrak{X}_s, \mathfrak{H}_s)$
with $x \in \overline{\{y\}}$. They are included via so called face embeddings.
The dual intersection complex of a strictly poly-stable pair,
which is the content of Subsection $2.3$, is then obtained by gluing together
all canonical polyhedra along the face embeddings.
We investigate the structure of these and see that they are close to being
polyhedral complexes. However the intersection of faces might consist of several faces.
With a suitable subdivision along the lines of \cite[§\,5]{Gub10} it is possible to obtain a
polyhedral complex in the conventional sense. 
In Subsection $2.4$ we introduce the dual intersection complex of an arbitrary poly-stable pair.
It is obtained by identifying faces of the dual intersection complex of a suitable strictly poly-stable pair 
according to how the strata are mapped under the chosen \et morphism. The construction is
independent of this choice up to a canonical homeomorphism.
		
Section $3$ establishes our notion of skeleton as a subset of an analytic space.
In Subsection $3.1$ we give a brief review of the classical construction by Berkovich in \cite[§\,5]{Ber99}.
After that we assume that $K$ is non-trivially valued and Subsection $3.2$ takes care of the skeletons of standard pairs, which are introduced 
with an approximation procedure by classical skeletons in the same manner as in \cite[4.2]{GRW16}.
We see that the tropicalization map induces a homeomorphism between the skeleton and 
a geometric extended poly-simplex.
Then the goal of Subsection $3.3$ is to deal with building blocks.
Our technique is comparable with Step $6$ from the Proof of Theorems 5.2–5.4 in \cite[§\,5]{Ber99}. 
The most notable result here is that the choice of the building block determines the homeomorphism
between the skeleton and the geometric extended poly-simplex up to a “change of coordinates”
in the sense of extended poly-simplices, see Proposition \ref{CoordChange}.
We then have all the tools to define in Subsection $3.4$ the skeleton of a strictly poly-stable pair
and describe its relation with the dual intersection complex.
In Subsection $3.5$ we carry out the construction for the most general case, namely
that of an arbitrary poly-stable pair. We show all the desired properties of the skeleton
claimed in Theorem \ref{MainA}. One important observation is, that a surjective \et morphism
of formal schemes induces a quotient map between the generic fibers, which 
allows to consider the skeleton of a poly-stable pair as a certain coequalizer 
with respect to the skeleton of the chosen strictly poly-stable pair.
Finally we deal with trivially valued fields, see Subsection $3.6$, essentially 
by performing a base change to a non-trivially valued field and extend 
our results from the previous sections.
	
As an addendum, I included in Section $4$ some ideas towards the closure
of the extended skeleton and prove Theorem \ref{MainB}. Here we make use of the skeletons
of degenerate standard pairs.

\addtocontents{toc}{\protect\setcounter{tocdepth}{0}}
\section*{Acknowledgements}
\addtocontents{toc}{\protect\setcounter{tocdepth}{1}}
I would like to dearly thank my advisor Walter Gubler for introducing me
to non-archimedean analytic geometry and his excellent support during 
the writing of my thesis, which is the foundation of this paper. 	

I also want to give my thanks to Klaus Künnemann and Christian Vilsmeier
for helpful discussions and
I am grateful to Vladimir Berkovich for answering a question that arose during the course of this work.
	
I wish to emphasize my appreciation for the support which I
received as an associate member of the collaborative research center “SFB 1085: Higher Invariants”
funded by the Deutsche Forschungsgemeinschaft. It bore my travel expenses regarding scientific conferences 
and temporarily provided me with direct funding.

%---------------------------------------------------------------------------------------------------------------------------------------------------------------------------
%---------------------------------------------------------------------------------------------------------------------------------------------------------------------------

\section{Preliminaries}

This first section is devoted to introducing notations and
to explore the crucial definitions which provide the foundation for the objects studied
in this paper.  
We start off by fixing the following conventions throughout the paper.

\begin{noname}
	Here the natural numbers $\N$ contain $0$. For two sets $A$, $B$ the complement of 
	$A$ in $B$ is denoted by $B \setminus A$. 
	We write $A \subseteq B$, if and only if $A$ is a subset of $B$, and this also includes the case $A = B$.
	We denote the cardinality of $A$ by $|A|$.
	All rings are considered to be commutative and with unity unless specified otherwise. 
	For the group of units of a ring $R$ we write $R^*$.
	For an $n$-tuple $\textbf{r} = (r_1,\dots,r_n)$ of real numbers we denote $|\textbf{r}| := r_1 + \cdots + r_n$.
	Instead of a 1-tuple $(x)$ we will occasionally just write the entry $x$.
\end{noname}

\begin{noname}
	We follow the convention for “compactness” established by Bourbaki, which means the following:
	Let $X$ be a topological space.
	We will call $X$ \emph{quasicompact}, if every open cover of $X$ admits a finite subcover.
	If additionally the space $X$ is Hausdorff, we call it \emph{compact}.
	
	The space $X$ is said to be \emph{locally compact}, if $X$ is Hausdorff and
	every point of $X$ has a compact neighborhood.
	
	The space $X$ is said to be \emph{paracompact}, if $X$ is Hausdorff 
	and every open cover of $X$ admits a locally finite subcover.
	
	A formal scheme or a usual scheme is called \emph{quasicompact}, if its underlying topological space is quasicompact.
\end{noname}

\begin{noname}
	Let $X$ be a topological space and $A \subseteq X$ a subspace. We call a continuous map
	$\tau: X \to A$ a \emph{retraction}, if the restriction of $\tau$ to $A$ is the identity map on $A$.
	Moreover a continuous map $\Phi: X \times [0, 1] \to X$ is called a \emph{deformation retraction}
	onto $A$, if $\Phi(\cdot, 0)$ is the identity map on $X$ and $\Phi(\cdot, 1)$ is a retraction onto $A$.
	If additionally $\Phi(a, t) = a$ for all $a \in A$ and $t \in [0, 1]$, then $\Phi$
	is called a \emph{strong deformation retraction}.
	
	A continuous map $f: X \to Y$ between topological spaces is called \emph{proper}, 
	if the preimage of every quasicompact subset of $Y$ under $f$ is quasicompact.
	In the case that $X$ is Hausdorff and $Y$ is locally compact, this definition is equivalent to 
	the one in \cite[Chapter 1, §\,10.1, Definition 1]{Bou95}, see \emph{loc.cit.}, §\,10.3, Proposition 7.
	In particular in this case proper maps are closed, see \emph{loc.cit.}, §\,10.1, Proposition 1, and	 
	the product of two proper maps is again proper, see \emph{loc.cit.}, §\,10.2, Corollary 3.	
\end{noname}

\begin{noname}
	We will work with the compactified non-negative real numbers $\overline{\R}_{\geq 0} := \R_{\geq 0} \cup \{\infty\}$ 
	equipped with the obvious topology. We employ the usual conventions for calculations,
	namely $0 \cdot \infty = 0$, $a \cdot \infty = \infty$ for all $a \in \overline{\R}_{\geq 0} \setminus \{0\}$ 
	and a sum of finitely many numbers in $\overline{\R}_{\geq 0}$ 
	is equal to $\infty$ if and only if at least one of the summands is $\infty$.
	Otherwise it is the usual sum. Also we define $-\log(0) := \infty$. 
	Then $-\log$ induces an isomorphism $[0, 1] \xrightarrow{\sim} \overline{\R}_{\geq 0}$ of topological monoids,
	i.\,e. a homeomorphism turning multiplication in $[0, 1]$ into addition in $\overline{\R}_{\geq 0}$.
	Its inverse is given by $\overline{\R}_{\geq 0} \xrightarrow{\sim} [0, 1]$, $x \mapsto \exp(-x)$,
	where we define $\exp(-\infty) := 0$. 
\end{noname}

% For tuples $\textbf{n} = (n_1,\dots,n_p)$, $\textbf{m} = (m_1,\dots,m_p)$ of natural numbers 
% we will use the usual multi-index notations and operations, for example 
% $\mathfrak{B}^{\textbf{n}} = \mathfrak{B}^{n_1} \times \dots \times \mathfrak{B}^{n_p}$
% and $\textbf{n} + \textbf{m} = (n_1+m_1,\dots,n_p+m_p)$. 

%---------------------------------------------------------------------------------------------------------------------------------------------------------------------------

\subsection{Analytic geometry}

Here we give a brief overview of the notions from non-ar\-chi\-me\-dean analytic geometry
in the sense of Berkovich.

\begin{noname}
	Throughout the whole paper let $K$ be a field which is complete with respect to a non-archimedean
	absolute value $|\phantom{x}|$. The trivial absolute value is also allowed. 
	We denote by $K^{\circ} := \bracket{x \in K}{|x| \leq 1}$ its 
	valuation ring and by $K^{\circ \circ} := \bracket{x \in K}{|x| < 1}$ the maximal ideal.
	The residue field is $\widetilde{K} := K^{\circ}/K^{\circ \circ}$ and the valuation map $\val: K^* \rightarrow \R$ 
	is given by $\val := -\log|\phantom{x}|$. We extend it by setting $\val(0) := \infty$. 
	We write $|K^*|$ resp. $\Gamma := \val(K^*)$ for the multiplicative resp. additive value group.
\end{noname}

\begin{noname}
	The \emph{Berkovich spectrum} of a Banach ring $\mathscr{A}$ is denoted by $\mathscr{M}(\mathscr{A})$.
	It consists of all multiplicative bounded seminorms on $\mathscr{A}$ in the sense of \cite[§\,1.1]{Ber90}.
	
	For every $x \in \mathscr{M}(\mathscr{A})$
	we will denote the evaluation of $x$ in an element $f \in \mathscr{A}$ by $|f(x)|$.
	Recall that $\mathscr{M}(\mathscr{A})$ carries the initial topology with respect to the
	evaluation functions $\mathscr{M}(\mathscr{A}) \to \R_{\geq 0}$, $x \mapsto |f(x)|$
	for all $f \in \mathscr{A}$.
	The well-known result \cite[Theorem 1.2.1]{Ber90} states that $\mathscr{M}(\mathscr{A})$
	is a non-empty compact space.
	
	For every $x \in \mathscr{M}(\mathscr{A})$ the kernel $\wp_x$ of $\mathscr{A} \to \R_{\geq 0}$, $f \mapsto |f(x)|$
	is a closed prime ideal of $\mathscr{A}$. The resulting valuation on the integral domain $\mathscr{A}/\wp_x$  
	extends to a valuation on the fraction field. The completion of this fraction field with respect to
	the valuation is denoted by $\mathscr{H}(x)$. The residue field of the valued field $\mathscr{H}(x)$
	is denoted by $\widetilde{\mathscr{H}}(x)$. %It is called the \emph{double residue field} in $x$. 
\end{noname}

\begin{noname}
	The \emph{Tate algebra} in $n$ variables $T_1,\dots,T_n$ with coefficients in $K$ resp. $K^{\circ}$ is denoted by
	$K\{T_1,\dots,T_n\}$ resp. $K^{\circ}\{T_1,\dots,T_n\}$.  We denote by $B^n := \mathscr{M}(K\{T_1,\dots,T_n\})$ 
	the $n$-dimensional Berkovich analytic ball and by $\mathfrak{B}^n := \Spf(K^{\circ}\{T_1,\dots,T_n\})$
	the $n$-dimensional formal ball. In the 1-dimensional case we will often write $B$ with variable $T$ instead of $B^1$
	with variable $T_1$. Furthermore we use the notations $B(a, r) := \bracket{x \in B}{|(T-a)(x)| \leq r}$ resp.
	$B^{\circ}(a, r) := \bracket{x \in B}{|(T-a)(x)| < r}$ where $a \in K^{\circ}$ and $r \in [0, 1]$ resp. $r \in (0, 1]$, % \mathring{B}(a, r)
	which are known as the \emph{closed} resp. \emph{open discs}.
\end{noname} 

\begin{noname}
	When we talk about an \emph{analytic space}, we mean a $K$-analytic space in the sense of \cite[§\,1.2]{Ber93}.
	By definition, every $K$-analytic space is locally Hausdorff and each of its points has a neighborhood, which is a finite
	union of \emph{$K$-affinoid spaces}, which roughly speaking are Berkovich spectra of quotients of generalized Tate algebras.
	Consequently Hausdorff analytic spaces are locally compact. In the case, that one only needs quotients of classical
	Tate algebras, as introduced above, the $K$-analytic space is called \emph{strictly $K$-analytic}. 
	
	Berkovich analytic spaces are closely related with the older concept of rigid $K$-analytic spaces from
	\cite[§\,9]{BGR84}. More precisely, there is an equivalence between the category of paracompact strictly $K$-analytic spaces
	and the category of quasi-separated rigid $K$-analytic spaces which have an admissible affinoid covering of finite type, 
	see \cite[Theorem 1.6.1]{Ber93}. This equivalence allows us to apply results and constructions from the classical 
	rigid analytic setting to our Berkovich analytic setting.
\end{noname}

\begin{noname}
	A Banach $K$-algebra $\mathscr{A}$ is called \emph{peaked} over $K$, if for every valuation field $F$ over $K$
	the norm on the base change $\mathscr{A} \widehat{\otimes}_K F$ is multiplicative.
	
	A point $x$ of a $K$-analytic space is called \emph{peaked} over $K$, if $\mathscr{H}(x)$ is peaked over $K$.
	
	This notion of “peaked points” is synonymous with the notion of “universal points” introduced in \cite{Poi13},
	which is also used in the modern literature. For the present work however, we will stick to 
	Berkovich's classical notion of “peaked points” from \cite[§\,5.2]{Ber90}. 
\end{noname}

%We denote by $\Affine^n := \Spec(\widetilde{K}[T_1,\dots,T_n])$ % _{\widetilde{K}}
%the $n$-dimensional affine space over $\widetilde{K}$.

%It is also worth mentioning that the results cited above are all formulated in the framework of rigid geometry. 
%However they are applicable to our Berkovich analytic setting due to the equivalence of categories
%established in \cite[Theorem 1.6.1]{Ber93}. The generic fibers are strictly $K$-analytic, see \cite[§1]{Ber94}.

%---------------------------------------------------------------------------------------------------------------------------------------------------------------------------

\subsection{Formal geometry}

This subsection explains the formal schemes which are relevant for our work.
In some sense they form a bridge between analytic geometry and algebraic geometry.
The understanding of generic and special fibers 
as well as the reduction map will be of utmost importance for us.

\begin{noname}
	Recall that a family $(X_i)_{i \in I}$ of subspaces of a topological space $X$ is called
	\emph{locally finite}, if every point of $X$ has an open neighborhood
	which intersects only finitely many of the $X_i$.
	
	A ring $A$ is called \emph{topologically finitely presented} over $K^{\circ}$,
	if it is a  $K^{\circ}$-algebra of the form
	$K^{\circ}\{T_1,\dots,T_n\}/\mathfrak{a}$ for some finitely generated ideal 
	$\mathfrak{a} \subseteq K^{\circ}\{T_1,\dots,T_n\}$. 
	If additionally $A$ is flat over $K^{\circ}$, then $A$ is called an 
	\emph{admissible} $K^{\circ}$-algebra. 
	
	We call a formal scheme $\mathfrak{X}$ \emph{locally finitely presented} over $K^{\circ}$,
	if it is a locally finite union of open affine subschemes of the form
	$\Spf(A)$, where $A$ is a ring which is topologically finitely presented over $K^{\circ}$.
	If additionally every $A$ is an admissible $K^{\circ}$-algebra, then $\mathfrak{X}$
	is called \emph{admissible}. 
	%In the following all formal schemes are assumed to be locally finitely presented over $K^{\circ}$.
	%i.\,e. they are locally isomorphic to a formal scheme of the type $\Spf(A)$, where
	%$A$ is a ring which is topologically finitely presented over $K^{\circ}$. 
\end{noname}

\begin{noname}
	Let $\mathfrak{X}$ be an admissible formal $K^{\circ}$-scheme. 
	We denote by $\mathfrak{X}_s$ its \emph{special fiber}, which is a scheme of locally
	finite type over $\widetilde{K}$. In the affine case $\mathfrak{X} = \Spf(A)$, the special fiber is given as 
	$\mathfrak{X}_s = \Spec(\widetilde{A})$, where $\widetilde{A} := A/K^{\circ\circ}A$. 
	Moreover we denote by $\mathfrak{X}_{\eta}$ its \emph{generic fiber}, which is a paracompact locally compact strictly $K$-analytic space. 
	In the affine case $\mathfrak{X}_{\eta} = \mathscr{M}(\mathscr{A})$, where $\mathscr{A} := A \otimes_{K^{\circ}} K$
	is a strictly $K$-affinoid algebra. The general case is obtained by gluing together the generic fibers 
	from the affine situation. 
	We also want to mention, that if $(\mathfrak{X}_i)_{i \in I}$ is a locally finite 
	open cover of $\mathfrak{X}$ by admissible affines, then $(\mathfrak{X}_{i, \eta})_{i \in I}$ is a locally finite 
	cover of $\mathfrak{X}_{\eta}$ by closed analytic domains. 
	
	We also have the well-known \emph{reduction map} $\red_{\mathfrak{X}}: \mathfrak{X}_{\eta} \rightarrow \mathfrak{X}_s$.
	It is anti-continuous, which means that the preimage of every open subset is closed.
	We can explicitly describe it in the affine case: Every point $x \in \mathfrak{X}_{\eta} = \mathscr{M}(\mathscr{A})$
	gives rise to a character $\widetilde{\chi}_x: \widetilde{A} \to \widetilde{\mathscr{H}}(x)$.
	Then the kernel of $\widetilde{\chi}_x$ is a prime ideal of $\widetilde{A}$ and we have
	$\red_{\mathfrak{X}}(x) = \ker(\widetilde{\chi}_x) \in \Spec(\widetilde{A}) = \mathfrak{X}_s$.
	
	For a morphism $\psi: \mathfrak{Y} \to \mathfrak{X}$ of admissible formal $K^{\circ}$-schemes,
	we denote by $\psi_s$ resp. $\psi_{\eta}$ the induced morphism of special fibers
	resp. generic fibers. The reduction map is functorial in the sense that the following 
	diagram commutes:
	\begin{center}\begin{tikzpicture}[scale = 2]
		\node (1) at (0, 1) {$\mathfrak{Y}_{\eta}$};
		\node (2) at (1, 1) {$\mathfrak{X}_{\eta}$};
		\node (3) at (0, 0) {$\mathfrak{Y}_s$};
		\node (4) at (1, 0) {$\mathfrak{X}_s$};		
		
		\draw[->, thick] (1)--(2) node[pos=0.5, above] {$\psi_{\eta}$};
		\draw[->, thick] (1)--(3) node[pos=0.5, left] {$\red_{\mathfrak{Y}}$};
		\draw[->, thick] (2)--(4) node[pos=0.5, right] {$\red_{\mathfrak{X}}$};
		\draw[->, thick] (3)--(4) node[pos=0.5, below] {$\psi_s$};
		\end{tikzpicture}\end{center}
	Details concerning these constructions can be found in \cite[§\,1]{Ber94}.
\end{noname}

%For a locally finitely presented formal scheme $\mathfrak{X}$ over $K^{\circ}$, we denote by $\mathfrak{X}_s$ its special fiber, which is a scheme of locally
%finite type over $\widetilde{K}$, and by $\mathfrak{X}_{\eta}$ its generic fiber, which is a paracompact strictly $K$-analytic space. 

%All admissible formal schemes in this paper will be assumed to be quasi-compact. % and separated. % finitely presented over $K^{\circ}$

\begin{noname}
	Let $\psi: \mathfrak{Y} \to \mathfrak{X}$ be a morphism of formal $K^{\circ}$-schemes.
	We say that $\psi$ is \emph{\etNS}, if for every ideal of definition $\mathcal{J} \subseteq \mathcal{O}_{\mathfrak{X}}$ 
	the induced morphism $(\mathfrak{Y}, \mathcal{O}_{\mathfrak{Y}}/\mathcal{J}\mathcal{O}_{\mathfrak{Y}}) 
	\to (\mathfrak{X}, \mathcal{O}_{\mathfrak{X}}/\mathcal{J})$ of schemes is \etNS.
	
	More explicitly, if the absolute value on $K$ is non-trivial, we fix a non-zero element $a \in K^{\circ\circ}$.
	In the trivially valued case, we set $a := 0$. We denote for every $n \in \N_{>0}$ by $\mathfrak{X}_n$
	the scheme $(\mathfrak{X}, \mathcal{O}_{\mathfrak{X}}/a^n\mathcal{O}_{\mathfrak{X}})$,
	which is locally finitely presented over $K^{\circ}/(a^n)$. Analogously define $\mathfrak{Y}_n$.  
	Then $\psi: \mathfrak{Y} \to \mathfrak{X}$
	is \etNS, if and only if for all $n \in \N_{>0}$ the induced morphisms $\psi_n: \mathfrak{Y}_n \to \mathfrak{X}_n$ 
	of schemes are \etNS.
	
	Note that in this case in particular the induced morphism $\psi_s: \mathfrak{Y}_s \to \mathfrak{X}_s$ 
	of the special fibers is \etNS.	
\end{noname}

\begin{noname}
	We usually will denote formal $K^{\circ}$-schemes by Fraktur letters $\mathfrak{X}$, $\mathfrak{Y}$, and so on.
	Algebraic $\widetilde{K}$-schemes will be denoted by calligraphic letters $\mathcal{X}$, $\mathcal{Y}$, and so on. 
	Analytic spaces are denoted by capital Roman letters $X$, $Y$, and so on.
\end{noname}

%---------------------------------------------------------------------------------------------------------------------------------------------------------------------------

\subsection{Standard schemes and poly-stability}

Next we introduce the basic notions of standard schemes, standard pairs and poly-stable pairs.
In terms of the special fiber, the standard schemes are made up of factors which
define the concept of having “simple normal crossings”
and exhibit in some sense the tamest kind of singularity a scheme can have.
We then obtain a pair by including a divisor cut out by the coordinate functions 
of a ball, which can also be a factor of a standard scheme.
Requiring this shape \et locally gives rise to the notion of “strictly poly-stable”,
which can then be further generalized to “poly-stable”.
The idea for considering pairs and the way how to define them originates from \cite[§\,3]{GRW16}.

\begin{definition}
	For $n \in \N$ and $a \in K^{\circ}$ we define the formal $K^{\circ}$-scheme 
	$\mathfrak{T}(n, a) := \Spf(K^{\circ}\{T_0,\dots,T_n\}/(T_0 \cdots T_n - a))$.
	We will refer to the formal scheme $\mathfrak{T}(n, 1)$ as \emph{torus}.
	
	Let $p \in \Nn$, $\textbf{n} = (n_1,\dots,n_p)$ and $\textbf{a} = (a_1,\dots,a_p)$ with $n_i \in \Nn$ and
	$a_i \in K^{\circ}$ for all $i \in \dotsbra{1}{p}$. 
	Let $d \in \N$. We define the following fiber product 
	over $\Spf(K^{\circ})$ in the category of formal $K^{\circ}$-schemes: 
	\[\mathfrak{S}(\textbf{n}, \textbf{a}, d) := \prod\limits_{i=1}^p \mathfrak{T}(n_i, a_i)
	\times \mathfrak{B}^d. \] 
	We call $\mathfrak{S}(\textbf{n},\textbf{a}, d)$ a \emph{standard scheme}, if $a_i \in K^{\circ \circ}$ for all $i \in \dotsbra{1}{p}$. 
	\noindent Usually we will refer to the coordinates of a standard scheme by
	\[\mathfrak{S}(\textbf{n}, \textbf{a}, d) = \prod\limits_{i=1}^p \Spf(K^{\circ}\{T_{i0},\dots,T_{in_i}\}/(T_{i0} \cdots T_{in_i} - a_i)) 
	\times \Spf(K^{\circ}\{T_1,\dots,T_d\}). \]  
	This is an affine formal scheme and we get the associated ring by taking the 
	completed tensor products of the rings corresponding to each factor.
	We also include the case $p = 0$, $\textbf{n} = (0)$ and $\textbf{a} = (1)$ to the standard schemes, i.\,e.
	\[\mathfrak{S}(d) := \mathfrak{S}((0), (1), d) := \mathfrak{B}^d. \]
	Furthermore we denote $\mathfrak{S}(\textbf{n}, \textbf{a}) := \mathfrak{S}(\textbf{n}, \textbf{a}, 0)$.
	Let $s \in \N$ with $s \leq d$.
	%Define $\mathfrak{G}(s)$ to be the closed subset of $\mathfrak{S}(\textbf{n}, \textbf{a}, d)$ 
	%cut out by the equation $\prod_{i=1}^s T_{i} = 0$, where
	%$T_1, \dots, T_d$ denote the coordinates of the factor $\mathfrak{B}^d$ as introduced above.
	% One could also write this as $\mathfrak{G}(s) = \{T_1 \cdots T_s = 0\}$.  
	%We consider $\mathfrak{G}(s)$ as a closed subscheme of $\mathfrak{S}(\textbf{n}, \textbf{a}, d)$ 
	%via the induced reduced structure. 
	Define $\mathfrak{G}(s)$ to be the closed subscheme of $\mathfrak{S}(\textbf{n}, \textbf{a}, d)$ 
	associated to the ideal $(T_1 \cdots T_s)$, where $T_1, \dots, T_d$ denote the coordinates 
	of the factor $\mathfrak{B}^d$ as introduced above.
	In the case $s = 0$ we have $\mathfrak{G}(0) = \emptyset$. 
	One can think of $\mathfrak{G}(s)$ as the closed subset of $\mathfrak{S}(\textbf{n}, \textbf{a}, d)$ 
	cut out by the equation $T_1 \cdots T_s = 0$. %, enhanced with the induced reduced structure.  
	We call $(\mathfrak{S}(\textbf{n}, \textbf{a}, d), \mathfrak{G}(s))$ a \emph{standard pair}.
\end{definition}

\begin{noname} \label{ToriCover}
	We can find an open cover of the formal ball $\mathfrak{B}^n$ by tori $\mathfrak{T}(n, 1)$:
	Consider the morphisms $\mathfrak{T}(n, 1) \to \mathfrak{B}^n$ induced by sending 
	each coordinate $T_i$ either to $T_i$ or to $1-T_i$. We obtain $2^n$ different open immersions in this way
	and their images cover $\mathfrak{B}^d$. 
	% We call the open immersion induced by $T_i \mapsto T_i$ the \emph{canonical} one.
\end{noname}

\begin{noname} \label{RemoveCoord}
	Let $n \in \N$, $k \in \dotsbra{0}{n}$ and $a \in K^{\circ}$. We want to consider the open formal subscheme $\mathfrak{U}$
	of $\mathfrak{T}(n, a) = \Spf(K^{\circ}\{T_0,\dots,T_n\}/(T_0 \cdots T_n - a))$ 
	where $T_0 \cdots T_k$ does not vanish.
	%It is formal affine and the corresponding ring is the adic completion of the localization with respect to the element $T_0 \cdots T_k$, i.e. 
	$\mathfrak{U}$ is isomorphic to the formal spectrum of
	\[A := K^{\circ}\{T_0,\dots,T_k, S, T_{k+1},\dots,T_n\}/(T_0 \cdots T_k S - 1, T_0 \cdots T_n - a). \]  
	We have a $K^{\circ}$-algebra isomorphism between $A$ and 
	\[B := K^{\circ}\{T_0,\dots,T_k, S, T_{k+1},\dots,T_n\}/(T_0 \cdots T_k S - 1, T_{k+1} \cdots T_n - a). \] 
	Explicitly it is given by keeping the coordinates $T_0,\dots,T_k, S, T_{k+1},\dots,T_{n-1}$ fixed and
	\begin{align*}
	&A \xrightarrow{\sim} B ~,~ T_n \mapsto ST_n \\ 
	&B \xrightarrow{\sim} A ~,~ T_n \mapsto T_0 \cdots T_k T_n.
	\end{align*} 	
	This shows that $\mathfrak{U}$ is isomorphic to $\mathfrak{T}(k+1, 1) \times \mathfrak{T}(n-k-1, a)$.
	We will use this later to embed a standard scheme from which we removed the zero locus of some coordinates,
	into another standard scheme.
\end{noname}

\begin{definition} \label{StrPstDef}
	Let $\mathfrak{X}$ be an admissible formal $K^{\circ}$-scheme 
	and $\mathfrak{H}$ be a closed subscheme of $\mathfrak{X}$.
	We call $(\mathfrak{X}, \mathfrak{H})$ a \emph{strictly poly-stable pair},
	if $\mathfrak{X}$ can be covered by open subschemes $\mathfrak{U}$ such that for every $\mathfrak{U}$ 
	there exists a standard pair $(\mathfrak{S}, \mathfrak{G})$
	and an \et morphism $\varphi: \mathfrak{U} \to \mathfrak{S}$
	with the property that $\mathfrak{H} \cap \mathfrak{U}$ is equal as a formal scheme
	to the pullback $\varphi^{-1}(\mathfrak{G})$.
	We will write this shortened by saying that we have an \et morphism 
	$\varphi: (\mathfrak{U}, \mathfrak{H} \cap \mathfrak{U}) \to (\mathfrak{S}, \mathfrak{G})$.
	Note that in this case $\varphi$ restricts to an \et morphism 
	$\varphi: \mathfrak{H} \cap \mathfrak{U} \to \mathfrak{G}$.  
	
	If above every standard scheme $\mathfrak{S}$ can be chosen to be of the form $\mathfrak{S}(\textbf{n}, \textbf{a}, d)$ 
	with $1$-tuples $\textbf{n}$ and $\textbf{a}$, then the pair $(\mathfrak{X}, \mathfrak{H})$ 
	is called \emph{strictly semi-stable}.
	
	Moreover $\mathfrak{X}$ is called \emph{strictly poly-stable} resp. \emph{strictly semi-stable}, 
	if $(\mathfrak{X}, \emptyset)$ is strictly poly-stable resp. strictly semi-stable.	 
\end{definition}

\begin{remark}
	This definition of a strictly poly-stable scheme is slightly different from the one by Berkovich,
	since he considers standard pairs to be of the type $\mathfrak{S}(\textbf{n}, \textbf{a}) \times \mathfrak{T}(d, 1)$.
	However they are equivalent, because $\mathfrak{B}^d$ has an open cover by finitely many copies of $\mathfrak{T}(d, 1)$,
	see \ref{ToriCover}.    
\end{remark}

\begin{example}
	Let $\mathfrak{X}$ be an admissible formal $K^{\circ}$-scheme and $\mathfrak{H} \subseteq \mathfrak{X}$ be a closed subscheme.
	\begin{enumerate}
		\item Obviously every standard pair is strictly poly-stable.
		\item $(\mathfrak{X}, \mathfrak{H})$ is strictly poly-stable, if and only if for every open subset $\mathfrak{U} \subseteq \mathfrak{X}$
		the pair $(\mathfrak{U}, \mathfrak{H} \cap \mathfrak{U})$ is strictly poly-stable. 
		\item Let $\mathfrak{X}$ be smooth, % quasi-compact % admissible formal $K^{\circ}$-scheme
		which means that $\mathfrak{X}$ can be covered by open subschemes $\mathfrak{U}$ such that for every $\mathfrak{U}$ 
		there exists an integer $d \in \N$ and an \et morphism $\mathfrak{U} \to \mathfrak{S}(d)$.
		Then $\mathfrak{X}$ is strictly poly-stable.  
	\end{enumerate}
\end{example}

\begin{definition} \label{PstDef}
	Let $\mathfrak{X}$ be an admissible formal $K^{\circ}$-scheme 
	and $\mathfrak{H}$ be a closed subscheme of $\mathfrak{X}$.
	We call $(\mathfrak{X}, \mathfrak{H})$ a \emph{poly-stable pair}, if there exists a strictly poly-stable pair
	$(\mathfrak{Y}, \mathfrak{G})$ and a surjective \et morphism 
	$\psi: \mathfrak{Y} \to \mathfrak{X}$ such that $\mathfrak{G}$ is equal as a formal scheme to
	the pullback $\psi^{-1}(\mathfrak{H})$.  
	We will write this shortened by saying that we have a surjective \et morphism 
	$\psi: (\mathfrak{Y}, \mathfrak{G}) \to (\mathfrak{X}, \mathfrak{H})$.
	
	The pair $(\mathfrak{X}, \mathfrak{H})$ is called \emph{semi-stable}, if $(\mathfrak{Y}, \mathfrak{G})$
	can be chosen to be a strictly semi-stable pair.  
	
	Moreover $\mathfrak{X}$ is called \emph{poly-stable} resp. \emph{semi-stable}, 
	if $(\mathfrak{X}, \emptyset)$ is poly-stable resp. semi-stable.
\end{definition}

\begin{remark}
	If $\Spf(A)$ is a poly-stable scheme, then $A$ is a reduced $K^{\circ}$-algebra.
	This is clear, if $K$ is trivially valued. Otherwise, this is an easy consequence of \cite[Proposition~1.4]{Ber99}.
\end{remark}

\begin{definition}
	Let $\mathcal{X}$ be a scheme of finite type over a field $k$. We call $\mathcal{X}$ 
	\emph{strictly poly-stable}, if it can be covered by open subschemes $\mathcal{U}$, each admitting $p, d, n_i \in \N$ and an \et morphism $\mathcal{U}  \to \Spec\left(\bigotimes_{i=1}^p k[T_{i0},\dots,T_{in_i}]/(T_{i0} \cdots T_{in_i}) 
	\otimes k[T_1,\dots,T_d]\right)$.
	
	We call $\mathcal{X}$ \emph{poly-stable}, if there exists a strictly poly-stable $\mathcal{Y}$ and a surjective \et morphism $\mathcal{Y} \to \mathcal{X}$. 
	
	It is an easy consequence of \cite[Proposition 17.5.7]{EGAIV4}, that every poly-stable
	$k$-scheme is reduced.	
\end{definition}

\begin{example}
	Clearly every strictly poly-stable pair is also poly-stable.  
	% However the Tate curve is poly-stable but not strictly poly-stable. 
	
	However the \emph{nodal cubic}, by which we mean the affine curve over $\widetilde{K}$ given by the equation 
	$y^2 = x^2(x+1)$, is an example of a poly-stable scheme which is not strictly poly-stable,
	see also Example \ref{NonStrict}.
\end{example}

\begin{proposition} \label{DivStrPst}
	Let $(\mathfrak{X}, \mathfrak{H})$ be a poly-stable resp. strictly poly-stable pair.
	Then $\mathfrak{X}$ and $\mathfrak{H}$ are poly-stable resp. strictly poly-stable. 
	In particular the special fibers $\mathfrak{X}_s$ and $\mathfrak{H}_s$ are poly-stable resp. strictly poly-stable.   
\end{proposition}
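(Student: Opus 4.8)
The plan is to reduce the whole proposition to one explicit identification: describing the closed subscheme $\mathfrak{G}(s)$ of a standard pair $(\mathfrak{S}(\textbf{n},\textbf{a},d),\mathfrak{G}(s))$ up to isomorphism. First I would record that, inside $\mathfrak{B}^d=\Spf(K^{\circ}\{T_1,\dots,T_d\})$, the closed subscheme cut out by $T_1\cdots T_s=0$ equals $\Spf(K^{\circ}\{T_1,\dots,T_s\}/(T_1\cdots T_s))\times\mathfrak{B}^{d-s}$ (the ideal is generated by an element coming from the $\mathfrak{B}^d$-factor), and that after relabelling the first $s$ coordinates its first factor is the degenerate torus $\mathfrak{T}(s-1,0)$ (using $\mathfrak{T}(0,0)\cong\Spf(K^{\circ})$, so this factor is trivial when $s=1$). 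Since $0\in K^{\circ\circ}$, it follows for $s\geq 2$ that
\[
\mathfrak{G}(s)\cong\prod_{i=1}^{p}\mathfrak{T}(n_i,a_i)\times\mathfrak{T}(s-1,0)\times\mathfrak{B}^{d-s}
=\mathfrak{S}((n_1,\dots,n_p,s-1),(a_1,\dots,a_p,0),d-s),
\]
while $\mathfrak{G}(1)\cong\mathfrak{S}(\textbf{n},\textbf{a},d-1)$ and $\mathfrak{G}(0)=\emptyset$. In every case $\mathfrak{G}(s)$ is empty or isomorphic to a standard scheme, hence admissible and strictly poly-stable.

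With this in hand, the strictly poly-stable case is bookkeeping. Choose an open cover of $\mathfrak{X}$ by subschemes $\mathfrak{U}$ carrying \et morphisms $\varphi\colon(\mathfrak{U},\mathfrak{H}\cap\mathfrak{U})\to(\mathfrak{S},\mathfrak{G}(s))$. For $\mathfrak{X}$ I simply discard the divisor: viewing $\varphi\colon\mathfrak{U}\to\mathfrak{S}$ as an \et chart to the standard pair $(\mathfrak{S},\mathfrak{G}(0))=(\mathfrak{S},\emptyset)$ shows that $(\mathfrak{X},\emptyset)$ is strictly poly-stable. For $\mathfrak{H}$ I use that each $\varphi$ restricts to an \et morphism $\mathfrak{H}\cap\mathfrak{U}\to\mathfrak{G}(s)$ (as noted in Definition~\ref{StrPstDef}); composing with the identification of $\mathfrak{G}(s)$ from the first step exhibits the $\mathfrak{H}\cap\mathfrak{U}$ as \et charts over a standard scheme, so $(\mathfrak{H},\emptyset)$ is strictly poly-stable. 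Admissibility of $\mathfrak{H}$ is not an extra hypothesis here: $\mathfrak{H}$ is closed in the admissible $\mathfrak{X}$, hence a locally finite union of open affines, and \et-locally it is isomorphic to the admissible scheme $\mathfrak{G}(s)$, so flatness and topological finite presentation over $K^{\circ}$ are inherited.

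For the poly-stable case I would take a surjective \et morphism $\psi\colon(\mathfrak{Y},\mathfrak{G})\to(\mathfrak{X},\mathfrak{H})$ with $(\mathfrak{Y},\mathfrak{G})$ strictly poly-stable; by the previous paragraph $\mathfrak{Y}$ and $\mathfrak{G}$ are then strictly poly-stable formal schemes. Now $\psi\colon\mathfrak{Y}\to\mathfrak{X}$ is a surjective \et morphism with $\psi^{-1}(\emptyset)=\emptyset$, so $(\mathfrak{X},\emptyset)$ is poly-stable. Moreover $\psi$ restricts to $\psi'\colon\mathfrak{G}\to\mathfrak{H}$; since $\mathfrak{G}=\psi^{-1}(\mathfrak{H})$, this $\psi'$ is the base change of $\psi$ along the closed immersion $\mathfrak{H}\hookrightarrow\mathfrak{X}$, hence \et, and it is surjective because $\psi$ is surjective. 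Thus $(\mathfrak{H},\emptyset)$ is poly-stable, with admissibility of $\mathfrak{H}$ following by faithfully flat descent along $\psi'$ (or again from the \et-local description). Finally, $\mathfrak{X}_s$ and $\mathfrak{H}_s$ are by construction the special fibers of the (strictly) poly-stable formal schemes $\mathfrak{X}$ and $\mathfrak{H}$, hence (strictly) poly-stable $\widetilde{K}$-schemes by definition.

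The only genuinely non-formal point is the identification of $\mathfrak{G}(s)$ in the first step — in particular, recognizing that the degenerate torus $\mathfrak{T}(s-1,0)$ is a legitimate and admissible factor of a standard scheme precisely because $0\in K^{\circ\circ}$, and treating the low cases $s\in\{0,1\}$ separately. Everything afterwards is just unwinding Definitions~\ref{StrPstDef} and~\ref{PstDef} together with the standard facts that \et morphisms are stable under composition and base change and that admissibility descends along them.
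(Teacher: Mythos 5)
Your proposal is correct and follows essentially the same approach as the paper's proof: the key observation in both is that $\mathfrak{G}(s)$ is (isomorphic to) the standard scheme $\prod_{i=1}^p\mathfrak{T}(n_i,a_i)\times\mathfrak{T}(s-1,0)\times\mathfrak{B}^{d-s}$, with the degenerate factor $\mathfrak{T}(s-1,0)$ legitimate because $0\in K^{\circ\circ}$ and the case $s=1$ handled separately. You spell out the bookkeeping (restricting $\varphi$, base-changing $\psi$, admissibility of $\mathfrak{H}$) that the paper leaves as "immediately clear," but the argument is the same.
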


\begin{proof}
	The claim is immediately clear for $\mathfrak{X}$. To see the claim for $\mathfrak{H}$
	we note that for every standard pair $(\mathfrak{S}(\textbf{n}, \textbf{a}, d), \mathfrak{G}(s))$
	with $s \geq 1$ the closed subscheme $\mathfrak{G}(s)$ is a standard scheme as well, namely
	\[\mathfrak{G}(s) = \prod\limits_{i=1}^p \mathfrak{T}(n_i, a_i)
	\times \mathfrak{T}(s-1, 0) \times \mathfrak{B}^{d-s}. \] 
	If $s=1$, then $\mathfrak{T}(s-1, 0) = \mathfrak{T}(0, 0) = \Spf(K^{\circ})$
	and this factor can be ignored.		
\end{proof}

\begin{remark}
	Let $\mathcal{X}$ be a strictly poly-stable $\widetilde{K}$-scheme and $x \in \mathcal{X}$. It follows easily from the definition that
	all irreducible components of $\mathcal{X}$ passing through $x$ have the same dimension.
	This also implies that the connected components of $\mathcal{X}$ are equidimensional. 
\end{remark}

%---------------------------------------------------------------------------------------------------------------------------------------------------------------------------

\subsection{Stratification}

The goal of this subsection is to generalize the notion of “stratum” from \cite[§\,2]{Ber99} to
our situation involving pairs. Many properties stay true in this context, in particular we get 
a nice explicit description of the strata of strictly poly-stable pairs analogous to \emph{loc.cit.}, Proposition 2.5.

\begin{definition}
	Let $\mathcal{X}$ be a reduced scheme of locally finite type over a field $k$. We set $\Nor(\mathcal{X})$ for the normality locus of $\mathcal{X}$,
	which is open and dense in $\mathcal{X}$. We define  $\mathcal{X}^{(0)} := \mathcal{X}$ and inductively $\mathcal{X}^{(i+1)} := \mathcal{X}^{(i)} \setminus \Nor(\mathcal{X}^{(i)})$ for $i \in \N$. 
	A \emph{stratum} of $\mathcal{X}$ is an irreducible component of $\Nor(\mathcal{X}^{(i)})$ for some $i \in \N$.
	Note that the strata are disjoint locally closed subsets of $\mathcal{X}$ and that the family of all strata  
	yields a locally finite cover of $\mathcal{X}$. We denote by $\str(\mathcal{X})$ the set of the generic points of the strata of $\mathcal{X}$.
	There is a bijective correspondence between $\str(\mathcal{X})$ and the set of strata of $\mathcal{X}$.  
	Moreover we denote by $\irr(\mathcal{X})$ the set of irreducible components of $\mathcal{X}$. 
	
	Now let $\mathcal{H} \subseteq \mathcal{X}$ be a closed subset, which we will consider as a 
	closed subscheme of $\mathcal{X}$ via the induced reduced structure. A \emph{stratum} of the pair $(\mathcal{X}, \mathcal{H})$  
	is either a stratum of $\mathcal{H}$ or a non-empty open subscheme of $S$ of the form $S \setminus \mathcal{H}$, where $S$ is a stratum of $\mathcal{X}$.
	Again they are irreducible, locally closed and disjoint and the family of all strata yields a locally finite cover of $\mathcal{X}$.
	We denote $\str(\mathcal{X}, \mathcal{H}) := \str(\mathcal{X}) \cup \str(\mathcal{H})$. 
	There is a bijective correspondence between $\str(\mathcal{X}, \mathcal{H})$ 
	and the set of strata of $(\mathcal{X}, \mathcal{H})$.
	We call the pair $(\mathcal{X}, \mathcal{H})$ \emph{well-stratified}, if no stratum of $\mathcal{X}$ is contained in $\mathcal{H}$.  
	This is equivalent to the assertion that the sets $\str(\mathcal{X})$ and $\str(\mathcal{H})$ are disjoint.	
	% Note that in general this union is not disjoint and $\str(\mathcal{X}, \mathcal{H})$ does not correspond
	% to the set of strata of $(\mathcal{X}, \mathcal{H})$.
	% For $x \in \str(\mathcal{X}, \mathcal{H})$
	% we mean by the stratum corresponding to $x$ the stratum of $(\mathcal{X}, \mathcal{H})$ which has generic point $x$.
	% and we will also refer to the elements of $\str(\mathcal{X}, \mathcal{H})$ as strata.
	
	We introduce a partial order on $\str(\mathcal{X}, \mathcal{H})$ by setting for all $x, y \in \str(\mathcal{X}, \mathcal{H})$ 
	that $x \leq y$ if and only if $x \in \overline{\{y\}}$, where the closure is taken in $\mathcal{X}$.
	The pair $(\mathcal{X}, \mathcal{H})$ is called \emph{elementary}, if $\str(\mathcal{X}, \mathcal{H})$ has a unique minimal element.
	We call $\mathcal{X}$ \emph{elementary}, if $(\mathcal{X}, \emptyset)$ is elementary. 
	
	A \emph{strata subset} of $\mathcal{X}$ resp. $(\mathcal{X}, \mathcal{H})$
	is a union of strata of $\mathcal{X}$ resp. $(\mathcal{X}, \mathcal{H})$.
\end{definition}

\begin{remark} \label{OpenStratRem}
	Let $\mathcal{X}$ be a reduced scheme of locally finite type over a field $k$ and $\mathcal{U} \subseteq \mathcal{X}$
	an open subscheme. Then the strata of $\mathcal{U}$ are given as the non-empty intersections $S \cap \mathcal{U}$ 
	for all strata $S$ of $\mathcal{X}$. One easily concludes $\str(\mathcal{U}) = \{x \in \str(\mathcal{X}) ~|~ x \in \mathcal{U}\}$.  
\end{remark}

\begin{remark} \label{StratRem}
	%All schemes $\mathcal{X}$ that will be of interest for us, in particular the poly-stable ones, are quasi-compact. 
	%These have finitely many irreducible components and $\str(\mathcal{X})$ is finite. In this case $\mathcal{X}$ is elementary
	%if and only if $\str(\mathcal{X})$ has a least element. Every closed subset $\mathcal{H} \subseteq \mathcal{X}$ 
	%is quasi-compact as well and therefore $(\mathcal{X}, \mathcal{H})$ is elementary
	%if and only if $\str(\mathcal{X}, \mathcal{H})$ has a least element.  
	Since $\mathcal{X}$ is locally noetherian, there exist 
	no infinite strictly descending chains of elements in $\str(\mathcal{X}, \mathcal{H})$.
	In particular $(\mathcal{X}, \mathcal{H})$ is elementary
	if and only if $\str(\mathcal{X}, \mathcal{H})$ has a least element. 
\end{remark}

\begin{proposition} \label{ClosureStrataSet}
	Let $\mathcal{X}$ be a poly-stable scheme. Then the closure of every stratum of $\mathcal{X}$
	is a strata subset of $\mathcal{X}$. 
\end{proposition}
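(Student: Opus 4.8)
The plan is to reduce the assertion, by two descent steps, to the case in which $\mathcal{X}$ is the special fiber of a standard scheme, where it becomes an explicit combinatorial fact. The device making the descent possible is the pointwise reformulation of the conclusion: since the strata of $\mathcal{X}$ are disjoint and cover $\mathcal{X}$, a closed subset $Z\subseteq\mathcal{X}$ is a strata subset if and only if for every $z\in Z$ the stratum of $\mathcal{X}$ through $z$ --- equivalently, its generic point --- lies in $Z$. Applied to $Z=\overline{\{y\}}$ for $y\in\str(\mathcal{X})$, what has to be shown is: whenever $z\in\overline{\{y\}}$ and $x\in\str(\mathcal{X})$ is the generic point of the stratum through $z$, then $x\in\overline{\{y\}}$. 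Note also that no divisor enters here, so $\mathcal{X}$ is already an object of the setting of \cite[§\,2]{Ber99}, and one may simply invoke the results there; below I sketch how the argument would run from scratch.

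First I would pass to the strictly poly-stable case. By definition $\mathcal{X}=\mathfrak{X}_s$ for a poly-stable formal scheme $\mathfrak{X}$, and Definition~\ref{PstDef} furnishes a surjective \et morphism $\psi\colon\mathfrak{Y}\to\mathfrak{X}$ with $\mathfrak{Y}$ strictly poly-stable; then $\psi_s\colon\mathcal{Y}:=\mathfrak{Y}_s\to\mathcal{X}$ is surjective \etNS, and $\mathcal{Y}$ is strictly poly-stable. As $\psi_s$ is \etNS{} one has $\psi_s^{-1}(\mathcal{X}^{(i)})=\mathcal{Y}^{(i)}$ and $\psi_s^{-1}(\Nor(\mathcal{X}^{(i)}))=\Nor(\mathcal{Y}^{(i)})$ (normality being insensitive to \et base change), and being open and flat $\psi_s$ carries the generic point of any stratum of $\mathcal{Y}$ to the generic point of a stratum of $\mathcal{X}$, inducing a surjection $\str(\mathcal{Y})\twoheadrightarrow\str(\mathcal{X})$; moreover $\psi_s(\overline{\{w\}})=\overline{\{\psi_s(w)\}}$ for every $w\in\mathcal{Y}$, by going-down. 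Granting the assertion for $\mathcal{Y}$ and given $z\in\overline{\{y\}}$: pick $y'\in\str(\mathcal{Y})$ over $y$, so $\overline{\{y\}}=\psi_s(\overline{\{y'\}})$; write $z=\psi_s(w)$ with $w\in\overline{\{y'\}}$ and let $w'$ be the generic point of the stratum of $\mathcal{Y}$ through $w$. Then $w'\in\overline{\{y'\}}$ by the hypothesis for $\mathcal{Y}$, hence $\psi_s(w')\in\overline{\{y\}}$, and $\psi_s(w')$ is the generic point of the stratum of $\mathcal{X}$ through $z$ by the compatibility above, that is, $\psi_s(w')=x$. This reduces to $\mathcal{X}$ strictly poly-stable. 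A second, entirely similar step --- covering $\mathcal{X}$ by opens $\mathcal{U}=\mathfrak{U}_s$ carrying \et morphisms to standard-scheme special fibers as in Definition~\ref{StrPstDef}, using Remark~\ref{OpenStratRem} to identify the strata of $\mathcal{U}$ with the strata of $\mathcal{X}$ meeting $\mathcal{U}$, and the analogous local analysis of \et morphisms --- reduces to the case where $\mathcal{X}$ is the special fiber of a standard scheme.

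For a standard scheme the conclusion is explicit. Since each $a_i\in K^{\circ\circ}$, we have $\mathfrak{S}(\textbf{n},\textbf{a},d)_s=\prod_{i=1}^p V(T_{i0}\cdots T_{in_i})\times\mathbb{A}^d_{\widetilde{K}}$, a product of coordinate crosses with an affine space; as in \cite[Proposition~2.5]{Ber99}, its strata are indexed by tuples $(I_1,\dots,I_p)$ with $\emptyset\ne I_i\subseteq\{0,\dots,n_i\}$, the stratum $S_{(I_i)_i}$ being the locally closed subset on which $T_{ij}=0$ precisely for $j\in I_i$. Its closure is the closed subset on which $T_{ij}=0$ for all $j\in I_i$, which is exactly the union of the $S_{(I'_i)_i}$ over all tuples with $I_i\subseteq I'_i$ for every $i$ --- manifestly a strata subset.

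The standard-scheme computation is routine; the real work, and the main obstacle, is the descent, i.e.\ establishing in the generality of Section~1 the analogues of the elementary lemmas of \cite[§\,2]{Ber99}: that surjective \et morphisms (and, locally, arbitrary \et morphisms) of special fibers of poly-stable formal schemes are compatible with the stratification --- sending generic points of strata to generic points of strata, respecting specializations, and therefore carrying strata subsets to strata subsets and back. The point requiring care is that an \et morphism to a standard-scheme special fiber need not be surjective, so one has to examine the stratification of $\mathcal{U}$ \etNS-locally over the standard model: over the locus where several components of the special fiber meet there are no nontrivial connected \et covers and the stratification is rigid, while over the complement the strata are smooth and the descent is immediate. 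Once this local analysis is carried out, the proposition follows formally from the explicit case.
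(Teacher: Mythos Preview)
The paper's proof is a one-line citation of \cite[Lemma~2.1~(i)]{Ber99}; you even anticipated this, observing that since no divisor is involved the statement lies squarely in Berkovich's setting and one may simply invoke his results. Your sketch is correct and essentially reconstructs Berkovich's argument: the two-step \et descent (poly-stable $\to$ strictly poly-stable $\to$ standard) together with the explicit combinatorial check on standard special fibers is exactly how \cite[§\,2]{Ber99} proceeds, and the compatibility facts you use along the way (strata behave well under \et morphisms, going-down for flat maps gives $\psi_s(\overline{\{w\}})=\overline{\{\psi_s(w)\}}$) are precisely the content of \cite[Lemmas~2.1--2.2]{Ber99}. Your second reduction step is admittedly left informal, but you flag this yourself, and it is covered by the same lemmas. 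In short: your approach is correct and coincides with Berkovich's; the paper just cites him rather than reproducing the argument.
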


\begin{proof}
	See \cite[Lemma 2.1 (i)]{Ber99}.
\end{proof}

\begin{proposition} \label{StrataMap}
	Let $\psi: \mathcal{Y} \rightarrow \mathcal{X}$ be an \et morphism of reduced schemes of locally finite type over a field $k$.
	Then $\psi$ induces an \et morphism from each stratum of $\mathcal{Y}$ to a stratum of $\mathcal{X}$. 
	In particular the preimage of a stratum of $\mathcal{X}$ is a strata subset of $\mathcal{Y}$.
\end{proposition}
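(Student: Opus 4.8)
The plan is to exploit the fact that the stratification is built by iterating the construction $\mathcal{X} \rightsquigarrow \mathcal{X}^{(i+1)} = \mathcal{X}^{(i)} \setminus \Nor(\mathcal{X}^{(i)})$, and that each step of this construction is compatible with \et base change. The key point to establish is: if $\psi : \mathcal{Y} \to \mathcal{X}$ is \et (between reduced locally finite type $k$-schemes), then $\psi^{-1}(\Nor(\mathcal{X})) = \Nor(\mathcal{Y})$, and consequently $\psi^{-1}(\mathcal{X}^{(i)}) = \mathcal{Y}^{(i)}$ for all $i$. The first equality follows because normality is an \etNS-local property on the source in both directions: an \et morphism is flat with geometrically regular (in fact trivial) fibers, so $\mathcal{O}_{\mathcal{Y}, y}$ is normal if and only if $\mathcal{O}_{\mathcal{X}, \psi(y)}$ is normal; moreover $\Nor$ is open, so $\psi^{-1}(\Nor(\mathcal{X}))$ is precisely the normal locus of $\mathcal{Y}$. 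Here one should note that $\mathcal{Y}$ is automatically reduced since $\mathcal{X}$ is and $\psi$ is \et. Taking complements iteratively and using that each $\mathcal{X}^{(i)}$, $\mathcal{Y}^{(i)}$ is again reduced (being an open subscheme of a reduced scheme), we get $\psi^{-1}(\mathcal{X}^{(i)}) = \mathcal{Y}^{(i)}$ for every $i$ by induction on $i$.

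Granting this, fix a stratum $T$ of $\mathcal{Y}$, say $T$ is an irreducible component of $\Nor(\mathcal{Y}^{(i)})$. Restricting $\psi$ gives an \et morphism $\psi : \Nor(\mathcal{Y}^{(i)}) = \psi^{-1}(\Nor(\mathcal{X}^{(i)})) \to \Nor(\mathcal{X}^{(i)})$. An \et morphism sends each irreducible component of the source dominantly onto an irreducible component of the target: indeed, $\psi$ is open (\et morphisms are open) and quasi-finite, so the image of the irreducible $T$ is an irreducible locally closed subset whose closure is an irreducible component $S$ of $\Nor(\mathcal{X}^{(i)})$ (using that $\psi$ restricted to a component is still open into $\Nor(\mathcal{X}^{(i)})$, hence the image is open in $S$, hence dense in $S$). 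Now $S$ is by definition a stratum of $\mathcal{X}$. It remains to see that $\psi$ restricts to an \et morphism $T \to S$. Since $\psi(T) \subseteq S$ and both $T$ and $S$ carry the reduced induced structure coming from being (open in) reduced schemes, and since $\psi^{-1}(S)$ is an open subscheme of $\Nor(\mathcal{Y}^{(i)})$ containing $T$ as a connected (indeed, open and closed) component — because the components of $\Nor(\mathcal{Y}^{(i)})$ are disjoint, each being open in the normal, hence locally irreducible, scheme $\Nor(\mathcal{Y}^{(i)})$ — the induced map $T \to S$ is \et, being the restriction of the \et map $\psi^{-1}(S) \to S$ to an open-and-closed subscheme.

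For the last sentence: the preimage $\psi^{-1}(S)$ of a stratum $S$ of $\mathcal{X}$ is open in $\psi^{-1}(\Nor(\mathcal{X}^{(i)})) = \Nor(\mathcal{Y}^{(i)})$, hence is a (disjoint) union of irreducible components of $\Nor(\mathcal{Y}^{(i)})$, i.e.\ a union of strata of $\mathcal{Y}$ — so it is a strata subset of $\mathcal{Y}$ by definition.

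The main obstacle I expect is the bookkeeping in the reduction that $\psi$ sends each irreducible component of the source onto a whole irreducible component of the target and that this restriction is again \et onto the reduced structure — one must be careful that "stratum" means an irreducible component of $\Nor(\mathcal{X}^{(i)})$ taken with its reduced structure, and verify that the schematic structures match up under the \et map rather than just the underlying sets. This is where openness and the local irreducibility of normal schemes (so that irreducible components of $\Nor(\mathcal{Y}^{(i)})$ are open) do the real work; everything else is the standard transfer of normality along \et morphisms via \cite[Proposition 17.5.7]{EGAIV4} or the flatness-plus-regular-fibers criterion.
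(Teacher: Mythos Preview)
Your argument is correct and is the standard one; the paper itself does not give a proof but simply cites \cite[Lemma 2.2 (i)]{Ber99}, whose proof proceeds along exactly these lines (\'etale-local nature of normality, induction on $i$ to get $\psi^{-1}(\mathcal{X}^{(i)}) = \mathcal{Y}^{(i)}$, then openness of \'etale maps and disjointness of components of a normal scheme to match up irreducible components).

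One small correction: you write that each $\mathcal{X}^{(i)}$ is reduced ``being an open subscheme of a reduced scheme'', but in fact $\mathcal{X}^{(i+1)} = \mathcal{X}^{(i)} \setminus \Nor(\mathcal{X}^{(i)})$ is a \emph{closed} subscheme of $\mathcal{X}^{(i)}$, endowed by definition with the reduced induced structure. The induction still goes through because the schematic preimage of a reduced closed subscheme under an \'etale morphism is again reduced, so $\psi^{-1}(\mathcal{X}^{(i)})$ with its natural scheme structure coincides with $\mathcal{Y}^{(i)}$ and the restricted map stays \'etale by base change. This is the only point where your bookkeeping slipped; the rest of the argument is clean.
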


\begin{proof}
	See \cite[Lemma 2.2 (i)]{Ber99}.
\end{proof}

\begin{proposition} \label{WellStrat}
	If $(\mathfrak{X}, \mathfrak{H})$ is a poly-stable pair, then $(\mathfrak{X}_s, \mathfrak{H}_s)$ is well-stratified.
	In particular every element of $\str(\mathfrak{X}_s, \mathfrak{H}_s)$ corresponds to a stratum of either
	$\mathfrak{X}_s$ or $\mathfrak{H}_s$.
\end{proposition}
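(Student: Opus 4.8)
The plan is to reduce the statement to the strictly poly-stable situation, where one has an explicit description of the strata analogous to \cite[Proposition 2.5]{Ber99}, and then to propagate well-stratification along the surjective \'etale morphism coming from the definition of a poly-stable pair. First I would treat the strictly poly-stable case: since well-stratification is a local property on $\mathfrak{X}$ (the strata of an open subscheme are the non-empty intersections with the strata of $\mathfrak{X}$ by Remark~\ref{OpenStratRem}, and containment in $\mathfrak{H}_s$ can be checked locally), it suffices to consider a standard pair $(\mathfrak{S}(\textbf{n}, \textbf{a}, d), \mathfrak{G}(s))$, or rather its special fiber. Here one reads off the strata explicitly: a stratum of $\mathfrak{S}(\textbf{n},\textbf{a},d)_s$ is cut out in each torus factor $\mathfrak{T}(n_i, a_i)_s$ by setting a non-empty subset of the coordinates $T_{i0},\dots,T_{in_i}$ equal to zero (and in the degenerate case $a_i = 0$, by which of the two ``branches'' one lies on), while the ball factor $\mathfrak{B}^d$ contributes no stratification. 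The divisor $\mathfrak{G}(s)_s$ is the locus $T_1 \cdots T_s = 0$ inside the ball factor. Thus a stratum of $\mathfrak{X}_s$, being defined purely by vanishing of torus coordinates, is never contained in the locus $T_1\cdots T_s = 0$; it always contains points with all $T_1,\dots,T_s$ non-zero. This shows $(\mathfrak{S}(\textbf{n},\textbf{a},d)_s, \mathfrak{G}(s)_s)$ is well-stratified, and by the locality argument so is $(\mathfrak{X}_s, \mathfrak{H}_s)$ for any strictly poly-stable pair.

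Next I would handle the general poly-stable case. Let $\psi\colon (\mathfrak{Y},\mathfrak{G}) \to (\mathfrak{X},\mathfrak{H})$ be a surjective \'etale morphism with $\mathfrak{Y}$ strictly poly-stable and $\mathfrak{G} = \psi^{-1}(\mathfrak{H})$. The induced map $\psi_s\colon \mathfrak{Y}_s \to \mathfrak{X}_s$ is surjective and \'etale, and it restricts to a surjective \'etale map $\mathfrak{G}_s \to \mathfrak{H}_s$. Suppose for contradiction that some stratum $S$ of $\mathfrak{X}_s$ is contained in $\mathfrak{H}_s$. By Proposition~\ref{StrataMap}, the preimage $\psi_s^{-1}(S)$ is a strata subset of $\mathfrak{Y}_s$, and by surjectivity it is non-empty; pick a stratum $S'$ of $\mathfrak{Y}_s$ with $\psi_s(S') \subseteq S \subseteq \mathfrak{H}_s$. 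Since $\psi_s^{-1}(\mathfrak{H}_s) = \mathfrak{G}_s$ as sets (pullback of the closed subscheme, or at the level of underlying reduced subschemes), we get $S' \subseteq \mathfrak{G}_s$, contradicting well-stratification of the strictly poly-stable pair $(\mathfrak{Y}_s, \mathfrak{G}_s)$ established above. Hence no stratum of $\mathfrak{X}_s$ lies in $\mathfrak{H}_s$, i.e.\ $(\mathfrak{X}_s, \mathfrak{H}_s)$ is well-stratified. The final sentence of the proposition is then immediate: by definition $\str(\mathfrak{X}_s, \mathfrak{H}_s) = \str(\mathfrak{X}_s) \cup \str(\mathfrak{H}_s)$, and well-stratification says these two sets are disjoint, so each element is a generic point of a stratum of exactly one of the two.

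The main obstacle I anticipate is the bookkeeping in the strictly poly-stable base case: one must be careful that the stratification of $\mathfrak{S}(\textbf{n},\textbf{a},d)_s$ genuinely involves only the torus coordinates and not the ball coordinates $T_1,\dots,T_d$ that define $\mathfrak{G}(s)$ — this is where the explicit structure from Subsection~1.4 (the analogue of \cite[Proposition 2.5]{Ber99}) and the description of $\mathfrak{H}_s$ from Subsection~1.5 do the real work. A secondary subtlety is making sure that the reduced-subscheme conventions are compatible when one writes $\psi_s^{-1}(\mathfrak{H}_s)$ and compares it with $\mathfrak{G}_s$, but since well-stratification is a purely set-theoretic condition on underlying spaces this causes no essential difficulty. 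Everything else is a formal consequence of Remark~\ref{OpenStratRem} and Proposition~\ref{StrataMap}.
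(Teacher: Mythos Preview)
Your overall architecture matches the paper's: first verify the claim for standard pairs, then lift to strictly poly-stable pairs, then to arbitrary poly-stable pairs via a surjective \'etale cover. Your second paragraph (the general poly-stable reduction) is correct and essentially identical to the paper's argument.

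The gap is in your first paragraph. You claim that by locality (Remark~\ref{OpenStratRem}) it suffices to treat a standard pair. But locality only reduces you to the open subsets $\mathfrak{U}$ appearing in Definition~\ref{StrPstDef}, and these are \emph{\'etale over} a standard pair, not standard pairs themselves. Remark~\ref{OpenStratRem} tells you nothing about how strata behave under an \'etale morphism that is not an open immersion. So as written you have not yet proved the strictly poly-stable case.

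The repair is exactly the mechanism you already use in the second paragraph: given a stratum of $\mathfrak{U}_s$ contained in $(\mathfrak{H}\cap\mathfrak{U})_s$, push it forward along the \'etale morphism $\varphi_s\colon \mathfrak{U}_s \to \mathfrak{S}_s$ using Proposition~\ref{StrataMap}; its image lands in a stratum of $\mathfrak{S}_s$ whose generic point lies in $\mathfrak{G}_s$, and since $\mathfrak{G}_s$ is closed that entire stratum is contained in $\mathfrak{G}_s$, contradicting the standard-pair case. This is precisely what the paper does. In effect your two paragraphs should use the \emph{same} argument (\'etale push-forward of strata via Proposition~\ref{StrataMap}) twice, once for $\varphi\colon\mathfrak{U}\to\mathfrak{S}$ and once for $\psi\colon\mathfrak{Y}\to\mathfrak{X}$; locality via Remark~\ref{OpenStratRem} is only the preliminary step that lets you pick a $\mathfrak{U}$ containing the generic point of the offending stratum.
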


\begin{proof}
	First one easily verifies the claim for standard pairs.
	
	Let now $(\mathfrak{X}, \mathfrak{H})$ be strictly poly-stable. Assume for contradiction that there exists a stratum with generic point
	$x \in \str(\mathfrak{X}_s)$ which is contained in $\mathfrak{H}_s$. Then we find an open neighborhood $\mathfrak{U}$ of $x$
	and an \et morphism $\varphi: (\mathfrak{U}, \mathfrak{H} \cap \mathfrak{U}) \to (\mathfrak{S}, \mathfrak{G})$.
	Then $x \in \str(\mathfrak{U}_s)$ and let $S$ be the corresponding stratum in $\mathfrak{U}_s$.
	Now according to Proposition~\ref{StrataMap} the map $\varphi$ induces an \et morphism 
	from $S$ to a stratum of $\mathfrak{S}$. Let $y \in \str(\mathfrak{S}_s)$ be its generic point. 
	Then $y = \varphi_s(x) \in \varphi_s(S) \subseteq \varphi_s((\mathfrak{H} \cap \mathfrak{U})_s) \subseteq \mathfrak{G}_s$.
	This is impossible for standard pairs. 
	
	Finally consider an arbitrary poly-stable pair $(\mathfrak{X}, \mathfrak{H})$. By definition there exists a surjective \et
	morphism $\psi: (\mathfrak{Y}, \mathfrak{G}) \to (\mathfrak{X}, \mathfrak{H})$ with $ (\mathfrak{Y}, \mathfrak{G})$ strictly poly-stable.
	Assume for contradiction that there exists a stratum $S$ of $\mathfrak{X}_s$ which is contained in $\mathfrak{H}_s$.
	The preimage $\varphi^{-1}_s(S)$ is a non-empty strata subset of $\mathfrak{Y}_s$ which is contained in $\mathfrak{G}_s$. But this can not
	happen for strictly poly-stable pairs.
\end{proof}

\begin{example} \label{StratExample}
	The following graphic illustrates the stratification of the $\widetilde{K}$-scheme $\mathcal{X} = \Spec(\widetilde{K}[T_0,T_1,T_2]/(T_0 T_1 T_2))$: 
	\begin{center}\includegraphics[scale=1]{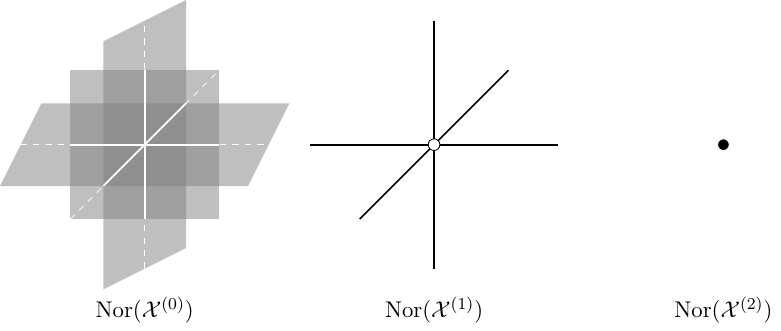}\end{center}
\end{example}

\begin{example}
	Let $(\mathfrak{S}, \mathfrak{G}) := (\mathfrak{S}(\textbf{n}, \textbf{a}, d), \mathfrak{G}(s))$ be a standard pair. Then 
	\[\mathfrak{S}_s = \Spec\left(\bigotimes\limits_{i=1}^p \widetilde{K}[T_{i0},\dots,T_{in_i}]/(T_{i0} \cdots T_{in_i}) 
	\otimes \widetilde{K}[T_1,\dots,T_d]\right) \]
	and the pair $(\mathfrak{S}_s, \mathfrak{G}_s)$ is elementary with the minimal stratum being the closed subset of
	$\mathfrak{S}_s$ cut out by $T_{ij} = 0$ for all $i \in \dotsbra{1}{p}$, $j \in \dotsbra{0}{n_i}$
	and $T_1 = 0, \dots ,T_s = 0$. The minimal stratum has codimension $|\textbf{n}| + s$ in $\mathfrak{S}_s$
	and every other stratum has lower codimension. 
\end{example}

\begin{definition} \label{IrrMetric}
	Let $\mathcal{X}$ be a strictly poly-stable scheme and $x \in \mathcal{X}$. 
	We write $\irr(\mathcal{X}, x)$	for the set of all irreducible components of $\mathcal{X}$ containing $x$.
	This set is equipped with a metric by assigning to two elements $\mathcal{V}, \mathcal{V}' \in \irr(\mathcal{X}, x)$ 
	the codimension of the intersection $\mathcal{V} \cap \mathcal{V}'$ at $x$ as their distance. 
	More explicitly, all irreducible components of $\mathcal{X}$ containing $x$ have the same dimension, say $r$. 
	Since $\mathcal{V} \cap \mathcal{V}'$ is smooth, its irreducible components are disjoint,
	meaning that there is exactly one irreducible component of $\mathcal{V} \cap \mathcal{V}'$
	which contains $x$. Let $s$ be its dimension. Then the requested codimension is equal to $r-s$. 
	
	If $y \in \mathcal{X}$ such that $x \in \overline{\{y\}}$, then the inclusion 
	$\irr(\mathcal{X}, y) \hookrightarrow \irr(\mathcal{X}, x)$ is isometric.
\end{definition}

\begin{proposition} \label{IrrIsometric}
	Let $\psi: \mathcal{Z} \rightarrow \mathcal{Y}$ be an \et morphism  
	of strictly poly-stable $\widetilde{K}$-schemes with $y \in \mathcal{Y}$, $z \in \mathcal{Z}$ such that 
	$y \in \overline{\{\psi(z)\}}$. 
	\begin{enumerate}
		\item For every $\mathcal{V} \in \irr(\mathcal{Z}, z)$ we have $\overline{\psi(\mathcal{V})} \in \irr(\mathcal{Y}, y)$.
		\item The induced map $\alpha: \irr(\mathcal{Z}, z) \rightarrow \irr(\mathcal{Y}, y)$ is isometric.
		If $y = \psi(z)$, then $\alpha$ is an isometric bijection. 
	\end{enumerate}
\end{proposition}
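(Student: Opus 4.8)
The plan is to reduce everything to local affine computations near $z$, where the \'etale morphism $\psi$ can be analyzed using its behaviour on irreducible components. For (i), since $\psi$ is \'etale it is flat with discrete fibers, and étale morphisms are open; so if $\mathcal{V} \in \irr(\mathcal{Z}, z)$ then $\psi(\mathcal{V})$ is a locally closed irreducible subset of $\mathcal{Y}$ containing the point $\psi(z)$, and I would argue that $\dim \mathcal{V} = \dim \overline{\psi(\mathcal{V})}$ because $\psi$ restricted to $\mathcal{V}$ is quasi-finite (being a restriction of an étale, hence quasi-finite, morphism) so it does not drop dimension, while it cannot increase dimension either. Since $\mathcal{Z}$ and $\mathcal{Y}$ are strictly poly-stable, all irreducible components through $z$ (resp.\ through $y$, which lies in $\overline{\{\psi(z)\}}$, hence through $\psi(z)$) have the same dimension, so $\overline{\psi(\mathcal{V})}$ is an irreducible closed subset of $\mathcal{Y}$ of the correct dimension passing through $y$; I must then check it is actually a \emph{component}, which follows because it is cut out locally by the same equations that cut out $\mathcal{V}$ (étale pullback of the coordinate functions $T_{ij}$ from a standard scheme), so near $y$ it coincides with an honest irreducible component of $\mathcal{Y}$. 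This gives a well-defined map $\alpha \colon \irr(\mathcal{Z},z) \to \irr(\mathcal{Y},y)$, $\mathcal{V} \mapsto \overline{\psi(\mathcal{V})}$.

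For the isometry statement in (ii), recall from Definition~\ref{IrrMetric} that the distance between $\mathcal{V}, \mathcal{V}'$ is the codimension (at the relevant point) of the unique component of $\mathcal{V} \cap \mathcal{V}'$ through that point. The key point is that an étale morphism induces, on local rings at $z$ and $\psi(z)$, an isomorphism after completion — more concretely, near $z$ the scheme $\mathcal{Z}$ looks étale-locally exactly like a standard scheme's special fiber, and $\psi$ identifies the components through $z$ with the components through $\psi(z)$ together with their intersection pattern. I would make this precise by passing to an open neighborhood $\mathfrak{U}$ of (the formal lift of) $z$ admitting an étale morphism to a standard scheme, using Proposition~\ref{StrataMap}-type behaviour to control how components map, and then computing both codimensions in the standard model where the answer is manifestly the same: the intersection $\mathcal{V} \cap \mathcal{V}'$ pulls back from the intersection $\overline{\psi(\mathcal{V})} \cap \overline{\psi(\mathcal{V}')}$ along an étale map, which preserves dimensions of the relevant components. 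Hence $d(\mathcal{V},\mathcal{V}') = d(\alpha(\mathcal{V}),\alpha(\mathcal{V}'))$, i.e.\ $\alpha$ is isometric.

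Finally, when $y = \psi(z)$, I claim $\alpha$ is a bijection. Injectivity is immediate from the isometry (distinct components through $z$ are at positive distance, hence map to distinct components). For surjectivity, let $\mathcal{W} \in \irr(\mathcal{Y}, y)$; since $\psi$ is étale at $z$ and $y = \psi(z)$, it is in particular flat and unramified there, so the preimage $\psi^{-1}(\mathcal{W})$ is a closed subset containing $z$, and at least one of its irreducible components through $z$, say $\mathcal{V}$, dominates $\mathcal{W}$ — again because $\psi|_{\mathcal{V}}$ is quasi-finite it does not drop dimension, forcing $\overline{\psi(\mathcal{V})} = \mathcal{W}$. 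Thus $\alpha(\mathcal{V}) = \mathcal{W}$.

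The main obstacle I anticipate is the careful bookkeeping in (ii): verifying that étale pullback genuinely preserves the codimension of the distinguished component of a pairwise intersection of components, rather than just their dimensions in isolation. This requires knowing that $\mathcal{V} \cap \mathcal{V}'$ is itself smooth (as recorded in Definition~\ref{IrrMetric}) and that $\psi$ restricts to an étale morphism on it with the component through $z$ mapping onto the component through $\psi(z)$; establishing this cleanly is where one should invoke Proposition~\ref{StrataMap} together with the explicit description of components and their intersections in a standard scheme.
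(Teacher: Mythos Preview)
Your overall strategy matches the paper's, but the execution differs in two places worth noting.

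For (i), your dimension-counting argument works, but the paper's argument is shorter and avoids the vague step about being ``cut out locally by the same equations'': since $\psi$ is \'etale it is open, so $\psi(\mathcal{V})$ contains a non-empty open subset of $\mathcal{Y}$; hence $\overline{\psi(\mathcal{V})}$ is an irreducible closed set containing an open subset of $\mathcal{Y}$, which forces it to be an irreducible component. No dimension comparison or passage to a standard chart is needed.

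For (ii), the obstacle you flag at the end is exactly the point where the paper's proof is sharper than your sketch. You write that ``$\mathcal{V}\cap\mathcal{V}'$ pulls back from $\overline{\psi(\mathcal{V})}\cap\overline{\psi(\mathcal{V}')}$'', but a priori $\psi^{-1}(\alpha(\mathcal{V})\cap\alpha(\mathcal{V}'))$ could be strictly larger than $\mathcal{V}\cap\mathcal{V}'$ near $z$. The paper resolves this cleanly without passing to completions or a standard model: since $\alpha(\mathcal{V})$ is smooth, its preimage $\psi^{-1}(\alpha(\mathcal{V}))$ is a \emph{disjoint} union of irreducible components of $\mathcal{Z}$, so in a neighbourhood of $z$ it coincides with $\mathcal{V}$; likewise for $\mathcal{V}'$. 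Hence near $z$ one has $\psi^{-1}(\alpha(\mathcal{V})\cap\alpha(\mathcal{V}'))=\mathcal{V}\cap\mathcal{V}'$, and then smoothness of the intersection (again giving disjoint components) lets you identify the component through $z$ with the preimage of the component through $\psi(z)$, whence the codimensions agree. So the missing ingredient in your outline is precisely the repeated use of ``smooth $\Rightarrow$ irreducible components are disjoint'', which replaces any appeal to completions or to Proposition~\ref{StrataMap}. Your surjectivity argument is essentially the paper's.
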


\begin{proof} % Beweis für (i) stammt von Stacks Project
	We start with (i).
	Let $\mathcal{V} \in \irr(\mathcal{Z}, z)$. Since $\mathcal{Z}$ is locally noetherian, there exists a non-empty open affine 
	subset $\mathcal{U} \subset \mathcal{Z}$ contained in $\mathcal{V}$. Note that $\psi(\mathcal{U}) \subset \mathcal{Y}$ 
	is open, because \et morphisms are open. Thus $\overline{\psi(\mathcal{V})}$ is an irreducible
	closed subset which contains a non-empty open subset of $\mathcal{Y}$. This implies that $\overline{\psi(\mathcal{V})}$
	is an irreducible component of $\mathcal{Y}$. From $z \in \mathcal{V}$ we deduce
	$y \in \overline{\{\psi(z)\}} \subseteq \overline{\psi(\mathcal{V})}$.     
	
	In order to show (ii), let $\mathcal{V}, \mathcal{V}' \in \irr(\mathcal{Z}, z)$. Then 
	$\dim(\mathcal{V}) = \dim(\mathcal{V}') =\dim(\alpha(\mathcal{V}')) =\dim(\alpha(\mathcal{V}))$.
	Let $\mathcal{W}$ be the irreducible component of $\alpha(\mathcal{V}) \cap \alpha(\mathcal{V}')$
	which contains $\psi(z)$. Then it also contains $y$. Note that $\psi^{-1}(\alpha(\mathcal{V}))$
	is the union of all $\mathcal{V}'' \in \irr(\mathcal{Z})$ such that $\overline{\psi(\mathcal{V}'')} = \alpha(\mathcal{V})$.
	This union is disjoint, because $\alpha(\mathcal{V})$ is smooth. Also $\psi^{-1}(\mathcal{W})$ is the disjoint
	union of its irreducible components, because $\mathcal{W}$ is smooth.
	It follows that the irreducible component of $\psi^{-1}(\mathcal{W})$ which contains $z$, is
	equal to the irreducible component of $\mathcal{V} \cap \mathcal{V}'$ which contains $z$. 
	Since it has the same dimension as $\mathcal{W}$, we have shown that $\alpha$ is isometric.
	
	Now we additionally assume $y = \psi(z)$. Take any $\mathcal{W} \in \irr(\mathcal{Y}, y)$.
	At least one of the irreducible components of the closed subset $\psi^{-1}(\mathcal{W})$ contains $z$.
	This irreducible component is contained in some $\mathcal{V} \in \irr(\mathcal{Z}, z)$. Then $\alpha(\mathcal{V}) = \mathcal{W}$.  
	Therefore $\alpha$ is also surjective. 
\end{proof}

\begin{proposition} \label{IrredCompProp}
	Let $(\mathfrak{X}, \mathfrak{H})$ be a strictly poly-stable pair.
	For every irreducible component $\mathcal{V}$ of $\mathfrak{H}_s$ there
	is exactly one irreducible component $\mathcal{W}$ of $\mathfrak{X}_s$
	such that $\mathcal{V} \subseteq \mathcal{W}$. Then $\mathcal{V} \neq \mathcal{W}$ 
	and in particular $\irr(\mathfrak{X}_s)$ and $\irr(\mathfrak{H}_s)$ are disjoint.  
\end{proposition}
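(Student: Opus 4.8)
The plan is to reduce everything to an explicit description in the standard-pair case and then bootstrap using the \'etale-local structure of strictly poly-stable pairs, together with Propositions~\ref{DivStrPst} and~\ref{IrrIsometric}.

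\emph{Standard pairs.} I would first treat $(\mathfrak{S},\mathfrak{G}) = (\mathfrak{S}(\textbf{n},\textbf{a},d),\mathfrak{G}(s))$; we may assume $s\geq 1$, as otherwise $\mathfrak{G}=\emptyset$. Since the $a_i$ lie in $K^{\circ\circ}$, $\mathfrak{S}_s = \prod_{i=1}^p\mathfrak{T}(n_i,a_i)_s\times\mathbb{A}^d_{\widetilde{K}}$ with $\mathfrak{T}(n_i,a_i)_s = V(T_{i0}\cdots T_{in_i})$, so $\irr(\mathfrak{S}_s)$ consists of the components $\mathcal{W}_{\textbf{j}} = \bigcap_i V(T_{ij_i})\times\mathbb{A}^d_{\widetilde{K}}$ indexed by $\textbf{j}=(j_1,\dots,j_p)$ with $j_i\in\{0,\dots,n_i\}$, each of dimension $|\textbf{n}|+d$; and since $\mathfrak{G}_s = V(T_1\cdots T_s)\cap\mathfrak{S}_s$, where $T_1,\dots,T_d$ are the coordinates of $\mathbb{A}^d_{\widetilde{K}}$, the components of $\mathfrak{G}_s$ are the $\mathcal{V}_{\textbf{j},k} = \mathcal{W}_{\textbf{j}}\cap V(T_k)$ with $k\in\{1,\dots,s\}$, each of dimension $|\textbf{n}|+d-1$. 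Inspecting which coordinates vanish shows that $\mathcal{V}_{\textbf{j},k}$ is contained in $\mathcal{W}_{\textbf{j}}$ and in no other component of $\mathfrak{S}_s$ (the $T_{ij}$ with $j\neq j_i$ do not vanish on $\mathcal{W}_{\textbf{j}}$), while $\mathcal{V}_{\textbf{j},k}\subsetneq\mathcal{W}_{\textbf{j}}$ by dimension. Thus $\mathfrak{S}_s$ is pure of dimension $|\textbf{n}|+d$, $\mathfrak{G}_s$ is pure of dimension $|\textbf{n}|+d-1$, and the generic point of any component of $\mathfrak{G}_s$ lies on exactly one component of $\mathfrak{S}_s$; this proves the statement for standard pairs.

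\emph{Reduction to the local case.} For an arbitrary strictly poly-stable pair $(\mathfrak{X},\mathfrak{H})$, fix $\mathcal{V}\in\irr(\mathfrak{H}_s)$ with generic point $v$. As $\mathcal{V}$ is an irreducible closed subset of $\mathfrak{X}_s$, it lies in some $\mathcal{W}\in\irr(\mathfrak{X}_s)$, so existence is free. Both the uniqueness of such a $\mathcal{W}$ and the relation $\mathcal{V}\neq\mathcal{W}$ are local around $v$: for any open $\mathfrak{U}\ni v$, intersection with $\mathfrak{U}_s$ induces an inclusion-preserving bijection between the components of $\mathfrak{X}_s$ through $v$ and those of $\mathfrak{U}_s$ through $v$, and likewise between those of $\mathfrak{H}_s$ through $v$ and those of $(\mathfrak{H}\cap\mathfrak{U})_s$ through $v$. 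Hence I may replace $(\mathfrak{X},\mathfrak{H})$ by one of the open pairs $(\mathfrak{U},\mathfrak{H}\cap\mathfrak{U})$ from the definition containing $v$, and so assume there is an \'etale morphism $\varphi\colon(\mathfrak{X},\mathfrak{H})\to(\mathfrak{S},\mathfrak{G})$ to a standard pair (necessarily with $s\geq 1$, since $v\in\mathfrak{H}_s\neq\emptyset$). By Proposition~\ref{DivStrPst} the schemes $\mathfrak{X}_s,\mathfrak{H}_s,\mathfrak{S}_s,\mathfrak{G}_s$ are strictly poly-stable, $\varphi_s\colon\mathfrak{X}_s\to\mathfrak{S}_s$ is \'etale, and $\mathfrak{H}_s=\varphi_s^{-1}(\mathfrak{G}_s)$, so $\mathfrak{H}_s\to\mathfrak{G}_s$ is \'etale as well.

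\emph{Conclusion.} Set $w:=\varphi_s(v)\in\mathfrak{G}_s$. Since $v$ is a minimal point of $\mathfrak{H}_s$ and $\mathfrak{H}_s\to\mathfrak{G}_s$ is flat, going-down forces $w$ to be a minimal point of $\mathfrak{G}_s$, that is, the generic point of a component; by the standard-pair case $w$ then lies on exactly one component $\mathcal{W}_0$ of $\mathfrak{S}_s$, so $\irr(\mathfrak{S}_s,w)=\{\mathcal{W}_0\}$. Applying Proposition~\ref{IrrIsometric}~(ii) to $\varphi_s$ with $z=v$ and $y=w=\varphi_s(v)$ gives a bijection $\irr(\mathfrak{X}_s,v)\xrightarrow{\sim}\irr(\mathfrak{S}_s,w)$, so $v$ lies on a unique component $\mathcal{W}$ of $\mathfrak{X}_s$; this is uniqueness. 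Moreover, as \'etale morphisms preserve dimension, $\dim\mathcal{W}=\dim_v\mathfrak{X}_s=\dim_w\mathfrak{S}_s=|\textbf{n}|+d$ and $\dim\mathcal{V}=\dim_v\mathfrak{H}_s=\dim_w\mathfrak{G}_s=|\textbf{n}|+d-1$, whence $\mathcal{V}\neq\mathcal{W}$. The last assertion is then immediate: each $\mathcal{V}\in\irr(\mathfrak{H}_s)$ is a proper closed subset of a component of $\mathfrak{X}_s$, hence not itself a component, so $\irr(\mathfrak{X}_s)\cap\irr(\mathfrak{H}_s)=\emptyset$. I expect the main difficulty to be the bookkeeping of the local reduction --- confirming that the two assertions are genuinely local at $v$ and survive restriction to an open subscheme, and that the hypotheses of Proposition~\ref{IrrIsometric} (both schemes strictly poly-stable, $y=\psi(z)$) are met --- rather than the standard-pair computation, which is routine.
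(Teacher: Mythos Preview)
Your proof is correct and follows essentially the same approach as the paper: reduce to an open $\mathfrak{U}$ with an \'etale chart to a standard pair, use Proposition~\ref{IrrIsometric} to transport the question to $(\mathfrak{S}_s,\mathfrak{G}_s)$, and conclude from the explicit codimension-one description there. The only cosmetic difference is packaging: you establish the standard-pair case first and then use the bijection $\irr(\mathfrak{X}_s,v)\xrightarrow{\sim}\irr(\mathfrak{S}_s,w)$ together with a going-down argument to locate $w$, whereas the paper argues directly by contradiction (two components through $\mathcal{V}$ would force $\beta(\mathcal{V}\cap\mathfrak{U}_s)$ to have codimension $\geq 2$ in $\mathfrak{S}_s$, contradicting that components of $\mathfrak{G}_s$ have codimension $1$).
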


\begin{proof}
	Let $\mathcal{V}$ be an irreducible component of $\mathfrak{H}_s$.
	Then clearly $\mathcal{V}$ is contained in an irreducible component of $\mathfrak{X}_s$.
	Let $x$ be the generic point of $\mathcal{V}$ and $\mathfrak{U}$ be an open neighborhood of $x$ in $\mathfrak{X}$ such that 
	there exists a standard pair $(\mathfrak{S}, \mathfrak{G})$ and an \et morphism 
	$\varphi: (\mathfrak{U}, \mathfrak{H} \cap \mathfrak{U}) \to (\mathfrak{S}, \mathfrak{G})$.   
	Let $y := \varphi(x)$. Then $\varphi$ induces isometric bijections
	$\alpha: \irr(\mathfrak{U}_s, x) \xrightarrow{\sim} \irr(\mathfrak{S}_s, y)$ and 
	$\beta: \irr((\mathfrak{H} \cap \mathfrak{U})_s, x) \xrightarrow{\sim} \irr(\mathfrak{G}_s, y)$.
	Consider irreducible components $\mathcal{W}_1$ and $\mathcal{W}_2$ of $\mathfrak{X}_s$ which contain $\mathcal{V}$.
	We assume for contradiction that $\mathcal{W}_1 \neq \mathcal{W}_2$.
	Then $\beta(\mathcal{V} \cap \mathfrak{U}_s)$ is contained in the two distinct irreducible components 
	$\alpha(\mathcal{W}_1 \cap \mathfrak{U}_s)$ and $\alpha(\mathcal{W}_2 \cap \mathfrak{U}_s)$ of $\mathfrak{S}_s$
	and thus $\beta(\mathcal{V} \cap \mathfrak{U}_s)$ has codimension at least $2$ in $\mathfrak{S}_s$, which is absurd, since every 
	irreducible component of $\mathfrak{G}_s$ has codimension $1$ in $\mathfrak{S}_s$. We conclude $W_1 = W_2$.
	Because $\alpha(\mathcal{W}_1 \cap \mathfrak{U}_s)$ and $\beta(\mathcal{V} \cap \mathfrak{U}_s)$
	have different dimensions, it follows that $\mathcal{V} \neq \mathcal{W}_1$.
\end{proof}

\begin{definition}
	For every subset $A \subseteq \irr(\mathfrak{H}_s)$ let us denote
	\[I^{(A)} := \{\mathcal{W} \in \irr(\mathfrak{X}_s) ~|~ 
	\mathcal{V} \subseteq \mathcal{W} \textnormal{ for some } \mathcal{V} \in A\} \subseteq \irr(\mathfrak{X}_s).\] 
\end{definition}

\begin{proposition} \label{StrataFormSingle}
	Let $\mathfrak{X}$ be strictly poly-stable. Then the intersection of any set of irreducible components of 
	$\mathfrak{X}_s$ is smooth and the family of strata of $\mathfrak{X}_s$ coincides with the family of
	irreducible components of non-empty sets of the form 
	\begin{align*}
	% \tag{$*$}
	\bigcap\limits_{\mathcal{V} \in A} \mathcal{V} \setminus 
	\bigcup\limits_{\mathcal{W} \in \irr(\mathfrak{X}_s) \setminus A} \mathcal{W},
	\end{align*}
	where $A \subseteq \irr(\mathfrak{X}_s)$ is a finite non-empty subset. In particular every non-empty set of the above form % $(*)$
	is a disjoint union of strata of $\mathfrak{X}_s$. 
	
	Moreover all strata of a poly-stable scheme are smooth.
\end{proposition}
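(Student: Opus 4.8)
The plan is to reduce both assertions to an explicit computation on standard schemes and then to transport it along an \et cover.

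First I would treat a standard scheme $\mathfrak{S}=\mathfrak{S}(\textbf{n},\textbf{a},d)$ by hand. Its special fiber is a product of the coordinate arrangements $\widetilde{K}[T_{i0},\dots,T_{in_i}]/(T_{i0}\cdots T_{in_i})$ with an affine space, so $\irr(\mathfrak{S}_s)$ is canonically identified with $\prod_{i=1}^{p}\{0,\dots,n_i\}$, the component with index $(j_1,\dots,j_p)$ being the vanishing locus of $T_{1j_1},\dots,T_{pj_p}$. Hence for any $A\subseteq\irr(\mathfrak{S}_s)$ the intersection $\bigcap_{\mathcal{V}\in A}\mathcal{V}$ is cut out by the equations $T_{ij}=0$ for $i\in\{1,\dots,p\}$ and $j$ in the $i$-th projection of $A$; this is a product of affine spaces over $\widetilde{K}$, in particular smooth and irreducible. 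A non-empty set $\bigcap_{\mathcal{V}\in A}\mathcal{V}\setminus\bigcup_{\mathcal{W}\in\irr(\mathfrak{S}_s)\setminus A}\mathcal{W}$ then forces $A$ to contain every component through $\bigcap_{\mathcal{V}\in A}\mathcal{V}$, and in that case the set is a product of pieces of the form ``a coordinate subspace minus the remaining coordinate hyperplanes'', hence an open dense subscheme of the smooth irreducible scheme $\bigcap_{\mathcal{V}\in A}\mathcal{V}$; in particular it is itself smooth and irreducible. Finally, running through the chain $\mathfrak{S}_s=\mathfrak{S}_s^{(0)}\supseteq\mathfrak{S}_s^{(1)}\supseteq\cdots$ one checks that $\Nor(\mathfrak{S}_s^{(k)})$ is exactly the disjoint union of those sets of the above form that have codimension $k$; so the strata of $\mathfrak{S}_s$ are precisely the non-empty sets of the asserted form, and all of them are smooth.

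Next I would pass to an arbitrary strictly poly-stable pair $(\mathfrak{X},\mathfrak{H})$ --- only $\mathfrak{X}$ is relevant --- by covering $\mathfrak{X}$ with opens $\mathfrak{U}$ carrying \et morphisms $\varphi\colon\mathfrak{U}\to\mathfrak{S}$ to standard schemes, so that $\varphi_s$ is \etNS. The formation of $\Nor$, of the chain $\mathfrak{X}_s^{(k)}$, and of the sets $\bigcap_{\mathcal{V}\in A}\mathcal{V}\setminus\bigcup_{\mathcal{W}\notin A}\mathcal{W}$ all commute with restriction to opens and, by Proposition~\ref{IrrIsometric}(ii), which near a point $x$ matches the irreducible components of $\mathfrak{X}_s$ through $x$ with those of $\mathfrak{S}_s$ through $\varphi_s(x)$, also with \et pullback along the charts $\varphi_s$. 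Concretely, near a point $x$ an intersection of irreducible components of $\mathfrak{X}_s$ becomes, after shrinking $\mathfrak{U}$ and discarding the members not passing through $x$, \et over an intersection of components of $\mathfrak{S}_s$, which is smooth by the first step; hence that intersection is smooth at $x$. The same matching together with Remark~\ref{OpenStratRem} identifies $\Nor(\mathfrak{X}_s^{(k)})\cap\mathfrak{U}_s$ with the \et pullback of $\Nor(\mathfrak{S}_s^{(k)})$, so $\Nor(\mathfrak{X}_s^{(k)})$ is smooth over $\widetilde{K}$; being a normal scheme, its irreducible components are open and closed in it, and --- again via the pullback --- these components are precisely the irreducible components of the non-empty sets $\bigcap_{\mathcal{V}\in A}\mathcal{V}\setminus\bigcup_{\mathcal{W}\notin A}\mathcal{W}$. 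This yields the first two assertions of the proposition for $\mathfrak{X}$ and the ``in particular'' statement, all strata involved being smooth.

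Finally, for an arbitrary poly-stable scheme $\mathcal{X}$ I would choose a surjective \et morphism $\psi\colon\mathfrak{Y}\to\mathfrak{X}$ with $\mathfrak{Y}$ strictly poly-stable and $\mathcal{X}=\mathfrak{X}_s$, so that $\psi_s$ is \et and surjective. Given a stratum $S$ of $\mathcal{X}$, Proposition~\ref{StrataMap} shows that $\psi_s^{-1}(S)$ is a non-empty union of strata of $\mathfrak{Y}_s$, each of which $\psi_s$ maps \et onto an open subset of some stratum of $\mathcal{X}$; since the strata are pairwise disjoint and the image lies in $S$, that target stratum must be $S$ itself. Thus $S$ is covered by the open sets $\psi_s(S')$, each receiving an \et surjection from a stratum $S'$ of $\mathfrak{Y}_s$ that is smooth over $\widetilde{K}$ by the second step; as smoothness over $\widetilde{K}$ descends along \et surjections, $S$ is smooth. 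I expect the only genuine friction to be in the middle step, namely checking carefully that the $\Nor$-chain and the sets $\bigcap_{\mathcal{V}\in A}\mathcal{V}\setminus\bigcup_{\mathcal{W}\notin A}\mathcal{W}$ really do pull back along the \et charts, so that Proposition~\ref{IrrIsometric} applies and the standard-scheme computation of the first step can be imported; once that is in place, the smoothness and disjointness assertions are formal.
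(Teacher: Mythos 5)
Your argument is correct and is essentially the proof that the paper outsources: Proposition~\ref{StrataFormSingle} is established in the paper simply by citing \cite[Proposition~2.5 and Corollary~2.6]{Ber99}, and your three steps (explicit computation on a standard scheme, transport along the \et charts via Proposition~\ref{IrrIsometric}, then \et descent of smoothness for the poly-stable case) reproduce Berkovich's argument faithfully. The only place where you (rightly) flag the friction --- checking that the chain $\mathcal{X}^{(k)}$ and the locally closed pieces $\bigcap_{\mathcal{V}\in A}\mathcal{V}\setminus\bigcup_{\mathcal{W}\notin A}\mathcal{W}$ really pull back along the \et charts --- is exactly the content of Berkovich's Proposition~2.5, so there is no gap, just work you would have to write out if the citation were not available.
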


\begin{proof}
	See \cite[Proposition 2.5]{Ber99} and \cite[Corollary 2.6]{Ber99}.
	%We remark that due to our assumption that $\mathfrak{X}$ is quasi-compact, the set $\irr(\mathfrak{X}_s)$ is finite. 
	Also note that the irreducible components of a smooth scheme are disjoint.  
\end{proof}

\begin{proposition} \label{StrataForm}
	Let $(\mathfrak{X}, \mathfrak{H})$ be a strictly poly-stable pair. We denote 
	\[\irr(\mathfrak{X}_s+\mathfrak{H}_s) := \irr(\mathfrak{X}_s) \cup \irr(\mathfrak{H}_s).\]
	Let $A \subseteq \irr(\mathfrak{X}_s+\mathfrak{H}_s)$ be finite and non-empty such that the intersection
	of all $\mathcal{V} \in A$ is non-empty.
	We define the locally closed subset
	\[\mathcal{X}_A := \bigcap\limits_{\mathcal{V} \in A} \mathcal{V} \setminus 
	\bigcup\limits_{\mathcal{W} \in \irr(\mathfrak{X}_s+\mathfrak{H}_s) \setminus A} \mathcal{W} \]	
	of $\mathfrak{X}_s$. If $\mathcal{X}_A$ is non-empty, then
	$A \subseteq \irr(\mathfrak{X}_s)$ or $A \cap \irr(\mathfrak{X}_s) = I^{(A \cap \irr(\mathfrak{H}_s))}$.  
	%\[A \cap \irr(\mathfrak{H}_s) = \bracket{\mathcal{V} \in \irr(\mathfrak{H}_s)}{\textnormal{There exists }
	%\mathcal{W} \in A \cap \irr(\mathfrak{X}_s) \textnormal{ with } \mathcal{V} \subseteq \mathcal{W}} \] 
	In this case the irreducible components of $\mathcal{X}_A$ are strata of $(\mathfrak{X}_s, \mathfrak{H}_s)$.
	Conversely, every stratum of $(\mathfrak{X}_s, \mathfrak{H}_s)$ is given as an irreducible component of 
	$\mathcal{X}_A$ for a suitable $A$, and this $A$ is uniquely determined.  
\end{proposition}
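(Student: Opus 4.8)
The plan is to reduce the statement to the description of the strata of a single strictly poly-stable scheme, i.e.\ to Proposition~\ref{StrataFormSingle} applied separately to $\mathfrak X_s$ and to $\mathfrak H_s$, which are strictly poly-stable by Proposition~\ref{DivStrPst}. I will freely use Proposition~\ref{IrredCompProp} to compare the two systems of irreducible components, together with the well-stratifiedness from Proposition~\ref{WellStrat}. Write $A_X:=A\cap\irr(\mathfrak X_s)$ and $A_H:=A\cap\irr(\mathfrak H_s)$; by Proposition~\ref{IrredCompProp} this is a disjoint decomposition $A=A_X\sqcup A_H$, and for $\mathcal V\in\irr(\mathfrak H_s)$ we let $\mathcal W(\mathcal V)\in\irr(\mathfrak X_s)$ be the unique irreducible component of $\mathfrak X_s$ containing $\mathcal V$, so that $I^{(A_H)}=\{\mathcal W(\mathcal V)\mid\mathcal V\in A_H\}$. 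The one extra ingredient I would isolate first is the following local fact: for every point $z\in\mathfrak H_s$ one has $A_X^z=I^{(A_H^z)}$, where $A_X^z$ and $A_H^z$ denote the sets of irreducible components of $\mathfrak X_s$, resp.\ of $\mathfrak H_s$, passing through $z$. For a standard pair this is immediate from the explicit product description of $\irr(\mathfrak S(\mathbf n,\mathbf a,d)_s)$ and $\irr(\mathfrak G(s)_s)$: a component of $\mathfrak S_s$ through a point $y\in\mathfrak G_s$ always contains a component of $\mathfrak G_s$ through $y$, obtained by additionally imposing one of the equations $T_k=0$ with $k\le s$ that already hold at $y$. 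For an arbitrary strictly poly-stable pair one transports this along an \'etale chart $\varphi\colon(\mathfrak U,\mathfrak H\cap\mathfrak U)\to(\mathfrak S,\mathfrak G)$, using that $\varphi$ induces simultaneously the isometric bijections of Proposition~\ref{IrrIsometric} for $\mathfrak X_s$ and for $\mathfrak H_s$ (the latter since $\varphi$ restricts to an \'etale morphism $\mathfrak H\cap\mathfrak U\to\mathfrak G$), and that these identifications are compatible with the maps $\mathcal V\mapsto\mathcal W(\mathcal V)$.

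Suppose first $A_H=\emptyset$, so $A=A_X\subseteq\irr(\mathfrak X_s)$. Since $\mathfrak H_s$ is the union of its irreducible components, $\mathcal X_A$ equals $\mathcal X_A^{\mathfrak X}\setminus\mathfrak H_s$, where $\mathcal X_A^{\mathfrak X}:=\bigcap_{\mathcal V\in A}\mathcal V\setminus\bigcup_{\mathcal W\in\irr(\mathfrak X_s)\setminus A}\mathcal W$ is exactly the set occurring in Proposition~\ref{StrataFormSingle}. By that proposition $\mathcal X_A^{\mathfrak X}$ is a finite disjoint union of strata of $\mathfrak X_s$, and these strata are its open-and-closed, hence irreducible, components — the intersection $\bigcap_{\mathcal V\in A}\mathcal V$ being smooth. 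By Proposition~\ref{WellStrat} none of them is contained in $\mathfrak H_s$, so removing $\mathfrak H_s$ leaves each of them non-empty; thus $\mathcal X_A$ is non-empty as soon as $\mathcal X_A^{\mathfrak X}$ is, and its irreducible components are the strata $S\setminus\mathfrak H_s$ of $(\mathfrak X_s,\mathfrak H_s)$ with $S$ a stratum of $\mathfrak X_s$ satisfying $A=\{\mathcal W\in\irr(\mathfrak X_s)\mid S\subseteq\mathcal W\}$. So in this case everything reduces to Proposition~\ref{StrataFormSingle} for $\mathfrak X_s$.

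Now let $A_H\neq\emptyset$. Here the claim is that $\mathcal X_A\neq\emptyset$ precisely when $A_X=I^{(A_H)}$, and that then $\mathcal X_A$ coincides with $\mathcal X_A^{\mathfrak H}:=\bigcap_{\mathcal V\in A_H}\mathcal V\setminus\bigcup_{\mathcal V'\in\irr(\mathfrak H_s)\setminus A_H}\mathcal V'$, whose irreducible components are strata of $\mathfrak H_s$ by Proposition~\ref{StrataFormSingle}. The inclusion $I^{(A_H)}\subseteq A_X$ is forced whenever $\mathcal X_A\neq\emptyset$: if $\mathcal W\in I^{(A_H)}\setminus A_X$, pick $\mathcal V\in A_H$ with $\mathcal V\subseteq\mathcal W$; then $\bigcap_{\mathcal V'\in A}\mathcal V'\subseteq\mathcal W$ while $\mathcal W$ is deleted in forming $\mathcal X_A$, so $\mathcal X_A=\emptyset$. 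The reverse inclusion and the identification $\mathcal X_A=\mathcal X_A^{\mathfrak H}$ both come from the local fact: any point $z$ lying in $\mathcal X_A$ or in $\mathcal X_A^{\mathfrak H}$ belongs to $\bigcap_{\mathcal V\in A_H}\mathcal V\subseteq\mathfrak H_s$, hence satisfies $A_X^z=I^{(A_H^z)}$ with $A_H\subseteq A_H^z$; if such a $z$ also lay on some $\mathcal W\in\irr(\mathfrak X_s)\setminus A_X$, then $\mathcal W=\mathcal W(\mathcal V)$ for some $\mathcal V\in A_H^z$, and since $z\in\mathcal V$ but $z$ avoids every component of $\mathfrak H_s$ outside $A_H$, necessarily $\mathcal V\in A_H$, so $\mathcal W\in I^{(A_H)}$. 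Applying this to a point $z\in\mathcal X_A$ and an arbitrary $\mathcal W\in A_X$ (which passes through $z$) gives $A_X\subseteq I^{(A_H)}$, hence $A_X=I^{(A_H)}$; and when $A_X=I^{(A_H)}$ the same argument shows that no point of $\mathcal X_A^{\mathfrak H}$ lies on a component outside $A_X$, so, using also $\bigcap_{\mathcal V\in A}\mathcal V=\bigcap_{\mathcal V\in A_H}\mathcal V$ (each $\mathcal V\in A_H$ lying in $\mathcal W(\mathcal V)\in A_X$), we obtain $\mathcal X_A=\mathcal X_A^{\mathfrak H}$, which is non-empty by Proposition~\ref{StrataFormSingle} for $\mathfrak H_s$ and has strata of $\mathfrak H_s$ as irreducible components.

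It remains to check the converse and the uniqueness of $A$. A stratum of $(\mathfrak X_s,\mathfrak H_s)$ is either of the form $S\setminus\mathfrak H_s$ for a stratum $S$ of $\mathfrak X_s$ — handled by the case $A_H=\emptyset$ with $A=\{\mathcal W\in\irr(\mathfrak X_s)\mid S\subseteq\mathcal W\}$ — or a stratum $T$ of $\mathfrak H_s$, in which case $T$ is an irreducible component of $\mathcal X_B^{\mathfrak H}$ for $B:=\{\mathcal V\in\irr(\mathfrak H_s)\mid T\subseteq\mathcal V\}$ by Proposition~\ref{StrataFormSingle}, and then $A:=B\cup I^{(B)}$ has $A_H=B$, $A_X=I^{(A_H)}$, $\bigcap_{\mathcal V\in A}\mathcal V\supseteq T\neq\emptyset$, so by the case $A_H\neq\emptyset$ we get $\mathcal X_A=\mathcal X_B^{\mathfrak H}$ and $T$ is an irreducible component of $\mathcal X_A$. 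Uniqueness follows because for any admissible $A$ one has $\mathcal X_A\cap\mathcal W=\emptyset$ for every $\mathcal W\in\irr(\mathfrak X_s+\mathfrak H_s)\setminus A$ while $\mathcal X_A\subseteq\bigcap_{\mathcal V\in A}\mathcal V$; hence if a stratum $S'$ of the pair is an irreducible component of $\mathcal X_A$, then $A=\{\mathcal V\in\irr(\mathfrak X_s+\mathfrak H_s)\mid S'\subseteq\mathcal V\}$ is determined by $S'$. The main obstacle is the case $A_H\neq\emptyset$: proving the local identity $A_X^z=I^{(A_H^z)}$ for $z\in\mathfrak H_s$ — which requires the \'etale comparison of irreducible components to be carried out simultaneously and compatibly for $\mathfrak X_s$ and $\mathfrak H_s$ — and then the bookkeeping that, once $A_X=I^{(A_H)}$, deleting the surplus components of $\mathfrak X_s$ in the definition of $\mathcal X_A$ has no effect.
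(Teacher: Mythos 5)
Your proof is correct and follows essentially the same route as the paper's: both reduce to Proposition~\ref{StrataFormSingle} applied separately to $\mathfrak{X}_s$ and $\mathfrak{H}_s$, using Propositions~\ref{IrredCompProp} and~\ref{WellStrat} to control the interaction. Your explicit ``local fact'' $A_X^z = I^{(A_H^z)}$ is a more elaborate formulation of the paper's (unproved but immediate) observation that $\mathcal{W}\cap\mathfrak{H}_s$ is a union of irreducible components of $\mathfrak{H}_s$ for every $\mathcal{W}\in\irr(\mathfrak{X}_s)$, and it plays exactly the same role in both the ``only if'' and the ``if'' direction.
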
 

\begin{proof}
	Assume that $\mathcal{X}_A$ and $A \cap \irr(\mathfrak{H}_s)$ are non-empty.
	It is clear that $A \cap \irr(\mathfrak{X}_s) \supseteq I^{(A \cap \irr(\mathfrak{H}_s))}$. 
	Now let $\mathcal{W} \in A \cap \irr(\mathfrak{X}_s)$ and $x \in \mathcal{X}_A$.
	Because $A \cap \irr(\mathfrak{H}_s)$ is non-empty, it follows that $x \in \mathfrak{H}_s$.
	The intersection $\mathcal{W} \cap \mathfrak{H}_s$ is a union of irreducible components of $\mathfrak{H}_s$.
	Let $\mathcal{V}$ be the one which contains $x$. Then $\mathcal{V} \in A \cap \irr(\mathfrak{H}_s)$
	and therefore $\mathcal{W} \in I^{(A \cap \irr(\mathfrak{H}_s))}$.
	
	The remaining claims are obvious due to Proposition \ref{IrredCompProp} and Proposition \ref{StrataFormSingle}.      
\end{proof} 

\begin{remark}
	The statement above implies, that our notion of “stratum” agrees with the 
	notion of “vertical stratum” as introduced in \cite[3.15.1]{GRW16}.
\end{remark}

\begin{corollary} \label{ElementaryCor}
	Let $(\mathfrak{X}, \mathfrak{H})$ be a strictly poly-stable pair such that
	$(\mathfrak{X}_s, \mathfrak{H}_s)$ is elementary. Then also $\mathfrak{X}_s$ and $\mathfrak{H}_s$ 
	are elementary. The minimal element of $\str(\mathfrak{X}_s, \mathfrak{H}_s)$ and $\str(\mathfrak{H}_s)$
	resp. $\str(\mathfrak{X}_s)$ is equal to the generic point of the intersection of all elements in 
	$\irr(\mathfrak{H}_s)$ resp. $\irr(\mathfrak{X}_s)$.
\end{corollary}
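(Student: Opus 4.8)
The plan is to reduce the whole statement to a single auxiliary fact about one elementary strictly poly-stable $\widetilde{K}$-scheme, and then to transfer that fact to $\mathfrak{X}_s$ and $\mathfrak{H}_s$ using well-stratifiedness. First, by Remark~\ref{StratRem} the hypothesis that $(\mathfrak{X}_s,\mathfrak{H}_s)$ is elementary provides a \emph{least} element $x_0\in\str(\mathfrak{X}_s,\mathfrak{H}_s)$, i.e.\ $x_0\le y$ for all $y$. The auxiliary claim I would isolate is: \emph{if $\mathcal{Z}$ is an elementary strictly poly-stable $\widetilde{K}$-scheme with least element $z_0\in\str(\mathcal{Z})$, then $\mathcal{Z}_0:=\bigcap_{\mathcal{W}\in\irr(\mathcal{Z})}\mathcal{W}$ is irreducible and $z_0$ is its generic point.}

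To prove the auxiliary claim, first observe that for each $\mathcal{W}\in\irr(\mathcal{Z})$ the generic point $\eta_{\mathcal{W}}$ is the generic point of a component of $\Nor(\mathcal{Z})$, hence lies in $\str(\mathcal{Z})$; therefore $z_0\le\eta_{\mathcal{W}}$, i.e.\ $z_0\in\overline{\{\eta_{\mathcal{W}}\}}=\mathcal{W}$, and intersecting over all $\mathcal{W}$ gives $z_0\in\mathcal{Z}_0$, so in particular $\mathcal{Z}_0\neq\emptyset$. By Proposition~\ref{StrataFormSingle}, $\mathcal{Z}_0$ is smooth and its irreducible components are strata of $\mathcal{Z}$; being a smooth closed subset of $\mathcal{Z}$, those components are pairwise disjoint and closed in $\mathcal{Z}$. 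If $\mathcal{Z}_0$ had two distinct components $Z_1,Z_2$ with generic points $\zeta_1,\zeta_2\in\str(\mathcal{Z})$, then $z_0\le\zeta_i$ would force $z_0\in Z_1\cap Z_2=\emptyset$, a contradiction; hence $\mathcal{Z}_0$ is irreducible, a single stratum $S_0\ni z_0$ with generic point $\zeta_0$. Finally, for any $y\in\str(\mathcal{Z})$ the closure $\overline{\{y\}}$ is a union of strata of $\mathcal{Z}$ by Proposition~\ref{ClosureStrataSet} and contains $z_0\in S_0$, hence contains $S_0$, so $\zeta_0\le y$; thus $\zeta_0$ is also a least element and $\zeta_0=z_0$.

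Now I would conclude. If $\mathfrak{H}_s=\emptyset$ the assertions about $\mathfrak{H}_s$ are vacuous and the claim for $\mathfrak{X}_s$ is exactly the auxiliary claim with $\mathcal{Z}=\mathfrak{X}_s$, so assume $\mathfrak{H}_s\neq\emptyset$. For $\mathfrak{X}_s$: let $S$ be the stratum of $\mathfrak{X}_s$ containing $x_0$ and $\zeta$ its generic point; for $y\in\str(\mathfrak{X}_s)\subseteq\str(\mathfrak{X}_s,\mathfrak{H}_s)$, from $x_0\le y$ and Proposition~\ref{ClosureStrataSet} (applied in $\mathfrak{X}_s$) we get $S\subseteq\overline{\{y\}}$, hence $\zeta\le y$, so $\mathfrak{X}_s$ is elementary, and the auxiliary claim identifies the least element of $\str(\mathfrak{X}_s)$ with the generic point of $\bigcap\irr(\mathfrak{X}_s)$. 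For $\mathfrak{H}_s$: choosing any stratum of $\mathfrak{H}_s$ with generic point $t$ we have $x_0\le t$, so $x_0\in\overline{\{t\}}\subseteq\mathfrak{H}_s$; by Proposition~\ref{WellStrat} the stratum of the pair with generic point $x_0$ is then a stratum of $\mathfrak{H}_s$, so $x_0\in\str(\mathfrak{H}_s)$, and since $\mathfrak{H}_s$ is closed in $\mathfrak{X}_s$ the inequalities $x_0\le y$ for $y\in\str(\mathfrak{H}_s)$ hold already with closures taken in $\mathfrak{H}_s$; thus $\mathfrak{H}_s$ is elementary with least element $x_0$. As $\mathfrak{H}_s$ is strictly poly-stable by Proposition~\ref{DivStrPst}, the auxiliary claim gives $x_0=$ the generic point of $\bigcap\irr(\mathfrak{H}_s)$, which is then also the minimal element of $\str(\mathfrak{H}_s)$ and of $\str(\mathfrak{X}_s,\mathfrak{H}_s)$.

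I expect the main obstacle to be the auxiliary claim, precisely the point that $\bigcap\irr(\mathcal{Z})$ is a \emph{single} stratum and that the least element $z_0$ lies on it: this is where Propositions~\ref{StrataFormSingle} and~\ref{ClosureStrataSet} (smoothness of intersections of irreducible components, the stratum description, closures as strata subsets) are used essentially. The bookkeeping in the transfer step — tracking whether closures and the partial order are taken in $\mathfrak{X}_s$ or in $\mathfrak{H}_s$, and using well-stratifiedness to see that $x_0\in\mathfrak{H}_s$ forces $x_0\in\str(\mathfrak{H}_s)$ — is routine but must be carried out with care.
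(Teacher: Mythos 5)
Your proof is correct and follows essentially the same route as the paper's: both arguments hinge on observing that the least element of $\str(\mathfrak{X}_s,\mathfrak{H}_s)$ lies in every irreducible component, then using Propositions~\ref{StrataFormSingle}/\ref{StrataForm} (smoothness of intersections of components, hence disjoint components) together with uniqueness of the least element to conclude the relevant intersections are irreducible. Your write-up is more carefully packaged — isolating the single-scheme auxiliary claim and explicitly checking elementarity of $\mathfrak{X}_s$ and $\mathfrak{H}_s$ before invoking it, and tracking the ambient space for closures — but the underlying ideas coincide with the paper's terser argument.
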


\begin{proof}
	It follows from Remark \ref{StratRem} and Proposition \ref{StrataForm} that the generic point of the minimal
	stratum is contained in every element of $\irr(\mathfrak{X}_s + \mathfrak{H}_s)$. Moreover the intersection of any set of
	elements in $\irr(\mathfrak{X}_s+\mathfrak{H}_s)$ has to be non-empty and irreducible. Indeed
	the smoothness of such an intersection implies that its irreducible components are disjoint and there can 
	only exist a least element of $\str(\mathfrak{X}_s, \mathfrak{H}_s)$, if there is exactly one irreducible component. 
	Now the claims become apparent and we additionally discover that the least elements of $\str(\mathfrak{X}_s, \mathfrak{H}_s)$ 
	and $\str(\mathfrak{H}_s)$ coincide, since every irreducible component of $\mathfrak{H}_s$ is contained 
	in an irreducible component of $\mathfrak{X}_s$. 
\end{proof}

%---------------------------------------------------------------------------------------------------------------------------------------------------------------------------

\subsection{Building blocks}

Let for this subsection $(\mathfrak{X}, \mathfrak{H})$ be a strictly poly-stable pair
and $x \in \mathfrak{X}$. We establish the notion of “building blocks”, which basically means a 
suitable neighborhood of $x$ where on the special fiber we have the same stratification and intersection behavior
of irreducible components as for standard pairs. This is essentially the situation which is classically
dealt with in Step $6$ from the Proof of Theorems 5.2–5.4 in \cite[§\,5]{Ber99}, when constructing the skeleton.
I also was heavily influenced by \cite[Proposition 5.2]{Gub10} and \cite[Proposition 4.1]{GRW16}
to make these considerations. For later reference we will study the structure of the building blocks in detail.

\begin{proposition} \label{RespDivCoord}
	Let $\mathfrak{U}$ resp. $\mathfrak{U}'$ be an open neighborhood of $x$ such that 
	there exists a standard pair $(\mathfrak{S}, \mathfrak{G})$resp. $(\mathfrak{S}', \mathfrak{G}')$ and an \et morphism 
	$\varphi: (\mathfrak{U}, \mathfrak{H} \cap \mathfrak{U}) \to (\mathfrak{S}, \mathfrak{G})$ resp. 
	$\varphi': (\mathfrak{U}', \mathfrak{H} \cap \mathfrak{U}') \to (\mathfrak{S}', \mathfrak{G}')$,
	where $\mathfrak{G} = \{T_1 \cdots T_s = 0\}$ and $\mathfrak{G}' = \{T'_1 \cdots T'_{s'} = 0\}$.
	The morphism $\varphi$ resp. $\varphi'$ induces an isometric bijection 
	$\beta: \irr((\mathfrak{H} \cap \mathfrak{U})_s, x) \xrightarrow{\sim} \irr(\mathfrak{G}_s, y)$
	resp. $\beta': \irr((\mathfrak{H} \cap \mathfrak{U}')_s, x) \xrightarrow{\sim} \irr(\mathfrak{G}'_s, y')$, where $y := \varphi(x)$
	and $y' := \varphi'(x)$. 
	\begin{enumerate}
		\item For every $\mathcal{V} \in \irr(\mathfrak{H}_s, x)$ there exists exactly one $i \in \dotsbra{1}{s}$
		such that $\beta(\mathcal{V} \cap \mathfrak{U}_s) \subseteq \{T_i = 0\}$.
		\item Let $\mathcal{V}_1, \mathcal{V}_2  \in \irr(\mathfrak{H}_s, x)$ such that 
		$\beta(\mathcal{V}_1 \cap \mathfrak{U}_s) \subseteq \{T_i = 0\}$ and 
		$\beta(\mathcal{V}_2 \cap \mathfrak{U}_s) \subseteq \{T_i = 0\}$
		for some $i \in \dotsbra{1}{s}$.
		Then there exists an index $j \in \dotsbra{1}{s'}$ such that
		$\beta'(\mathcal{V}_1 \cap \mathfrak{U}'_s) \subseteq \{T'_j = 0\}$ and 
		$\beta'(\mathcal{V}_2 \cap \mathfrak{U}'_s) \subseteq \{T'_j = 0\}$.	  
	\end{enumerate}
\end{proposition}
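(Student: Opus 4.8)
The plan is to prove (i) by a direct computation on the standard pair, and to reduce (ii) to the \emph{chart-independence} of a certain partition of $\irr(\mathfrak{H}_s, x)$, which I would obtain from an intrinsic description. For (i): by Proposition~\ref{IrrIsometric} the restriction of $\varphi_s$ sends $\mathcal{V} \cap \mathfrak{U}_s$ to the irreducible component $\mathcal{W} := \overline{\varphi_s(\mathcal{V} \cap \mathfrak{U}_s)} = \beta(\mathcal{V} \cap \mathfrak{U}_s)$ of $\mathfrak{G}_s$, which contains $y$. Since $\mathfrak{G}_s = \bigcup_{i=1}^{s} \bigl(\{T_i = 0\} \cap \mathfrak{S}_s\bigr)$ and $\mathcal{W}$ is irreducible, $\mathcal{W} \subseteq \{T_i = 0\}$ for some $i \in \{1, \dots, s\}$; this $i$ is unique because the $\mathfrak{B}^d$-coordinates $T_1, \dots, T_d$ meet $\mathfrak{S}_s$ transversally, so $\{T_i = 0\} \cap \{T_{i'} = 0\} \cap \mathfrak{S}_s$ has codimension $2$ in $\mathfrak{S}_s$ for $i \neq i'$, whereas every irreducible component of $\mathfrak{G}_s$ has codimension $1$ in $\mathfrak{S}_s$ (see the proof of Proposition~\ref{IredCompProp}). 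Transporting this along $\beta$ gives (i).

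For (ii) the hard part is to show that whether $\mathcal{V}_1$ and $\mathcal{V}_2$ lie in a common $\{T_i = 0\}$ does not depend on the chart $\varphi$. By Proposition~\ref{IredCompProp} I would attach to each $\mathcal{V}_l$ ($l = 1, 2$) the unique irreducible component $\mathcal{X}_l$ of $\mathfrak{X}_s$ with $\mathcal{V}_l \subseteq \mathcal{X}_l$; since $x \in \mathcal{V}_l$, we have $\mathcal{X}_l \in \irr(\mathfrak{X}_s, x)$. The claim I would establish is: there is an index $i \in \{1, \dots, s\}$ with $\beta(\mathcal{V}_1 \cap \mathfrak{U}_s), \beta(\mathcal{V}_2 \cap \mathfrak{U}_s) \subseteq \{T_i = 0\}$ if and only if $d(\mathcal{V}_1, \mathcal{V}_2) = d(\mathcal{X}_1, \mathcal{X}_2)$, where $d$ is the distance in the metric of Definition~\ref{IrrMetric} (on $\irr(\mathfrak{H}_s, x)$, resp.\ on $\irr(\mathfrak{X}_s, x)$). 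The right-hand side does not involve $\mathfrak{U}, \varphi, \mathfrak{S}, \mathfrak{G}$, so once the claim is proved its left-hand side holds verbatim for the data $(\mathfrak{U}', \varphi', \mathfrak{S}', \mathfrak{G}')$ too; combined with the uniqueness of the index in (i) applied to $\varphi'$, this is exactly the assertion of (ii).

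To prove the claim I would transport everything to the standard pair. Restriction to $\mathfrak{U}_s$ is an isometric bijection on $\irr(\mathfrak{H}_s, x)$ and on $\irr(\mathfrak{X}_s, x)$, since the metric at $x$ only depends on a neighborhood of $x$; and $\beta$, together with $\alpha: \irr(\mathfrak{U}_s, x) \xrightarrow{\sim} \irr(\mathfrak{S}_s, y)$, are isometric bijections of the shape $\mathcal{V} \mapsto \overline{\varphi_s(\mathcal{V})}$, hence compatible with the inclusions $\mathcal{V}_l \cap \mathfrak{U}_s \subseteq \mathcal{X}_l \cap \mathfrak{U}_s$. Thus, writing $\mathcal{W}_l := \beta(\mathcal{V}_l \cap \mathfrak{U}_s)$ and $\mathcal{Z}_l := \alpha(\mathcal{X}_l \cap \mathfrak{U}_s)$, one has $\mathcal{W}_l \subseteq \mathcal{Z}_l$, so $\mathcal{Z}_l$ is the unique irreducible component of $\mathfrak{S}_s$ containing $\mathcal{W}_l$ (Proposition~\ref{IredCompProp} for $(\mathfrak{S}, \mathfrak{G})$), while $d(\mathcal{V}_1, \mathcal{V}_2) = d(\mathcal{W}_1, \mathcal{W}_2)$ and $d(\mathcal{X}_1, \mathcal{X}_2) = d(\mathcal{Z}_1, \mathcal{Z}_2)$. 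It then remains to check on the standard pair, with its explicit components $\mathcal{Z}_l = \{T_{k, j_k^{(l)}} = 0 : 1 \le k \le p\}$ and $\mathcal{W}_l = \mathcal{Z}_l \cap \{T_{i_l} = 0\}$, $i_l \in \{1, \dots, s\}$, that $d(\mathcal{W}_1, \mathcal{W}_2)$ equals $d(\mathcal{Z}_1, \mathcal{Z}_2)$ when $i_1 = i_2$ and $d(\mathcal{Z}_1, \mathcal{Z}_2) + 1$ when $i_1 \neq i_2$ — a short dimension count, using that $\mathcal{Z}_1 \cap \mathcal{Z}_2$ is irreducible and contains $y$, that $\mathcal{W}_l$ is a divisor in $\mathcal{Z}_l$ cut out by $T_{i_l}$, and that $\mathcal{W}_1 \cap \mathcal{W}_2$ is obtained from $\mathcal{Z}_1 \cap \mathcal{Z}_2$ by imposing $T_{i_1} = 0$ and $T_{i_2} = 0$, that is, by one equation when $i_1 = i_2$ and by two transverse ones otherwise. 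The conceptual point, and the only real difficulty, is the choice of this intrinsic invariant — one must compare with $d(\mathcal{X}_1, \mathcal{X}_2)$, not with an absolute quantity, because $d(\mathcal{V}_1, \mathcal{V}_2)$ by itself cannot detect the index $i$; the dimension count and the bookkeeping with the isometric bijections are routine.
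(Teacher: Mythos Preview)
Your proposal is correct and follows essentially the same approach as the paper: both proofs attach to each $\mathcal{V}_l$ the unique irreducible component $\mathcal{W}_l$ of $\mathfrak{X}_s$ containing it (via Proposition~\ref{IrredCompProp}) and then use a codimension/distance comparison on the standard pair to see that ``same $T_i$'' versus ``different $T_i$'' is detected by whether the codimension of $\mathcal{V}_1\cap\mathcal{V}_2$ exceeds that of $\mathcal{W}_1\cap\mathcal{W}_2$ by one or by two, a quantity that is transported unchanged by the isometric bijections of Proposition~\ref{IrrIsometric}. Your packaging of this as the intrinsic criterion $d(\mathcal{V}_1,\mathcal{V}_2)=d(\mathcal{X}_1,\mathcal{X}_2)$ is a slightly cleaner formulation than the paper's direct contradiction argument, but the mathematical content is identical.
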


\begin{proof}
	(i) follows immediately from the explicit description of the irreducible components of $\mathfrak{G}_s$.
	
	Next we want to prove (ii). %We may assume that $\mathcal{V}_1 \neq \mathcal{V}_2$.
	%According to Proposition \ref{IrredCompProp} there exists unique $\mathcal{W}_1, \mathcal{W}_2 \in \irr(\mathfrak{X}_s)$
	%such that $\mathcal{V}_1 \subseteq \mathcal{W}_1$ and $\mathcal{V}_2 \subseteq \mathcal{W}_2$. Now we must have
	%$\mathcal{W}_1 \neq \mathcal{W}_2$, because otherwise we would get 
	%$\beta(\mathcal{V}_1 \cap \mathfrak{U}_s) = \beta(\mathcal{V}_2 \cap \mathfrak{U}_s)$, therefore
	%$\mathcal{V}_1 \cap \mathfrak{U}_s = \mathcal{V}_2 \cap \mathfrak{U}_s$ and thus $\mathcal{V}_1 = \mathcal{V}_2$.
	According to Proposition \ref{IrredCompProp} there exist unique $\mathcal{W}_1, \mathcal{W}_2 \in \irr(\mathfrak{X}_s, x)$
	such that $\mathcal{V}_1 \subseteq \mathcal{W}_1$ and $\mathcal{V}_2 \subseteq \mathcal{W}_2$.
	Let $d$ be the distance between $\mathcal{W}_1$ and $\mathcal{W}_2$ with respect to the metric
	introduced in Definition \ref{IrrMetric}. Then $\mathcal{V}_1$ and $\mathcal{V}_1$
	have distance $d+1$. The inclusions induce isometric bijections 
	$\irr((\mathfrak{H} \cap \mathfrak{U})_s, x) \xrightarrow{\sim} \irr(\mathfrak{H}_s, x)$
	and $\irr((\mathfrak{H} \cap \mathfrak{U}')_s, x) \xrightarrow{\sim} \irr(\mathfrak{H}_s, x)$.
	If there were distinct indexes $j, k \in \dotsbra{1}{s'}$ such that
	$\beta'(\mathcal{V}_1 \cap \mathfrak{U}'_s) \subseteq \{T'_j = 0\}$ and 
	$\beta'(\mathcal{V}_2 \cap \mathfrak{U}'_s) \subseteq \{T'_k = 0\}$, then 
	$\beta'(\mathcal{V}_1 \cap \mathfrak{U}'_s) \cap \beta'(\mathcal{V}_2 \cap \mathfrak{U}'_s)$
	would have codimension $d+2$ in $\mathfrak{S}'_s$, implying that 
	$\beta(\mathcal{V}_1 \cap \mathfrak{U}_s) \cap \beta(\mathcal{V}_2 \cap \mathfrak{U}_s)$
	would have codimension $d+2$ in $\mathfrak{S}_s$.
	But $\beta(\mathcal{V}_1 \cap \mathfrak{U}_s) \cap \beta(\mathcal{V}_2 \cap \mathfrak{U}_s)$ 
	has codimension $d+1$ in $\mathfrak{S}_s$, which finishes the proof.
\end{proof}

\begin{proposition} \label{BuildingBlockExist}
	There exists an affine open neighborhood $\mathfrak{U}$ of $x$, a standard pair $(\mathfrak{S}, \mathfrak{G})$
	and an \et morphism $\varphi: (\mathfrak{U}, \mathfrak{H} \cap \mathfrak{U}) \to (\mathfrak{S}, \mathfrak{G})$
	such that the following hold: 
	\begin{enumerate}
		\item The pair $(\mathfrak{U}_s, (\mathfrak{H} \cap \mathfrak{U})_s)$ is elementary
		and its minimal stratum contains $x$.
		% The point $x$ is contained in every irreducible component of $\mathfrak{U}_s$ and $(\mathfrak{H} \cap \mathfrak{U})_s$. 
		%% The scheme $\mathfrak{U}_s$ and the pair $(\mathfrak{U}_s, (\mathfrak{H} \cap \mathfrak{U})_s)$ are 
		%% elementary and their respective minimal stratum contains $x$. 
		\item The point $\varphi(x)$ is contained in every irreducible component of $\mathfrak{S}_s$ and $\mathfrak{G}_s$.		
	\end{enumerate}
\end{proposition}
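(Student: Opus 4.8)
The plan is to refine an arbitrary \et chart around $x$ in two stages, following Step~$6$ of the proof of Theorems 5.2--5.4 in \cite{Ber99} and \cite[Proposition 4.1]{GRW16}: first shrink the source so that (i) holds, then restrict the source once more and \emph{re-coordinatize} the target standard pair, via \ref{RemoveCoord}, so that the image of $x$ is pushed into every irreducible component of the special fiber, which yields (ii). To set up, Definition~\ref{StrPstDef} provides an open neighborhood of $x$ with an \et morphism to a standard pair, and by the Example after Definition~\ref{StrPstDef} we may shrink it to an affine one; write $\varphi\colon(\mathfrak{U},\mathfrak{H}\cap\mathfrak{U})\to(\mathfrak{S},\mathfrak{G})$ for this chart.

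\emph{Step 1 (property (i)).} Let $S$ be the unique stratum of $(\mathfrak{X}_s,\mathfrak{H}_s)$ containing $x$, with generic point $\eta_S$. As the strata form a locally finite cover, pick an open $W\ni x$ meeting only finitely many strata, and put $\mathfrak{U}_1:=W\setminus\bigcup_T\overline{T}$, the union running over the finitely many strata $T$ with $T\cap W\neq\emptyset$ and $x\notin\overline{T}$; this is an open neighborhood of $x$. The closure of a stratum of $(\mathfrak{X}_s,\mathfrak{H}_s)$ is again a union of strata --- this follows from Proposition~\ref{ClosureStrataSet} applied to the poly-stable schemes $\mathfrak{X}_s$ and $\mathfrak{H}_s$ (Proposition~\ref{DivStrPst}), using well-stratifiedness (Proposition~\ref{WellStrat}) to pass from a vertical stratum $S'\setminus\mathfrak{H}_s$ to $\overline{S'}$ --- so every stratum surviving in $\mathfrak{U}_1$ has $\eta_S$ in its closure; hence $\eta_S$ is the least element of $\str(\mathfrak{U}_{1,s},(\mathfrak{H}\cap\mathfrak{U}_1)_s)$ and, by Remark~\ref{StratRem}, $(\mathfrak{U}_{1,s},(\mathfrak{H}\cap\mathfrak{U}_1)_s)$ is elementary with minimal stratum containing $x$. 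Passing to a smaller affine open through $x$ preserves this, since an irreducible locally closed subset and a nonempty open subset of it have the same closure. Moreover, if $x\notin\mathfrak{H}$ then $\mathfrak{H}\cap\mathfrak{U}_1=\emptyset$, since a stratum of $\mathfrak{H}_s$ lies in the closed set $\mathfrak{H}_s$ while $\eta_S$ does not, so it cannot dominate $\eta_S$; in that case we take the divisor of the standard pair to be empty. Thus we may assume $\varphi\colon(\mathfrak{U},\mathfrak{H}\cap\mathfrak{U})\to(\mathfrak{S}(\textbf{n},\textbf{a},d),\mathfrak{G}(s))$ already satisfies (i).

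\emph{Step 2 (property (ii)).} Set $y:=\varphi(x)$. For each torus factor $\mathfrak{T}(n_i,a_i)$ let $J_i:=\{\,j\in\dotsbra{0}{n_i}:T_{ij}(y)=0\,\}$, which is nonempty, and for the ball factor $\mathfrak{B}^d$ let $L:=\{\,l\in\dotsbra{1}{d}:T_l(y)=0\,\}$ and $L_0:=L\cap\dotsbra{1}{s}$. Let $\mathfrak{S}'\subseteq\mathfrak{S}$ be the open subscheme (affine, being a localization) where the coordinates $T_{ij}$ with $j\notin J_i$ and $T_l$ with $l\notin L$ are invertible; then $y\in\mathfrak{S}'_s$ and $\mathfrak{U}':=\varphi^{-1}(\mathfrak{S}')$ is an affine open neighborhood of $x$ inside $\mathfrak{U}$. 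By \ref{RemoveCoord} (after permuting coordinates), $\mathfrak{T}(n_i,a_i)\cap\mathfrak{S}'\cong\mathfrak{T}(|J_i|-1,a_i)\times\mathbb{G}_m^{\,n_i+1-|J_i|}$, and inverting the $T_l$ with $l\notin L$ on $\mathfrak{B}^d$ gives $\mathfrak{B}^d\cap\mathfrak{S}'\cong\mathbb{G}_m^{\,d-|L|}\times\mathfrak{B}^{|L|}$; each of the resulting tori admits an open immersion into a ball $\mathfrak{B}^m=\mathfrak{S}(m)$ (cf.\ \ref{ToriCover}). Composing $\varphi|_{\mathfrak{U}'}$ with these immersions produces an \et morphism $\varphi'\colon\mathfrak{U}'\to\mathfrak{S}(\textbf{m},\textbf{a}',d')$, where $\textbf{m}$ records the numbers $|J_i|-1$ with $|J_i|\geq2$ (the degenerate factors $\mathfrak{T}(0,a_i)$ being trivial and dropped), the entries of $\textbf{a}'$ still lie in $K^{\circ\circ}$, and $d'$ is the total dimension of the ball part. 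Reordering the $\mathfrak{B}^{d'}$-coordinates so that those indexed by $L_0$ come first, we observe that $T_1\cdots T_s$ and $\prod_{l\in L_0}T_l$ differ by a unit on $\mathfrak{S}'$ and hence cut out the same ideal, so $\mathfrak{H}\cap\mathfrak{U}'=(\varphi')^{-1}(\mathfrak{G}(|L_0|))$ and $\varphi'$ is a chart of the required form. Now $y':=\varphi'(x)$ vanishes on every coordinate of every factor $\mathfrak{T}(|J_i|-1,a_i)$ (namely the $T_{ij}$ with $j\in J_i$) and on every divisor coordinate (the $T_l$ with $l\in L_0$); since the irreducible components of $\mathfrak{S}(\textbf{m},\textbf{a}',d')_s$ are obtained by choosing, in each torus factor, one coordinate to vanish, and those of $\mathfrak{G}(|L_0|)_s$ by additionally choosing one of the first $|L_0|$ ball-coordinates to vanish, the point $y'$ lies in all of them --- this is (ii). Property (i) passes to $(\mathfrak{U}'_s,(\mathfrak{H}\cap\mathfrak{U}')_s)$ exactly as in Step~1, because $\mathfrak{U}'\subseteq\mathfrak{U}$ is an open neighborhood of $x$.

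The main obstacle will be the coordinate bookkeeping in Step~2: one has to track the transformations coming from \ref{RemoveCoord} and the torus-into-ball immersions, check that $\mathfrak{G}$ is carried \emph{exactly} (and not merely topologically) onto the divisor of a genuine standard pair, and verify that $\varphi'(x)$ vanishes on enough coordinates to meet every irreducible component of both $\mathfrak{S}'_s$ and $\mathfrak{G}'_s$. The only other point requiring a brief separate argument is the extension, invoked in Step~1, of Proposition~\ref{ClosureStrataSet} from schemes to strata of pairs; the remaining verifications are routine.
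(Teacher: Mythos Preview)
Your proof is correct and follows essentially the same two-stage strategy as the paper: shrink the source so that the pair becomes elementary, and re-coordinatize the target via \ref{RemoveCoord} so that $\varphi(x)$ lies in every irreducible component. The paper carries out these stages in the opposite order (re-coordinatize first, then shrink), and treats the divisor slightly differently: rather than inverting the non-vanishing ball coordinates $T_l$ for $l\in\{1,\dots,s\}\setminus L_0$ as you do, the paper simply replaces $\mathfrak{G}(s)$ by $\mathfrak{G}(s')$ after a permutation and then \emph{verifies} that $\varphi^{-1}(\mathfrak{G}(s'))=\mathfrak{H}\cap\mathfrak{U}$ using the fact---available only after the elementary step---that every irreducible component of $(\mathfrak{H}\cap\mathfrak{U})_s$ contains $x$ (Corollary~\ref{ElementaryCor}). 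Your localization makes this verification a triviality ($T_1\cdots T_s$ and $\prod_{l\in L_0}T_l$ differ by a unit on $\mathfrak{S}'$), which is why the order of your two steps is flexible, whereas the paper's order is forced by its choice of verification. Both routes produce the same building block; your extra inversion of the ball coordinates $T_l$ for $l>s$ with $T_l(y)\neq 0$ is harmless but unnecessary, since those coordinates contribute neither to $\irr(\mathfrak{S}_s)$ nor to $\irr(\mathfrak{G}_s)$.
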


\begin{proof}
	Let $\mathfrak{U}'$ be an open neighborhood of $x$ such that there exists an \et morphism 
	$\varphi: (\mathfrak{U}', \mathfrak{H} \cap \mathfrak{U}') \to (\mathfrak{S}(\textbf{n}, \textbf{a}, d), \mathfrak{G}(s))$
	with $p$-tuples $\textbf{n}$ and $\textbf{a}$.
	We remove from $\mathfrak{S}(\textbf{n}, \textbf{a}, d)$ all closed subsets $\{T_{ij} = 0\}$ with
	$i \in \dotsbra{1}{p}$, $j \in \dotsbra{0}{n_i}$ such that $\varphi(x) \notin \{T_{ij} = 0\}$.
	The result is an open subset of $\mathfrak{S}(\textbf{n}, \textbf{a}, d)$ which
	is a product $\mathfrak{T} := \mathfrak{S}(\textbf{n}', \textbf{a}') \times \mathfrak{B}^d \times \mathfrak{T}(d', 1)$,
	as indicated in \ref{RemoveCoord}.   
	Let $\mathfrak{U}'' := \varphi^{-1}(\mathfrak{T})$. This is an open neighborhood of $x$.
	The torus $\mathfrak{T}(d', 1)$ can be openly embedded into a ball.
	Therefore we get an \et morphism $\varphi: \mathfrak{U}'' \to \mathfrak{S}$ to a standard scheme 
	$\mathfrak{S} = \mathfrak{S}(\textbf{n}', \textbf{a}') \times \mathfrak{B}^d \times \mathfrak{B}^{d'}$,
	whose irreducible components each contain $\varphi(x)$.
	
	Consider the subset of all elements $i \in \dotsbra{1}{s}$ such that $\varphi(x) \in \{T_i = 0\}$.
	After a change of coordinates we may assume it is of the form $\dotsbra{1}{s'}$
	for some $s' \leq s$. Let then $\mathfrak{G}$ be the closed subset $\{T_1 \cdots T_{s'} = 0\}$ of $\mathfrak{S}$.
	Then $\varphi(x)$ is contained in every irreducible component of $\mathfrak{G}_s$.
	
	%Finally we remove from $\mathfrak{U}''$ all irreducible components of $\mathfrak{X}_s$ and $\mathfrak{H}_s$
	%which do not contain $x$. This yields an open subset $\mathfrak{U} \subseteq \mathfrak{U}''$ such that 
	%$x$ is contained in every irreducible component of $\mathfrak{U}_s$ and $(\mathfrak{H} \cap \mathfrak{U})_s$.
	Finally we remove from $\mathfrak{U}''$ the closed subsets $\overline{\{y\}}$ for all $y \in \str(\mathfrak{X}_s, \mathfrak{H}_s)$
	with $x \notin \overline{\{y\}}$. This yields an open subset $\mathfrak{U} \subseteq \mathfrak{U}''$ such that
	$(\mathfrak{U}_s, (\mathfrak{H} \cap \mathfrak{U})_s)$ is elementary with its minimal stratum containing $x$,
	see Proposition \ref{ClosureStrataSet}.
	Obviously we may replace $\mathfrak{U}$ by an affine open neighborhood of $x$ in $\mathfrak{U}$.  
	The resulting \et morphism $\varphi: \mathfrak{U} \to \mathfrak{S}$ satisfies 
	$\varphi^{-1}(\mathfrak{G}) = \mathfrak{H} \cap \mathfrak{U}$: 
	
	The inclusion “$\subseteq$” is clear. On the other hand, consider the \et composition 
	$\mathfrak{H} \cap \mathfrak{U} \to \mathfrak{H} \cap \mathfrak{U}' \to \mathfrak{G}(s)$.
	Every irreducible component of $(\mathfrak{H} \cap \mathfrak{U})_s$ contains $x$ by Corollary~\ref{ElementaryCor}, 
	therefore its image under $\varphi$ is contained in an irreducible component of $\mathfrak{G}(s)$, in which $\varphi(x)$ lies.
	In particular it is contained in $\{T_1 \cdots T_{s'} = 0\}$. This shows “$\supseteq$”.  
\end{proof}

\begin{definition}
	We call an open neighborhood $\mathfrak{U}$ of $x$ together with an \et morphism $\varphi$ satisfying the properties
	from Proposition \ref{BuildingBlockExist} a \emph{building block} of $(\mathfrak{X}, \mathfrak{H})$ in $x$.
	
	We will also say an \et morphism $\psi: (\mathfrak{Y}, \mathfrak{F}) \to (\mathfrak{S}, \mathfrak{G})$ to a standard scheme
	is a \emph{building block}, if $\mathfrak{Y}$ is affine, $(\mathfrak{Y}_s, \mathfrak{F}_s)$ is elementary
	and the image of the minimal element in $\str(\mathfrak{Y}_s, \mathfrak{F}_s)$ under $\psi$
	is  the minimal element in $\str(\mathfrak{S}_s, \mathfrak{G}_s)$. 
	
	If we just say $\mathfrak{U}$ is a \emph{building block} of $(\mathfrak{X}, \mathfrak{H})$, we mean
	that $\mathfrak{U}$ is an open affine subset of $\mathfrak{X}$ such that there exists an \et morphism 
	$(\mathfrak{U}, \mathfrak{H} \cap \mathfrak{U}) \to (\mathfrak{S}, \mathfrak{G})$
	being a building block in the above sense. 
	
	Moreover we say $\psi: \mathfrak{Y} \to \mathfrak{S}$ is a \emph{building block}, if
	$\psi: (\mathfrak{Y}, \emptyset) \to (\mathfrak{S}, \emptyset)$ is a building block.  
\end{definition}

\begin{noname} \label{SliceBuildingBlock}
	Let $\varphi: (\mathfrak{U}, \mathfrak{H} \cap \mathfrak{U}) \to (\mathfrak{S}, \mathfrak{G}) 
	= (\mathfrak{S}(\textbf{n}, \textbf{a}, d), \mathfrak{G}(s))$
	be a building block of $(\mathfrak{X}, \mathfrak{H})$ in $x$. We want to consider a point $y \in \mathfrak{X}$
	such that $x \in \overline{\{y\}}$. Note that $y \in \mathfrak{U}$, because otherwise we would have $y \in \mathfrak{X} \setminus \mathfrak{U}$
	and thus $\overline{\{y\}} \subseteq \mathfrak{X} \setminus \mathfrak{U}$, which contradicts $x \in \overline{\{y\}}$.
	We can apply the same construction as in Proposition~\ref{BuildingBlockExist} to obtain a building
	block in $y$ from $\mathfrak{U}$ and $\varphi$. More precisely we get an open subset $\mathfrak{U}' \subseteq \mathfrak{U}$ 
	which contains $y$, a standard pair 
	$(\mathfrak{S}', \mathfrak{G}') = (\mathfrak{S}(\textbf{n}', \textbf{a}', d+d'), \mathfrak{G}(s'))$, an open subset 
	$\mathfrak{T} = \mathfrak{S}(\textbf{n}', \textbf{a}') \times \mathfrak{B}^d \times \mathfrak{T}(d', 1)$ of $\mathfrak{S}$
	which also openly embeds into $\mathfrak{S}'$ via some torus embedding, and an \et morphism
	$\varphi': (\mathfrak{U}', \mathfrak{H} \cap \mathfrak{U}') \to (\mathfrak{S}', \mathfrak{G}')$, which is just the restriction of 
	$\varphi$ to $\mathfrak{U}'$ followed by $\mathfrak{T} \hookrightarrow \mathfrak{S}'$
	after a suitable permutation of the coordinates of $\mathfrak{B}^d$. This open subset $\mathfrak{U}'$ is together with $\varphi'$
	a building block of $(\mathfrak{X}, \mathfrak{H})$ in $y$.   
	Note that the open immersion $(\mathfrak{T}, \mathfrak{G}' \cap \mathfrak{T}) \hookrightarrow (\mathfrak{S}', \mathfrak{G}')$ 
	is a building block as well.
\end{noname}

\begin{proposition} \label{PreImIrrComp}
	Let $\mathfrak{U}$ together with $\varphi: (\mathfrak{U}, \mathfrak{H} \cap \mathfrak{U}) \to (\mathfrak{S}, \mathfrak{G})$
	be a building block of $(\mathfrak{X}, \mathfrak{H})$ in $x$. Then the following statements hold:
	\begin{enumerate}
		\item The point $x$ is contained in every irreducible component of $\mathfrak{U}_s$ and $(\mathfrak{H} \cap \mathfrak{U})_s$.
		\item The preimage of every irreducible component $\mathcal{V}$ of $\mathfrak{S}_s$ resp. $\mathfrak{G}_s$  under 
		$\varphi_s$ is an irreducible component of $\mathfrak{U}_s$ resp. $(\mathfrak{H} \cap \mathfrak{U})_s$. 
		\item If $x \in \str(\mathfrak{X}_s, \mathfrak{H}_s)$, then the minimal stratum of 
		$(\mathfrak{U}_s, (\mathfrak{H} \cap \mathfrak{U})_s)$ has generic point $x$.
	\end{enumerate}
\end{proposition}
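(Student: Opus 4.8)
The plan is to establish the three assertions in order, using (i) as an input to (ii) while treating (iii) essentially on its own. \emph{For (i)}, I would invoke the defining property of a building block, Proposition~\ref{BuildingBlockExist}~(i): the pair $(\mathfrak{U}_s, (\mathfrak{H}\cap\mathfrak{U})_s)$ is elementary and $x$ lies in its minimal stratum $\Sigma$. Since $\mathfrak{U}_s$ is locally noetherian, $\str(\mathfrak{U}_s, (\mathfrak{H}\cap\mathfrak{U})_s)$ has a least element $\xi$ and $\Sigma = \overline{\{\xi\}}$ is closed (Remark~\ref{StratRem}). If $\mathcal{W}$ is any irreducible component of $\mathfrak{U}_s$, then its generic point $\eta_{\mathcal{W}}$ lies in $\str(\mathfrak{U}_s)$ by Proposition~\ref{StrataFormSingle} (it is the generic point of $\mathcal{W}$ with the intersections with the other components removed), hence $\eta_{\mathcal{W}} \in \str(\mathfrak{U}_s, (\mathfrak{H}\cap\mathfrak{U})_s)$; minimality of $\xi$ gives $\xi \leq \eta_{\mathcal{W}}$, i.e.\ $\xi \in \overline{\{\eta_{\mathcal{W}}\}} = \mathcal{W}$, so $x \in \overline{\{\xi\}} \subseteq \mathcal{W}$. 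Running the same argument with $\str((\mathfrak{H}\cap\mathfrak{U})_s)$ in place of $\str(\mathfrak{U}_s)$ handles the components of $(\mathfrak{H}\cap\mathfrak{U})_s$; alternatively one may just quote Corollary~\ref{ElementaryCor}.

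\emph{For (ii)}, note first that $\mathfrak{U}_s$, $(\mathfrak{H}\cap\mathfrak{U})_s$, $\mathfrak{S}_s$ and $\mathfrak{G}_s$ are strictly poly-stable $\widetilde{K}$-schemes (Proposition~\ref{DivStrPst}) and that $\varphi_s$ is \et. By (i) every irreducible component of $\mathfrak{U}_s$ contains $x$, and by the building block property every irreducible component of $\mathfrak{S}_s$ contains $\varphi(x)$; hence Proposition~\ref{IrrIsometric}~(ii), applied with $\varphi_s(x) = \psi(z)$, yields a bijection $\alpha\colon \irr(\mathfrak{U}_s) \xrightarrow{\sim} \irr(\mathfrak{S}_s)$, $\mathcal{W} \mapsto \overline{\varphi_s(\mathcal{W})}$. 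Now fix $\mathcal{V} \in \irr(\mathfrak{S}_s)$. The set $\varphi_s^{-1}(\mathcal{V})$ is closed and contains $x$; moreover, since $\varphi_s$ is flat and $\mathfrak{S}_s$ is equidimensional, $\mathcal{V}$ is of maximal dimension, so $\varphi_s^{-1}(\mathcal{V})$ is of maximal dimension at each of its points and is therefore a union of irreducible components of $\mathfrak{U}_s$. For such a component $\mathcal{W}'$ we have $\varphi_s(\mathcal{W}') \subseteq \mathcal{V}$, hence $\alpha(\mathcal{W}') = \overline{\varphi_s(\mathcal{W}')} \subseteq \mathcal{V}$, and as one irreducible component contained in another they must coincide; injectivity of $\alpha$ then shows $\mathcal{W}' = \alpha^{-1}(\mathcal{V})$ is unique, so $\varphi_s^{-1}(\mathcal{V}) = \alpha^{-1}(\mathcal{V}) \in \irr(\mathfrak{U}_s)$. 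Finally, for $\mathcal{V} \subseteq \mathfrak{G}_s$ we have $\varphi_s^{-1}(\mathcal{V}) \subseteq \varphi_s^{-1}(\mathfrak{G}_s) = (\mathfrak{H}\cap\mathfrak{U})_s$, so the same argument applied to the \et restriction $\varphi_s\colon (\mathfrak{H}\cap\mathfrak{U})_s \to \mathfrak{G}_s$ (using (i) and the building block property once more) settles the case of the components of $\mathfrak{G}_s$.

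\emph{For (iii)}, suppose $x \in \str(\mathfrak{X}_s, \mathfrak{H}_s)$, so that $x$ is the generic point of a stratum $\Sigma$ of $(\mathfrak{X}_s, \mathfrak{H}_s)$. As $\mathfrak{U}_s$ is open in $\mathfrak{X}_s$ and $(\mathfrak{H}\cap\mathfrak{U})_s = \mathfrak{H}_s \cap \mathfrak{U}_s$, restricting strata to $\mathfrak{U}_s$ (Remark~\ref{OpenStratRem} applied to $\mathfrak{X}_s$ and to $\mathfrak{H}_s$, together with the definition of a stratum of a pair) shows that $\Sigma \cap \mathfrak{U}_s$ is a stratum of $(\mathfrak{U}_s, (\mathfrak{H}\cap\mathfrak{U})_s)$; being a non-empty open subset of $\Sigma$, it still has generic point $x$. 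By the building block property, $x$ also lies in the minimal stratum $\Sigma_0$ of $(\mathfrak{U}_s, (\mathfrak{H}\cap\mathfrak{U})_s)$, and since distinct strata are disjoint, $\Sigma \cap \mathfrak{U}_s = \Sigma_0$; thus the minimal stratum has generic point $x$.

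I expect the crux to be the purity step in (ii), namely ruling out that the scheme-theoretic preimage $\varphi_s^{-1}(\mathcal{V})$ acquires superfluous irreducible components — equivalently, that it is everywhere of codimension zero in $\mathfrak{U}_s$. This is precisely where the flatness of $\varphi_s$ and the equidimensionality of the standard scheme $\mathfrak{S}_s$ (respectively $\mathfrak{G}_s$) come in; once purity is known, the component-level bijection $\alpha$ from Proposition~\ref{IrrIsometric} pins down $\varphi_s^{-1}(\mathcal{V})$ uniquely. Parts (i) and (iii) are by contrast fairly soft, resting only on the defining properties of building blocks and on the behaviour of strata under restriction to open subschemes.
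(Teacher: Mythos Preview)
Your proof is correct and follows essentially the same architecture as the paper for all three parts. The only notable variation is in the uniqueness step of (ii): you pin down the single component via the injectivity of the bijection $\alpha$ from Proposition~\ref{IrrIsometric}, whereas the paper instead observes that $\mathcal{V}$ is smooth (being an irreducible component of a strictly poly-stable scheme), hence $\varphi_s^{-1}(\mathcal{V})$ is smooth, so its irreducible components are \emph{disjoint}; since by (i) they all contain $x$, there can be only one. Both arguments are short and valid; the paper's avoids the detour through $\alpha$, while yours makes the role of the isometric bijection more explicit.
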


\begin{proof}
	(i) is an immediate consequence of Corollary~\ref{ElementaryCor}.
	
	For the proof of (ii) we note that $\mathcal{V}$ is smooth, thus also $\varphi^{-1}_s(\mathcal{V})$ is smooth. 
	Furthermore every irreducible component of 
	$\varphi^{-1}_s(\mathcal{V})$ is contained in an irreducible component of $\mathfrak{U}_s$ resp. $(\mathfrak{H} \cap \mathfrak{U})_s$. 
	By dimensionality reasons $\varphi^{-1}_s(\mathcal{V})$ then has to be a union of irreducible components of $\mathfrak{U}_s$
	resp. $(\mathfrak{H} \cap \mathfrak{U})_s$.
	Since all of these contain $x$, this union has to consist of a single member. 
	
	(iii) follows from Remark~\ref{OpenStratRem}. 	
\end{proof}	

\begin{proposition}
	There is a unique number $s$ such that for all building blocks $\mathfrak{U}$ together with  
	$\varphi: (\mathfrak{U}, \mathfrak{H} \cap \mathfrak{U}) \to (\mathfrak{S}, \mathfrak{G})$
	of $(\mathfrak{X}, \mathfrak{H})$ in $x$, we have $\mathfrak{G} = \{T_1 \cdots T_s = 0\}$.
	We denote this number $s$ in the following by $s_x$. Observe that $s_x = 0$ if and only if $x \notin \mathfrak{H}$.
\end{proposition}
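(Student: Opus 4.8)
The plan is to pin down the number $s$ attached to a building block in $x$ as an invariant of the point $x$ alone, by realising it as the number of blocks of a partition of the intrinsic set $\irr(\mathfrak{H}_s, x)$. First I would fix a building block $\varphi\colon (\mathfrak{U}, \mathfrak{H}\cap\mathfrak{U}) \to (\mathfrak{S}, \mathfrak{G})$ of $(\mathfrak{X}, \mathfrak{H})$ in $x$ with $\mathfrak{G} = \{T_1 \cdots T_s = 0\}$, set $y := \varphi(x)$, and combine the isometric bijection $\beta\colon \irr((\mathfrak{H}\cap\mathfrak{U})_s, x) \xrightarrow{\sim} \irr(\mathfrak{G}_s, y)$ provided by Proposition~\ref{RespDivCoord} with the restriction bijection $\irr((\mathfrak{H}\cap\mathfrak{U})_s, x) \xrightarrow{\sim} \irr(\mathfrak{H}_s, x)$ (also recorded in Proposition~\ref{RespDivCoord}) to obtain a bijection $\gamma_\varphi\colon \irr(\mathfrak{H}_s, x) \xrightarrow{\sim} \irr(\mathfrak{G}_s, y)$. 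By Proposition~\ref{BuildingBlockExist}~(ii) every irreducible component of $\mathfrak{G}_s$ contains $y$, so $\irr(\mathfrak{G}_s, y) = \irr(\mathfrak{G}_s)$.

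The next step is a short structural analysis of $\mathfrak{G}_s$ using the explicit shape $\mathfrak{G}(s) = \prod_i \mathfrak{T}(n_i, a_i) \times \mathfrak{T}(s-1,0) \times \mathfrak{B}^{d-s}$ from the proof of Proposition~\ref{DivStrPst}: the special fiber $\mathfrak{G}_s$ is the union inside the reduced scheme $\mathfrak{S}_s$ of the $s$ coordinate hyperplane sections $\{T_i = 0\}$, $i \in \dotsbra{1}{s}$, each of which is non-empty and of pure codimension one; hence every irreducible component of $\mathfrak{G}_s$ lies in exactly one of these hyperplane sections (``exactly one'' by a codimension count, ``at least one'' being part of Proposition~\ref{RespDivCoord}~(i)), and each hyperplane section contains at least one irreducible component of $\mathfrak{G}_s$. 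Therefore $\mathcal{V} \mapsto$ ``the unique $i$ with $\mathcal{V} \subseteq \{T_i = 0\}$'' is a well-defined surjection $\irr(\mathfrak{G}_s) \twoheadrightarrow \dotsbra{1}{s}$, so that $s$ equals the number of its fibres. Transporting this partition through $\gamma_\varphi$ yields a partition $P_\varphi$ of $\irr(\mathfrak{H}_s, x)$ with exactly $s$ blocks, characterised by: $\mathcal{V}_1$ and $\mathcal{V}_2$ lie in a common block precisely when $\beta(\mathcal{V}_1\cap\mathfrak{U}_s)$ and $\beta(\mathcal{V}_2\cap\mathfrak{U}_s)$ lie in a common $\{T_i = 0\}$.

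To conclude I would take a second building block $\varphi'\colon (\mathfrak{U}', \mathfrak{H}\cap\mathfrak{U}') \to (\mathfrak{S}', \mathfrak{G}')$ in $x$ with $\mathfrak{G}' = \{T'_1 \cdots T'_{s'} = 0\}$ and its partition $P_{\varphi'}$ of $\irr(\mathfrak{H}_s, x)$, which has $s'$ blocks. Proposition~\ref{RespDivCoord}~(ii) applied to the pair $(\varphi, \varphi')$ says exactly that any two components lying in a common block of $P_\varphi$ also lie in a common block of $P_{\varphi'}$; applying the same statement to the pair $(\varphi', \varphi)$ gives the reverse implication. Hence $P_\varphi = P_{\varphi'}$, and so $s = s'$; this is the existence and uniqueness of the number $s =: s_x$. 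For the final assertion, since $s_x$ is the number of blocks of $P_\varphi$, we have $s_x = 0$ if and only if $\irr(\mathfrak{H}_s, x) = \emptyset$, which (as the irreducible components of the scheme $\mathfrak{H}_s$ cover it, and $\mathfrak{H}$ and $\mathfrak{H}_s$ share the same underlying topological space) is equivalent to $x \notin \mathfrak{H}$; alternatively $s_x = 0$ means $\mathfrak{G} = \emptyset$ for a (hence, by the above, any) building block in $x$, and $\mathfrak{H}\cap\mathfrak{U} = \varphi^{-1}(\mathfrak{G})$ makes this equivalent to $x \notin \mathfrak{H}$.

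I expect the only mildly delicate point to be the structural claim about $\mathfrak{G}_s$ in the second paragraph — that its irreducible components split into exactly $s$ non-empty classes according to which ball coordinate $T_i$ ($i \le s$) cuts them out — which needs a little care with irreducible components of products over the possibly non-algebraically-closed field $\widetilde{K}$, but follows readily from the monomial form of the factors $\mathfrak{T}(n_i, a_i)_s$ and $\mathfrak{T}(s-1,0)_s$ (one can, for instance, exhibit a $\widetilde{K}$-rational point of $\{T_i=0\}\cap\mathfrak{S}_s$ avoiding all $\{T_j=0\}$, $j\neq i$). Everything else is bookkeeping with Propositions~\ref{RespDivCoord} and~\ref{BuildingBlockExist}, the essential observation being that part~(ii) of Proposition~\ref{RespDivCoord}, read symmetrically in $\varphi$ and $\varphi'$, is precisely the statement forcing $P_\varphi = P_{\varphi'}$.
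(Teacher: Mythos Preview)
Your proof is correct, but it takes a different and somewhat longer route than the paper. The paper's argument is a pure count: alongside the bijection $\irr(\mathfrak{H}_s,x)\xrightarrow{\sim}\irr(\mathfrak{G}_s)$ that you use, it also invokes the bijection $\irr(\mathfrak{X}_s,x)\xrightarrow{\sim}\irr(\mathfrak{S}_s)$, observes the explicit numerical identity $|\irr(\mathfrak{G}_s)|=s\cdot|\irr(\mathfrak{S}_s)|$ for a standard pair, and concludes $s=|\irr(\mathfrak{H}_s,x)|/|\irr(\mathfrak{X}_s,x)|$, which is visibly intrinsic to $x$. No use of Proposition~\ref{RespDivCoord}~(ii) is needed.

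Your approach, by contrast, constructs the partition $P_\varphi$ of $\irr(\mathfrak{H}_s,x)$ and uses Proposition~\ref{RespDivCoord}~(ii) symmetrically in $\varphi,\varphi'$ to show $P_\varphi=P_{\varphi'}$, whence $s=s'$. This is more than the proposition demands, but it buys you something real: your partition $P_\varphi$ is exactly the set $D_x$ that the paper introduces in the definition \emph{following} this proposition, and your argument already establishes its independence of the building block. So the paper's count is quicker for the stated proposition, while your route anticipates and subsumes the next step in the development.
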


\begin{proof}
	Let $\mathfrak{U}$ together with  
	$\varphi: (\mathfrak{U}, \mathfrak{H} \cap \mathfrak{U}) \to (\mathfrak{S}, \mathfrak{G})$
	be a building block of $(\mathfrak{X}, \mathfrak{H})$ in $x$. We set $y := \varphi(x)$.
	Then we have bijections $\irr(\mathfrak{X}_s, x) \xrightarrow{\sim} \irr(\mathfrak{U}_s, x)
	\xrightarrow{\sim} \irr(\mathfrak{S}_s, y) = \irr(\mathfrak{S}_s)$ and
	$\irr(\mathfrak{H}_s, x) \xrightarrow{\sim} \irr((\mathfrak{H} \cap \mathfrak{U})_s, x)
	\xrightarrow{\sim} \irr(\mathfrak{G}_s, y) = \irr(\mathfrak{G}_s)$. Let $s \in \N$
	such that $\mathfrak{G} = \{T_1 \cdots T_s = 0\}$. Then $|\irr(\mathfrak{G}_s)| = s \cdot |\irr(\mathfrak{S}_s)|$
	and therefore $|\irr(\mathfrak{H}_s, x)| = s \cdot |\irr(\mathfrak{X}_s, x)|$.
	This shows that the number $s$ does only depend on $x$ and not on the choice of the building block.
\end{proof}

\begin{proposition} \label{DimensionProp}
	Let $\mathfrak{U}$ together with $\varphi: (\mathfrak{U}, \mathfrak{H} \cap \mathfrak{U}) 
	\to (\mathfrak{S}, \mathfrak{G}) = (\mathfrak{S}(\textbf{n}, \textbf{a}, d), \mathfrak{G}(s_x))$
	be a building block of $(\mathfrak{X}, \mathfrak{H})$ in $x$. We assume that $x \in \str(\mathfrak{X}_s, \mathfrak{H}_s)$
	and denote by $\zeta$ the least element in $\str(\mathfrak{S}_s, \mathfrak{G}_s)$.
	Then the codimension of the stratum corresponding to $x$ in the connected component of $\mathfrak{X}_s$ containing $x$,
	is equal to $|\textbf{n}| + s_x$. Moreover $\varphi^{-1}_s(\zeta) = \{x\}$.  
\end{proposition}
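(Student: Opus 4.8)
The plan is to first establish that $\varphi_s$ sends $x$ to $\zeta$; the codimension formula then drops out of a comparison of local ring dimensions together with the explicit shape of a standard pair, and the identity $\varphi_s^{-1}(\zeta) = \{x\}$ follows by combining étale invariance of dimension with the elementariness built into a building block. Throughout put $y := \varphi(x)$ and recall that $\varphi_s\colon \mathfrak{U}_s \to \mathfrak{S}_s$ is étale and restricts to an étale morphism $(\mathfrak{H}\cap\mathfrak{U})_s \to \mathfrak{G}_s$. For the first step, that $\varphi_s(x) = \zeta$: since $x \in \str(\mathfrak{X}_s, \mathfrak{H}_s)$, Proposition~\ref{PreImIrrComp} tells us that $x$ is the generic point of the minimal stratum of the elementary pair $(\mathfrak{U}_s, (\mathfrak{H}\cap\mathfrak{U})_s)$ and that every irreducible component of $\mathfrak{U}_s$ and of $(\mathfrak{H}\cap\mathfrak{U})_s$ passes through $x$; likewise $y$ lies on every irreducible component of $\mathfrak{S}_s$ and $\mathfrak{G}_s$. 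Hence, using Proposition~\ref{PreImIrrComp}~(ii) and the bijection of irreducible components through $x$ furnished by Proposition~\ref{IrrIsometric}, the assignment $\mathcal{W} \mapsto \varphi_s^{-1}(\mathcal{W})$ is a bijection $\irr(\mathfrak{G}_s) \xrightarrow{\sim} \irr((\mathfrak{H}\cap\mathfrak{U})_s)$ (and similarly for $\mathfrak{S}_s, \mathfrak{U}_s$, which is the relevant case when $s_x = 0$). Combining this with the description of minimal strata as intersections of all irreducible components from Corollary~\ref{ElementaryCor}, applied both to $(\mathfrak{U}, \mathfrak{H}\cap\mathfrak{U})$ and to the standard pair $(\mathfrak{S}, \mathfrak{G})$, gives
\[
\overline{\{x\}} \;=\; \bigcap_{\mathcal{W}\in\irr(\mathfrak{G}_s)} \varphi_s^{-1}(\mathcal{W}) \;=\; \varphi_s^{-1}\bigl(\overline{\{\zeta\}}\bigr)
\]
(read $\mathfrak{S}_s$ for $\mathfrak{G}_s$ when $s_x = 0$). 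Thus $\varphi_s^{-1}(\overline{\{\zeta\}})$ is irreducible with generic point $x$, and since it is étale over $\overline{\{\zeta\}}$ (a base change of $\varphi_s$) with both schemes irreducible, this morphism is dominant, so it sends the generic point $x$ to the generic point $\zeta$.

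For the codimension claim, since $\varphi_s$ is étale with $\varphi_s(x) = \zeta$, the induced map $\mathcal{O}_{\mathfrak{S}_s,\zeta} \to \mathcal{O}_{\mathfrak{U}_s,x}$ is étale, so $\dim\mathcal{O}_{\mathfrak{U}_s,x} = \dim\mathcal{O}_{\mathfrak{S}_s,\zeta}$. As $\mathfrak{U}_s$ is open in $\mathfrak{X}_s$, the left-hand side equals $\dim\mathcal{O}_{\mathfrak{X}_s,x}$, which is exactly the codimension, in the connected component of $\mathfrak{X}_s$ through $x$, of the stratum corresponding to $x$ (whose generic point is $x$). On the other hand $\overline{\{\zeta\}}$ is the minimal stratum of the standard pair $(\mathfrak{S}_s,\mathfrak{G}_s) = (\mathfrak{S}(\textbf{n},\textbf{a},d)_s,\mathfrak{G}(s_x)_s)$, so by the explicit computation in the example following Proposition~\ref{StrataForm} it has codimension $|\textbf{n}| + s_x$ in the equidimensional scheme $\mathfrak{S}_s$, i.e. $\dim\mathcal{O}_{\mathfrak{S}_s,\zeta} = |\textbf{n}| + s_x$. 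This yields the first assertion.

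For $\varphi_s^{-1}(\zeta) = \{x\}$: since $\varphi_s(x) = \zeta$, it suffices to show that every $w$ with $\varphi_s(w) = \zeta$ equals $x$. Étaleness of $\varphi_s$ at $w$ makes $\kappa(w)/\kappa(\zeta)$ a finite extension, so $\overline{\{w\}}$ has the same dimension as $\overline{\{\zeta\}}$ and hence as $\overline{\{x\}}$. Now $w$ lies in some stratum $\Sigma$ of $(\mathfrak{U}_s, (\mathfrak{H}\cap\mathfrak{U})_s)$; if $s_x \geq 1$ then $w \in \varphi_s^{-1}(\overline{\{\zeta\}}) \subseteq (\mathfrak{H}\cap\mathfrak{U})_s$ (by the first step and Corollary~\ref{ElementaryCor}), so $\Sigma$ is a stratum of $(\mathfrak{H}\cap\mathfrak{U})_s$, while if $s_x = 0$ it is a stratum of $\mathfrak{U}_s$. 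Applying Proposition~\ref{StrataMap} to $\varphi_s\colon(\mathfrak{H}\cap\mathfrak{U})_s \to \mathfrak{G}_s$, respectively to $\varphi_s\colon\mathfrak{U}_s \to \mathfrak{S}_s$, shows that $\varphi_s$ carries $\Sigma$ étale onto a stratum containing $\zeta$, necessarily $\overline{\{\zeta\}}$; thus $\dim\Sigma = \dim\overline{\{\zeta\}} = \dim\overline{\{w\}}$, and since $\overline{\{w\}} \subseteq \overline{\Sigma}$ is an inclusion of irreducible closed subsets of equal dimension, $w$ is the generic point of $\Sigma$, so $w \in \str(\mathfrak{U}_s, (\mathfrak{H}\cap\mathfrak{U})_s)$. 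As this pair is elementary with least element $x$ (Proposition~\ref{PreImIrrComp} and Remark~\ref{StratRem}), we get $x \leq w$, i.e. $x \in \overline{\{w\}}$; together with $\dim\overline{\{x\}} = \dim\overline{\{w\}}$ and irreducibility this forces $\overline{\{x\}} = \overline{\{w\}}$, hence $w = x$.

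I expect the first step to be the main obstacle: one must verify that a building block \emph{in} $x$ genuinely transports the minimal stratum of $(\mathfrak{U}_s,(\mathfrak{H}\cap\mathfrak{U})_s)$ to the minimal stratum $\zeta$ of $(\mathfrak{S}_s,\mathfrak{G}_s)$. A related subtlety arises in the last step, where for horizontal $x$ the pertinent minimal stratum is a stratum of $\mathfrak{G}_s$ rather than of $\mathfrak{S}_s$, so Proposition~\ref{StrataMap} has to be invoked for the restricted morphism $(\mathfrak{H}\cap\mathfrak{U})_s \to \mathfrak{G}_s$ and one must keep track of which ambient scheme one is working in. Once $\varphi_s(x) = \zeta$ is secured, the codimension count is immediate.
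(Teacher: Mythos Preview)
Your argument is correct, but it is considerably more elaborate than the paper's, and in particular you are working harder than necessary in the first step. The paper gets $\varphi_s(x)=\zeta$ in one line: Proposition~\ref{StrataMap} (applied to $\mathfrak{U}_s\to\mathfrak{S}_s$ or to $(\mathfrak{H}\cap\mathfrak{U})_s\to\mathfrak{G}_s$, according to whether $x\in\str(\mathfrak{X}_s)$ or $x\in\str(\mathfrak{H}_s)$) shows that $\varphi_s(x)$ is the generic point of a stratum of $(\mathfrak{S}_s,\mathfrak{G}_s)$; but condition~(ii) in the definition of a building block says that $\varphi(x)$ lies in every irreducible component of $\mathfrak{S}_s$ and of $\mathfrak{G}_s$, and for a standard pair the only element of $\str(\mathfrak{S}_s,\mathfrak{G}_s)$ with that property is $\zeta$. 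So there is no need to set up the bijection of irreducible components and argue via $\varphi_s^{-1}(\overline{\{\zeta\}})=\overline{\{x\}}$ and dominance. Your anticipated ``main obstacle'' is in fact a non-issue once you remember that the building block axioms already pin down $\varphi(x)$.

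For the codimension, the paper argues globally rather than locally: the étale morphism $\varphi_s$ maps the stratum of $x$ étale onto the stratum of $\zeta$, hence they have equal dimension, and since $\mathfrak{U}_s$ is connected, equidimensional, and of the same dimension as $\mathfrak{S}_s$, the codimensions agree. Your local-ring computation is equivalent and perfectly fine. For $\varphi_s^{-1}(\zeta)=\{x\}$ the paper simply says ``by dimensionality reasons''; what underlies this is essentially your observation that any $w$ in the fibre lies in $\varphi_s^{-1}(\overline{\{\zeta\}})=\overline{\{x\}}$ and has $\dim\overline{\{w\}}=\dim\overline{\{\zeta\}}=\dim\overline{\{x\}}$, forcing $w=x$. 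Your detour through identifying the stratum $\Sigma$ containing $w$ and invoking Proposition~\ref{StrataMap} again is correct but unnecessary once you already have $\varphi_s^{-1}(\overline{\{\zeta\}})=\overline{\{x\}}$ from the first step.
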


\begin{proof}
	By Proposition \ref{StrataMap} the morphism $\varphi$ maps the stratum of $(\mathfrak{U}_s, (\mathfrak{H} \cap \mathfrak{U})_s)$ % $(\mathfrak{X}_s, \mathfrak{H}_s)$
	corresponding to $x$ to the stratum of $(\mathfrak{S}_s, \mathfrak{G}_s)$ corresponding to $\varphi(x)$.
	Since $\varphi(x)$ is contained in every irreducible component of $\mathfrak{S}_s$ and $\mathfrak{G}_s$, 
	$\varphi(x)$ is equal to the least element $\zeta$ of $\str(\mathfrak{S}_s, \mathfrak{G}_s)$  
	and the codimension of the corresponding stratum in $\mathfrak{S}_s$ is equal to $|\textbf{n}| + s_x$. 
	We know that $\mathfrak{U}_s$ is connected and equidimensional with the same dimension
	as $\mathfrak{S}_s$, which shows the first claim. The second claim $\varphi^{-1}_s(\zeta) = \{x\}$
	is clear by dimensionality reasons.      
\end{proof}	

\begin{definition}
	Let $\mathfrak{U}$ together with  
	$\varphi: (\mathfrak{U}, \mathfrak{H} \cap \mathfrak{U}) \to (\mathfrak{S}, \mathfrak{G})$
	be a building block of $(\mathfrak{X}, \mathfrak{H})$ in $x$. 
	It induces an isometric bijection $\beta: \irr((\mathfrak{H} \cap \mathfrak{U})_s, x) \xrightarrow{\sim} \irr(\mathfrak{G}_s)$.
	For all $i \in \dotsbra{1}{s_x}$ denote by $I_{x, i}$ the set of all $\mathcal{V} \in \irr(\mathfrak{H}_s, x)$
	such that $\beta(\mathcal{V} \cap \mathfrak{U}_s) \subseteq \{T_i = 0\}$. 
	We define $D_x$ to be the set $\bracket{I_{x,i}}{i \in \dotsbra{1}{s_x}}$.
	
	By Proposition \ref{RespDivCoord} the definition of $D_x$ does not depend on the choice
	of the building block, the elements of $D_x$ are disjoint subsets of $\irr(\mathfrak{H}_s, x)$
	each consisting of $|\irr(\mathfrak{X}_s, x)|$ elements and the union of all elements of $D_x$
	is equal to $\irr(\mathfrak{H}_s, x)$. 
	
	More precisely for every $I \in D_x$ and every $\mathcal{W} \in \irr(\mathfrak{X}_s, x)$
	there exists a unique  $\mathcal{V} \in I$ such that $\mathcal{V} \subseteq \mathcal{W}$.
	We denote this $\mathcal{V}$ by $\mathcal{V}_{\mathcal{W}, I}$. 
	Furthermore we will denote by $\mathcal{V}_I$ the closed subset of $\mathfrak{H}_s$
	which is the union of all elements of $I$. 
\end{definition}

\begin{theorem} \label{BijThm}
	Let $\mathfrak{U}$ together with  
	$\varphi: (\mathfrak{U}, \mathfrak{H} \cap \mathfrak{U}) \to (\mathfrak{S}, \mathfrak{G})$
	be a building block of $(\mathfrak{X}, \mathfrak{H})$ in $x$.  
	Then there exists a unique bijection $\gamma_{\varphi}: \dotsbra{1}{s_x} \xrightarrow{\sim} D_x$
	such that for all $i \in \dotsbra{1}{s_x}$ we have
	\[\mathcal{V}_{\gamma_{\varphi}(i)} \cap \mathfrak{U}_s = \varphi^{-1}_s(\{T_i = 0\}).\]
	Consider the isometric bijection 
	$\alpha: \irr(\mathfrak{S}_s) \xrightarrow{\sim} \irr(\mathfrak{U}_s, x) \xrightarrow{\sim} \irr(\mathfrak{X}_s, x)$
	induced by $\varphi$ and the open immersion $\mathfrak{U} \hookrightarrow \mathfrak{X}$.
	Then for all $i \in \dotsbra{1}{s_x}$ and $\mathcal{T} \in \irr(\mathfrak{S}_s)$ we have
	\[\mathcal{V}_{\alpha(\mathcal{T}), \gamma_{\varphi}(i)} \cap \mathfrak{U}_s = \varphi^{-1}_s(\mathcal{T} \cap \{T_i = 0\}).\]
	%There exists a unique set $I_x$ containing $s_x$ distinct closed subsets of $\mathfrak{H}$
	%such that the following holds: For all building blocks $\mathfrak{U}$ together with  
	%$\varphi: (\mathfrak{U}, \mathfrak{H} \cap \mathfrak{U}) \to (\mathfrak{S}, \mathfrak{G})$
	%of $(\mathfrak{X}, \mathfrak{H})$ in $x$ there is a unique bijection 
	%$\beta: I_x \xrightarrow{\sim} \dotsbra{1}{s_x}$ such that for all $\mathfrak{F} \in I_x$
	%we have $\mathfrak{F} = \varphi^{-1}(\{T_{\beta(\mathfrak{F})} = 0\})$.  
\end{theorem}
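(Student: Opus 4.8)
The strategy is to define $\gamma_{\varphi}$ by the obvious recipe $i \mapsto I_{x,i}$, where $I_{x,i}$ is formed with respect to the given building block $\varphi$, and then to translate everything in sight into statements about the \et map $\varphi_s$. The engine behind the argument is that for a building block, taking preimages under $\varphi_s$ gives bijections from $\irr(\mathfrak{S}_s)$ onto $\irr(\mathfrak{U}_s, x)$ and from $\irr(\mathfrak{G}_s)$ onto $\irr((\mathfrak{H} \cap \mathfrak{U})_s, x)$ (Proposition \ref{PreImIrrComp}), which are compatible with $\beta$ in the sense that $\varphi_s^{-1}(\beta(\mathcal{V}')) = \mathcal{V}'$ for $\mathcal{V}' \in \irr((\mathfrak{H} \cap \mathfrak{U})_s, x)$, and with $\alpha$ in the sense that $\alpha(\mathcal{T}) \cap \mathfrak{U}_s = \varphi_s^{-1}(\mathcal{T})$ for $\mathcal{T} \in \irr(\mathfrak{S}_s)$; moreover $\mathcal{V} \mapsto \mathcal{V} \cap \mathfrak{U}_s$ is a bijection $\irr(\mathfrak{H}_s, x) \xrightarrow{\sim} \irr((\mathfrak{H} \cap \mathfrak{U})_s, x)$ with inverse the closure in $\mathfrak{X}_s$ (Remark \ref{OpenStratRem}).

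First I would set $\gamma_{\varphi}(i) := I_{x,i}$. By Proposition \ref{RespDivCoord}\,(i) each $\mathcal{V} \in \irr(\mathfrak{H}_s, x)$ lies in exactly one $I_{x,i}$, so the $I_{x,i}$ are pairwise disjoint; they are non-empty because $\beta$ is a bijection and each irreducible component of $\mathfrak{G}_s$ lies in some $\{T_i = 0\}$; and by construction $D_x = \{I_{x,i} : i \in \dotsbra{1}{s_x}\}$. Hence $\gamma_{\varphi}$ is a well-defined bijection. To check $\mathcal{V}_{\gamma_{\varphi}(i)} \cap \mathfrak{U}_s = \varphi_s^{-1}(\{T_i = 0\})$, note that $\{T_i = 0\} \subseteq \mathfrak{S}_s$ is a union of irreducible components of $\mathfrak{G}_s$, so by Proposition \ref{PreImIrrComp}\,(ii) its preimage is a union of irreducible components of $(\mathfrak{H} \cap \mathfrak{U})_s$, all through $x$; by compatibility of $\beta$ with $\varphi_s$, an irreducible component $\mathcal{V}'$ of $(\mathfrak{H} \cap \mathfrak{U})_s$ through $x$ satisfies $\mathcal{V}' \subseteq \varphi_s^{-1}(\{T_i = 0\})$ if and only if $\beta(\mathcal{V}') \subseteq \{T_i = 0\}$, i.e.\ if and only if $\mathcal{V}' = \mathcal{V} \cap \mathfrak{U}_s$ for some $\mathcal{V} \in I_{x,i}$; the union of these is $\mathcal{V}_{I_{x,i}} \cap \mathfrak{U}_s$. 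For uniqueness, if $\gamma$ is any bijection with the stated property, then $\mathcal{V}_{\gamma(i)} \cap \mathfrak{U}_s = \mathcal{V}_{\gamma_{\varphi}(i)} \cap \mathfrak{U}_s$ for all $i$, and since $I \mapsto \mathcal{V}_I \cap \mathfrak{U}_s$ is injective on $D_x$ (using that $\mathcal{V} \mapsto \mathcal{V} \cap \mathfrak{U}_s$ is injective on $\irr(\mathfrak{H}_s, x)$) we conclude $\gamma = \gamma_{\varphi}$.

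For the second identity, fix $\mathcal{T} \in \irr(\mathfrak{S}_s)$ and $i \in \dotsbra{1}{s_x}$. A short computation with the explicit product presentation of $\mathfrak{S}_s$ and $\mathfrak{G}_s$ shows that $\mathcal{T} \cap \{T_i = 0\}$ is an irreducible component of $\mathfrak{G}_s$ (of codimension one in $\mathfrak{S}_s$). Hence $\varphi_s^{-1}(\mathcal{T} \cap \{T_i = 0\})$ is an irreducible component of $(\mathfrak{H} \cap \mathfrak{U})_s$ by Proposition \ref{PreImIrrComp}\,(ii), so it equals $\mathcal{V} \cap \mathfrak{U}_s$ for a unique $\mathcal{V} \in \irr(\mathfrak{H}_s, x)$. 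From $\varphi_s(\mathcal{V} \cap \mathfrak{U}_s) \subseteq \{T_i = 0\}$ we get $\beta(\mathcal{V} \cap \mathfrak{U}_s) \subseteq \{T_i = 0\}$, so $\mathcal{V} \in I_{x,i} = \gamma_{\varphi}(i)$; and from $\mathcal{V} \cap \mathfrak{U}_s = \varphi_s^{-1}(\mathcal{T} \cap \{T_i = 0\}) \subseteq \varphi_s^{-1}(\mathcal{T}) = \alpha(\mathcal{T}) \cap \mathfrak{U}_s$, together with $\mathcal{V} = \overline{\mathcal{V} \cap \mathfrak{U}_s}$ in $\mathfrak{X}_s$, we get $\mathcal{V} \subseteq \alpha(\mathcal{T})$, whence $\mathcal{V} = \mathcal{V}_{\alpha(\mathcal{T}), \gamma_{\varphi}(i)}$. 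Therefore $\mathcal{V}_{\alpha(\mathcal{T}), \gamma_{\varphi}(i)} \cap \mathfrak{U}_s = \varphi_s^{-1}(\mathcal{T} \cap \{T_i = 0\})$, as desired.

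I expect no conceptual obstacle here; the only thing requiring genuine care is the bookkeeping — keeping the three bijections $\alpha$, $\beta$ and $\mathcal{V} \mapsto \mathcal{V} \cap \mathfrak{U}_s$ (with their attendant closures) straight, recording precisely where the identities $\alpha(\mathcal{T}) \cap \mathfrak{U}_s = \varphi_s^{-1}(\mathcal{T})$ and $\varphi_s^{-1}(\beta(\mathcal{V}')) = \mathcal{V}'$ are invoked, and carrying out the elementary verification inside a standard scheme that $\mathcal{T} \cap \{T_i = 0\}$ is an irreducible component of $\mathfrak{G}_s$.
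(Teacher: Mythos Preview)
Your proof is correct and follows the same basic plan as the paper: define $\gamma_{\varphi}(i) := I_{x,i}$, verify the displayed identity, deduce uniqueness from distinctness of the $\mathcal{V}_I \cap \mathfrak{U}_s$, and obtain the second identity from Proposition~\ref{PreImIrrComp}\,(ii).

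The only notable difference is in how the first identity $\mathcal{V}_{\gamma_{\varphi}(i)} \cap \mathfrak{U}_s = \varphi_s^{-1}(\{T_i = 0\})$ is established. The paper checks the inclusion ``$\supseteq$'' pointwise: given $y \in \mathfrak{U}_s$ with $\varphi_s(y) \in \{T_i = 0\}$, it invokes the isometric bijection $\irr((\mathfrak{H} \cap \mathfrak{U})_s, y) \xrightarrow{\sim} \irr(\mathfrak{G}_s, \varphi_s(y))$ at $y$, picks a component of $\mathfrak{G}_s$ through $\varphi_s(y)$ lying in $\{T_i = 0\}$, and uses the building-block property to see that its preimage contains $x$, hence lies in $I_{x,i}$. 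You instead decompose $\{T_i = 0\}$ into irreducible components of $\mathfrak{G}_s$ and apply Proposition~\ref{PreImIrrComp}\,(ii) component-by-component, matching them with elements of $I_{x,i}$ via the compatibility $\varphi_s^{-1}(\beta(\mathcal{V}')) = \mathcal{V}'$. Your route is slightly more structural and avoids the auxiliary point $y$, at the cost of having to record the compatibility identities for $\alpha$ and $\beta$ explicitly; the paper's route is more hands-on but makes the role of the building-block condition (that $x$ lies in every component) more visible. Both are equally valid and of comparable length.
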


\begin{proof}
	$\varphi$ induces an isometric bijection $\beta: \irr((\mathfrak{H} \cap \mathfrak{U})_s, x) \xrightarrow{\sim} \irr(\mathfrak{G}_s)$.
	For all $i \in \dotsbra{1}{s_x}$ denote by $I_{x, i}$ the set of all $\mathcal{V} \in \irr(\mathfrak{H}_s, x)$
	such that $\beta(\mathcal{V} \cap \mathfrak{U}_s) \subseteq \{T_i = 0\}$.
	We claim that $\gamma_{\varphi}$ defined by sending $i$ to $I_{x, i}$ for all $i \in \dotsbra{1}{s_x}$
	satisfies the desired property. The inclusion “$\subseteq$” is clear from the definitions.
	So let us choose a point $y \in \mathfrak{U}_s$ with $\varphi(y) \in \{T_i = 0\}$.
	We want to show that there exists an element $\mathcal{V} \in I_{x, i}$ such that $y \in \mathcal{V}$. 
	Note that $\varphi$ also induces an isometric bijection 
	$\irr((\mathfrak{H} \cap \mathfrak{U})_s, y) \xrightarrow{\sim} \irr(\mathfrak{G}_s, \varphi(y))$.
	Let $\mathcal{W} \in \irr(\mathfrak{G}_s, \varphi(y))$ with $\mathcal{W} \subseteq \{T_i = 0\}$.
	Then the preimage of $\mathcal{W}$ under this bijection is an element of $\irr((\mathfrak{H} \cap \mathfrak{U})_s, y)$,
	which contains $x$ since $\mathfrak{U}$ together with $\varphi$ is a building block,
	and we can choose $\mathcal{V}$ to be the corresponding element in $\irr(\mathfrak{H}_s, y)$.
	This proves the inclusion “$\supseteq$”.
	
	The uniqueness of $\gamma_{\varphi}$ is clear, since the $\mathcal{V}_I \cap \mathfrak{U}_s$
	for all $I \in D_x$ are distinct to one another.
	
	The second claim follows immediately from Proposition \ref{PreImIrrComp} (ii).
\end{proof}

\begin{proposition} \label{DivInjection}
	Let $y \in \mathfrak{X}$ with $x \in \overline{\{y\}}$. Then the inclusion 
	$\irr(\mathfrak{H}_s, y) \hookrightarrow \irr(\mathfrak{H}_s, x)$ induces an injective map
	$j_{y, x}: D_y \hookrightarrow D_x$ such that for all $I \in D_y$ we have 
	$I \subseteq j_{y, x}(I)$. If $z \in \mathfrak{X}$ with $y \in \overline{\{z\}}$ then in particular
	$x \in \overline{\{z\}}$ and $j_{z, x} = j_{y, x} \circ j_{z, y}$.
	
	Moreover let us denote $D_{x, y} := \{I \in D_x ~|~ I \cap \irr(\mathfrak{H}_s, y) \neq \emptyset\}$.
	Then there is a bijection $k_{x, y}: D_{x, y} \xrightarrow{\sim} D_y$ 
	given by $k_{x, y}(I) := I \cap \irr(\mathfrak{H}_s, y)$ for all $I \in D_{x, y}$.
\end{proposition}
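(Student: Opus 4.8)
The plan is to prove one geometric statement — that the partition $D_y$ of $\irr(\mathfrak{H}_s,y)$ is exactly the restriction of the partition $D_x$ of $\irr(\mathfrak{H}_s,x)$ to the subset $\irr(\mathfrak{H}_s,y)\subseteq\irr(\mathfrak{H}_s,x)$ — after which every assertion of the proposition is a formal consequence of the combinatorics of set partitions. First some reductions. For any building block $\mathfrak{U}$ of $(\mathfrak{X},\mathfrak{H})$ in $x$ we have $y\in\mathfrak{U}$, since otherwise $\overline{\{y\}}\subseteq\mathfrak{X}\setminus\mathfrak{U}$ would contradict $x\in\overline{\{y\}}$; hence each irreducible component of $\mathfrak{H}_s$ through $y$ is a closed set containing $\overline{\{y\}}\ni x$, which gives the inclusion $\irr(\mathfrak{H}_s,y)\subseteq\irr(\mathfrak{H}_s,x)$. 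If $x\notin\mathfrak{H}$ both sides are empty and there is nothing to do, so assume $x\in\mathfrak{H}$. Also, $x\in\overline{\{z\}}$ whenever $y\in\overline{\{z\}}$ follows at once from $\overline{\{y\}}\subseteq\overline{\{z\}}$.

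For the key claim I would fix a building block $\varphi\colon(\mathfrak{U},\mathfrak{H}\cap\mathfrak{U})\to(\mathfrak{S},\mathfrak{G})$ of $(\mathfrak{X},\mathfrak{H})$ in $x$ with $\mathfrak{G}=\{T_1\cdots T_{s_x}=0\}$, and after permuting the divisor coordinates assume $\{\,i\in\dotsbra{1}{s_x} : \varphi(y)\in\{T_i=0\}\,\}=\dotsbra{1}{s_y}$. Applying the construction of \ref{SliceBuildingBlock} yields a building block $\varphi'\colon(\mathfrak{U}',\mathfrak{H}\cap\mathfrak{U}')\to(\mathfrak{S}',\mathfrak{G}')$ of $(\mathfrak{X},\mathfrak{H})$ in $y$ with $\mathfrak{U}'\subseteq\mathfrak{U}$ and $\mathfrak{G}'=\{T'_1\cdots T'_{s_y}=0\}$, such that $\varphi'|_{\mathfrak{U}'}$ factors as $\mathfrak{U}'\hookrightarrow\mathfrak{U}\xrightarrow{\varphi}\mathfrak{S}\supseteq\mathfrak{T}\hookrightarrow\mathfrak{S}'$ and — once the permutation of $\mathfrak{B}^d$-coordinates occurring there is absorbed into the initial choice at $x$ — satisfies $\varphi'^{*}(T'_i)=\varphi^{*}(T_i)|_{\mathfrak{U}'}$ for $i\in\dotsbra{1}{s_y}$. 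Since $\beta$ and $\beta'$ send an irreducible component to the closure of its image (Proposition~\ref{IrrIsometric}) and, by Proposition~\ref{PreImIrrComp}(ii), the $\varphi_s$-preimage of an irreducible component of $\mathfrak{G}_s$ is again irreducible and contains $x$, the condition $\mathcal{V}\in I_{x,i}$ translates into $\mathcal{V}\cap\mathfrak{U}_s\subseteq\varphi^{-1}_s(\{T_i=0\})$, and likewise $\mathcal{V}\in I_{y,i}$ into $\mathcal{V}\cap\mathfrak{U}'_s\subseteq\varphi'^{-1}_s(\{T'_i=0\})=\varphi^{-1}_s(\{T_i=0\})\cap\mathfrak{U}'_s$.

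It then remains to check, for $\mathcal{V}\in\irr(\mathfrak{H}_s,y)$ and $i\in\dotsbra{1}{s_y}$, that $\mathcal{V}\cap\mathfrak{U}_s\subseteq\varphi^{-1}_s(\{T_i=0\})$ holds if and only if $\mathcal{V}\cap\mathfrak{U}'_s\subseteq\varphi^{-1}_s(\{T_i=0\})$. One implication is trivial; for the other, note that $\mathcal{V}\cap\mathfrak{U}_s$ is irreducible (a nonempty open subset of the irreducible $\mathcal{V}$, as $x\in\mathcal{V}\cap\mathfrak{U}_s$) and that $\mathcal{V}\cap\mathfrak{U}'_s$ is a nonempty — it contains $y$ — open, hence dense, subset of it, while $\varphi^{-1}_s(\{T_i=0\})\cap\mathfrak{U}_s$ is closed. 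In addition, $I_{x,i}\cap\irr(\mathfrak{H}_s,y)=\emptyset$ for $s_y<i\le s_x$, because any $\mathcal{V}$ in this intersection would pass through $y$ even though $\varphi(y)\notin\{T_i=0\}$. Combining these observations with the descriptions of $I_{x,i}$ and $I_{y,i}$ gives $I_{y,i}=I_{x,i}\cap\irr(\mathfrak{H}_s,y)$ for all $i\in\dotsbra{1}{s_y}$, which is exactly the key claim, since $\{I_{x,i}\}$ and $\{I_{y,i}\}$ are the partitions $D_x$ and $D_y$ independently of the building blocks used.

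Granting the key claim, define $j_{y,x}(I)$ for $I\in D_y$ to be the unique block of $D_x$ containing $I$ — well-defined and satisfying $I\subseteq j_{y,x}(I)$ by the claim — and $k_{x,y}(I):=I\cap\irr(\mathfrak{H}_s,y)$ for $I\in D_{x,y}$. That $k_{x,y}$ takes values in $D_y$ and is bijective, and that $j_{y,x}$ coincides with the inclusion $D_{x,y}\hookrightarrow D_x$ composed with $k_{x,y}^{-1}$ (so that $j_{y,x}$ is injective), are immediate partition arguments using that $D_x$ and $D_y$ are partitions with nonempty blocks. Transitivity $j_{z,x}=j_{y,x}\circ j_{z,y}$ follows because, for $I\in D_z$, the set $j_{z,y}(I)\supseteq I$ lies in a single block of $D_x$, which must therefore be the block of $D_x$ containing $I$, namely $j_{z,x}(I)$. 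The main obstacle is the coordinate bookkeeping inside the key claim: one has to make sure that the divisor coordinates of the building block at $y$ supplied by \ref{SliceBuildingBlock} restrict precisely to those divisor coordinates of the chosen building block at $x$ that vanish at $y$ — i.e.\ that the permutation of $\mathfrak{B}^d$-coordinates there can be normalized away and that embedding $\mathfrak{T}(d',1)$ into a ball does not interfere with the divisor. Once this compatibility is secured, the density argument and the combinatorics are routine.
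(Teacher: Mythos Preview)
Your proof is correct. It differs from the paper's approach, which dispatches the whole proposition in one line as ``immediate consequences of Proposition~\ref{RespDivCoord}.'' The point the paper exploits is that a building block $(\mathfrak{U},\varphi)$ at $x$ is in particular an open neighborhood of $y$ admitting an \'etale map to a standard pair, so Proposition~\ref{RespDivCoord} applies at the point $y$ with one chart being $(\mathfrak{U},\varphi)$ and the other being any building block at $y$. Since the map $\beta$ of Proposition~\ref{IrrIsometric} sends $\mathcal{V}\cap\mathfrak{U}_s$ to $\overline{\varphi_s(\mathcal{V}\cap\mathfrak{U}_s)}$ regardless of whether one views it as based at $x$ or at $y$, the partition of $\irr(\mathfrak{H}_s,y)$ read off from $(\mathfrak{U},\varphi)$ is literally the restriction of $D_x$, and Proposition~\ref{RespDivCoord}(ii) identifies that with $D_y$.

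Your route instead builds a specific building block at $y$ from the one at $x$ via \ref{SliceBuildingBlock} and tracks the divisor coordinates explicitly, then uses a density argument. This is more concrete and self-contained --- you never need to observe that Proposition~\ref{RespDivCoord} applies to non-building-block charts --- at the cost of the coordinate bookkeeping you flag at the end. The paper's approach is more economical precisely because Proposition~\ref{RespDivCoord} was stated for arbitrary charts rather than only for building blocks, so the comparison across the two points reduces to a comparison of two charts at the single point $y$.
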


\begin{proof}
	These are immediate consequences of Proposition \ref{RespDivCoord}. 
\end{proof}

\begin{proposition} \label{EtaleMorProp}
	Let $\psi: \mathfrak{Y} \to \mathfrak{X}$ be an \et morphism of formal schemes.
	Consider the pullback $\mathfrak{G} := \psi^{-1}(\mathfrak{H})$ as a closed subscheme of $\mathfrak{Y}$.
	Let $y \in \mathfrak{Y}$ and $x := \psi(y)$. Then the following statements hold:
	\begin{enumerate}
		\item $(\mathfrak{Y}, \mathfrak{G})$ is a strictly poly-stable pair.
		\item $\psi$ induces isometric bijections $\irr(\mathfrak{Y}_s, y) \xrightarrow{\sim} \irr(\mathfrak{X}_s, x)$   
		and $\beta: \irr(\mathfrak{G}_s, y) \xrightarrow{\sim} \irr(\mathfrak{H}_s, x)$.
		\item $\psi$ induces a bijection $\delta: D_y \xrightarrow{\sim} D_x$ such that for all 
		$I \in D_{y}$ we have $\delta(I) = \bracket{\beta(\mathcal{V})}{\mathcal{V} \in I} \in D_x$.   
	\end{enumerate}
\end{proposition}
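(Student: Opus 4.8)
The plan is to treat (i)--(iii) in turn; (i) and (ii) will follow quickly from the \'etale-local structure of $(\mathfrak{X},\mathfrak{H})$ and from the results on \'etale morphisms of strictly poly-stable $\widetilde{K}$-schemes, while (iii) will be reduced to Proposition~\ref{RespDivCoord}. For (i), I would first observe that $\mathfrak{Y}$ is admissible: an \'etale morphism is flat and locally of topologically finite presentation, so these properties of $\mathfrak{X}$ over $K^{\circ}$ pass to $\mathfrak{Y}$. Given $y'\in\mathfrak{Y}$ with $x':=\psi(y')$, strict poly-stability of $(\mathfrak{X},\mathfrak{H})$ supplies an open $\mathfrak{U}\ni x'$ and an \'etale morphism $\varphi\colon(\mathfrak{U},\mathfrak{H}\cap\mathfrak{U})\to(\mathfrak{S},\mathfrak{G}_{\mathfrak{S}})$ onto a standard pair. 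The composite $\varphi\circ\psi|_{\psi^{-1}(\mathfrak{U})}$ is again \'etale, and since pullbacks of closed subschemes compose it sends $\mathfrak{G}_{\mathfrak{S}}$ back to $\psi^{-1}(\mathfrak{H}\cap\mathfrak{U})=\mathfrak{G}\cap\psi^{-1}(\mathfrak{U})$. As the $\psi^{-1}(\mathfrak{U})$ cover $\mathfrak{Y}$, this exhibits $(\mathfrak{Y},\mathfrak{G})$ as a strictly poly-stable pair.

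For (ii), by (i) and Proposition~\ref{DivStrPst} each of $\mathfrak{Y}_s$, $\mathfrak{X}_s$, $\mathfrak{G}_s$, $\mathfrak{H}_s$ is a strictly poly-stable $\widetilde{K}$-scheme. The morphism $\psi_s$ is \'etale, and since forming the special fibre commutes with pullback one has $\mathfrak{G}_s=\psi_s^{-1}(\mathfrak{H}_s)$, so $\psi_s$ restricts to an \'etale morphism $\mathfrak{G}_s\to\mathfrak{H}_s$. Because $x=\psi_s(y)$, Proposition~\ref{IrrIsometric}(ii) applied to $\psi_s\colon\mathfrak{Y}_s\to\mathfrak{X}_s$ and to its restriction $\mathfrak{G}_s\to\mathfrak{H}_s$ yields the two desired isometric bijections (if $x\notin\mathfrak{H}$ both sides of the second one are empty and there is nothing to prove).

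For (iii), one reduces to showing that the bijection $\beta$ from (ii) carries the partition $D_y$ of $\irr(\mathfrak{G}_s,y)$ onto the partition $D_x$ of $\irr(\mathfrak{H}_s,x)$. Using the description of $D_x$ as a union of $s_x$ disjoint blocks each of cardinality $|\irr(\mathfrak{X}_s,x)|\ge 1$ together with (ii), one gets $s_y=s_x$ and that every block of $D_y$ and of $D_x$ has cardinality $|\irr(\mathfrak{X}_s,x)|$; hence it suffices to check that $\beta$ maps every block of $D_y$ into a block of $D_x$. For this I would fix a building block $\varphi_{\mathfrak{Y}}\colon(\mathfrak{V}_0,\mathfrak{G}\cap\mathfrak{V}_0)\to(\mathfrak{S}_0,\{T'_1\cdots T'_{s_y}=0\})$ of $(\mathfrak{Y},\mathfrak{G})$ in $y$ computing $D_y$ (it exists by Proposition~\ref{BuildingBlockExist} applied to the strictly poly-stable pair $(\mathfrak{Y},\mathfrak{G})$) and a building block $\varphi_{\mathfrak{X}}\colon(\mathfrak{U},\mathfrak{H}\cap\mathfrak{U})\to(\mathfrak{S},\{T_1\cdots T_{s_x}=0\})$ of $(\mathfrak{X},\mathfrak{H})$ in $x$ computing $D_x$, and then shrink to $\mathfrak{V}:=\mathfrak{V}_0\cap\psi^{-1}(\mathfrak{U})\ni y$. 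Restricting $\varphi_{\mathfrak{Y}}$ to $\mathfrak{V}$ and forming $\psi':=\varphi_{\mathfrak{X}}\circ\psi|_{\mathfrak{V}}$ produces two \'etale morphisms from $(\mathfrak{V},\mathfrak{G}\cap\mathfrak{V})$ to standard pairs of the form required by Proposition~\ref{RespDivCoord}; that proposition, at the point $y$, then says that whenever $\mathcal{V}_1,\mathcal{V}_2\in\irr(\mathfrak{G}_s,y)$ lie in a common block $I_{y,i}$ of $D_y$, the bijection induced by $\psi'$ sends both $\mathcal{V}_1\cap\mathfrak{V}_s$ and $\mathcal{V}_2\cap\mathfrak{V}_s$ into one coordinate hyperplane $\{T_j=0\}$. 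Since the bijection induced by $\psi'$ is the composition of (the localization of) $\beta$ with the isometric bijection induced by $\varphi_{\mathfrak{X}}$, this means precisely that $\beta(\mathcal{V}_1),\beta(\mathcal{V}_2)\in I_{x,j}$, i.e.\ they lie in a common block of $D_x$. This yields the block-preservation, hence the bijection $\delta$ with $\delta(I)=\{\beta(\mathcal{V})\mid\mathcal{V}\in I\}$.

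The step I expect to be the main obstacle is exactly this bookkeeping in (iii): one must carefully identify the map on sets of irreducible components induced by the composite $\psi'=\varphi_{\mathfrak{X}}\circ\psi|_{\mathfrak{V}}$ with the composition of $\beta$ and the bijection induced by $\varphi_{\mathfrak{X}}$, using the canonical isometric identifications $\irr((\mathfrak{G}\cap\mathfrak{V})_s,y)\cong\irr(\mathfrak{G}_s,y)$ and $\irr((\mathfrak{H}\cap\mathfrak{U})_s,x)\cong\irr(\mathfrak{H}_s,x)$ and their compatibility with passing to smaller neighbourhoods, and one must keep in mind that Proposition~\ref{RespDivCoord} is formulated for a single ambient strictly poly-stable pair, here taken to be $(\mathfrak{Y},\mathfrak{G})$.
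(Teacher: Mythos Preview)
Your argument is correct. Parts (i) and (ii) match the paper's proof exactly (the paper just says ``straightforward from the definition'' and ``an application of Proposition~\ref{IrrIsometric}''). For (iii) your route works but is more circuitous than the paper's. You choose an \emph{independent} building block $\varphi_{\mathfrak{Y}}$ of $(\mathfrak{Y},\mathfrak{G})$ in $y$, then compare it to the composite $\psi'=\varphi_{\mathfrak{X}}\circ\psi$ via Proposition~\ref{RespDivCoord}, which costs you the bookkeeping you flag at the end. The paper instead avoids the second building block entirely: it takes a building block $\varphi\colon(\mathfrak{U},\mathfrak{H}\cap\mathfrak{U})\to(\mathfrak{S},\mathfrak{F})$ of $(\mathfrak{X},\mathfrak{H})$ in $x$, lets $\mathfrak{U}'$ be $\psi^{-1}(\mathfrak{U})$ with the closures $\overline{\{z\}}$ removed for all $z\in\str(\mathfrak{Y}_s,\mathfrak{G}_s)$ with $y\notin\overline{\{z\}}$, and observes that $\varphi':=\varphi\circ\psi\colon(\mathfrak{U}',\mathfrak{G}\cap\mathfrak{U}')\to(\mathfrak{S},\mathfrak{F})$ is itself a building block of $(\mathfrak{Y},\mathfrak{G})$ in $y$. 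Now $D_y$ and $D_x$ are both computed via maps to the \emph{same} standard pair $(\mathfrak{S},\mathfrak{F})$, with $\varphi'_s$ factoring through $\varphi_s$ via $\psi_s$, so the compatibility $\delta(I)=\{\beta(\mathcal{V})\mid\mathcal{V}\in I\}$ is immediate from the definition of $D_y$, $D_x$. This sidesteps Proposition~\ref{RespDivCoord} and the identification gymnastics you anticipated; your approach, on the other hand, has the minor virtue of not needing to verify that the shrunken $\mathfrak{U}'$ is genuinely a building block (elementary, affine, minimal stratum mapping correctly).
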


\begin{proof}
	(i) follows straightforward from the definition and (ii) is just an application of Proposition \ref{IrrIsometric}.
	
	In order to show (iii) let $\mathfrak{U}$ together with  
	$\varphi: (\mathfrak{U}, \mathfrak{H} \cap \mathfrak{U}) \to (\mathfrak{S}, \mathfrak{F})$
	be a building block of $(\mathfrak{X}, \mathfrak{H})$ in $x$. We define $\mathfrak{U}'$ to be $\psi^{-1}(\mathfrak{U})$
	without $\overline{\{z\}}$ for all $z \in \str(\mathfrak{Y}_s, \mathfrak{G}_s)$ such that $y \notin \overline{\{z\}}$.
	Then $\mathfrak{U}'$ together with 
	$\varphi' := \varphi \circ \psi: (\mathfrak{U}', \mathfrak{G} \cap \mathfrak{U}') \to (\mathfrak{S}, \mathfrak{F})$
	is a building block of $(\mathfrak{Y}, \mathfrak{G})$ in $y$. Now the claim is evident from the definition
	of $D_y$ resp. $D_x$.
\end{proof}

%---------------------------------------------------------------------------------------------------------------------------------------------------------------------------
%---------------------------------------------------------------------------------------------------------------------------------------------------------------------------

\section{Dual intersection complexes} \label{DualIntComplexChap} % Combinatorics of the special fiber

The goal of this section is to define and explore the “dual intersection complex” of a poly-stable pair.
It is a geometric object which is combinatorial in its nature, in the sense that it encodes the
stratification behavior of our pair. We will need it in order to explain the piecewise linear structure of 
the extended skeletons, which will be considered later.

Our exposition relies on the ideas from \cite[§§\,3--4]{Ber99} and leads us to the language of extended poly-simplices.
However I decided to not include a theory of poly-simplicial sets, but instead lay my focus on the geometric objects.
%This might come at the cost of a lack of conceptuality and not being able to formulate some aspects of the theory as concisely as possible,
%but my hope was to gain a more elementary and accessible language.

I adapted the term “canonical polyhedron” from \cite[4.6]{GRW16} to emphasize that the extended poly-simplices
obtained from the building blocks are canonical up to “permutation of coordinates”, which also gets a concrete meaning
due to the language from §\,\ref{PolySimplexSec}. In the strictly poly-stable case, the dual intersection complex
is then obtained by suitably gluing together the canonical polyhedra along faces.
The arbitrary poly-stable case is then constructed from the strict case by allowing to identify even more faces.

%---------------------------------------------------------------------------------------------------------------------------------------------------------------------------

\subsection{Poly-simplices} \label{PolySimplexSec}

We will have a look at simplices, poly-simplices, colored poly-simplices, extended poly-simplices
and morphisms between them. These objects admit geometric realizations,
which later will constitute the pieces our dual intersection complex is made up of,
at least in the strict case.

\begin{definition}
	Let $n \in \N$. A \emph{simplex} is a set $[n] := \{0,\dots,n\}$.
	
	Let $p \in \Nn$ and $\mathbf{n} = (n_1,\dots,n_p)$ be a $p$-tuple with $n_1,\dots,n_p \in \Nn$.
	We also allow the case $p = 0$ for which we write $\mathbf{n} := (0)$.  
	A \emph{poly-simplex} is a set $[\mathbf{n}] := [n_1]\times \cdots \times [n_p]$.
	We define a metric on this set by assigning to each pair of $p$-tuples in $[\mathbf{n}]$ 
	the number of components in which they do not agree.
	
	Let $[\mathbf{n}']$ be a poly-simplex with $\mathbf{n}'$ being a $p'$-tuple. A \emph{morphism} from $[\mathbf{n}]$ to $[\mathbf{n}']$
	consists of the following data: A map $c: [\textbf{n}] \rightarrow [\textbf{n}']$,
	a subset $J \subseteq \{1,\dots,p\}$, % such that for all $i \in J$ we have $n_i \geq 1$?
	an injective map $f: J \rightarrow \{1,\dots,p'\}$ and a
	family $(c_l)_{l \in \{1,\dots,p'\}}$ of maps satisfying these conditions:
	Let $l \in \{1,\dots,p'\}$. If $l \in \im(f)$ then $c_l$ is an injective map $[n_{f^{-1}(l)}] \rightarrow [n'_l]$ 
	and if $l \notin \im(f)$ then $c_l$ is a map $[0] \rightarrow [n'_l]$. The image $(i_1, \dots , i_{p'}) \in [\textbf{n}']$ 
	of an element $(j_1, \dots , j_p) \in [\textbf{n}]$ under $c$ is given by
	\[ i_l = \left\{ 
	\begin{array}{cl}
	c_l(j_{f^{-1}(l)}) & \text{, if } l \in \im(f) \\
	c_l(0) & \text{, else}  	
	\end{array}
	\right. \]  
	for all $l \in \dotsbra{1}{p'}$. In the case $\mathbf{n} = (0)$ we require $J = \emptyset$.  
	Note that if $\mathbf{n} \neq (0)$ and $c$ is injective, then it follows that $J = \{1,\dots,p\}$.  
	
	If $\mathbf{n} \neq (0)$ let $\mathbf{r} := (r_1,\dots,r_p)$ with $r_i \in \overline{\R}_{>0} := \R_{>0} \cup \{\infty\}$ 
	for all $i \in \dotsbra{1}{p}$. 
	In the case $\mathbf{n} = (0)$ we require $\mathbf{r} = (0)$.
	A \emph{colored poly-simplex} is a pair $[\mathbf{n}, \mathbf{r}] := (\mathbf{n}, \mathbf{r})$.
	
	Let $[\mathbf{n}', \mathbf{r}']$ be a colored poly-simplex. 
	A \emph{morphism} from $[\mathbf{n}, \mathbf{r}]$ to $[\mathbf{n}', \mathbf{r}']$ is defined by the same data 
	$(c, J, f, (c_l)_{l \in \{1,\dots,p'\}})$ as a morphism from $[\mathbf{n}]$ to $[\mathbf{n}']$ 
	with the additional condition that $r_i = r'_{f(i)}$ for all $i \in J$.
	Note that this condition is trivially satisfied if $p=0$.
	The morphism is called \emph{injective}, if $c$ is injective.
	
	Let $s \in \N$. 
	An \emph{extended poly-simplex} is a triple $[\mathbf{n}, \mathbf{r}, s] := (\mathbf{n}, \mathbf{r}, s)$,
	such that $[\mathbf{n}, \mathbf{r}]$ is a colored poly-simplex.
	% for all $i \in \{1,\dots,p\}$ we have $n_i+s_i \geq 1$. 
	% to which we associate a set $[\mathbf{n}, \mathbf{s}] := [\mathbf{n}] \times [\mathbf{s}]$. 
	%The extended colored poly-simplex $(\mathbf{n}, \mathbf{t}, \mathbf{s})$ is said to be \emph{reduced} if
	
	Let $[\mathbf{n}', \mathbf{r}', s']$ be an extended poly-simplex. 
	A \emph{morphism} from $[\mathbf{n}, \mathbf{r}, s]$ to $[\mathbf{n}', \mathbf{r}', s']$
	consists of a morphism from $[\mathbf{n}, \mathbf{r}]$ to $[\mathbf{n}', \mathbf{r}']$ % with data $(\gamma, J, f, (\alpha_l)_{l \in \{1,\dots,p'\}})$ 
	and a map $g: \dotsbra{0}{s} \to \dotsbra{0}{s'}$ with $g(0) = 0$ %with data $(\delta, K, g, (\beta_l)_{l \in \{1,\dots,p'\}})$ such that $J = K$ and $f = g$. 
	and such that every element in $\dotsbra{1}{s'}$ has at most one preimage under $g$.
	It is called \emph{injective}, if the morphism from $[\mathbf{n}, \mathbf{r}]$ to $[\mathbf{n}', \mathbf{r}']$ is injective
	and $g$ is injective. If $g$ is injective, we will often just consider its restriction $\dotsbra{1}{s} \to \dotsbra{1}{s'}$. 
	
	One defines in the obvious way the category of extended poly-simplices. 
	% An \emph{automorphism} of $[\mathbf{n}, \mathbf{r}, s]$ is an isomorphism $[\mathbf{n}, \mathbf{r}, s] \to [\mathbf{n}, \mathbf{r}, s]$.    	
\end{definition}

\begin{proposition} \label{IsomorphIsometric}
	Let $[\mathbf{n}]$ and $[\mathbf{m}]$ be two poly-simplices with $p$-tuple
	$\mathbf{n}$ and with $q$-tuple $\mathbf{m}$. 
	Then a map $[\mathbf{m}] \to [\mathbf{n}]$ is isometric if and only if it is an injective morphism of poly-simplices.
	If an injective morphism $[\mathbf{m}] \to [\mathbf{n}]$ exists, then $q \leq p$. 
	In particular an isomorphism between $[\mathbf{m}]$ and $[\mathbf{n}]$
	is the same as an isometric bijection. If $[\mathbf{m}]$ is isomorphic to $[\mathbf{n}]$,
	then $p=q$.   
\end{proposition}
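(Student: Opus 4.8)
The plan is to exploit that, under the stated metric, a poly-simplex $[\mathbf{m}]$ is simply a Hamming space: the distance of two tuples counts the coordinates in which they disagree. The assertion then boils down to the claim that isometric maps are precisely the ones respecting the product decomposition coordinate by coordinate. I would first dispose of the degenerate case $\mathbf{m} = (0)$: then $[\mathbf{m}]$ is a single point, every map out of it is vacuously isometric and is also an injective morphism (take $J = \emptyset$ and read the $c_l$ off the single image point), and $q = 0 \leq p$, so all four assertions are trivial. From then on I would assume $\mathbf{m} = (m_1,\dots,m_q)$ with all $m_i \geq 1$, so that each factor $[m_i]$ has at least two elements and the coordinate lines are nondegenerate.

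For the implication ``injective morphism $\Rightarrow$ isometric'' I would argue directly: if $\varphi$ has data $(c, J, f, (c_l)_l)$ with $c$ injective, then $J = \{1,\dots,q\}$, so $c(x)_l = c_l(x_{f^{-1}(l)})$ for $l \in \im(f)$ while $c(x)_l$ is independent of $x$ for $l \notin \im(f)$; given $x, y$, the coordinates in which $c(x)$ and $c(y)$ differ are exactly the $l = f(k)$ with $x_k \neq y_k$, using injectivity of $f$ and of each $c_{f(k)}$, so the two Hamming distances coincide. This is routine.

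The substance is the converse. As an isometry is injective, it suffices to manufacture morphism data from an isometric $\varphi \colon [\mathbf{m}] \to [\mathbf{n}]$. The key local input is a lemma: if $\psi \colon [m] \to [\mathbf{n}]$ is isometric, $[m]$ a single simplex (all distinct points at mutual distance $1$), then there is one target coordinate outside of which all points $\psi(a)$ agree — proved by inspecting $\psi(0), \psi(1), \psi(2)$ and noting that two distinct ``active'' coordinates among them would push their distance up to $\geq 2$. Applying this to each coordinate line $L_k(x) := \{\, y \in [\mathbf{m}] : y_i = x_i \text{ for all } i \neq k \,\}$ yields an active coordinate $l(k,x) \in \{1,\dots,p\}$. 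I would then show $l(k,x) \neq l(j,x)$ for $k \neq j$ (else moving $x$ in directions $k$ and $j$ gives two points whose images each differ from $\varphi(x)$ in that one common coordinate, hence at distance $\leq 1$, while their preimages are at distance $2$) and — the delicate step — that $l(k,\cdot)$ is independent of the base point: comparing adjacent base points differing in a coordinate $j \neq k$ and assuming their active $k$-coordinates differ, one tracks the images of the four points obtained by moving in directions $j$ and $k$ and finds two of them whose images differ in at least three coordinates although their preimages are at distance $1$. Writing $f(k)$ for the common value of $l(k,\cdot)$, the map $f$ is injective by the previous point, and moving the source in direction $k$ alters only the target coordinate $f(k)$; hence $\varphi(x)_l$ depends only on $x_{f^{-1}(l)}$ for $l \in \im(f)$ and is constant otherwise, with the induced maps $c_l$ injective along each line. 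Taking $J = \{1,\dots,q\}$ and $c = \varphi$ then exhibits $\varphi$ as an injective morphism.

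The remaining assertions I would derive formally. An injective morphism with $\mathbf{m} \neq (0)$ supplies an injection $\{1,\dots,q\} \hookrightarrow \{1,\dots,p\}$, so $q \leq p$. An isomorphism of poly-simplices, having an inverse morphism, has bijective underlying map, hence is an injective morphism and therefore an isometric bijection; conversely an isometric bijection has an isometric — in particular morphism — bijective inverse and so is an isomorphism. Finally $[\mathbf{m}] \cong [\mathbf{n}]$ yields injective morphisms in both directions, forcing $q \leq p \leq q$. The main obstacle I anticipate is the globalization of $l(k,\cdot)$: the local statements are immediate, but gluing them requires the careful multi-point Hamming-distance bookkeeping above; everything else is routine or formal.
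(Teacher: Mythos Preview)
Your argument is correct. The paper's own proof is a one-line citation of \cite[Lemma 3.1]{Ber99} (the reference in the text reads Ber90, almost certainly a typo for Ber99, where \S3 treats poly-simplicial combinatorics), so it outsources exactly the Hamming-metric characterization you work out by hand. Your approach is genuinely more self-contained: the local lemma ``an isometric map from a single simplex $[m]$ into $[\mathbf{n}]$ moves only one target coordinate'' followed by the two globalization steps (distinctness of active coordinates at a fixed base point, then base-point independence) reproduces the content of Berkovich's lemma directly. One small remark on the delicate step: in the four-point argument the cases where the third coordinate $c = l(j,x)$ coincides with $a$ or $b$ are not excluded a priori, but they are ruled out by your earlier observation that $l(k,x) \neq l(j,x)$ and the trivial fact $l(j,x) = l(j,x')$ (same line), so the contradiction goes through in all cases; you may want to say this explicitly. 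Otherwise the write-up is complete, and the formal consequences ($q \leq p$, isomorphism $\Leftrightarrow$ isometric bijection, $p = q$) are derived exactly as one would expect.
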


\begin{proof}
	This follows from \cite[Lemma 3.1]{Ber90} and the definitions. 
\end{proof}

\begin{example} \label{MetrEx}
	Let $\mathfrak{S} := \mathfrak{S}(\textbf{n},\textbf{a},d)$ be a standard scheme. There is a canonical
	bijection $[\textbf{n}] \xrightarrow{\sim} \irr(\mathfrak{S}_s)$. Explicitly the tuple $(k_1, \dots, k_p) \in [\textbf{n}]$ 
	corresponds to the irreducible component $\{T_{1k_1}=0, \dots, T_{pk_p}=0\}$. Consider the generic point $\eta$
	of the intersection of all irreducible components of $\mathfrak{S}_s$, which we can imagine as $\{\mathbf{0}\} \times \mathfrak{B}^d_s$. 
	If we enhance $\irr(\mathfrak{S}_s) = \irr(\mathfrak{S}_s, \eta)$ with the metric from Definition \ref{IrrMetric},
	then the above map becomes an isometric bijection.
\end{example}

\begin{example}
	Consider a standard scheme $\mathfrak{S} := \mathfrak{S}(\textbf{n},\textbf{a},d)$ and an isomorphism
	from $[\mathbf{n}, \mathbf{r}, d]$ to $[\mathbf{n}', \mathbf{r}', d]$ of extended poly-simplices
	given by the data $c: [\mathbf{n}] \xrightarrow{\sim} [\mathbf{n}']$, $f: \dotsbra{1}{p} \xrightarrow{\sim} \dotsbra{1}{p}$, 
	$(c_l)_{l \in \{1,\dots,p\}}$ and $g: \dotsbra{0}{d} \xrightarrow{\sim} \dotsbra{0}{d}$,
	where $\mathbf{r} := \val(\textbf{a})$. Then we obtain an isomorphism 
	$\mathfrak{S}' := \mathfrak{S}(\textbf{n}',\textbf{a}',d) \xrightarrow{\sim} \mathfrak{S}(\textbf{n},\textbf{a},d)$ 
	of formal schemes, where $\textbf{a}' = (a_{f^{-1}(1)}, \dots, a_{f^{-1}(p)})$. 
	It is induced by $T_{ij} \mapsto T'_{f(i), c_{f(i)}(j)}$ for all
	$i \in \dotsbra{1}{p}$, $j \in \dotsbra{0}{n_i}$ and $T_k \mapsto T'_g(k)$ for all
	$k \in \dotsbra{1}{d}$. With the canonical map from Example \ref{MetrEx}
	we have the following commutative diagram with isometric bijections:
	\begin{center}\begin{tikzpicture}[scale = 2]
		\node (1) at (0, 1) {$\irr(\mathfrak{S}'_s)$};
		\node (2) at (1, 1) {$\irr(\mathfrak{S}_s)$};
		\node (3) at (0, 0) {$[\mathbf{n}']$};
		\node (4) at (1, 0) {$[\mathbf{n}]$};	
		
		\draw[->, thick] (1)--(2) node[pos=0.5, above] {$\sim$};
		\draw[->, thick] (1)--(3) node[pos=0.5, sloped, below] {$\sim$};
		\draw[->, thick] (4)--(3) node[pos=0.5, above] {$\sim$} node[pos=0.5, below] {$c$};
		\draw[->, thick] (2)--(4) node[pos=0.5, sloped, above] {$\sim$};
		\end{tikzpicture}\end{center}
\end{example}

\begin{definition} \label{GeomSimplex}
	Let $n \in \N$ and $r \in \R_{\geq 0}$.
	Then we define the \emph{geometric simplex} as 
	\[\Delta(n, r) := \bracket{(x_0,\dots,x_n) \in \R^{n+1}_{\geq 0}}{x_0 + \cdots + x_n = r}, \]
	which is endowed with the subspace topology of $\R^{n+1}$. 
	We also define some degenerate geometric simplex at infinity, namely
	\[\Delta(n, \infty) := \bracket{(x_0,\dots,x_n) \in \overline{\R}^{n+1}_{\geq 0}}{x_i = \infty \text{ for at least one }i},\]
	where $\overline{\R}_{\geq 0} := \R_{\geq 0} \cup \{\infty\}$ is equipped with the obvious topology. 
	A \emph{vertex} of a geometric simplex is a point where at most one coordinate is distinct from $0$. 
	
	Let now $p, s \in \N$ and $\mathbf{n} = (n_1, \dots, n_p)$, $\mathbf{r} = (r_1, \dots, r_p)$ with
	$n_i \in \N$ and $r_i \in \overline{\R}_{\geq 0}$ for all $i \in \dotsbra{1}{p}$.
	In the case $p=0$ we set $\mathbf{n} = (0)$ and $\mathbf{r} = (0)$. 
	Then we consider the topological space 
	\[\Delta(\mathbf{n}, \mathbf{r}, s) := \prod\limits_{i=1}^p \Delta(n_i, r_i) \times \R_{\geq 0}^s \]
	endowed with the product topology. It is called a \emph{geometric extended poly-simplex}.
	We write $\Delta(\mathbf{n}, \mathbf{r}) := \Delta(\mathbf{n}, \mathbf{r}, 0)$.
	A \emph{vertex} of a geometric extended poly-simplex is a point whose projection
	to each geometric simplex factor is a vertex and every coordinate of the projection to the factor 
	$\R_{\geq 0}^s$ is $0$. 
	
	We also define \emph{open} versions of the above by setting
	\begin{align*}
	\Delta^{\circ}(n, r) &:=\left\{ 
	\begin{array}{ll}
	\Delta(n, 0), & \text{, if~} r=0 \\
	\bracket{(x_0,\dots,x_n) \in \Delta(n, r)}{x_i \neq 0 \text{ for all }i}, & \text{, if~} r \in \overline{\R}_{> 0},  	
	\end{array}
	\right. \\ 
	\Delta^{\circ}(\mathbf{n}, \mathbf{r}, s) &:= \prod\limits_{i=1}^p \Delta(n_i, r_i)^{\circ} \times \R_{> 0}^s.
	\end{align*}
	%The geometric extended poly-simplex $\Delta(\mathbf{n}, \mathbf{r}, s)$ is called \emph{$\Gamma$-rational},
	%if $r_i \in \Gamma$ for all $i \in \dotsbra{1}{p}$.
\end{definition}

\begin{remark}
	The degenerate simplices $\Delta(n, \infty)$ look somewhat different than their finite counterparts,
	however they still share the same structure in the sense, that they can be decomposed into their
	faces, which are simplices of lower dimension.
\end{remark}

\begin{definition}
	Let $[\mathbf{n}', \mathbf{r}', s']$ be an extended poly-simplex with $p'$-tuple
	$\mathbf{n}'$ and $F$ a morphism from $[\mathbf{n}, \mathbf{r}, s]$ to $[\mathbf{n}', \mathbf{r}', s']$
	given by the data $(c, J, f, (c_l)_{l \in \{1,\dots,p'\}})$ and $g: \dotsbra{0}{s} \to \dotsbra{0}{s'}$. 
	In the following we want to establish a functor from the category of extended poly-simplices to the category of
	topological spaces:
	
	On objects we define the \emph{geometric realization} of $[\mathbf{n}, \mathbf{r}, s]$ as the topological space
	$\Delta(\mathbf{n}, \mathbf{r}, s)$.
	
	On morphisms define the \emph{geometric realization} of $F$ as the continuous map 
	$\Delta(F): \Delta(\mathbf{n}, \mathbf{r}, s) \to \Delta(\mathbf{n}', \mathbf{r}', s')$
	defined as follows: First we note that every point in $\Delta(\mathbf{n}, \mathbf{r}, s)$
	can be represented by two tuples, namely $(x_{ij})_{i \in \{1,\dots,p\}, j \in \{0,\dots,n_i\}}$ 
	and $(y_j)_{j \in \{1,\dots,s\}}$ with $x_{ij} \in \overline{\R}_{\geq 0}$, $y_j \in \R_{\geq 0}$
	such that $\sum^{n_i}_{j=0} x_{ij} = r_i$ for all $i \in \{1,\dots,p\}$. Similarly for $\Delta(\mathbf{n}', \mathbf{r}', s')$.
	Then the image of the point associated with $(x_{ij})$ and $(y_j)$ under $\Delta(F)$
	is the point associated with $(u_{ij})$ and $(v_j)$, 
	where the components are defined like this: \newline
	For all $i \in \{1,\dots,p'\}, j \in \{0,\dots,n'_i\}$ 
	\begin{itemize}[leftmargin=*]
		\item if $i \notin \im(f)$, we set % $\textbf{n} = (0)$ or if $\textbf{n} \neq (0)$ and
		\[ u_{ij} = \left\{ 
		\begin{array}{cl}
		r'_i & \text{, if } j = c_i(0) \\
		0 & \text{, else,}  	
		\end{array}
		\right. \] 
		\item if $i \in \im(f)$, we set % $\textbf{n} \neq (0)$ and
		\[ u_{ij} = \left\{ 
		\begin{array}{cl}
		x_{f^{-1}(i), c_i^{-1}(j)} & \text{, if } j \in \im(c_i) \\
		0 & \text{, else.}  	
		\end{array}
		\right. \]
	\end{itemize}	
	For all $j \in \{1,\dots,s'\}$	
	\begin{itemize}[leftmargin=*]		 
		\item if $j \notin \im(g)$, we set $v_j = 0$,   
		\item if $j \in \im(g)$, we set $v_j = y_{g^{-1}(j)}$.
	\end{itemize}	
	This defines the \emph{geometric realization functor} $\Delta$ from the category of extended poly-simplices to the 
	category of topological spaces.
	
	Note that if $F$ is an isomorphism, then $\Delta(F)$ restricts to a homeomorphism 
	$\Delta^{\circ}(F): \Delta^{\circ}(\mathbf{n}, \mathbf{r}, s) \to \Delta^{\circ}(\mathbf{n}', \mathbf{r}', s')$.
	Thus we also get a functor $\Delta^{\circ}$ from the category of extended poly-simplices with isomorphisms between them 
	to the category of topological spaces with homeomorphisms between them.	
\end{definition}

\begin{definition} \label{AffLinFct}
	We enhance a geometric extended poly-simplex $\Delta(\mathbf{n}, \mathbf{r}, s)$ %$\Gamma$-rational
	with a sheaf of monoids consisting of all functions which locally are restrictions of functions of the form 
	\[(x_{10},\dots,x_{pn_p},y_1,\dots,y_s) \mapsto \lambda + \sum\limits_{i=1}^p \sum\limits_{j=0}^{n_i} a_{ij} x_{ij} + \sum\limits_{i=1}^s b_i y_i \] 
	with $\lambda \in \Gamma \cap \R_{\geq 0}$, $a_{ij} \in \N$ and $b_i \in \N$. We call it the \emph{sheaf of locally affine linear functions}.  
	
	One easily sees that the geometric realization functor $\Delta$ defined above respects this sheaf in the sense, that the pullback of 
	a locally affine linear function with respect to the geometric realization of a morphism of extended poly-simplices
	yields again a locally affine linear function. %The following provides a converse of this statement in a special situation:     
\end{definition}

%---------------------------------------------------------------------------------------------------------------------------------------------------------------------------

\subsection{Canonical polyhedra} \label{CanPolySubSec}

In the upcoming two subsections 
we consider the case of a strictly poly-stable pair $(\mathfrak{X}, \mathfrak{H})$.
We denote $\mathcal{X} := \mathfrak{X}_s$, $\mathcal{H} := \mathfrak{H}_s$.
Let for the rest of this subsection be $x \in \str(\mathcal{X}, \mathcal{H})$.

We want to introduce “combinatorial charts” associated to building blocks
and use them to identify geometric extended poly-simplices. In this way we obtain for
every stratum a “canonical polyhedron”.
Depending on how the closures of the strata contain one another, we also receive
“face embeddings” between them.

\begin{noname} \label{CombChartConstr}
	As seen in Proposition \ref{BuildingBlockExist}, there exists
	an affine open neighborhood $\mathfrak{U}$ of $x$, a standard pair 
	$(\mathfrak{S}, \mathfrak{G}) = (\mathfrak{S}(\textbf{n},\textbf{a},d), \mathfrak{G}(s_x))$
	and an \et morphism $\varphi: (\mathfrak{U}, \mathfrak{H} \cap \mathfrak{U}) \to (\mathfrak{S}, \mathfrak{G})$ 
	forming a building block of $(\mathfrak{X}, \mathfrak{H})$ in $x$.
	We denote $\mathcal{U} := \mathfrak{U}_s$, $\mathcal{S} := \mathfrak{S}_s$ and $\mathcal{G} := \mathfrak{G}_s$. 
	
	Proposition \ref{IrrIsometric} implies that the open immersion $\mathcal{U} \to \mathcal{X}$ and the \et morphism $\varphi_s$ induce isometric bijections 
	$\irr(\mathcal{U}) = \irr(\mathcal{U}, x) \xrightarrow{\sim} \irr(\mathcal{X}, x) $
	and $\irr(\mathcal{U}, x) \xrightarrow{\sim} \irr(\mathcal{S}, \varphi(x)) = \irr(\mathcal{S})$.
	Furthermore there is a canonical isometric bijection $\irr(\mathcal{S}) \xrightarrow{\sim} [\textbf{n}]$, see Example~\ref{MetrEx}.
	These induce an isometric bijection $[\textbf{n}] \xrightarrow{\sim} \irr(\mathcal{X}, x)$, which we will denote by $\alpha_{\varphi}$.
	
	Next we recall that our building block also induces a bijection $\gamma_{\varphi}: \dotsbra{1}{s_x} \xrightarrow{\sim} D_x$, 
	see Theorem \ref{BijThm}.
	
	We collect this data to a triple $C_{\varphi} := ([\mathbf{n}, \mathbf{r}, s], \alpha_{\varphi}, \gamma_{\varphi})$,
	where $\mathbf{r} := \val(\textbf{a})$. 
\end{noname}

\begin{remark} \label{StdChangeRem}
	With the same notation as above, let $[\textbf{n}']$ be a poly-simplex and 
	$\alpha': [\textbf{n}'] \xrightarrow{\sim} \irr(\mathcal{X}, x)$ an isometric bijection.
	Then Example \ref{MetrEx} applied to $\alpha^{-1}_{\varphi} \circ \alpha'$ shows that
	there is a building block $\mathfrak{U}$ together with $\varphi'$ of 
	$(\mathfrak{X}, \mathfrak{H})$ in $x$ such that $\alpha_{\varphi'} = \alpha'$.
\end{remark}

\begin{proposition} \label{ColorProp}
	Let $[\textbf{n}]$ be a poly-simplex with $\textbf{n} = (n_1, \dots , n_p)$ 
	and $\alpha: [\textbf{n}] \xrightarrow{\sim} \irr(\mathcal{X}, x)$
	an isometric bijection. Then there exists a tuple $\mathbf{r} = (r_1, \dots , r_p)$ of elements in $\overline{\R}_{>0}$
	such that for every building block 
	$\varphi: (\mathfrak{U}, \emptyset) \to (\mathfrak{S}(\textbf{n},\textbf{a},d), \emptyset)$ % H|_{\mathfrak{U}} % H(s)
	of $(\mathfrak{X}, \emptyset)$ in $x$ with $\alpha_{\varphi} = \alpha$ we have $\mathbf{r} = \val(\textbf{a})$.
	We denote this tuple by $\mathbf{r}_{\alpha}$.   
\end{proposition}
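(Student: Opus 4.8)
The plan is to show that the valuation tuple $\val(\textbf{a})$ attached to a building block $\varphi\colon(\mathfrak{U},\emptyset)\to(\mathfrak{S}(\textbf{n},\textbf{a},d),\emptyset)$ depends only on the isometric bijection $\alpha_\varphi$, not on the remaining choices, and then to extract the entry $r_i$ intrinsically from the geometry of $\mathcal{X}$ near $x$ via the irreducible component $\alpha(\mathbf{e}_i)$, where $\mathbf{e}_i$ ranges over a suitable family of vertices of $[\mathbf{n}]$. Concretely, fix a poly-simplex $[\textbf{n}]$ with an isometric bijection $\alpha\colon[\textbf{n}]\xrightarrow{\sim}\irr(\mathcal{X},x)$. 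For each $i\in\dotsbra{1}{p}$ I would look at the subset of components of $\mathcal{X}$ through $x$ corresponding under $\alpha$ to tuples that vary only in the $i$-th coordinate; these are pairwise at distance exactly $1$ in the metric of Definition \ref{IrrMetric}. Under any building block $\varphi$ with $\alpha_\varphi=\alpha$, the canonical map $\irr(\mathcal{S})\xrightarrow{\sim}[\textbf{n}]$ of Example \ref{MetrEx} sends these components to the irreducible components $\{T_{ik}=0\}$ of $\mathfrak{S}_s$ for $k\in\dotsbra{0}{n_i}$, all sitting inside the single torus factor $\mathfrak{T}(n_i,a_i)$.

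The key point is that the number $r_i=\val(a_i)$ can be recovered from the local structure of $\mathfrak{X}$ near $x$ along that torus factor. I would argue as follows: the \'etale morphism $\varphi$ identifies, after passing to completions, the local ring of $\mathfrak{X}$ at (the generic point of the minimal stratum through) $x$ with that of $\mathfrak{S}$ at its minimal stratum; in particular the relation $T_{i0}\cdots T_{in_i}=a_i$ is inherited, so $\val(a_i)$ is the valuation of the product of the local equations of the components $\alpha(\mathbf{e})$ with $\mathbf{e}$ varying in the $i$-th coordinate. Since the local equations are determined up to units of valuation $0$ by the components themselves (which are fixed, being $\alpha$ of fixed elements of $[\textbf{n}]$), the product has a well-defined valuation in $\Gamma$. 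Hence $r_i$ is independent of the choice of building block $\varphi$ realizing $\alpha$. One also needs $r_i\in\overline{\R}_{>0}$: this is immediate because $a_i\in K^{\circ\circ}$ for a standard scheme (so $\val(a_i)>0$), with $\val(a_i)=\infty$ allowed in the degenerate case $a_i=0$.

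For existence of \emph{some} building block with $\alpha_\varphi=\alpha$ in the first place — which is needed for the statement to be non-vacuous — I would invoke Remark \ref{StdChangeRem}: starting from any building block $\varphi_0$ in $x$ (which exists by Proposition \ref{BuildingBlockExist}), the composite $\alpha_{\varphi_0}^{-1}\circ\alpha$ is an isometric bijection of poly-simplices, hence by Proposition \ref{IsomorphIsometric} an isomorphism, which by the recipe of the Example following Example \ref{MetrEx} is induced by an isomorphism of standard schemes; composing $\varphi_0$ with this isomorphism yields a building block $\varphi$ with $\alpha_\varphi=\alpha$, and for it $\val(\textbf{a})$ is exactly the tuple $\mathbf{r}$ whose intrinsic description we just gave. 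Finally, to conclude that \emph{every} such $\varphi$ produces the same tuple, I would compare two building blocks $\varphi,\varphi'$ with $\alpha_\varphi=\alpha_{\varphi'}=\alpha$ by passing to a common open refinement of $\mathfrak{U}$ and $\mathfrak{U}'$ containing $x$ (using Proposition \ref{RespDivCoord}-style compatibility and that both restrict to building blocks there by \ref{SliceBuildingBlock}) and applying the intrinsic characterization of each $r_i$ on that refinement.

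The main obstacle I anticipate is making the "intrinsic recovery of $\val(a_i)$" rigorous: one must check that the relevant completed local rings of $\mathfrak{X}$ and $\mathfrak{S}$ really do agree along the \'etale morphism in a way that carries the product relation, and that changing the building block only multiplies each $T_{ik}$ by a unit of $K^{\circ}$-valuation zero, so that the valuation of the product relation is unchanged. Equivalently, one can avoid completions and phrase everything through the isometry diagrams of Example \ref{MetrEx} together with the observation that a poly-simplex isomorphism $[\mathbf{n}]\xrightarrow{\sim}[\mathbf{n}']$ permutes the torus factors and, by the colored-poly-simplex condition, matches the corresponding $a_i$ up to the same permutation — so the unordered multiset, and in fact the $\alpha$-indexed family, of valuations is an invariant of $\alpha$ alone.
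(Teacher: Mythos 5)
Your strategy is the right one: realize $\val(\textbf{a})$ as an intrinsic invariant of the formal scheme near $x$ together with $\alpha$, use Remark~\ref{StdChangeRem} and Proposition~\ref{IsomorphIsometric} to produce at least one building block realizing $\alpha$, and then argue independence. But the step you yourself flag as the obstacle is genuinely the entire content of the proposition --- the paper simply offloads it to \cite[Proposition 4.3]{Ber99} --- and the sketched resolution does not go through as stated.

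Two specific problems. First, a mislabeling: $\{T_{ik}=0\}$ is not an irreducible component of $\mathfrak{S}_s$ once $p>1$. By Example~\ref{MetrEx} the components are the common zero loci of one coordinate from each torus factor; $\{T_{ik}=0\}$ is the union of all components whose $i$-th label is $k$. The $n_i+1$ components you single out correspond under the canonical isometry to full tuples in $[\mathbf{n}]$ that vary in the $i$-th slot, not to the sets $\{T_{ik}=0\}$. Second, and more seriously, the assertion that the two coordinate systems ``differ by units of valuation $0$'' is not a formal consequence of having the same set-theoretic zero loci, even with the \'etale structure in hand. At the generic point $\xi$ of an irreducible component of $\mathfrak{U}_s$ through $x$, the local ring $\mathcal{O}_{\mathfrak{U},\xi}$ of the formal scheme has maximal ideal $K^{\circ\circ}\mathcal{O}_{\mathfrak{U},\xi}$, whereas the coordinate $T_{i,k_i}$ vanishing at $\xi$ is associate to $a_i$ there (the other $T_{ij}$ being units), so it generates the typically strictly smaller ideal $a_i\mathcal{O}_{\mathfrak{U},\xi}$. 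Thus $T_{i,k_i}$ is not a ``local equation'' of the divisor in the naive sense, and whether $T_{i,k_i}$ and $T'_{i,k_i}$ are associate in $\mathcal{O}_{\mathfrak{U},\xi}$ is tautologically equivalent to $|a_i|=|a'_i|$, which is precisely the statement to be proved. Filling this gap requires identifying $\val(a_i)$ with a genuine invariant --- e.g., a modulus of the formal fiber $\red^{-1}_{\mathfrak{U}}(x)$, or an invariant of the completed local ring at a closed point of the minimal stratum --- which is what \cite[Proposition 4.3]{Ber99} accomplishes; your argument would in effect have to reproduce that, rather than bypassing it via local equations.
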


\begin{proof}
	This is essentially \cite[Proposition 4.3]{Ber99}. 
\end{proof}

\begin{proposition} \label{ColorChangeProp}
	In the same situation as in Proposition \ref{ColorProp}, let $c: [\textbf{n}'] \hookrightarrow [\textbf{n}]$ be an injective morphism
	of poly-simplices with data $(c, J, f, (c_i)_{i \in \{1,\dots,p\}})$ and $\textbf{n}' = (n'_1, \dots , n'_{p'})$.  
	We assume that there is an element $y \in \str(\mathcal{X}, \mathcal{H})$ with $x \leq y$
	such that the image of $\alpha \circ c$ is equal to $\irr(\mathcal{X}, y)$.
	Then $\alpha \circ c: [\textbf{n}'] \xrightarrow{\sim} \irr(\mathcal{X}, y)$ is an isometric bijection.
	If $\textbf{n}' \neq (0)$ we get $\mathbf{r}_{\alpha \circ c} = (r_{f(1)}, \dots, r_{f(p')})$
	and otherwise $\mathbf{r}_{\alpha \circ c} = (0)$.
\end{proposition}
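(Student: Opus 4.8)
The plan is to reduce everything to the already-established behaviour of the tuple $\mathbf{r}_\alpha$ under building blocks. The first claim, that $\alpha \circ c \colon [\textbf{n}'] \xrightarrow{\sim} \irr(\mathcal{X}, y)$ is an isometric bijection, is immediate: $c$ is an injective morphism of poly-simplices, hence isometric by Proposition~\ref{IsomorphIsometric}; $\alpha$ is an isometric bijection by hypothesis; and the image of $\alpha \circ c$ is $\irr(\mathcal{X}, y)$ by assumption. A composition of an isometry with an isometric bijection onto a prescribed target is an isometric bijection. (Recall also from Definition~\ref{IrrMetric} that the inclusion $\irr(\mathcal{X}, y) \hookrightarrow \irr(\mathcal{X}, x)$ is isometric, which is what makes the metric on $[\textbf{n}']$ match up with the induced metric.)

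For the statement about the colour tuple, I would fix a building block $\varphi \colon (\mathfrak{U}, \emptyset) \to (\mathfrak{S}(\textbf{n}, \textbf{a}, d), \emptyset)$ of $(\mathfrak{X}, \emptyset)$ in $x$ with $\alpha_\varphi = \alpha$; such a $\varphi$ exists by Remark~\ref{StdChangeRem}. Since $x \leq y$, i.e.\ $x \in \overline{\{y\}}$, we have $y \in \mathfrak{U}$ (as noted in \ref{SliceBuildingBlock}), and the slicing construction of \ref{SliceBuildingBlock} produces from $\mathfrak{U}$ and $\varphi$ a building block $\varphi' \colon (\mathfrak{U}', \emptyset) \to (\mathfrak{S}(\textbf{n}', \textbf{a}', d+d'), \emptyset)$ of $(\mathfrak{X}, \emptyset)$ in $y$. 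Here one removes exactly those coordinate hyperplanes $\{T_{ij} = 0\}$ not passing through $\varphi(y)$, and by construction the surviving factors are indexed by $f(1), \dots, f(p')$ — that is, the map $\irr(\mathcal{X}, y) \hookrightarrow \irr(\mathcal{X}, x)$ corresponds under $\alpha_\varphi = \alpha$ precisely to $c \colon [\textbf{n}'] \hookrightarrow [\textbf{n}]$, with the index inclusion $f$. Consequently $\textbf{a}' = (a_{f(1)}, \dots, a_{f(p')})$, and hence $\val(\textbf{a}') = (r_{f(1)}, \dots, r_{f(p')})$ where $\mathbf{r}_\alpha = \mathbf{r} = \val(\textbf{a})$.

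It then remains to observe that $\alpha_{\varphi'}$, the isometric bijection $[\textbf{n}'] \xrightarrow{\sim} \irr(\mathcal{X}, y)$ attached to $\varphi'$ as in \ref{CombChartConstr}, equals $\alpha \circ c$. This is a compatibility statement between the isometric bijections coming from $\mathfrak{U} \hookrightarrow \mathfrak{X}$, from $\varphi_s$, and from the canonical map of Example~\ref{MetrEx}, and it follows from the functoriality of these identifications together with the explicit description of the embedding $\mathfrak{T} \hookrightarrow \mathfrak{S}'$ in \ref{RemoveCoord}--\ref{SliceBuildingBlock}. Granting this, Proposition~\ref{ColorProp} applied to the building block $\varphi'$ in $y$ with $\alpha_{\varphi'} = \alpha \circ c$ gives $\mathbf{r}_{\alpha \circ c} = \val(\textbf{a}') = (r_{f(1)}, \dots, r_{f(p')})$ when $\textbf{n}' \neq (0)$; in the degenerate case $\textbf{n}' = (0)$ there are no factors and $\mathbf{r}_{\alpha \circ c} = (0)$ by convention. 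The main obstacle is the bookkeeping in this last step: one has to check carefully that the coordinate permutation introduced in the slicing construction of \ref{SliceBuildingBlock} is exactly the one recorded by the data $(c, J, f, (c_i))$, so that $\alpha_{\varphi'} = \alpha \circ c$ on the nose rather than merely up to a further isomorphism of poly-simplices — everything else is a direct appeal to the cited results.
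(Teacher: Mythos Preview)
Your proposal is correct and follows essentially the same approach as the paper: pick a building block $\varphi$ at $x$ with $\alpha_\varphi = \alpha$ via Remark~\ref{StdChangeRem}, apply the slicing construction (which the paper carries out inline rather than citing \ref{SliceBuildingBlock}, but it is the same procedure) to obtain a building block $\varphi'$ at $y$ with $\textbf{a}' = (a_{f(1)}, \dots, a_{f(p')})$ and $\alpha_{\varphi'} = \alpha \circ c$, and then read off $\mathbf{r}_{\alpha \circ c}$ from Proposition~\ref{ColorProp}. Your identification of the bookkeeping step $\alpha_{\varphi'} = \alpha \circ c$ as the only point requiring care is exactly right; the paper handles it by noting that $\varphi(y) \in \{T_{ij} = 0\}$ is equivalent to $i \in \im(f)$ and $j \in \im(c_i)$, which pins down the coordinate correspondence.
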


\begin{proof}
	The case $\textbf{n}' = (0)$ is trivial. Therefore we may assume now that $\textbf{n}' \neq (0)$. 
	By~Remark~\ref{StdChangeRem} there exists a building block $\mathfrak{U}$ together with 
	$\varphi: (\mathfrak{U}, \emptyset) \to (\mathfrak{S}(\textbf{n},\textbf{a},d), \emptyset)$
	of $(\mathfrak{U}, \emptyset)$ in $x$ such that $\alpha_{\varphi} = \alpha$. 
	Note that $y \in \mathfrak{U}$, because otherwise we would have $y  \in \mathfrak{X} \setminus \mathfrak{U}$
	and thus % since $\mathfrak{X} \setminus \mathfrak{U}$ is closed
	$\overline{\{y\}} \subseteq \mathfrak{X} \setminus \mathfrak{U}$, which contradicts $x \in \overline{\{y\}}$. 
	We now apply the same procedure as in the proof of Proposition \ref{BuildingBlockExist} to obtain a 
	suitable building block in $y$:
	
	We remove from $\mathfrak{S}(\textbf{n}, \textbf{a}, d)$ all closed subsets $\{T_{ij} = 0\}$ with
	$i \in \dotsbra{1}{p}$, $j \in \dotsbra{0}{n_i}$ such that $\varphi(y) \notin \{T_{ij} = 0\}$.
	By our assumptions $\varphi(y) \in \{T_{ij} = 0\}$ is equivalent to $i \in \im(f)$ and $j \in \im(c_i)$.
	We point to Proposition \ref{PreImIrrComp} here. 
	Therefore the result is an open subset of $\mathfrak{S}(\textbf{n}, \textbf{a}, d)$ which
	is of the form $\mathfrak{T} := \mathfrak{S}(\textbf{n}', \textbf{a}', d) \times \mathfrak{T}(d', 1)$
	with $\textbf{a}' = (a_{f(1)}, \dots, a_{f(p')})$. 
	Let $\mathfrak{U}' := \varphi^{-1}(\mathfrak{T})$. This is an open neighborhood of $y$.
	The torus $\mathfrak{T}(d', 1)$ can be openly embedded into a ball.
	We get an \et morphism $\varphi: \mathfrak{U}' \to \mathfrak{S}$ to a standard scheme 
	$\mathfrak{S}(\textbf{n}', \textbf{a}', d + d')$,
	whose irreducible components each contain $\varphi(y)$.
	
	%Finally we remove from $\mathfrak{U}'$ all irreducible components of $\mathcal{X}$ which do not contain $y$. 
	%This yields an open subset $\mathfrak{U}'' \subseteq \mathfrak{U}'$ such that 
	%$y$ is contained in every irreducible component of $\mathfrak{U}''_s$.
	Finally we remove from $\mathfrak{U}'$ the closed subsets $\overline{\{z\}}$ for all $z \in \str(\mathcal{X})$
	with $y \notin \overline{\{z\}}$. This yields an open subset $\mathfrak{U}'' \subseteq \mathfrak{U}'$ such that
	$\mathfrak{U}''_s$ is elementary with its minimal stratum containing $y$.
	Obviously we may replace $\mathfrak{U}''$ by an affine open neighborhood of $y$ in $\mathfrak{U}''$. 
	Then $\mathfrak{U}''$ together with 
	$\varphi' := \varphi|_{\mathfrak{U}''}: (\mathfrak{U}'', \emptyset) \to (\mathfrak{S}(\textbf{n}', \textbf{a}', d + d'), \emptyset)$
	is a building block of $(\mathfrak{X}, \emptyset)$ in $y$. It satisfies $\alpha_{\varphi'} = \alpha \circ c$
	and the claim is evident.
\end{proof}

\begin{definition}
	A \emph{combinatorial chart} in $x$ is a triple $C := ([\mathbf{n}, \mathbf{r}, s_x], \alpha, \gamma)$ 
	consisting of an extended poly-simplex $[\mathbf{n}, \mathbf{r}, s_x]$,
	an isometric bijection $\alpha: [\textbf{n}] \xrightarrow{\sim} \irr(\mathcal{X}, x)$ 
	and a bijection $\gamma: \dotsbra{1}{s_x} \xrightarrow{\sim} D_x$ such that $\mathbf{r} = \mathbf{r}_{\alpha}$.	 
\end{definition}

\begin{proposition} \label{ChartIsom}
	Let $C := ([\mathbf{n}, \mathbf{r}, s_x], \alpha, \gamma)$ and $C' := ([\mathbf{n}', \mathbf{r}', s_x], \alpha', \gamma')$
	be combinatorial charts in $x$. Then $(\alpha')^{-1} \circ \alpha$ and $(\gamma')^{-1} \circ \gamma$
	induce an isomorphism $[\mathbf{n}, \mathbf{r}, s_x] \xrightarrow{\sim} [\mathbf{n}', \mathbf{r}', s_x]$
	of extended poly-simplices, which we will denote by $h_{C, C'}$. 
	It holds that $h_{C, C} = \id$ and $h_{C', C} = h^{-1}_{C, C'}$. 
	If $C''$ is another combinatorial chart in $x$, then $h_{C, C''} = h_{C', C''} \circ h_{C, C'}$.  
\end{proposition}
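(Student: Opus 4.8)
The plan is to read off the isomorphism $h_{C,C'}$ directly from the two prescribed set maps $(\alpha')^{-1}\circ\alpha$ and $(\gamma')^{-1}\circ\gamma$, and then reduce all assertions to the fact, recorded in Proposition~\ref{IsomorphIsometric}, that an isomorphism of poly-simplices is uniquely determined by (and coincides with) its underlying isometric bijection.

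First I would set $c := (\alpha')^{-1}\circ\alpha$. As a composition of isometric bijections, $c\colon[\mathbf{n}]\to[\mathbf{n}']$ is again an isometric bijection, so by Proposition~\ref{IsomorphIsometric} it is an isomorphism of poly-simplices; in particular $\mathbf{n}$ and $\mathbf{n}'$ have the same length $p$, and the data of $c$ consists of $J=\{1,\dots,p\}$, a bijection $f\colon\{1,\dots,p\}\to\{1,\dots,p\}$ and bijections $(c_l)_l$. (If $\mathbf{n}=(0)$, then $\irr(\mathcal{X},x)$ is a singleton, forcing $\mathbf{n}'=(0)$, and this part is trivial.) Likewise $g:=(\gamma')^{-1}\circ\gamma$ is a bijection of $\{1,\dots,s_x\}$, which I extend by $g(0):=0$; both charts being in $x$, both extended poly-simplices carry the same parameter $s_x$, so the map $g$ indeed has source and target $\{0,\dots,s_x\}$, and it obviously satisfies $g(0)=0$ and is injective.

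The one substantive point is that $c$ respects the colourings, i.e.\ $r_i=r'_{f(i)}$ for all $i$ (this is needed for $c$ to underlie a morphism of colored poly-simplices). Here I would invoke Proposition~\ref{ColorChangeProp} for the injective morphism $c^{-1}\colon[\mathbf{n}']\hookrightarrow[\mathbf{n}]$, whose factor map is $f^{-1}$, together with the element $y:=x$: the image of $\alpha\circ c^{-1}=\alpha'$ is all of $\irr(\mathcal{X},x)=\irr(\mathcal{X},y)$, so that proposition gives $\mathbf{r}_{\alpha\circ c^{-1}}=(r_{f^{-1}(1)},\dots,r_{f^{-1}(p)})$. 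Since by definition of a combinatorial chart $\mathbf{r}=\mathbf{r}_\alpha$ and $\mathbf{r}'=\mathbf{r}_{\alpha'}=\mathbf{r}_{\alpha\circ c^{-1}}$, we conclude $r'_l=r_{f^{-1}(l)}$, i.e.\ $r_i=r'_{f(i)}$. Hence $(c,J,f,(c_l)_l)$ together with $g$ is the data of an isomorphism of extended poly-simplices $h_{C,C'}\colon[\mathbf{n},\mathbf{r},s_x]\xrightarrow{\sim}[\mathbf{n}',\mathbf{r}',s_x]$. I expect this colouring verification — in particular keeping track of the fact that the factor permutation of $c^{-1}$ is $f^{-1}$ — to be the main (though, given Proposition~\ref{ColorChangeProp}, minor) obstacle.

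Finally the three identities follow formally. By Proposition~\ref{IsomorphIsometric} an isomorphism of poly-simplices is determined by its underlying set map, so an isomorphism of extended poly-simplices is determined by the pair of underlying set maps on $[\mathbf{n}]$ and on $\{0,\dots,s_x\}$, and composition of morphisms is given by composition of these underlying maps. For $C=C'$ one has $\alpha=\alpha'$ and $\gamma=\gamma'$, so both underlying maps are identities, whence $h_{C,C}=\id$. The underlying maps of $h_{C',C}$ are $\big(\alpha^{-1}\circ\alpha',\,\gamma^{-1}\circ\gamma'\big)$, so composing with those of $h_{C,C'}$ yields $(\id,\id)$ and therefore $h_{C',C}=h_{C,C'}^{-1}$. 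Similarly the underlying maps of $h_{C',C''}\circ h_{C,C'}$ are $\big(((\alpha'')^{-1}\circ\alpha')\circ((\alpha')^{-1}\circ\alpha),\,((\gamma'')^{-1}\circ\gamma')\circ((\gamma')^{-1}\circ\gamma)\big)=\big((\alpha'')^{-1}\circ\alpha,\,(\gamma'')^{-1}\circ\gamma\big)$, which are the underlying maps of $h_{C,C''}$; hence $h_{C,C''}=h_{C',C''}\circ h_{C,C'}$.
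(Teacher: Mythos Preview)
Your proof is correct and follows essentially the same approach as the paper's own proof: both use Proposition~\ref{IsomorphIsometric} to recognize $c=(\alpha')^{-1}\circ\alpha$ as an isomorphism of poly-simplices, then invoke Proposition~\ref{ColorChangeProp} to verify the colouring condition, and finally declare the cocycle identities as formal consequences. You are simply more explicit than the paper at each step (in particular in spelling out why Proposition~\ref{ColorChangeProp} yields $r_i=r'_{f(i)}$ and in justifying the three identities via the uniqueness statement of Proposition~\ref{IsomorphIsometric}), where the paper just says these are ``obvious from the construction''.
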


\begin{proof}
	We get an isometric bijection $c := (\alpha')^{-1} \circ \alpha: [\mathbf{n}] \xrightarrow{\sim} [\mathbf{n}']$ and according to
	Proposition~\ref{IsomorphIsometric} it is an isomorphism of poly-simplices. 
	Moreover $(\gamma')^{-1} \circ \gamma: \dotsbra{1}{s_x} \xrightarrow{\sim} \dotsbra{1}{s_x}$ is a bijection. 
	Applying Proposition~\ref{ColorChangeProp} tells us, that $c$ is actually an isomorphism
	$[\mathbf{n}, \mathbf{r}] \xrightarrow{\sim} [\mathbf{n}', \mathbf{r}']$ of colored poly-simplices. 
	Consequently we obtain the desired isomorphisms $h_{C, C'}$ and their stated properties are obvious from the construction. 
\end{proof}

\begin{example}
	Our construction from \ref{CombChartConstr} shows that combinatorial charts exist. In fact it is an easy consequence
	of Example \ref{MetrEx} and Proposition \ref{ChartIsom} that every combinatorial chart in $x$ is of the form $C_{\varphi}$ 
	for a suitable building block $\mathfrak{U}$ together with $\varphi$ of $(\mathfrak{X}, \mathfrak{H})$ in $x$. 
\end{example}

\begin{definition} \label{CanPolyDef}
	Let $\mathsf{Chart}_x$ be the category whose objects are the combinatorial charts in $x$.
	The set of morphisms between two combinatorial charts $C$ and $C'$ in $x$ consists only of the isomorphism $h_{C, C'}$
	constructed above. One easily confirms that this in fact defines a category in the obvious way
	and we have a forgetful functor $([\mathbf{n}, \mathbf{r}, s_x], \alpha, \gamma) \mapsto [\mathbf{n}, \mathbf{r}, s_x]$ 
	from $\mathsf{Chart}_x$ to the category of extended poly-simplices with isomorphisms between them. 
	Let $F_x$ be the composition of this forgetful functor with the geometric realization functor~$\Delta$. 
	We define $\Delta(x)$ to be the colimit of the functor $F_x$ 
	and we call $\Delta(x)$ the \emph{canonical polyhedron} of $x$.
	
	If instead of $\Delta$ we consider the functor $\Delta^{\circ}$, we get the \emph{open canonical polyhedron} 
	$\Delta^{\circ}(x)$ of $x$, which canonically embeds into $\Delta(x)$ as a subspace.
	
	We note that every combinatorial chart $([\mathbf{n}, \mathbf{r}, s_x], \alpha, \gamma)$ in $x$
	gives rise to a homeomorphism $\Delta(\mathbf{n}, \mathbf{r}, s_x) \xrightarrow{\sim} \Delta(x)$.
	%and we can think of $\Delta(x)$ as a geometric extended simplex $\Delta(\mathbf{n}, \mathbf{r}, s_x)$ whose faces are labeled
	%according to the strata of $(\mathcal{X}, \mathcal{H})$, where the infinite faces correspond to strata of $\mathcal{H}$. 
	This motivates us to define the \emph{dimension} of $\Delta(x)$ resp. $\Delta^{\circ}(x)$ to be $|\mathbf{n}| + s_x$,
	which is well-defined by the above considerations. From Proposition \ref{DimensionProp} we obtain that for all $d \in \N$ 
	the $d$-dimensional canonical polyhedra correspond to the strata of codimension $d$.  
\end{definition} 

\begin{example}
	Let $\mathfrak{X} := \mathfrak{S}(2, a)$ with $a \in K^{\circ \circ} \setminus \{0\}$. 
	Then $\mathfrak{X}_s = \widetilde{K}[T_0, T_1, T_2]/(T_0T_1T_2)$, see Example \ref{StratExample}.
	We choose $x$ to be the minimal stratum of $\mathfrak{X}_s$, i.\,e. the origin.
	The canonical polyhedron $\Delta(x)$ can then be visualized as follows. 
	\begin{center}\includegraphics[scale=1]{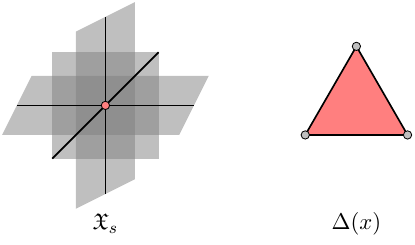}\end{center} 
\end{example}

\begin{example}
	We want to illustrate the situation of the standard pair 
	$(\mathfrak{X}, \mathfrak{H}) := (\mathfrak{S}(1,a,1), \mathfrak{G}(1))$ for some $a \in K^{\circ \circ} \setminus \{0\}$.
	We can write $\mathfrak{X}_s = \Spec(\widetilde{K}[T_0,T_1,T_2]/(T_0 T_1))$ and $\mathfrak{H}_s = \{T_2 = 0\}$. 
	Then $\mathfrak{X}_s$ resp. $\mathfrak{H}_s$ decomposes into three strata with generic points 
	$x_1$, $x_2$, $x_3$ resp. $h_1$, $h_2$, $h_3$. Again the origin is the minimal stratum,
	denoted by $h_3$. 
	\begin{center}\includegraphics[scale=1]{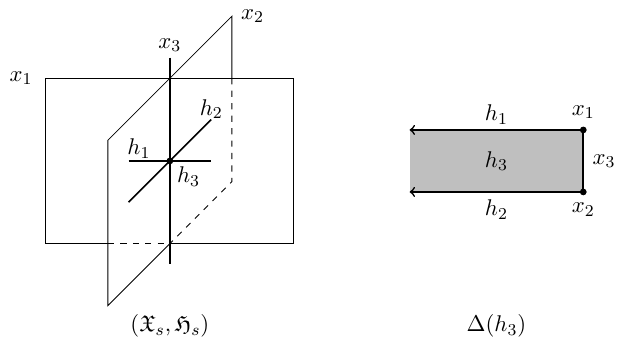}\end{center} 
\end{example}

\begin{noname} \label{FaceEmbDef}
	Let $y \in \str(\mathcal{X}, \mathcal{H})$ with $x \leq y$. Let us consider a combinatorial chart 
	$C := ([\mathbf{n}, \mathbf{r}, s_x], \alpha, \gamma)$ resp. $D := ([\mathbf{n}', \mathbf{r}', s_y], \alpha', \gamma')$
	in $x$ resp. $y$. The inclusion $\irr(\mathcal{X}, y) \hookrightarrow \irr(\mathcal{X}, x)$ induces an injective
	morphism $c := \alpha^{-1} \circ \alpha': [\mathbf{n}'] \to [\mathbf{n}]$ and by Proposition~\ref{ColorChangeProp}
	it is even an injective morphism $[\mathbf{n}', \mathbf{r}'] \to [\mathbf{n}, \mathbf{r}]$ of colored poly-simplices.
	Moreover let $j_{y, x}: D_y \hookrightarrow D_x$ be the injective map from Proposition~\ref{DivInjection}.
	Then we obtain an injective map $g := \gamma^{-1} \circ j_{y, x} \circ \gamma': \dotsbra{1}{s_y} \hookrightarrow \dotsbra{1}{s_x}$.
	
	Now $c$ and $g$ induce an injective morphism $[\mathbf{n}', \mathbf{r}', s_y] \to [\mathbf{n}, \mathbf{r}, s_x]$
	of extended poly-simplices. We denote it by $\iota_{D, C}$.
\end{noname}

\begin{proposition} \label{ChartEmbed}
	Let $y \in \str(\mathcal{X}, \mathcal{H})$ with $x \leq y$. On the one hand we consider combinatorial charts $C$ and $C'$ in $x$
	and on the other hand we consider combinatorial charts $D$ and $D'$ in $y$. 
	Then $h^{-1}_{C, C'} \circ \iota_{D', C'} \circ h_{D, D'} = \iota_{D, C}$
\end{proposition}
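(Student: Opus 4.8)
The plan is to reduce the identity to pure bookkeeping in the category of extended poly-simplices. First I would write the four charts as $C = ([\mathbf{n}_C, \mathbf{r}_C, s_x], \alpha_C, \gamma_C)$, $C' = ([\mathbf{n}_{C'}, \mathbf{r}_{C'}, s_x], \alpha_{C'}, \gamma_{C'})$ in $x$ and $D = ([\mathbf{n}_D, \mathbf{r}_D, s_y], \alpha_D, \gamma_D)$, $D' = ([\mathbf{n}_{D'}, \mathbf{r}_{D'}, s_y], \alpha_{D'}, \gamma_{D'})$ in $y$, and then read off the defining data of each morphism occurring in the statement. By Proposition \ref{ChartIsom} the isomorphism $h_{C, C'}$ is induced by the isometric bijection $\alpha_{C'}^{-1} \circ \alpha_C$ on the poly-simplex parts and by $\gamma_{C'}^{-1} \circ \gamma_C$ on the $\dotsbra{1}{s_x}$-parts, and likewise $h_{D, D'}$ by $\alpha_{D'}^{-1} \circ \alpha_D$ and $\gamma_{D'}^{-1} \circ \gamma_D$; moreover $h^{-1}_{C, C'} = h_{C', C}$ is induced by $\alpha_C^{-1} \circ \alpha_{C'}$ and $\gamma_C^{-1} \circ \gamma_{C'}$, again by Proposition \ref{ChartIsom}. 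Reading off \ref{FaceEmbDef}, the face embedding $\iota_{D', C'}$ is induced by the injective morphism $\alpha_{C'}^{-1} \circ \alpha_{D'}$ of colored poly-simplices (suppressing the inclusion $\irr(\mathcal{X}, y) \hookrightarrow \irr(\mathcal{X}, x)$) together with $\gamma_{C'}^{-1} \circ j_{y, x} \circ \gamma_{D'}$ on the $\dotsbra{1}{s_y}$-parts, and $\iota_{D, C}$ by $\alpha_C^{-1} \circ \alpha_D$ and $\gamma_C^{-1} \circ j_{y, x} \circ \gamma_D$; here $j_{y, x} : D_y \hookrightarrow D_x$ is the map of Proposition \ref{DivInjection}, which depends only on the relation $x \le y$ and not on any chart, so it is literally the same map in $\iota_{D', C'}$ and in $\iota_{D, C}$.

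The key point I would then invoke is that all of these are \emph{injective} morphisms of extended poly-simplices, so by Proposition \ref{IsomorphIsometric} each underlying map on poly-simplices is isometric and therefore determines its whole defining datum $(c, J, f, (c_l))$; together with the fact that the $g$-component on $\dotsbra{0}{s}$ is separate, explicit data, this makes precise that a composite of injective morphisms of extended poly-simplices is exactly the morphism induced by the composite of the underlying poly-simplex maps and the composite of the maps on $\dotsbra{0}{s}$-parts. Granting this, computing $h^{-1}_{C, C'} \circ \iota_{D', C'} \circ h_{D, D'}$ on the poly-simplex parts gives
\[(\alpha_C^{-1} \circ \alpha_{C'}) \circ (\alpha_{C'}^{-1} \circ \alpha_{D'}) \circ (\alpha_{D'}^{-1} \circ \alpha_D) = \alpha_C^{-1} \circ \alpha_D,\]
and on the $\dotsbra{1}{s_y}$-parts gives
\[(\gamma_C^{-1} \circ \gamma_{C'}) \circ (\gamma_{C'}^{-1} \circ j_{y, x} \circ \gamma_{D'}) \circ (\gamma_{D'}^{-1} \circ \gamma_D) = \gamma_C^{-1} \circ j_{y, x} \circ \gamma_D.\]
These are precisely the data defining $\iota_{D, C}$, so $h^{-1}_{C, C'} \circ \iota_{D', C'} \circ h_{D, D'} = \iota_{D, C}$, which is the claim.

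The hard part will not be the cancellation computation displayed above (that is routine associativity), but setting up and justifying the bookkeeping lemma — that composition of morphisms of extended poly-simplices is computed componentwise on the underlying maps, and that such composites are again injective morphisms of extended poly-simplices, so that the composite even makes sense as a morphism. I expect to handle this by a short lemma in §\,\ref{PolySimplexSec}, using Proposition \ref{IsomorphIsometric} to recover $J$, $f$, $(c_l)$ from the set map $c$ for injective morphisms; alternatively one can observe from the explicit formula for the geometric realization that $\Delta$ is faithful on injective morphisms, so that equality of the realized maps (which is immediate once the underlying maps agree) already forces equality of the morphisms.
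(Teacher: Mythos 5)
Your proof is correct and takes the same route the paper does — the paper simply declares the equality "immediately clear from the way the morphisms have been constructed," and you have written out exactly that unwinding: reading off the defining data of $h_{C,C'}$, $h_{D,D'}$, $\iota_{D,C}$, $\iota_{D',C'}$ from Proposition~\ref{ChartIsom} and \ref{FaceEmbDef}, and cancelling. Your cautionary remark about why the composite is computed componentwise is fair given that the paper only says "one defines in the obvious way the category of extended poly-simplices," but it is indeed a routine matter: composites of isometric maps are isometric and composites of injective maps are injective, so by Proposition~\ref{IsomorphIsometric} the composite is again an injective morphism of poly-simplices, and for injective morphisms (where necessarily $J = \dotsbra{1}{q}$ once $\mathbf{n} \neq (0)$) the data $(f,(c_l))$ is uniquely recovered from the set map $c$ by seeing which target coordinate varies as each source coordinate varies; the $g$-parts compose independently. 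Either that observation or your suggested alternative via faithfulness of $\Delta$ on injective morphisms closes the gap.
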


\begin{proof}
	This is immediately clear from the way the morphisms have been constructed.
\end{proof}

\begin{definition} 
	Let $y \in \str(\mathcal{X}, \mathcal{H})$ with $x \leq y$.
	From Proposition~\ref{ChartEmbed} we can conclude that the injective morphisms $\iota_{D, C}$ give rise
	to a well-defined injective continuous map $\iota_{y, x}: \Delta(y) \hookrightarrow \Delta(x)$. We call it the
	\emph{face embedding} from $y$ to $x$.   
\end{definition}

\begin{proposition}
	Let $y, z \in \str(\mathcal{X}, \mathcal{H})$ with $x \leq y \leq z$. Then $\iota_{x, x} = \id$
	and $\iota_{z, x} = \iota_{y, x} \circ \iota_{z, y}$. This allows us to identify $\Delta(y)$
	with a subspace of $\Delta(x)$. 
\end{proposition}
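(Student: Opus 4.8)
The plan is to reduce the two identities to statements about the morphisms $\iota_{D,C}$ of extended poly-simplices and then invoke functoriality of the geometric realization functor $\Delta$. After fixing combinatorial charts $C$ in $x$ and $D$ in $y$, the face embedding $\iota_{y,x}$ is by construction (see \ref{FaceEmbDef} and the definition preceding this proposition) the geometric realization of the injective morphism $\iota_{D,C}\colon[\mathbf n',\mathbf r',s_y]\to[\mathbf n,\mathbf r,s_x]$, read through the canonical homeomorphisms $\Delta(\mathbf n,\mathbf r,s_x)\xrightarrow{\sim}\Delta(x)$ and $\Delta(\mathbf n',\mathbf r',s_y)\xrightarrow{\sim}\Delta(y)$ supplied by the colimit description in Definition~\ref{CanPolyDef}; Proposition~\ref{ChartEmbed} guarantees that the outcome is independent of the choices. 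Hence I would prove $\iota_{C,C}=\id$ and $\iota_{E,C}=\iota_{D,C}\circ\iota_{E,D}$ for fixed charts $C$, $D$, $E$ in $x$, $y$, $z$, and then transport these along the identifications.

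For $\iota_{x,x}=\id$ I would take $y=x$ and use the same chart on both sides. Then the map $c=\alpha^{-1}\circ\alpha$ underlying $\iota_{C,C}$ is $\id_{[\mathbf n]}$, and $g=\gamma^{-1}\circ j_{x,x}\circ\gamma$, so the only point to check is $j_{x,x}=\id_{D_x}$. This is immediate from Proposition~\ref{DivInjection}: one has $I\subseteq j_{x,x}(I)$ with both $I$ and $j_{x,x}(I)$ in $D_x$, and distinct elements of $D_x$ are disjoint, so $j_{x,x}(I)=I$. Thus $\iota_{C,C}$ is the identity morphism of $[\mathbf n,\mathbf r,s_x]$ and $\iota_{x,x}=\Delta(\id)=\id$.

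For the composition law I would fix charts $C$, $D$, $E$ and check that the defining data of $\iota_{E,C}$ is the composite of that of $\iota_{D,C}$ and $\iota_{E,D}$. On the poly-simplex side, the maps underlying $\iota_{D,C}$ and $\iota_{E,D}$ are induced by the inclusions $\irr(\mathcal X,y)\hookrightarrow\irr(\mathcal X,x)$ and $\irr(\mathcal X,z)\hookrightarrow\irr(\mathcal X,y)$, which compose to the inclusion $\irr(\mathcal X,z)\hookrightarrow\irr(\mathcal X,x)$ underlying $\iota_{E,C}$; since each of these is an isometric embedding, its full morphism structure is determined by the underlying map (Proposition~\ref{IsomorphIsometric}), and the color conditions are exactly those provided by Proposition~\ref{ColorChangeProp}. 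On the $\R_{\geq 0}^s$ side, the map $g$ for $\iota_{D,C}$ is $\gamma_C^{-1}\circ j_{y,x}\circ\gamma_D$ and that for $\iota_{E,D}$ is $\gamma_D^{-1}\circ j_{z,y}\circ\gamma_E$, so their composite is $\gamma_C^{-1}\circ j_{y,x}\circ j_{z,y}\circ\gamma_E=\gamma_C^{-1}\circ j_{z,x}\circ\gamma_E$ by the transitivity $j_{z,x}=j_{y,x}\circ j_{z,y}$ of Proposition~\ref{DivInjection}, which is precisely the $g$ attached to $\iota_{E,C}$. Hence $\iota_{E,C}=\iota_{D,C}\circ\iota_{E,D}$, and applying $\Delta$ and transporting along the colimit identifications (legitimate again by Proposition~\ref{ChartEmbed}) gives $\iota_{z,x}=\iota_{y,x}\circ\iota_{z,y}$. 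Finally, since $\iota_{D,C}$ is injective, the explicit formula defining $\Delta$ on morphisms shows that $\Delta(\iota_{D,C})$ is a homeomorphism of $\Delta(\mathbf n',\mathbf r',s_y)$ onto a face of $\Delta(\mathbf n,\mathbf r,s_x)$, so $\iota_{y,x}$ is a topological embedding; together with the two identities just established this makes the family $(\iota_{y,x})$ a coherent system of embeddings, and we may regard $\Delta(y)$ as a subspace of $\Delta(x)$ whenever $x\le y$.

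I do not expect a genuine obstacle: the whole proof is the bookkeeping of morphism data under composition. The one spot that needs care is that an isometric embedding of (colored) poly-simplices carries a \emph{unique} morphism structure, so that compatibility of the underlying set maps automatically upgrades to compatibility of morphisms; everything else is the cited results (\ref{DivInjection}, \ref{IsomorphIsometric}, \ref{ColorChangeProp}, \ref{ChartEmbed}) together with functoriality of $\Delta$.
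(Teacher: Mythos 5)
Your argument is correct and is precisely the "go through the construction and use the cocycle property stated in Proposition~\ref{DivInjection}" that the paper's one-line proof invokes, spelled out in full: you reduce both identities to the chart-level morphisms $\iota_{D,C}$, verify $j_{x,x}=\id$ by the disjointness of the elements of $D_x$, derive the composition law from $j_{z,x}=j_{y,x}\circ j_{z,y}$ together with the fact that an isometric injection of poly-simplices carries a unique morphism structure (Proposition~\ref{IsomorphIsometric}), and then apply the functor $\Delta$ and the chart-independence from Proposition~\ref{ChartEmbed}. No gap; this is the same route as the paper, with the bookkeeping made explicit.
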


\begin{proof}
	Again just go through the construction and use the cocycle property stated in Proposition~\ref{DivInjection}. 
\end{proof}

\begin{lemma} \label{FaceEmbStr} % Lemma 3.13 in [Ber90]
	Let $C := ([\mathbf{n}, \mathbf{r}, s_x], \alpha, \gamma)$ be a combinatorial chart in $x$.
	Then for every injective morphism $[\mathbf{n}', \mathbf{r}', s'] \to [\mathbf{n}, \mathbf{r}, s_x]$ 
	of extended poly-simplices consisting of
	$c: [\mathbf{n}'] \hookrightarrow [\mathbf{n}]$ and $g: \dotsbra{1}{s'} \hookrightarrow \dotsbra{1}{s_x}$ 
	there exists a unique stratum $y \in \str(\mathcal{X}, \mathcal{H})$ 
	with $x \leq y$ such that $\im(\gamma \circ g) = D_{x, y}$ 
	and $D:= ([\mathbf{n}', \mathbf{r}', s'], \alpha \circ c, k_{x, y} \circ \gamma \circ g)$ is a combinatorial chart in $y$,
	where $D_{x, y}$ and $k_{x, y}$ are as in Proposition~\ref{DivInjection}. 
	In this case $c$ and $g$ induce the injective morphism $\iota_{D, C}$  
	and consequently the face embedding $\iota_{y, x}$. 
	Moreover $y$ does only depend on $\im(c)$ and $\im(g)$, and conversely $y$ uniquely determines $\im(c)$ and $\im(g)$.
\end{lemma}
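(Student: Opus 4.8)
The plan is to translate the combinatorial data into a building block, produce $y$ as the generic point of a suitable stratum, and then recognise the stated chart on it.

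\emph{Setup.} By the Example following Proposition~\ref{ChartIsom} I may assume $C=C_\varphi$ for a building block $\varphi\colon(\mathfrak U,\mathfrak H\cap\mathfrak U)\to(\mathfrak S,\mathfrak G)=(\mathfrak S(\mathbf n,\mathbf a,d),\mathfrak G(s_x))$ in $x$, with $\alpha=\alpha_\varphi$, $\gamma=\gamma_\varphi$ and $\mathbf r=\val(\mathbf a)$. Writing the data of $c$ as $(c,J,f,(c_l)_l)$ with $f\colon\{1,\dots,p'\}\hookrightarrow\{1,\dots,p\}$, the subset $\im(c)\subseteq[\mathbf n]$ is the ``box'' $\prod_l S_l$ with $S_l=\im(c_l)$ for $l\in\im(f)$ and $S_l=\{c_l(0)\}$ otherwise. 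I set $A:=\alpha(\im c)\subseteq\irr(\mathfrak X_s,x)$ and $D':=\gamma(\im g)\subseteq D_x$; by Proposition~\ref{IsomorphIsometric} the inclusion $A\hookrightarrow\irr(\mathfrak X_s,x)$ is isometric, and via the canonical identification of Example~\ref{MetrEx} everything can be read off inside the standard scheme $\mathfrak S$.

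\emph{Existence.} Following the explicit description of strata, I put $B:=A\cup\{\,\mathcal V_{\mathcal W,I}\mid\mathcal W\in A,\ I\in D'\,\}\subseteq\irr(\mathfrak X_s)\cup\irr(\mathfrak H_s)$. Since $\bigcap_{\mathcal V\in B}\mathcal V$ contains $x$ and is smooth with pairwise disjoint irreducible components (Proposition~\ref{StrataFormSingle}), there is a unique component $Z_0$ of it containing $x$; working in $\mathfrak S$ via $\varphi$ one checks that $\bigcap_{\mathcal V\in B}\mathcal V$ corresponds near $x$ to a coordinate subspace of $\mathfrak S_s$ that is contained in no $\mathcal W\in(\irr(\mathfrak X_s)\cup\irr(\mathfrak H_s))\setminus B$, so the generic point $y$ of $Z_0$ is the generic point of a component of $\mathcal X_B$ in the notation of Proposition~\ref{StrataForm} (the compatibility condition there holds because each $\mathcal V_{\mathcal W,I}$ lies in the unique component $\mathcal W$ of $\mathfrak X_s$ containing it, Proposition~\ref{IrredCompProp}). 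Hence $y\in\str(\mathfrak X_s,\mathfrak H_s)$, $x\le y$, $\irr(\mathfrak X_s,y)=A$, $\irr(\mathfrak H_s,y)=\{\mathcal V_{\mathcal W,I}\mid\mathcal W\in A,\ I\in D'\}$, and by the definition of $D_x$ one reads $D_{x,y}=D'$, i.e. $\im(\gamma\circ g)=D_{x,y}$. Finally $\alpha\circ c\colon[\mathbf n']\xrightarrow{\sim}\irr(\mathfrak X_s,y)$ is an isometric bijection, $\mathbf r'=\mathbf r_{\alpha\circ c}$ by Proposition~\ref{ColorChangeProp} (here $\mathbf r'=(r_{f(1)},\dots,r_{f(p')})$ because the given morphism respects colors), $s_y=|D_y|=|D_{x,y}|=|\im g|=s'$, and $k_{x,y}\circ\gamma\circ g\colon\{1,\dots,s'\}\xrightarrow{\sim}D_y$ is a bijection; so $D=([\mathbf n',\mathbf r',s'],\alpha\circ c,k_{x,y}\circ\gamma\circ g)$ is a combinatorial chart in $y$.

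\emph{Uniqueness and the last clause.} If $z\in\str(\mathfrak X_s,\mathfrak H_s)$ with $x\le z$ also satisfies $\im(\gamma\circ g)=D_{x,z}$ and turns $([\mathbf n',\mathbf r',s'],\alpha\circ c,k_{x,z}\circ\gamma\circ g)$ into a combinatorial chart, then its $\alpha$-part being an isometric bijection forces $\irr(\mathfrak X_s,z)=A$ and $D_{x,z}=D'$; passing to a slice building block in $z$ obtained from $\varphi$ as in~\ref{SliceBuildingBlock}, one reads off $\irr(\mathfrak H_s,z)=\{\mathcal V_{\mathcal W,I}\mid\mathcal W\in A,\ I\in D'\}$, so the components of $\mathfrak X_s$ or $\mathfrak H_s$ through $z$ form exactly $B$ and $\overline{\{z\}}$ is again a component of $\bigcap_{\mathcal V\in B}\mathcal V$ containing $x$, hence $=Z_0$ and $z=y$. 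This characterisation shows $y$ depends only on $\im(c)$ and $\im(g)$, while conversely $\im(c)=\alpha^{-1}(\irr(\mathfrak X_s,y))$ and $\im(g)=\gamma^{-1}(D_{x,y})$.

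\emph{The induced morphism.} By the construction recalled in~\ref{FaceEmbDef}, $\iota_{D,C}$ has poly-simplicial part $\alpha^{-1}\circ(\alpha\circ c)=c$ and $s$-part $\gamma^{-1}\circ j_{y,x}\circ(k_{x,y}\circ\gamma\circ g)$. Since the members of $D_x$ are pairwise disjoint and $k_{x,y}(I)\subseteq j_{y,x}(k_{x,y}(I))$ for $I\in D_{x,y}$ by Proposition~\ref{DivInjection}, the composite $j_{y,x}\circ k_{x,y}$ equals the inclusion $D_{x,y}\hookrightarrow D_x$; combined with $\im(\gamma\circ g)=D_{x,y}$ this shows the $s$-part is $g$, so $\iota_{D,C}$ is the given morphism and $\iota_{y,x}$ is its geometric realisation. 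I expect the only substantial point to be the existence step — the dimension and smoothness bookkeeping in the standard-scheme model showing that $\mathcal X_B$ is non-empty with $x$ in the closure of one of its components, and that the divisor bookkeeping matches $D_{x,y}$ with $D'$; everything else is formal manipulation of the combinatorial data and an unwinding of definitions.
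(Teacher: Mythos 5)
Your proof is correct and follows essentially the same route as the paper: the set $B$ you intersect over is exactly the paper's $\bigcap_{\mathcal W\in\im(\alpha\circ c)}\mathcal W\cap\bigcap_{I\in\im(\gamma\circ g)}\bigcap_{\mathcal V\in I\cap J}\mathcal V$, the identification of $y$ as the generic point of the unique component through $x$ and the invocation of Propositions~\ref{StrataForm}, \ref{IsomorphIsometric} and \ref{ColorChangeProp} all match. You additionally spell out the verification that the $s$-part of $\iota_{D,C}$ equals $g$ via $j_{y,x}\circ k_{x,y}=\mathrm{incl}$, a point the paper's proof leaves implicit.
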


\begin{proof}
	Let $J$ be the subset of all $\mathcal{V} \in \irr(\mathcal{H}, x)$ such that $\mathcal{V}$
	is contained in some element of $\im(\alpha \circ c) \subseteq \irr(\mathcal{X}, x)$.  
	Consider the intersection 
	\[\bigcap_{\mathcal{W} \in \im(\alpha \circ c)} \mathcal{W} \cap \bigcap_{I \in \im(\gamma \circ g)} \bigcap_{\mathcal{V} \in I \cap J} \mathcal{V}.\]
	% $I \subseteq \irr(\mathcal{H}, x)$ 
	Note that for every $I \in \im(\gamma \circ g)$ the intersection $I \cap J$ consists of $|\im(\alpha \circ c)|$ elements. 
	The generic points of the irreducible components of this intersection are elements of 
	$\str(\mathcal{X}, \mathcal{H})$, see Proposition \ref{StrataForm}.
	Moreover this intersection is smooth, thus it has exactly one irreducible component which contains $x$.
	Let $y$ be the generic point of this irreducible component. By construction we have $y \in \str(\mathcal{X}, \mathcal{H})$  
	and it satisfies the conditions $x \leq y$, $\im(\alpha \circ c) = \irr(\mathcal{X}, y)$
	and $\im(\gamma \circ g) = D_{x, y}$. Conversely this three conditions force us to make the choice for $y$
	as we did, which shows uniqueness. 
	
	Clearly $y$ does only depend on $\im(c)$ and $\im(g)$. Note that another choice of $c$ and $g$ such that
	$\im(c)$ or $\im(g)$ changes, would give rise to a different $y$.
	We convince ourselves now that $D$ is a combinatorial chart in $y$.
	
	It follows from Proposition \ref{IsomorphIsometric} that 
	$\alpha \circ c: [\mathbf{n}'] \xrightarrow{\sim} \irr(\mathcal{X}, y)$ is an isometric bijection.
	Obviously $k_{x, y} \circ \gamma \circ g: \dotsbra{1}{s'} \xrightarrow{\sim} D_y$ is a bijection.
	The only thing left to check is the equality $\mathbf{r}' = \mathbf{r}_{\alpha \circ c}$, 
	but this is a consequence of Proposition \ref{ColorChangeProp}. %It also shows that $\gamma$ and $g$ induce the face embedding 
	%$\Delta(y) \hookrightarrow \Delta(x)$, because they fit in a commutative diagram as above
	%with $\alpha' = \alpha \circ \gamma$ and $\beta' = \beta \circ g$.	
\end{proof}

\begin{lemma} \label{DisjointUnion}
	We continue to denote $x \in \str(\mathcal{X}, \mathcal{H})$. 
	\begin{enumerate}
		\item Then 
		\[\Delta(x) = \bigcup \Delta^{\circ}(y),\]
		where the union ranges over all $y \in \str(\mathcal{X}, \mathcal{H})$ with $x \leq y$.
		Moreover this union is disjoint. 
		\item Let $y, z \in \str(\mathcal{X}, \mathcal{H})$
		with $x \leq y$ and $x \leq z$. If $\Delta(y) \cap \Delta^{\circ}(z) \neq \emptyset$,
		then $\Delta(z) \subseteq \Delta(y)$ and $y \leq z$.
	\end{enumerate} 
\end{lemma}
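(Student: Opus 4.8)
The plan is to fix once and for all a combinatorial chart $C := ([\mathbf{n}, \mathbf{r}, s_x], \alpha, \gamma)$ in $x$. By Definition~\ref{CanPolyDef} it provides a homeomorphism $\Delta(\mathbf{n}, \mathbf{r}, s_x) \xrightarrow{\sim} \Delta(x)$, under which the face embeddings $\iota_{y,x}$ for $y \geq x$ correspond, by Lemma~\ref{FaceEmbStr}, exactly to the injective morphisms $[\mathbf{n}', \mathbf{r}', s'] \to [\mathbf{n}, \mathbf{r}, s_x]$ of extended poly-simplices (taken up to precomposition with isomorphisms). So I would reduce both assertions to a statement about the model $\Delta(\mathbf{n}, \mathbf{r}, s_x)$ together with this dictionary.

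For (i) the key step is the self-contained observation that the geometric extended poly-simplex $\Delta(\mathbf{n}, \mathbf{r}, s_x)$ is the \emph{disjoint} union, over all injective morphisms $F \colon [\mathbf{n}', \mathbf{r}', s'] \to [\mathbf{n}, \mathbf{r}, s_x]$ up to precomposition with isomorphisms, of the subsets $\Delta(F)\bigl(\Delta^{\circ}(\mathbf{n}', \mathbf{r}', s')\bigr)$. Such an isomorphism class is the datum of a non-empty subset $S_i \subseteq \{0,\dots,n_i\}$ for each $i$ and a subset $T \subseteq \{1,\dots,s_x\}$, and I would attach to a point $((x_{ij}),(y_j))$ its class by setting $T := \{j : y_j \neq 0\}$ and $S_i := \{j : x_{ij} \neq 0\}$, with the single proviso that when $r_i = \infty$ and the $i$-th factor of the point is a vertex (one coordinate $\infty$, the rest $0$) one instead takes $S_i$ to be that one index, reflecting the collapse of this factor. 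Checking that the point lies in the open face attached to $((S_i),T)$ and in no other is a direct computation from Definition~\ref{GeomSimplex}. Granting this, Lemma~\ref{FaceEmbStr} turns the indexing set into $\{y \in \str(\mathcal{X},\mathcal{H}) : x \leq y\}$ and identifies $\Delta(F)(\Delta^{\circ}(\mathbf{n}',\mathbf{r}',s'))$ with the image of $\Delta^{\circ}(y)$ under $\iota_{y,x}$, i.e.\ with the subspace $\Delta^{\circ}(y) \subseteq \Delta(x)$; hence $\Delta(x) = \bigsqcup_{x \leq y} \Delta^{\circ}(y)$, which is (i).

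For (ii) I would apply (i) with $y$ in place of $x$, getting $\Delta(y) = \bigsqcup_{y \leq w} \Delta^{\circ}(w)$. In each summand $x \leq y \leq w$, so by the cocycle identity $\iota_{w,x} = \iota_{y,x} \circ \iota_{w,y}$ (and Proposition~\ref{ChartEmbed}) this is, inside $\Delta(x)$ via $\iota_{y,x}$, a decomposition into the very same subspaces $\Delta^{\circ}(w) \subseteq \Delta(x)$ that occur in (i) for $x$. Intersecting with $\Delta^{\circ}(z)$ yields $\emptyset \neq \Delta(y) \cap \Delta^{\circ}(z) = \bigsqcup_{y \leq w}\bigl(\Delta^{\circ}(w) \cap \Delta^{\circ}(z)\bigr)$, so some $w$ with $y \leq w$ meets $\Delta^{\circ}(z)$; but the open faces of $\Delta(x)$ are pairwise disjoint by (i) and both $\Delta^{\circ}(w)$ and $\Delta^{\circ}(z)$ occur among them (as $w, z \geq x$), forcing $w = z$, hence $y \leq z$. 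Finally the cocycle identity $\iota_{z,x} = \iota_{y,x} \circ \iota_{z,y}$ gives $\Delta(z) \subseteq \Delta(y)$ as subspaces of $\Delta(x)$, completing (ii).

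The main obstacle I anticipate is precisely the face-decomposition statement in (i): it is intuitively evident, but the bookkeeping requires care because a degenerate factor $\Delta(n_i, \infty)$ does not subdivide like an ordinary simplex --- its open part $\Delta^{\circ}(n_i, \infty)$ still contains points with finite positive coordinates --- and this has to be reconciled with the "collapse a factor to a vertex'' phenomenon that injective morphisms of extended poly-simplices allow. Once that verification is in place, everything else is a formal consequence of Lemma~\ref{FaceEmbStr}, Proposition~\ref{ChartEmbed} and the cocycle identities already recorded before Lemma~\ref{FaceEmbStr}.
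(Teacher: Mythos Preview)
Your proposal is correct and follows essentially the same route as the paper: reduce both assertions to elementary combinatorial facts about the face decomposition of the model $\Delta(\mathbf{n},\mathbf{r},s_x)$, and then invoke Lemma~\ref{FaceEmbStr} to translate the indexing set of sub-poly-simplices into $\{y \in \str(\mathcal{X},\mathcal{H}) : x \leq y\}$. Two minor remarks: your ``proviso'' for the vertex case when $r_i=\infty$ is redundant, since the uniform rule $S_i=\{j:x_{ij}\neq 0\}$ already returns the correct singleton there; and for (ii) the paper argues directly on the model (if a closed sub-poly-simplex meets an open one then it contains it, hence the corresponding injective morphisms factor) rather than bootstrapping from (i) via the cocycle identity as you do, but the two arguments are interchangeable.
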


\begin{proof}
	First one easily verifies the following elementary geometric facts:
	\begin{itemize}
		\item Every geometric poly-simplex $\Delta(\mathbf{n}, \mathbf{r}, s)$ can be written as the
		union of open simplices $\Delta^{\circ}(\mathbf{n}', \mathbf{r}', s')$, which are considered
		as subspaces via the geometric realization of injective morphisms 
		$[\mathbf{n}', \mathbf{r}', s'] \to [\mathbf{n}, \mathbf{r}, s]$
		\item Let $(c', g'): [\mathbf{n}', \mathbf{r}', s'] \to [\mathbf{n}, \mathbf{r}, s]$ and
		$(c'', g''): [\mathbf{n}'', \mathbf{r}'', s''] \to [\mathbf{n}, \mathbf{r}, s]$ be 
		two injective morphisms. If $\im(c') = \im(c'')$ and $\im(g') = \im(g'')$
		then $\Delta^{\circ}(\mathbf{n}', \mathbf{r}', s') = \Delta^{\circ}(\mathbf{n}'', \mathbf{r}'', s'')$.
		%and there exists an isomorphism 
		%$(\eta, h): [\mathbf{m}, \mathbf{u}, t] \xrightarrow{\sim} [\mathbf{m}', \mathbf{u}', t']$
		%such that $\gamma = \gamma' \circ \eta$ and $g = g' \circ h$.
		Otherwise $\Delta^{\circ}(\mathbf{n}', \mathbf{r}', s')$ and $\Delta^{\circ}(\mathbf{n}'', \mathbf{r}'', s'')$ 
		are disjoint.
		\item If $\Delta(\mathbf{n}', \mathbf{r}', s') \cap \Delta^{\circ}(\mathbf{n}'', \mathbf{r}'', s'') \neq \emptyset$,
		then $\Delta(\mathbf{n}'', \mathbf{r}'', s'') \subseteq \Delta(\mathbf{n}', \mathbf{r}', s')$. 
		In this case we have $\im(c'') \subseteq \im(c')$ and $\im(g'') \subseteq \im(g')$,
		in particular there exists an injective morphism 
		$(\widetilde{c}, \widetilde{g}): [\mathbf{n}'', \mathbf{r}'', s''] \to [\mathbf{n}', \mathbf{r}', s']$  
		such that $(c'', g'') = (c', g') \circ (\widetilde{c}, \widetilde{g})$.
	\end{itemize}
	Now the claims are straightforward consequences of Lemma \ref{FaceEmbStr}.
	% For (ii) we also refer to Proposition \ref{StrataForm}. 
\end{proof}

%---------------------------------------------------------------------------------------------------------------------------------------------------------------------------

\subsection{The strict case}  % The combinatorial skeleton

Again let $(\mathfrak{X}, \mathfrak{H})$ be a strictly poly-stable pair 
and $\mathcal{X} := \mathfrak{X}_s$, $\mathcal{H} := \mathfrak{H}_s$.

In the following, we define the dual intersection complex
by gluing together the canonical polyhedra along the face embeddings.
We also provide an easy example.

\begin{noname}
	We consider the partially ordered set $\str(\mathcal{X}, \mathcal{H})$ in the obvious way
	as a category. % Recall that $\str(\mathcal{X}, \mathcal{H})$ is a finite set. 
	Explicitly its objects are the elements of $\str(\mathcal{X}, \mathcal{H})$ and the set of morphisms from
	$y$ to $x$ is the set containing the pair $(y, x)$ if $x \leq y$ and the empty set otherwise. 
	Let $F_{(\mathfrak{X}, \mathfrak{H})}$ be the functor 
	from $\str(\mathcal{X}, \mathcal{H})$ to the category of topological spaces which maps $x$ to $\Delta(x)$ and
	a pair $(y, x)$ to the face embedding $\iota_{y, x}: \Delta(y) \hookrightarrow \Delta(x)$.
\end{noname}

\begin{definition}	
	We define the \emph{dual intersection complex} $C(\mathfrak{X}, \mathfrak{H})$ of the strictly poly-stable pair $(\mathfrak{X}, \mathfrak{H})$ 
	to be the colimit of this functor $F_{(\mathfrak{X}, \mathfrak{H})}$ from above. 
\end{definition}

\begin{lemma} \label{EquivRel}
	More explicitly we can describe the dual intersection complex as 
	\[C(\mathfrak{X}, \mathfrak{H}) = \bigsqcup_{x \in \str(\mathcal{X}, \mathcal{H})} \Delta(x) / \sim~,\]
	where $\sim$ is the equivalence relation on the disjoint union given as follows: Two points $a \in \Delta(x)$ 
	and $b \in \Delta(y)$ are in relation $a \sim b$ iff there exists an element $z \in \str(\mathcal{X}, \mathcal{H})$
	with $x \leq z$ and $y \leq z$ such that $a, b \in \Delta(z)$ and $a$ agrees with $b$ in $\Delta(z)$, i.\,e. there exists
	a point $c \in \Delta(z)$ such that $\iota_{z, x}(c) = a$ and $\iota_{z, y}(c) = b$.	
\end{lemma}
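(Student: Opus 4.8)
The plan is to compare $\sim$ with the equivalence relation that computes the colimit. First I would recall the standard description of a colimit in the category of topological spaces: $C(\mathfrak{X}, \mathfrak{H})$ is $\bigsqcup_{x \in \str(\mathcal{X}, \mathcal{H})} \Delta(x)$ modulo the smallest equivalence relation $\approx$ for which $b \approx \iota_{y,x}(b)$ whenever $x \leq y$ and $b \in \Delta(y)$, with the quotient topology. Thus the lemma reduces to the claim that $\approx$ coincides with the explicitly described relation $\sim$; note this in particular requires checking that $\sim$ is an equivalence relation at all.

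Next I would verify that $\sim$ is an equivalence relation. Reflexivity (take $z := x$ and $c := a$, using $\iota_{x,x} = \id$) and symmetry are immediate. The substantial point is transitivity. Given $a \in \Delta(p)$, $b \in \Delta(q)$, $e \in \Delta(r)$ with $a \sim b$ witnessed by some $z$ with $p \leq z$, $q \leq z$ and $c \in \Delta(z)$, and $b \sim e$ witnessed by some $z'$ with $q \leq z'$, $r \leq z'$ and $c' \in \Delta(z')$, I would view $\Delta(z)$ and $\Delta(z')$ as subspaces of $\Delta(q)$ through the face embeddings; both contain the point $b$. By Lemma~\ref{DisjointUnion}~(i) there is a unique $w$ with $q \leq w$ and $b \in \Delta^{\circ}(w) \subseteq \Delta(q)$, and since $\Delta(z)$ and $\Delta(z')$ each meet $\Delta^{\circ}(w)$, Lemma~\ref{DisjointUnion}~(ii) gives $z \leq w$ and $z' \leq w$, hence $p \leq w$ and $r \leq w$. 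Let $d \in \Delta^{\circ}(w)$ be the unique preimage of $b$ under $\iota_{w,q}$. The cocycle identity $\iota_{w,q} = \iota_{z,q} \circ \iota_{w,z}$ together with injectivity of $\iota_{z,q}$ forces $\iota_{w,z}(d) = c$, so $\iota_{w,p}(d) = \iota_{z,p}(c) = a$; the symmetric computation with $z'$ gives $\iota_{w,r}(d) = e$. Hence $w$ and $d$ witness $a \sim e$.

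Finally I would identify $\sim$ with $\approx$. Since $\sim$ is now an equivalence relation containing every generating pair — for $x \leq y$ and $b \in \Delta(y)$ the choice $z := y$, $c := b$ shows $b \sim \iota_{y,x}(b)$ — minimality of $\approx$ yields $\approx\ \subseteq\ \sim$. Conversely, any witness $(z, c)$ for $a \sim b$ exhibits $a \approx c \approx b$, because $(c,a)$ and $(c,b)$ are generating pairs (coming from the morphisms $z \geq x$ and $z \geq y$), so $\sim\ \subseteq\ \approx$. As both descriptions carry the quotient topology, the asserted identification of topological spaces follows.

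The hard part is the transitivity step, and within it the construction of the common refinement $w$ together with a single point $d \in \Delta(w)$ restricting simultaneously to $a$ and $e$; this is exactly where the geometric bookkeeping on open faces of a canonical polyhedron packaged in Lemma~\ref{DisjointUnion}, combined with the cocycle property and injectivity of the face embeddings, does the work. Everything else is formal colimit manipulation.
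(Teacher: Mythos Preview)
Your proof is correct and follows essentially the same approach as the paper: the paper also declares that the only non-trivial point is transitivity of $\sim$, and proves it by passing to the unique open face containing the middle point $b$ via Lemma~\ref{DisjointUnion}, exactly as you do. You are simply more explicit about the surrounding formalities (the comparison $\sim = \approx$ and the use of injectivity and the cocycle identity for the face embeddings), which the paper leaves implicit.
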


\begin{proof}
	The only non-trivial thing to check is, that the relation $\sim$ is in fact transitive: 
	Let $x, y, z \in \str(\mathcal{X}, \mathcal{H})$ and $a \in \Delta(x)$, $b \in \Delta(y)$, 
	$c \in \Delta(z)$ with $a \sim b$ and $b \sim c$. By definition there exist $v, w \in \str(\mathcal{X}, \mathcal{H})$
	with $x \leq v$, $y \leq v$, $y \leq w$, $z \leq w$ such that $a, b \in \Delta(v)$, $b, c \in \Delta(w)$ 
	and $a$ agrees with $b$ in $\Delta(v)$ and $b$ agrees with $c$ in $\Delta(w)$. 
	
	Let $u \in \str(\mathcal{X}, \mathcal{H})$ be the unique element with $y \leq u$ and $b \in \Delta^{\circ}(u)$,
	see Lemma \ref{DisjointUnion}. %We identify $\Delta(u), \Delta(v), \Delta(w)$ with subspaces of $\Delta(y)$. 
	Since $b \in \Delta(v) \cap \Delta^{\circ}(u)$ and
	$b \in \Delta(w) \cap \Delta^{\circ}(u)$ we get $v \leq u$ and $w \leq u$.
	In particular $x \leq u$, $z \leq u$ and $a, b, c$ all agree in $\Delta(u)$,
	which shows $a \sim c$. 	
\end{proof}

\begin{definition}
	For every $x \in \str(\mathcal{X}, \mathcal{H})$ the canonical map $\Delta(x) \to C(\mathfrak{X}, \mathfrak{H})$ is injective
	according to Lemma \ref{EquivRel} and we call its image the \emph{face} of $C(\mathfrak{X}, \mathfrak{H})$ 
	associated to $x$. It is a closed subset of $C(\mathfrak{X}, \mathfrak{H})$ and we denote it by $\Delta(x)$ as well.
	Similarly we consider $\Delta^{\circ}(x)$ as a subspace of $C(\mathfrak{X}, \mathfrak{H})$, 
	called the \emph{open face} of $C(\mathfrak{X}, \mathfrak{H})$ associated to $x$.
	We point out that $\Delta^{\circ}(x)$ is in general not an open subset of $C(\mathfrak{X}, \mathfrak{H})$.
	It is however an open subset of $\Delta(x)$.   
	
	Let $y \in \str(\mathcal{X}, \mathcal{H})$ with $x \leq y$. The injective continuous map $\Delta(y) \hookrightarrow \Delta(x)$
	of subspaces of $C(\mathfrak{X}, \mathfrak{H})$ induced by the face embedding $\iota_{y, x}$ is called a \emph{face embedding} as well.
	% If $[\mathbf{n}, \mathbf{r}, s]$ is a combinatorial chart in $x$ we will refer to $\Delta(x)$ as an 
	% $[\mathbf{n}, \mathbf{r}, s]$-\emph{face} of $C(\mathfrak{X}, H)$.	
\end{definition}

\begin{corollary} \label{FaceIntersection}
	Let $x, y \in \str(\mathcal{X}, \mathcal{H})$. Then we have an equality  
	\[\Delta(x) \cap \Delta(y) = \bigcup \Delta(z) \]
	of closed subsets of $C(\mathfrak{X}, \mathfrak{H})$, where the union ranges over all 
	$z \in \str(\mathcal{X}, \mathcal{H})$ with $x \leq z$ and $y \leq z$. 
\end{corollary}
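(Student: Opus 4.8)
The plan is to prove the two inclusions separately, working inside the quotient space $C(\mathfrak{X}, \mathfrak{H}) = \bigsqcup_{x} \Delta(x)/\!\sim$ with the explicit description of $\sim$ from Lemma \ref{EquivRel}. For the inclusion ``$\supseteq$'', I would note that whenever $z \in \str(\mathcal{X}, \mathcal{H})$ satisfies $x \leq z$ and $y \leq z$, the face embeddings $\iota_{z,x}\colon \Delta(z) \hookrightarrow \Delta(x)$ and $\iota_{z,y}\colon \Delta(z) \hookrightarrow \Delta(y)$ realize $\Delta(z)$ as a subspace of both $\Delta(x)$ and $\Delta(y)$ inside $C(\mathfrak{X}, \mathfrak{H})$; these identifications are compatible by the very definition of $\sim$, so $\Delta(z) \subseteq \Delta(x) \cap \Delta(y)$, and taking the union over all such $z$ gives one direction.

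For the harder inclusion ``$\subseteq$'', let $a$ be a point lying in both $\Delta(x)$ and $\Delta(y)$ as subspaces of $C(\mathfrak{X}, \mathfrak{H})$. By Lemma \ref{DisjointUnion}(i) applied with base stratum $x$, there is a unique $u \in \str(\mathcal{X}, \mathcal{H})$ with $x \leq u$ and $a \in \Delta^{\circ}(u) \subseteq \Delta(x)$; likewise, applied with base stratum $y$, there is a unique $v$ with $y \leq v$ and $a \in \Delta^{\circ}(v) \subseteq \Delta(y)$. Since $a \in \Delta(x)$ represents the same point of $C(\mathfrak{X}, \mathfrak{H})$ as $a \in \Delta(y)$, the relation $\sim$ of Lemma \ref{EquivRel} furnishes a $w \in \str(\mathcal{X}, \mathcal{H})$ with $x \leq w$, $y \leq w$ and a point $c \in \Delta(w)$ with $\iota_{w,x}(c) = a$ and $\iota_{w,y}(c) = a$. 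Now $a \in \Delta(w) \cap \Delta^{\circ}(u)$ (as subsets of $\Delta(x)$), so Lemma \ref{DisjointUnion}(ii) gives $\Delta(u) \subseteq \Delta(w)$ and $w \leq u$; similarly from $a \in \Delta(w) \cap \Delta^{\circ}(v)$ inside $\Delta(y)$ we get $w \leq v$. In particular $u$ and $v$ both dominate $w$, hence by uniqueness in Lemma \ref{DisjointUnion}(i) — both $\Delta^{\circ}(u)$ and $\Delta^{\circ}(v)$ contain the class of $a$ and sit inside $\Delta(w)$ — we conclude $u = v =: z$. Then $z$ satisfies $x \leq z$ and $y \leq z$ and $a \in \Delta^{\circ}(z) \subseteq \Delta(z)$, which places $a$ in the right-hand union.

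Finally, each $\Delta(z)$ is a closed subset of $C(\mathfrak{X}, \mathfrak{H})$ (as recorded in the definition of faces following Lemma \ref{EquivRel}), and the union on the right is locally finite because $\str(\mathcal{X}, \mathcal{H})$ yields a locally finite cover of $\mathcal{X}$; hence the right-hand side is genuinely a closed subset, as asserted. The main obstacle is the bookkeeping in the ``$\subseteq$'' direction: one must be careful that the three strata $u$, $v$, $w$ produced by different applications of Lemmas \ref{DisjointUnion} and \ref{EquivRel} are pinned down consistently, and the key leverage is the uniqueness clause in Lemma \ref{DisjointUnion}(i) together with the ``$\Delta(z) \subseteq \Delta(y)$, $y \leq z$'' implication of Lemma \ref{DisjointUnion}(ii), used twice to force $u = v$.
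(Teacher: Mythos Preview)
Your argument is correct, but you are working much harder than necessary for the inclusion ``$\subseteq$''. The paper's proof is a one-liner: it is immediate from Lemma~\ref{EquivRel}. Concretely, once you invoke Lemma~\ref{EquivRel} to produce $w \in \str(\mathcal{X},\mathcal{H})$ with $x \leq w$, $y \leq w$ and a point $c \in \Delta(w)$ satisfying $\iota_{w,x}(c)=a$ and $\iota_{w,y}(c)=a$, you are already done: the class of $c$ in $C(\mathfrak{X},\mathfrak{H})$ is $a$, so $a \in \Delta(w)$, and $w$ lies in the index set of the right-hand union. There is no need to introduce the auxiliary strata $u$ and $v$, nor to appeal to Lemma~\ref{DisjointUnion} at all.

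Your detour through $u$ and $v$ essentially reproves part of the transitivity argument inside Lemma~\ref{EquivRel} itself (compare the role of $u$ there). It does yield the slightly sharper conclusion that $a$ lies in an \emph{open} face $\Delta^{\circ}(z)$ with $x \leq z$ and $y \leq z$, but the corollary as stated does not require this, and the paper obtains that refinement separately via Corollary~\ref{OpenFacesUnion}.
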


\begin{proof}
	This follows immediately from Lemma~\ref{EquivRel}. 
\end{proof}

\begin{corollary} \label{OpenFacesUnion}
	$C(\mathfrak{X}, \mathfrak{H})$ is equal to the union of its open faces, and this union is disjoint. 
\end{corollary}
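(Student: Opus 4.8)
The plan is to reduce both claims to Lemma~\ref{DisjointUnion} together with Corollary~\ref{FaceIntersection}, the point being that these two results already settle the question ``inside'' a single canonical polyhedron and for the intersection of two faces, respectively.

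First I would treat the covering statement. By the explicit description of $C(\mathfrak{X}, \mathfrak{H})$ in Lemma~\ref{EquivRel}, and since the canonical map $\Delta(x) \to C(\mathfrak{X}, \mathfrak{H})$ is injective, every point of $C(\mathfrak{X}, \mathfrak{H})$ lies in the face $\Delta(x)$ for some $x \in \str(\mathcal{X}, \mathcal{H})$. Lemma~\ref{DisjointUnion}~(i) gives $\Delta(x) = \bigcup_{x \leq y} \Delta^{\circ}(y)$, so the point in question lies in some open face. Hence $C(\mathfrak{X}, \mathfrak{H})$ is the union of its open faces.

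For disjointness I would take $y, z \in \str(\mathcal{X}, \mathcal{H})$ with $\Delta^{\circ}(y) \cap \Delta^{\circ}(z) \neq \emptyset$ and aim to prove $y = z$. Choosing a point $p$ in this intersection, we have $p \in \Delta(y) \cap \Delta(z)$, so Corollary~\ref{FaceIntersection} yields an element $w \in \str(\mathcal{X}, \mathcal{H})$ with $y \leq w$ and $z \leq w$ such that $p \in \Delta(w)$. Now I would work inside the single canonical polyhedron $\Delta(y)$: since $y \leq w$, the face embedding $\iota_{w, y}$ exhibits $\Delta(w)$ as a face of $\Delta(y)$, and it meets the open face $\Delta^{\circ}(y)$ of $\Delta(y)$ in the point $p$. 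Applying Lemma~\ref{DisjointUnion}~(ii), with $\Delta(y)$ itself as the ambient canonical polyhedron and $\Delta(w)$, $\Delta(y)$ as the two faces, gives $w \leq y$, hence $w = y$ by antisymmetry of the partial order. The symmetric argument inside $\Delta(z)$ gives $w = z$, and therefore $y = z$, which proves that the open faces are pairwise disjoint.

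The only subtlety, and the reason Corollary~\ref{FaceIntersection} is needed here rather than Lemma~\ref{DisjointUnion} alone, is that $\str(\mathcal{X}, \mathcal{H})$ need not possess a least element, so Lemma~\ref{DisjointUnion}, which compares faces of one fixed canonical polyhedron, cannot be applied directly to two unrelated open faces of $C(\mathfrak{X}, \mathfrak{H})$. Corollary~\ref{FaceIntersection} repairs this by producing a common stratum $w$ above $y$ and $z$ through which the relevant faces factor, reducing everything to the local statement. Apart from this bookkeeping no further work is required.
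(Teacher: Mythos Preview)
Your proof is correct and follows essentially the same approach as the paper, which simply cites Lemma~\ref{DisjointUnion}~(i) and Corollary~\ref{FaceIntersection}. You spell out the disjointness argument more carefully, invoking part~(ii) of Lemma~\ref{DisjointUnion} explicitly (whereas the paper leaves this implicit), but the underlying idea---producing a common $w\geq y,z$ via Corollary~\ref{FaceIntersection} and then using the local disjointness inside a single canonical polyhedron---is the same.
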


\begin{proof}
	Use Lemma~\ref{DisjointUnion}~(i) and Corollary~\ref{FaceIntersection}. 
\end{proof}

\begin{example}
	Let $\mathfrak{X} := \mathfrak{S}(1, a) = \Spf(K^{\circ}\{T_0, T_1\}/(T_0T_1 - a))$ 
	for some $a \in K^{\circ \circ} \setminus \{0\}$. 
	Pick $b_1, b_2, b_3 \in K^{\circ} \setminus K^{\circ \circ}$ such that $b_2$ and $b_3$
	do not reduce to the same element in the residue field $\widetilde{K}$.
	We denote by $\mathfrak{H}_1$, $\mathfrak{H}_2$, $\mathfrak{H}_3$ the disjoint closed subschemes
	of $\mathfrak{X}$ defined by the ideals $(T_1 - b_1)$, $(T_0 - b_2)$, $(T_0 - b_3)$,
	respectively. Then consider $\mathfrak{H} := \mathfrak{H}_1 \cup \mathfrak{H}_2 \cup \mathfrak{H}_3$. 
	If we embed $\mathfrak{X}_{\eta}$ into the unit ball $B = \mathscr{M}(K\{T\})$ via $T \mapsto T_1$
	as an annulus of inner radius $|a|$ and outer radius $1$,
	we can describe $\mathfrak{H}_{\eta}$ as the three points of type $1$ corresponding 
	to the elements $b_1$, $ab^{-1}_2$, $ab^{-1}_3$ of $K^{\circ}$.
	The special fiber $\mathfrak{H}_s$ consists of $3$ points, namely the point $\mathfrak{H}_{1, s}$, which lies on $\{T_0 = 0\}$, 
	and the points $\mathfrak{H}_{2, s}$, $\mathfrak{H}_{3, s}$, which lie on $\{T_1 = 0\}$. 
	In the dual intersection complex $C(\mathfrak{X}, \mathfrak{H})$ 
	the infinite rays correspond to these three points.
	The line segment has length $\val(a)$ and is just the dual graph of $\mathfrak{X}_s$.  
	\begin{center}\includegraphics[scale=1]{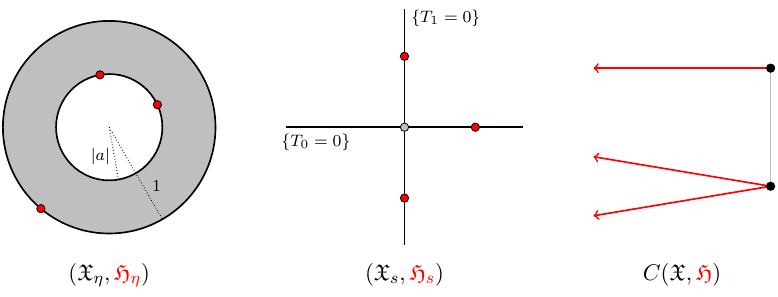}\end{center}
\end{example}

\begin{lemma} \label{IntersectionComplexMap}
	Let $(\mathfrak{X}, \mathfrak{H})$ be a strictly poly-stable pair and
	$\psi: \mathfrak{Y} \to \mathfrak{X}$ an \et morphism. Consider $\mathfrak{G} := \psi^{-1}(\mathfrak{H})$. 
	Then the induced morphism $\psi: (\mathfrak{Y}, \mathfrak{G}) \to (\mathfrak{X}, \mathfrak{H})$
	of strictly poly-stable pairs induces a continuous map $C(\psi): C(\mathfrak{Y}, \mathfrak{G}) \to C(\mathfrak{X}, \mathfrak{H})$
	of dual intersection complexes respecting faces and face embeddings.
	
	Moreover this construction is functorial, i.\,e. if $\psi': \mathfrak{Z} \to \mathfrak{Y}$ is another \et morphism,
	then $C(\psi \circ \psi') = C(\psi) \circ C(\psi')$.
\end{lemma}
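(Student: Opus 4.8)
The plan is the following. First, since $\mathfrak{G} = \psi^{-1}(\mathfrak{H})$ is the base change of $\mathfrak{H}$ along $\psi$, the morphism $\psi$ restricts to an \et morphism $\mathfrak{G} \to \mathfrak{H}$, so both $\psi_s \colon \mathfrak{Y}_s \to \mathfrak{X}_s$ and its restriction $\mathfrak{G}_s \to \mathfrak{H}_s$ are \et. By Proposition~\ref{StrataMap} each of these sends a stratum onto a stratum, and since a dominant morphism of irreducible schemes carries the generic point to the generic point, I obtain (using that $(\mathfrak{Y}_s, \mathfrak{G}_s)$ is well-stratified by Proposition~\ref{WellStrat}) a map $\Psi \colon \str(\mathfrak{Y}_s, \mathfrak{G}_s) \to \str(\mathfrak{X}_s, \mathfrak{H}_s)$, $y \mapsto \psi_s(y)$, which is order-preserving because $\psi_s(\overline{\{y'\}}) \subseteq \overline{\{\psi_s(y')\}}$.

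Second, for $y \in \str(\mathfrak{Y}_s, \mathfrak{G}_s)$ with $x := \Psi(y)$ I construct a homeomorphism $C(\psi)_y \colon \Delta(y) \xrightarrow{\sim} \Delta(x)$. Choose a building block $\varphi \colon (\mathfrak{U}, \mathfrak{H} \cap \mathfrak{U}) \to (\mathfrak{S}, \mathfrak{F})$ of $(\mathfrak{X}, \mathfrak{H})$ in $x$. As in the proof of Proposition~\ref{EtaleMorProp} there is an open neighbourhood $\mathfrak{U}'$ of $y$ inside $\psi^{-1}(\mathfrak{U})$ such that $\varphi' := \varphi \circ \psi \colon (\mathfrak{U}', \mathfrak{G} \cap \mathfrak{U}') \to (\mathfrak{S}, \mathfrak{F})$ is a building block of $(\mathfrak{Y}, \mathfrak{G})$ in $y$; it uses the \emph{same} standard pair, so $s_y = s_x$ and the combinatorial charts $C_\varphi$ in $x$ and $C_{\varphi'}$ in $y$ have the same underlying extended poly-simplex $[\mathbf{n}, \mathbf{r}, s_x]$. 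Composing the homeomorphisms $\Delta(\mathbf{n}, \mathbf{r}, s_x) \xrightarrow{\sim} \Delta(y)$ and $\Delta(\mathbf{n}, \mathbf{r}, s_x) \xrightarrow{\sim} \Delta(x)$ of Definition~\ref{CanPolyDef} gives $C(\psi)_y$. To see independence of $\varphi$, let $\varphi_1$ be a second building block in $x$ with lift $\varphi_1'$ in $y$. A direct inspection of the construction of $\alpha_\varphi$ in \ref{CombChartConstr} and of $\gamma_\varphi$ in Theorem~\ref{BijThm} shows that $\alpha_\varphi$ equals $\alpha_{\varphi'}$ followed by the isometric bijection $\irr(\mathfrak{Y}_s, y) \xrightarrow{\sim} \irr(\mathfrak{X}_s, x)$ of Proposition~\ref{EtaleMorProp}~(ii), and that $\gamma_\varphi$ equals $\gamma_{\varphi'}$ followed by the bijection $D_y \xrightarrow{\sim} D_x$ of Proposition~\ref{EtaleMorProp}~(iii); the same holds for $\varphi_1$, so $h_{C_\varphi, C_{\varphi_1}} = h_{C_{\varphi'}, C_{\varphi_1'}}$ in the sense of Proposition~\ref{ChartIsom} and the two candidate maps coincide.

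Third, I check compatibility with face embeddings. Let $y \leq y'$ in $\str(\mathfrak{Y}_s, \mathfrak{G}_s)$, so $x \leq x'$ with $x' := \Psi(y')$. Picking building blocks $\varphi$ in $x$, $\varphi_1$ in $x'$ and their lifts $\varphi'$ in $y$, $\varphi_1'$ in $y'$, the face embeddings $\iota_{x', x}$ and $\iota_{y', y}$ are the geometric realizations of the morphisms $\iota_{C_{\varphi_1}, C_\varphi}$ and $\iota_{C_{\varphi_1'}, C_{\varphi'}}$ (see \ref{FaceEmbDef}). The maps $\irr(\mathfrak{Y}_s, -) \xrightarrow{\sim} \irr(\mathfrak{X}_s, -)$ and $D_{-} \xrightarrow{\sim} D_{-}$ induced by $\psi$ at $y$ and at $y'$ are restrictions of one another, hence commute with the inclusions $\irr(-, y') \hookrightarrow \irr(-, y)$ and with the injections $j_{y', y}$, $j_{x', x}$ of Proposition~\ref{DivInjection}; combined with the compatibility just used for $\alpha$ and $\gamma$ this forces $\iota_{C_{\varphi_1}, C_\varphi} = \iota_{C_{\varphi_1'}, C_{\varphi'}}$, so $C(\psi)_y \circ \iota_{y', y} = \iota_{x', x} \circ C(\psi)_{y'}$. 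By the colimit description of the dual intersection complex in Lemma~\ref{EquivRel} the compatible family $(C(\psi)_y)_y$ descends to a continuous map $C(\psi) \colon C(\mathfrak{Y}, \mathfrak{G}) \to C(\mathfrak{X}, \mathfrak{H})$ which carries the face $\Delta(y)$ homeomorphically onto $\Delta(\Psi(y))$ and commutes with face embeddings.

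Finally, functoriality follows by the same mechanism: for $\psi' \colon \mathfrak{Z} \to \mathfrak{Y}$ \et with $\mathfrak{G}' := (\psi')^{-1}(\mathfrak{G}) = (\psi \circ \psi')^{-1}(\mathfrak{H})$, choosing for each $z \in \str(\mathfrak{Z}_s, \mathfrak{G}'_s)$ a building block $\varphi$ in $\psi_s(\psi'_s(z))$ together with the successive lifts $\varphi \circ \psi$ in $\psi'_s(z)$ and $\varphi \circ \psi \circ \psi'$ in $z$ — all sharing the same extended poly-simplex — one reads off $C(\psi)_{\psi'_s(z)} \circ C(\psi')_z = C(\psi \circ \psi')_z$ on each face $\Delta(z)$, and since $C(\mathfrak{Z}, \mathfrak{G}')$ is the union of its faces this yields $C(\psi) \circ C(\psi') = C(\psi \circ \psi')$. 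I expect the main obstacle to be precisely the bookkeeping in the second and third steps — verifying that matching building blocks upstairs and downstairs genuinely carry the same extended poly-simplex and that the identifications $\alpha$, $\gamma$, the isometries of $\irr$, the bijections of the sets $D$ and the maps $j$ all fit into commuting squares — but everything needed is supplied by Propositions~\ref{EtaleMorProp}, \ref{RespDivCoord} and \ref{DivInjection} together with the construction of combinatorial charts in \ref{CombChartConstr}, so no genuinely new idea is required.
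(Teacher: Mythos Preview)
Your proposal is correct and follows essentially the same route as the paper: use Proposition~\ref{EtaleMorProp} to obtain the isometric bijection $\irr(\mathfrak{Y}_s, y) \xrightarrow{\sim} \irr(\mathfrak{X}_s, x)$ and the bijection $D_y \xrightarrow{\sim} D_x$, observe via the relation $\alpha_\varphi = \alpha \circ \alpha_{\varphi'}$ (and its analogue for $\gamma$) that lifted building blocks give matching combinatorial charts, define the face-wise homeomorphisms $\Delta(y) \xrightarrow{\sim} \Delta(x)$, check compatibility with face embeddings, and glue. The paper's argument is considerably terser (it simply asserts that $\alpha$ and $\delta$ ``give rise to'' the homeomorphism and that compatibility and functoriality are clear), whereas you spell out the independence of the building block and the commuting-square verification for face embeddings explicitly; but the underlying mechanism is identical.
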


\begin{proof}
	We continue to use the notations from Proposition \ref{EtaleMorProp} and its proof.
	We have already seen there that $\psi$ induces a bijection $\delta: D_y \xrightarrow{\sim} D_x$.
	In our situation we choose $y \in \str(\mathfrak{Y}_s, \mathfrak{G}_s)$ and therefore 
	$x = \psi(y) \in \str(\mathfrak{X}_s, \mathfrak{H}_s)$. 
	Moreover $\psi$ induces an isometric bijection $\alpha: \irr(\mathfrak{Y}_s, y) \xrightarrow{\sim} \irr(\mathfrak{X}_s, x)$
	such that $\alpha_{\varphi} = \alpha \circ \alpha_{\varphi'}$.
	One now easily sees that $\alpha$ and $\delta$ give rise to a homeomorphism $h_{y, x}: \Delta(y) \xrightarrow{\sim} \Delta(x)$.
	If we have $y' \in \str(\mathfrak{Y}_s, \mathfrak{G}_s)$ with $y' \leq y$, then 
	$x' := \psi(y') \in \str(\mathfrak{X}_s, \mathfrak{H}_s)$ satisfies $x' \leq x$ and
	$h_{y', x'} \circ \iota_{y, y'} = \iota_{x, x'} \circ h_{y, x}$.
	In conclusion we obtain a continuous map $C(\psi): C(\mathfrak{Y}, \mathfrak{G}) \to C(\mathfrak{X}, \mathfrak{H})$,
	which is on the faces given by our constructed homeomorphisms $h_{y, x}$. 
	The functoriality is clear.  
\end{proof}

%---------------------------------------------------------------------------------------------------------------------------------------------------------------------------

\subsection{The general case}

Let now $(\mathfrak{X}, \mathfrak{H})$ be a poly-stable pair.
We choose a strictly poly-stable pair $(\mathfrak{Y}, \mathfrak{G})$ 
and a surjective \et morphism $\psi: (\mathfrak{Y}, \mathfrak{G}) \to (\mathfrak{X}, \mathfrak{H})$.

With the help of the coequalizer we define the dual intersection complex using the one from the strict case.
Up to a canonical homeomorphism it turns out to be independent of the choice of $(\mathfrak{Y}, \mathfrak{G})$ 
and $\psi$. Moreover we study its structure and give an example.

\begin{definition}
	Let $f:Z \to Y$ and $g:Z \to Y$ be two continuous maps of topological spaces. Then we define the \emph{coequalizer}
	of $f$ and $g$, denoted by
	\[\coker(Z \rightrightarrows Y),\]
	as the quotient space $Y/{\sim}$, where $\sim$ is the equivalence relation on $Y$ identifying
	two elements $y, y' \in Y$ iff there exists an element $z \in Z$ such that $f(z) = y$ and $g(z) = y'$.  
\end{definition}

\begin{definition} 
	We consider the fiber product $\mathfrak{Z} := \mathfrak{Y} \times_{\mathfrak{X}} \mathfrak{Y}$ via $\psi$
	and the two canonical projections $p_1$ and $p_2$ from $\mathfrak{Z}$ to $\mathfrak{Y}$.
	Let us denote $\mathfrak{F} := p^{-1}_1(\mathfrak{G}) = p^{-1}_2(\mathfrak{G})$, which we 
	see as a closed subscheme of $\mathfrak{Z}$.
	Applying Lemma \ref{IntersectionComplexMap} yields us two continuous maps $C(p_1)$ and $C(p_2)$
	from $C(\mathfrak{Z}, \mathfrak{F})$ to $C(\mathfrak{Y}, \mathfrak{G})$.
	
	We now define the \emph{dual intersection complex} $C(\mathfrak{X}, \mathfrak{H})$ as the coequalizer
	\[\coker(C(\mathfrak{Z}, \mathfrak{F}) \rightrightarrows C(\mathfrak{Y}, \mathfrak{G}))\]
	with respect to $C(p_1)$ and $C(p_2)$. 
	There is a canonical projection $\pi: C(\mathfrak{Y}, \mathfrak{G}) \to C(\mathfrak{X}, \mathfrak{H})$
	by which $C(\mathfrak{X}, \mathfrak{H})$ inherits the quotient topology.   	
	
	We define a \emph{face} resp. \emph{open face} of $C(\mathfrak{X}, \mathfrak{H})$ to be the image of 
	a face resp. open face of $C(\mathfrak{Y}, \mathfrak{G})$ under $\pi$. 	
\end{definition}

\begin{lemma} \label{StrataFaceCorresp}
	Each of the projections $p_1$ and $p_2$ induces a morphism $\str(\mathfrak{Z}_s) \to \str(\mathfrak{Y}_s)$
	resp. $\str(\mathfrak{F}_s) \to \str(\mathfrak{G}_s)$ of partially ordered sets, see Proposition \ref{StrataMap}.
	Then the morphism $\psi$ induces isomorphisms of partially ordered sets
	\[\coker(\str(\mathfrak{Z}_s) \rightrightarrows \str(\mathfrak{Y}_s)) \xrightarrow{\sim} \str(\mathfrak{X}_s) 
	\text{\quad and \quad} \coker(\str(\mathfrak{F}_s) \rightrightarrows \str(\mathfrak{G}_s)) \xrightarrow{\sim} \str(\mathfrak{H}_s).\]
	Moreover every pair $x \leq x'$ in $\str(\mathfrak{X}_s, \mathfrak{H}_s)$ comes from a pair
	$y \leq y'$ in $\str(\mathfrak{Y}_s, \mathfrak{G}_s)$.
\end{lemma}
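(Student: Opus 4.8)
The plan is to deduce both isomorphisms from the interplay between \et morphisms and the stratification, using Proposition \ref{StrataMap} and flatness of $\psi_s$. Since one checks readily that $\mathfrak{F}_s = \mathfrak{G}_s \times_{\mathfrak{H}_s} \mathfrak{G}_s$ and that $\psi_s$ restricts to a surjective \et morphism $\mathfrak{G}_s \to \mathfrak{H}_s$, the second isomorphism follows from the first by an identical argument, so I would concentrate on $\coker(\str(\mathfrak{Z}_s) \rightrightarrows \str(\mathfrak{Y}_s)) \to \str(\mathfrak{X}_s)$. Throughout I would use that $\mathfrak{Z}_s = \mathfrak{Y}_s \times_{\mathfrak{X}_s} \mathfrak{Y}_s$, that $p_{1,s}$ and $p_{2,s}$ are \etNS, being base changes of $\psi_s$, that $\psi_s$ is surjective and \etNS, hence flat, and the elementary fact that an irreducible scheme which is \et over an integral scheme has a one-point generic fibre.

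First I would observe that, by the last fact together with Proposition \ref{StrataMap}, $\psi_s$ sends the generic point of a stratum of $\mathfrak{Y}_s$ to the generic point of a stratum of $\mathfrak{X}_s$; combined with surjectivity of $\psi_s$ and disjointness of strata this shows that the induced map $q \colon \str(\mathfrak{Y}_s) \to \str(\mathfrak{X}_s)$ of partially ordered sets is surjective, and clearly $q \circ p_{1,s} = q \circ p_{2,s}$ since $\psi \circ p_1 = \psi \circ p_2$. The universal property of the coequalizer in the category of partially ordered sets (whose underlying set is the set-theoretic coequalizer, carrying the order generated by that of $\str(\mathfrak{Y}_s)$) then provides a monotone map $\bar q$ from $\coker(\str(\mathfrak{Z}_s) \rightrightarrows \str(\mathfrak{Y}_s))$ to $\str(\mathfrak{X}_s)$, and the task is to build a monotone inverse.

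The key step, and the one I expect to be the main obstacle, is to show that for every $x \in \str(\mathfrak{X}_s)$ the fibre $q^{-1}(x)$ is a single equivalence class of the coequalizer. Given $y_1, y_2 \in q^{-1}(x)$, the residue fields $\kappa(y_1)$ and $\kappa(y_2)$ are extensions of $\kappa(x)$, so $\kappa(y_1) \otimes_{\kappa(x)} \kappa(y_2) \neq 0$, which produces a point $w$ of $\mathfrak{Z}_s$ with $p_1(w) = y_1$ and $p_2(w) = y_2$; since $\psi_s \circ p_1$ is \et and maps $w$ to the generic point $x$ of a stratum, $w$ is itself the generic point of a stratum $W$ of $\mathfrak{Z}_s$, whose images under $p_{1,s}$ and $p_{2,s}$ are the strata of $y_1$ and $y_2$. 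Hence $y_1$ and $y_2$ are identified in the coequalizer, already in a single step. Conversely such an elementary identification preserves the $q$-image, so $q^{-1}(x)$ is exactly one class, and $\bar q$ is bijective on underlying sets.

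Finally I would prove that the set-theoretic inverse of $\bar q$ is monotone, which is precisely the last assertion of the lemma. Given $x \le x'$ in $\str(\mathfrak{X}_s)$, pick $y \in \str(\mathfrak{Y}_s)$ with $q(y) = x$ and use going-down for the flat map $\psi_s$ to lift the generization $x'$ of $x$ to a generization $y'$ of $y$ with $\psi_s(y') = x'$; the one-point-generic-fibre fact shows $y'$ is again the generic point of a stratum, so $y \le y'$ in $\str(\mathfrak{Y}_s)$ lies over $x \le x'$, giving $q^{-1}(x) \le q^{-1}(x')$ in the coequalizer. This makes $\bar q$ an isomorphism of partially ordered sets (and en passant shows the induced relation on the coequalizer is antisymmetric). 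For the divisor version and for the pair statement in the last sentence, one runs these arguments for $\psi_s$ and $\psi_s|_{\mathfrak{G}_s}$ simultaneously; the only extra remark needed is that, by well-stratifiedness of $(\mathfrak{X}_s, \mathfrak{H}_s)$ and $(\mathfrak{Y}_s, \mathfrak{G}_s)$ (Proposition \ref{WellStrat}) together with $\psi_s^{-1}(\mathfrak{H}_s) = \mathfrak{G}_s$, the property of a stratum generic point of lying in the divisor is both preserved and reflected by $\psi_s$, so that all the chosen preimages automatically land in the correct one of $\str(\mathfrak{Y}_s)$ or $\str(\mathfrak{G}_s)$.
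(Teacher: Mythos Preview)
Your argument is correct and complete. The main difference from the paper is that the paper outsources the two coequalizer isomorphisms to \cite[Corollary~2.8]{Ber99} and then, for the lifting of relations in the mixed case $x \in \str(\mathfrak{H}_s)$, $x' \in \str(\mathfrak{X}_s)$, uses the topological fact that closure commutes with preimage under the open map $\psi_s$ to find $y'$ with $y \in \overline{\{y'\}}$. You instead give a self-contained argument: you identify the fibres of $q$ with single equivalence classes via a fibre-product point, and you lift every relation $x \le x'$ uniformly by going-down for the flat map $\psi_s$, then force the lift $y'$ to be a stratum generic point via the dimension argument for \et fibres. Your route avoids the external citation and the case split, at the cost of redoing what \cite[Corollary~2.8]{Ber99} already packages; the paper's route is shorter on the page but less transparent about why the poset structure on the coequalizer is the expected one. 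Both approaches rely on Proposition~\ref{StrataMap} in the same way, and your remark about well-stratifiedness ensuring that preimages land in the correct part of $\str(\mathfrak{Y}_s, \mathfrak{G}_s)$ is exactly the bookkeeping the paper leaves implicit.
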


\begin{proof}
	The first part is an immediate application of \cite[Corollary 2.8]{Ber99}.
	
	The remaining claim is evident from the first one in the case 
	$x, x' \in \str(\mathfrak{X}_s)$ or $x, x' \in \str(\mathfrak{H}_s)$. 
	So let us consider the case $x \in \str(\mathfrak{H}_s)$ and $x' \in \str(\mathfrak{X}_s)$.
	The preimage of the stratum corresponding to $x$ resp. $x'$ under $\psi_s$
	is a non-empty strata subset $S$ resp. $T$ of $\mathfrak{G}_s$ resp. $\mathfrak{Y}_s$. 
	Pick any stratum of $\mathfrak{G}_s$ which lies over the stratum corresponding to $x$
	and let $y$ be its generic point. Since taking the topological closure is compatible
	with taking preimage under an open continuous map, we conclude from $x \leq x'$ and $y \in S$ 
	that $y$ is contained in the closure of $T$. But this means that $y \in \overline{\{y'\}}$
	for some $y' \in \str(\mathfrak{Y}_s)$ whose associated stratum lies over the 
	associated stratum of $x'$. 
\end{proof}

\begin{proposition} \label{FaceProp}
	Let $y \in \str(\mathfrak{Y}_s, \mathfrak{G}_s)$.
	Then the preimage of $\pi(\Delta^{\circ}(y))$ resp. $\pi(\Delta(y))$ under $\pi$ 
	is equal to the union of all $\Delta^{\circ}(y')$ resp. $\Delta(y')$ with 
	$y' \in \str(\mathfrak{Y}_s, \mathfrak{G}_s)$ such that $\psi_s(y) = \psi_s(y')$.
	In particular the open faces of $C(\mathfrak{Y}, \mathfrak{G})$ are disjoint. 
\end{proposition}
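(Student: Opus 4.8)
The plan is to analyse the relation on $C(\mathfrak{Y},\mathfrak{G})$ that defines the coequalizer. Write $R_0$ for the relation ``$a\,R_0\,b$ iff there is $c\in C(\mathfrak{Z},\mathfrak{F})$ with $C(p_1)(c)=a$ and $C(p_2)(c)=b$''. Since $\psi$ is unramified, the diagonal $\mathfrak{Y}\hookrightarrow\mathfrak{Z}$ and the flip of $\mathfrak{Z}$ are étale, so $R_0$ is reflexive and symmetric, and $\pi$ is the quotient map for the equivalence relation generated by $R_0$; in particular, for any $A$, $\pi^{-1}(\pi(A))$ is the smallest $R_0$-stable subset containing $A$. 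Two facts are used throughout: by the construction in the proof of Lemma~\ref{IntersectionComplexMap}, for every $z\in\str(\mathfrak{Z}_s,\mathfrak{F}_s)$ the map $C(p_i)$ carries $\Delta(z)$ homeomorphically onto $\Delta(p_{i,s}(z))$ and $\Delta^\circ(z)$ onto $\Delta^\circ(p_{i,s}(z))$; and $\psi\circ p_1=\psi\circ p_2$, so $\pi\circ C(p_1)=\pi\circ C(p_2)$.

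The combinatorial heart is the claim: if $y,y'\in\str(\mathfrak{Y}_s,\mathfrak{G}_s)$ satisfy $\psi_s(y)=\psi_s(y')$, then there is $z\in\str(\mathfrak{Z}_s,\mathfrak{F}_s)$ with $p_{1,s}(z)=y$ and $p_{2,s}(z)=y'$. To prove it, set $x:=\psi_s(y)$ and let $S_y,S_{y'},S_x$ be the corresponding strata. The set $S_y\times_{S_x}S_{y'}=p_{1,s}^{-1}(S_y)\cap p_{2,s}^{-1}(S_{y'})$ is an intersection of two strata subsets of $\mathfrak{Z}_s$ (Proposition~\ref{StrataMap}), hence a strata subset; it is non-empty since the images of the étale maps $S_y\to S_x$ and $S_{y'}\to S_x$ are non-empty open subsets of the irreducible $S_x$, so they meet. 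Any stratum $T_z\subseteq S_y\times_{S_x}S_{y'}$ has $p_{i,s}(z)$ lying in $S_y$ resp.\ $S_{y'}$, so (Proposition~\ref{StrataMap} again) $p_{1,s}(z)=y$ and $p_{2,s}(z)=y'$. Here $(\mathfrak{Z},\mathfrak{F})$ is strictly poly-stable (Proposition~\ref{EtaleMorProp}) so $(\mathfrak{Z}_s,\mathfrak{F}_s)$ is well-stratified (Proposition~\ref{WellStrat}), which places $z$ in $\str(\mathfrak{Z}_s,\mathfrak{F}_s)$; the case $y\in\str(\mathfrak{G}_s)$ is handled identically using $\mathfrak{F}_s=\mathfrak{G}_s\times_{\mathfrak{H}_s}\mathfrak{G}_s$, and by well-stratifiedness of $(\mathfrak{X}_s,\mathfrak{H}_s)$ exactly one of the two cases occurs.

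With this, the open-face assertion is quick. Put $W:=\bigcup_{\psi_s(y')=\psi_s(y)}\Delta^\circ(y')$. For ``$\supseteq$'': given such a $y'$, pick $z$ as above; $C(p_2)$ maps $\Delta^\circ(z)$ onto $\Delta^\circ(y')$, so any $b\in\Delta^\circ(y')$ equals $C(p_2)(c)$ for some $c\in\Delta^\circ(z)$, and $a:=C(p_1)(c)\in\Delta^\circ(y)$ satisfies $\pi(a)=\pi(b)$. For ``$\subseteq$'' it suffices to see $W$ is $R_0$-stable: if $b\in\Delta^\circ(y')\subseteq W$ and $b\,R_0\,b'$ via $c\in\Delta^\circ(z)$ with $C(p_1)(c)=b$ (the symmetric case is analogous), then $b\in\Delta^\circ(p_{1,s}(z))\cap\Delta^\circ(y')$, and disjointness of the open faces of $C(\mathfrak{Y},\mathfrak{G})$ (Corollary~\ref{OpenFacesUnion}) forces $p_{1,s}(z)=y'$; hence $b'=C(p_2)(c)\in\Delta^\circ(p_{2,s}(z))$ with $\psi_s(p_{2,s}(z))=\psi_s(p_{1,s}(z))=\psi_s(y)$, so $b'\in W$. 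The ``in particular'' then follows: two open faces of $C(\mathfrak{X},\mathfrak{H})$ that meet arise from open faces of $C(\mathfrak{Y},\mathfrak{G})$ with equal $\psi_s$-image, hence coincide.

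For $\Delta(y)$, the inclusion $\bigcup_{\psi_s(y')=\psi_s(y)}\Delta(y')\subseteq\pi^{-1}(\pi(\Delta(y)))$ is proved just as ``$\supseteq$'' above, now using the homeomorphism $C(p_2)\colon\Delta(z)\xrightarrow{\sim}\Delta(y')$. \textbf{The main obstacle is the reverse inclusion.} Decomposing $\Delta(y)=\bigsqcup_{y\le w}\Delta^\circ(w)$ (Lemma~\ref{DisjointUnion}) and using the open-face case, it reduces to the following: whenever $w\ge y$ and $\psi_s(w')=\psi_s(w)$, some $y'\le w'$ has $\psi_s(y')=\psi_s(y)$. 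This is an ``upper'' lifting of the partial order along $\psi$, complementary to the ``lower'' lifting contained in the proof of Lemma~\ref{StrataFaceCorresp}, and it does not follow from surjectivity of $\psi$ alone. I would derive it either from the local model — near the generic point of a stratum a building block identifies the situation with a standard pair, whose stratum poset maps isomorphically (cf.\ \cite[\S\,2]{Ber99}) — or from the properness of the maps $C(p_i)$, which makes $\pi$ a closed map, so that $\pi^{-1}(\pi(\Delta(y)))=\overline{\pi^{-1}(\pi(\Delta^\circ(y)))}=\overline{W}=\bigcup_{\psi_s(y')=\psi_s(y)}\Delta(y')$, using that $\Delta^\circ(y)$ is dense in $\Delta(y)$ and that the faces form a locally finite family. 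Either route reduces the $\Delta(y)$ statement to the already established open-face statement.
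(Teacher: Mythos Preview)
Your open-face argument is correct and amounts to an explicit unpacking of the paper's citation of Lemma~\ref{StrataFaceCorresp}, Corollary~\ref{OpenFacesUnion} and Lemma~\ref{IntersectionComplexMap}; the ``combinatorial heart'' you isolate is exactly the coequalizer statement of Lemma~\ref{StrataFaceCorresp}, and the $R_0$-stability check for $W$ is clean. The ``in particular'' (which should refer to $C(\mathfrak{X},\mathfrak{H})$) then follows as you say.

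For the closed-face assertion your caution is well placed: the ``upper lifting'' you isolate is \emph{false} in general, and with it the closed-face equality as literally stated. Take $\mathfrak{H}=\emptyset$, $\mathfrak{X}=\mathfrak{S}(1,a)$ with $a\in K^{\circ\circ}\setminus\{0\}$, and $\mathfrak{Y}=\mathfrak{X}\sqcup\mathfrak{T}$ where $\mathfrak{T}=\{T_0\ne 0\}\subset\mathfrak{X}$ (so $\mathfrak{T}\cong\mathfrak{T}(1,1)$ by \ref{RemoveCoord}), with $\psi=(\id,\text{inclusion})$. Writing $n$ for the node in the first component of $\mathfrak{Y}_s$ and $\ell'_0$ for the unique stratum of $\mathfrak{T}_s$, we have $\psi_s(\ell'_0)\ge\psi_s(n)$ but no $y'\le\ell'_0$ lies over $\psi_s(n)$; correspondingly $\pi^{-1}(\pi(\Delta(n)))=C(\mathfrak{Y},\emptyset)=\Delta(n)\sqcup\Delta(\ell'_0)$ strictly contains $\bigcup_{\psi_s(y')=\psi_s(n)}\Delta(y')=\Delta(n)$. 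Thus neither of your two routes can close the gap --- route~2 fails exactly at the step $\pi^{-1}(\overline{\pi(\Delta^\circ(y))})=\overline{\pi^{-1}(\pi(\Delta^\circ(y)))}$, which this example violates --- and the paper's ``easy consequence using Lemma~\ref{DisjointUnion}'' glosses over the same point. What \emph{does} follow from the open-face case together with Lemma~\ref{DisjointUnion}~(i) and order-preservation of $\psi_s$, and what suffices for every use in Lemma~\ref{FaceFacts}, is the weaker
\[
\pi^{-1}\bigl(\pi(\Delta(y))\bigr)\;=\;\bigcup_{\psi_s(y')\ge\psi_s(y)}\Delta(y').
\]
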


\begin{proof}
	The claim for the open faces $\pi(\Delta^{\circ}(y))$ follows from Lemma~\ref{StrataFaceCorresp}, Corollary~\ref{OpenFacesUnion}
	and Lemma~\ref{IntersectionComplexMap}. The assertion about the faces $\pi(\Delta(y))$ is then an easy consequence
	using Lemma~\ref{DisjointUnion}. 
\end{proof}

\begin{lemma} \label{FaceFacts}
	We state some fundamental properties of faces of the dual intersection complex:
	\begin{enumerate}
		\item Every face of $C(\mathfrak{X}, \mathfrak{H})$ is a closed subset of $C(\mathfrak{X}, \mathfrak{H})$.
		\item There is a bijective correspondence between the faces resp. open faces of $C(\mathfrak{X}, \mathfrak{H})$
		and the strata of $(\mathfrak{X}_s, \mathfrak{H}_s)$. For every $x \in \str(\mathfrak{X}_s, \mathfrak{H}_s)$
		we will denote by $\Delta(x)$ resp. $\Delta^{\circ}(x)$ the corresponding face resp. open face. 
		Then $\Delta^{\circ}(x)$ is contained in $\Delta(x)$ as an open subset. 
		\item For $x, x' \in \str(\mathfrak{X}_s, \mathfrak{H}_s)$ with $x \leq x'$
		we have an inclusion $\iota_{x', x}: \Delta(x') \hookrightarrow \Delta(x)$,
		which we call a \emph{face embedding}.
		More explicitly, there exist $y, y' \in \str(\mathfrak{Y}_s, \mathfrak{G}_s)$ with $x := \psi_s(y)$,
		$x' := \psi_s(y')$ and $y \leq y'$. Then $\iota_{x', x}$ is induced by $\iota_{y', y}$ via $\pi$
		and $\iota_{x', x}$ does not depend on the choice of $y, y'$.
		
		If $x'' \in \str(\mathfrak{X}_s, \mathfrak{H}_s)$ with $x' \leq x''$ we have
		$\iota_{x'', x} = \iota_{x', x} \circ \iota_{x'', x'}$. We use the face embeddings to 
		consider $\Delta(x')$ as a subspace of $\Delta(x)$. 
		\item The restriction of $\pi$ to an open face of $C(\mathfrak{Y}, \mathfrak{G})$ 
		induces a homeomorphism onto an open face of $C(\mathfrak{X}, \mathfrak{H})$. 
	\end{enumerate}
\end{lemma}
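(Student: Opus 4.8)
The strategy is to carry out everything on the strict cover $(\mathfrak{Y},\mathfrak{G})$ and transport it along $\pi$. Write $x:=\psi_s(y)$ for $y\in\str(\mathfrak{Y}_s,\mathfrak{G}_s)$ and $P_x$ for the fibre of the surjection $\str(\mathfrak{Y}_s,\mathfrak{G}_s)\twoheadrightarrow\str(\mathfrak{X}_s,\mathfrak{H}_s)$ of Lemma~\ref{StrataFaceCorresp} over $x$. Several preliminaries are used throughout. Since forming special fibres commutes with fibre products, $\mathfrak{Z}_s=\mathfrak{Y}_s\times_{\mathfrak{X}_s}\mathfrak{Y}_s$ and $\mathfrak{F}_s=\mathfrak{G}_s\times_{\mathfrak{H}_s}\mathfrak{G}_s$; the projections $p_1,p_2$ are étale (base change of $\psi$) and, applying Proposition~\ref{EtaleMorProp}~(i) to $p_1\colon(\mathfrak{Z},\mathfrak{F})\to(\mathfrak{Y},\mathfrak{G})$, the pair $(\mathfrak{Z},\mathfrak{F})$ is strictly poly-stable. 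By Lemma~\ref{IntersectionComplexMap}, $C(p_1)$ and $C(p_2)$ restrict on each face $\Delta(z)$ to homeomorphisms $h^{(p_i)}_{z,p_{i,s}(z)}\colon\Delta(z)\xrightarrow{\sim}\Delta(p_{i,s}(z))$ onto faces, hence also between the corresponding open faces. By Proposition~\ref{StrataMap} and the description of the strata of a fibre product lying over a stratum (which is smooth, Proposition~\ref{StrataFormSingle}) as the connected components of the preimage, for any $y,y'\in P_x$ there is $z\in\str(\mathfrak{Z}_s,\mathfrak{F}_s)$ with $p_{1,s}(z)=y$, $p_{2,s}(z)=y'$; for the same reason $y\leq y'$ with $\psi_s(y)=\psi_s(y')$ forces $y=y'$, so distinct elements of $P_x$ are incomparable. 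Finally, as $\psi_s$ is open, $\psi_s^{-1}(\overline{\{x'\}})=\overline{\psi_s^{-1}(\{x'\})}$, so for every $x'\geq x$ and every $y\in P_x$ there is $y'\geq y$ with $\psi_s(y')=x'$.

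The crux — and the step I expect to be the main obstacle — is the following rigidity: \emph{if $z\in\str(\mathfrak{Z}_s,\mathfrak{F}_s)$ satisfies $p_{1,s}(z)=p_{2,s}(z)=y$, then $\theta_z:=h^{(p_2)}_{z,y}\circ(h^{(p_1)}_{z,y})^{-1}$ is the identity of $\Delta(y)$.} Now $\theta_z$ is determined by the two automorphisms $(p_{2,s})_*\circ(p_{1,s})_*^{-1}$ of $\irr(\mathfrak{Y}_s,y)$ and of $D_y$ (via Proposition~\ref{IrrIsometric} and Proposition~\ref{DivInjection}), and I claim both are trivial. Since $(\mathfrak{Z},\mathfrak{F})$ and $(\mathfrak{Y},\mathfrak{G})$ are strictly poly-stable, the irreducible components of $\mathfrak{Z}_s$ at $z$ and of $\mathfrak{Y}_s$ at $y$ are exactly the analytic branches, i.e. the minimal primes of the completed local rings; the local-étale extension $\hat{\mathcal{O}}_{\mathfrak{X}_s,x}\to\hat{\mathcal{O}}_{\mathfrak{Y}_s,y}$ is residually a simple extension by an irreducible polynomial, hence induces a bijection on minimal primes; and $\hat{\mathcal{O}}_{\mathfrak{Z}_s,z}$, being a direct factor of $\hat{\mathcal{O}}_{\mathfrak{Y}_s,y}\widehat{\otimes}_{\hat{\mathcal{O}}_{\mathfrak{X}_s,x}}\hat{\mathcal{O}}_{\mathfrak{Y}_s,y}$, receives $p_{1,s}^{\#}$ and $p_{2,s}^{\#}$ differing by an $\hat{\mathcal{O}}_{\mathfrak{X}_s,x}$-automorphism, which therefore fixes each branch; so $(p_{1,s})_*$ and $(p_{2,s})_*$ agree on $\irr(\mathfrak{Z}_s,z)$, and likewise on $D_y$ by the same argument applied to $\mathfrak{F}_s=\mathfrak{G}_s\times_{\mathfrak{H}_s}\mathfrak{G}_s$ (or directly from Proposition~\ref{RespDivCoord}). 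Hence $\theta_z=\mathrm{id}$. Combined with the fact that the generating family $\{(C(p_1)(w),C(p_2)(w)):w\in C(\mathfrak{Z},\mathfrak{F})\}$ of the coequalizer relation is \emph{already} an equivalence relation — reflexivity from the diagonal $\delta\colon\mathfrak{Y}\hookrightarrow\mathfrak{Z}$ with $C(p_i)\circ C(\delta)=\mathrm{id}$, symmetry from the swap of $\mathfrak{Z}$, transitivity from $\mathfrak{Z}\times_{\mathfrak{Y}}\mathfrak{Z}=\mathfrak{Y}\times_{\mathfrak{X}}\mathfrak{Y}\times_{\mathfrak{X}}\mathfrak{Y}$, all using functoriality of $C(-)$ and injectivity of $C(p_i)$ on open faces — this yields: $\pi(a)=\pi(b)$ iff some $w$ has $C(p_1)(w)=a$, $C(p_2)(w)=b$; if in addition $a,b$ lie in a common open face then $a=b$. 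It also follows that for $y,y'\in P_x$ all $z$ over $(y,y')$ induce the \emph{same} homeomorphism $\theta_{y,y'}\colon\Delta^\circ(y)\xrightarrow{\sim}\Delta^\circ(y')$, and the $\theta_{y,y'}$ form a transitive equivalence-relation groupoid with $\theta_{y,y}=\mathrm{id}$.

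The four assertions now follow. \textbf{(i)} By Proposition~\ref{FaceProp}, $\pi^{-1}(\pi(\Delta(y)))=\bigcup_{y'\in P_x}\Delta(y')$, a locally finite union of closed sets, hence closed; as $\pi$ is a quotient map, $\pi(\Delta(y))$ is closed. \textbf{(ii)} By Corollary~\ref{OpenFacesUnion} and surjectivity of $\pi$ the $\pi(\Delta^\circ(y))$ cover $C(\mathfrak{X},\mathfrak{H})$; by Proposition~\ref{FaceProp} they are pairwise disjoint; and $\pi(\Delta^\circ(y))=\pi(\Delta^\circ(y'))$ iff $\psi_s(y)=\psi_s(y')$ (one implication from the $z$'s and $\pi C(p_1)=\pi C(p_2)$, the other from disjointness and Proposition~\ref{FaceProp}), so by Lemma~\ref{StrataFaceCorresp} the open faces correspond bijectively to $\str(\mathfrak{X}_s,\mathfrak{H}_s)$; writing $\Delta(y)=\bigsqcup_{y\leq y''}\Delta^\circ(y'')$ (Lemma~\ref{DisjointUnion}) and using the last preliminary shows $\pi(\Delta(y))$ is the union of the open faces indexed by the up-set of $\psi_s(y)$, so faces too correspond bijectively to $\str(\mathfrak{X}_s,\mathfrak{H}_s)$, and $\Delta^\circ(x)$ is open in $\Delta(x)$ (near a point of $\Delta^\circ(y_0)$ inside $\bigcup_{y'\in P_x}\Delta(y')$ the only incident face is $\Delta(y_0)$, by incomparability of $P_x$). \textbf{(iii)} Given $x\leq x'$, choose $y\leq y'$ over them (Lemma~\ref{StrataFaceCorresp}) and let $\iota_{x',x}$ be the map induced by $\iota_{y',y}$ under $\pi$; independence of the choice, injectivity, and the cocycle identity $\iota_{x'',x}=\iota_{x',x}\circ\iota_{x'',x'}$ follow from Lemma~\ref{IntersectionComplexMap} together with the rigidity above. \textbf{(iv)} By Proposition~\ref{FaceProp}, $\pi^{-1}(\Delta^\circ(x))=\bigsqcup_{y'\in P_x}\Delta^\circ(y')$ is saturated and open in the closed set $\bigcup_{y'\in P_x}\Delta(y')$, so $\pi$ restricts to a quotient map from it onto $\Delta^\circ(x)$; as the induced relation identifies $\Delta^\circ(y')$ with $\Delta^\circ(y'')$ via the unique $\theta_{y',y''}$ and has trivial isotropy, the quotient is homeomorphic to $\Delta^\circ(y)$, giving $\pi|_{\Delta^\circ(y)}\colon\Delta^\circ(y)\xrightarrow{\sim}\Delta^\circ(x)$.
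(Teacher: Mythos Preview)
Your strategy for (i) and (ii) is correct and essentially the paper's own (terse) argument via Proposition~\ref{FaceProp}, carried out with more care. The problem is your rigidity claim, on which your proofs of (iii) and especially (iv) rest. Take $K$ of residue characteristic $\neq 2$ with $\widetilde{K}=\mathbb{R}$, set $\mathfrak{X}=\Spf(K^\circ\{x,y\}/(x^2+y^2-a))$ for some $a\in K^{\circ\circ}\setminus\{0\}$, and let $\mathfrak{Y}=\mathfrak{X}\times_{\Spf K^\circ}\Spf(K^\circ[T]/(T^2+1))$. The substitution $u=x+Ty$, $v=x-Ty$ exhibits $\mathfrak{Y}$ as \'etale over $\mathfrak{S}(1,a)$, so $\mathfrak{Y}$ is strictly poly-stable and $\psi\colon\mathfrak{Y}\to\mathfrak{X}$ is surjective \'etale. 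Here $\hat{\mathcal{O}}_{\mathfrak{X}_s,0}\cong\mathbb{R}[[x,y]]/(x^2+y^2)$ is a \emph{domain} (one branch) while $\hat{\mathcal{O}}_{\mathfrak{Y}_s,0}\cong\mathbb{C}[[u,v]]/(uv)$ has two branches, so your assertion that the \'etale extension ``induces a bijection on minimal primes'' already fails. Moreover $\mathfrak{Z}=\mathfrak{Y}\times_{\mathfrak{X}}\mathfrak{Y}$ has a component on which $p_1$ and $p_2$ differ by the conjugation $T\mapsto-T$, which swaps $u\leftrightarrow v$ and hence the two irreducible components of $\mathfrak{Y}_s$ through the node $y$. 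For the corresponding $z$ one has $p_{1,s}(z)=p_{2,s}(z)=y$ yet $\theta_z$ is the nontrivial flip of the interval $\Delta(y)$; thus $\pi|_{\Delta^\circ(y)}$ is $2$-to-$1$, not injective.

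This counterexample shows that assertion (iv) as written is actually false in general; the paper's one-line appeal to Corollary~\ref{OpenFacesUnion} does not address this point either. Your completed-local-ring argument tacitly assumes both that an \'etale extension cannot split branches (true when $k(y)=k(x)$, false otherwise) and that any $\hat{\mathcal{O}}_{\mathfrak{X}_s,x}$-automorphism of $\hat{\mathcal{O}}_{\mathfrak{Y}_s,y}$ fixes each branch (false when several branches upstairs lie over one branch downstairs). What does survive is that $\pi|_{\Delta^\circ(y)}$ is the quotient by a finite group of automorphisms of the extended poly-simplex; since the canonical homeomorphism $S(\mathfrak{X},\mathfrak{H})\xrightarrow{\sim}C(\mathfrak{X},\mathfrak{H})$ in~\ref{SkeletonIntComplex} is built from matching coequalizer descriptions on both sides, Theorem~\ref{MainThmA}(iv) does not actually need the stronger injectivity you (and the paper) claim here.
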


\begin{proof}
	Every face of $C(\mathfrak{X}, \mathfrak{H})$ is equal to $\pi(\Delta(y))$ for some 
	$y \in \str(\mathfrak{Y}_s, \mathfrak{G}_s)$. 
	By Proposition \ref{FaceProp} the preimage of $\pi(\Delta(y))$ under $\pi$ 
	is equal to the union of all $\Delta(y')$ such that $\psi_s(y) = \psi_s(y')$.
	This union of the $\Delta(y')$ is a closed subset of $C(\mathfrak{Y}, \mathfrak{G})$,
	therefore $\pi(\Delta(y)) \subseteq C(\mathfrak{X}, \mathfrak{H})$ is closed, which proves (i).
	
	It also becomes clear how to define the correspondence in (ii). For every 
	$x \in \str(\mathfrak{X}_s, \mathfrak{H}_s)$ define $\Delta(x)$ to be the image under $\pi$
	of the union of all $\Delta(y')$ with $y' \in \str(\mathfrak{Y}_s, \mathfrak{G}_s)$
	such that $\psi_s(y') = x$. According to the above $\Delta(x)$ is a face of $C(\mathfrak{X}, \mathfrak{H})$.
	Obviously every face of $C(\mathfrak{X}, \mathfrak{H})$ can be obtained in this way
	and for each face the $x \in \str(\mathfrak{X}_s, \mathfrak{H}_s)$ 
	is uniquely determined. Proceed analogously for open faces to complete 
	the claim~(ii). 
	
	We handle (iii) again with Lemma \ref{StrataFaceCorresp} and the claim 
	follows from the fact that the whole construction respects the face embeddings
	in the strictly poly-stable case.
	
	Finally (iv) follows from Corollary~\ref{OpenFacesUnion}.
\end{proof}

\begin{remark}
	Despite their notation, in the poly-stable case the faces $\Delta(x)$ do in general no longer
	look like geometric poly-simplices. However they can be obtained from a geometric poly-simplex
	by gluing together some of its faces. Which faces are glued together and in which way, is dictated by
	how the strata of $(\mathfrak{Y}_s, \mathfrak{G}_s)$ map to the ones of $(\mathfrak{X}_s, \mathfrak{H}_s)$
	under $\psi_s$.
\end{remark}

\begin{theorem}
	Let us collect some important results concerning the structure of the dual intersection complex.
	Let $x, x' \in \str(\mathfrak{X}_s, \mathfrak{H}_s)$.
	\begin{enumerate}
		\item We have 
		\[\Delta(x) = \bigcup \Delta^{\circ}(x''),\]
		where the union ranges over all $x'' \in \str(\mathfrak{X}_s, \mathfrak{H}_s)$ 
		with $x \leq x''$, and this union is disjoint.
		\item If $\Delta(x) \cap \Delta^{\circ}(x') \neq \emptyset$, then $x \leq x'$.
		\item We have an equality  
		\[\Delta(x) \cap \Delta(x') = \bigcup \Delta(x'')\]
		of closed subsets of $C(\mathfrak{X}, \mathfrak{H})$, where the union ranges over all 
		$x'' \in \str(\mathfrak{X}_s, \mathfrak{H}_s)$ with $x \leq x''$ and $x' \leq x''$.
		\item $C(\mathfrak{X}, \mathfrak{H})$ is equal to the union of its open faces, and this union is disjoint.  
	\end{enumerate}	
\end{theorem}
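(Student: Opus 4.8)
The plan is to reduce everything to the strictly poly-stable case, where the analogous statements have already been established as Lemma~\ref{DisjointUnion}, Corollary~\ref{FaceIntersection} and Corollary~\ref{OpenFacesUnion}, and then to push these through the quotient map $\pi: C(\mathfrak{Y}, \mathfrak{G}) \to C(\mathfrak{X}, \mathfrak{H})$ using the bookkeeping provided by Lemma~\ref{StrataFaceCorresp}, Proposition~\ref{FaceProp} and Lemma~\ref{FaceFacts}. Throughout I would pick $y, y' \in \str(\mathfrak{Y}_s, \mathfrak{G}_s)$ with $\psi_s(y) = x$, $\psi_s(y') = x'$, using Lemma~\ref{StrataFaceCorresp} to arrange $y \leq y'$ whenever $x \leq x'$, and I would repeatedly invoke Proposition~\ref{FaceProp} to describe the $\pi$-preimage of a face or open face of $C(\mathfrak{X}, \mathfrak{H})$ as the union of the corresponding objects over the fiber $\psi_s^{-1}(x)$.

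For (i): apply $\pi$ to the identity $\Delta(y) = \bigcup_{y \leq y''} \Delta^{\circ}(y'')$ from Lemma~\ref{DisjointUnion}~(i). Since $\pi$ maps $\Delta^{\circ}(y'')$ onto $\Delta^{\circ}(\psi_s(y''))$ by Lemma~\ref{FaceFacts}~(iv), and since $y \leq y''$ forces $\psi_s(y) \leq \psi_s(y'')$ by Proposition~\ref{StrataMap}, we get $\Delta(x) = \bigcup_{x \leq x''} \Delta^{\circ}(x'')$; conversely every $x''$ with $x \leq x''$ arises as $\psi_s(y'')$ for some $y'' \geq y$ by Lemma~\ref{StrataFaceCorresp}. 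Disjointness of the open faces of $C(\mathfrak{X}, \mathfrak{H})$ is exactly the last sentence of Proposition~\ref{FaceProp} transported through $\pi$ together with Lemma~\ref{FaceFacts}~(iv); this is the main point to be careful about, since it is where the poly-stable quotient could in principle collapse two open faces onto one another, and one must check that $\pi^{-1}(\Delta^{\circ}(x))$ being precisely $\bigcup_{\psi_s(y'')=x}\Delta^{\circ}(y'')$ prevents this. Assertion (ii) is then immediate: if $\Delta(x) \cap \Delta^{\circ}(x') \neq \emptyset$, pick a point in the intersection, lift it to $C(\mathfrak{Y}, \mathfrak{G})$ inside some $\Delta(y)$ with $\psi_s(y) = x$; by Proposition~\ref{FaceProp} this lift lies in $\Delta^{\circ}(y')$ for some $y'$ with $\psi_s(y') = x'$, hence by Lemma~\ref{DisjointUnion}~(i) applied in $\Delta(y)$ we get $y \leq y'$, so $x = \psi_s(y) \leq \psi_s(y') = x'$.

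For (iii): the inclusion $\supseteq$ follows from the face embeddings $\iota_{x'', x}$ and $\iota_{x'', x'}$ of Lemma~\ref{FaceFacts}~(iii). For $\subseteq$, take $a \in \Delta(x) \cap \Delta(x')$; by (i) there is a unique $x''$ with $a \in \Delta^{\circ}(x'')$, and applying (ii) to $\Delta(x) \cap \Delta^{\circ}(x'')$ and to $\Delta(x') \cap \Delta^{\circ}(x'')$ yields $x \leq x''$ and $x' \leq x''$, so $a \in \Delta(x'')$ sits in the right-hand union. That all faces are closed (hence the union is a union of closed sets) is Lemma~\ref{FaceFacts}~(i). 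Finally (iv) follows by writing $C(\mathfrak{X}, \mathfrak{H}) = \pi(C(\mathfrak{Y}, \mathfrak{G})) = \pi\bigl(\bigcup_{y} \Delta^{\circ}(y)\bigr) = \bigcup_{x} \Delta^{\circ}(x)$ using Corollary~\ref{OpenFacesUnion} for $C(\mathfrak{Y}, \mathfrak{G})$ and the surjectivity of $\str(\mathfrak{Y}_s, \mathfrak{G}_s) \to \str(\mathfrak{X}_s, \mathfrak{H}_s)$; disjointness of this union is again the statement already obtained in (i). The only genuinely delicate step is the disjointness of the open faces after passing to the coequalizer, i.e.\ verifying that $\pi$ does not identify points of distinct open faces of $C(\mathfrak{Y}, \mathfrak{G})$ beyond what Proposition~\ref{FaceProp} records; everything else is a routine transport of the strictly poly-stable results along $\pi$.
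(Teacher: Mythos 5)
Your proof takes essentially the same route as the paper's: reduce to the strictly poly-stable decomposition (Lemma~\ref{DisjointUnion}, Corollary~\ref{FaceIntersection}, Corollary~\ref{OpenFacesUnion}) and transport through the quotient map $\pi$ using Proposition~\ref{FaceProp} and Lemma~\ref{FaceFacts}. The paper's proof of (ii) is a touch more streamlined — it decomposes $\Delta(x)$ via (i) and invokes disjointness of the open faces directly, rather than lifting the intersection point to $C(\mathfrak{Y}, \mathfrak{G})$ — but this is a cosmetic difference and your lifting argument is correct; the key ingredients and the overall strategy are identical.
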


\begin{proof}
	We prove (i) by writing $\Delta(x) = \pi(\Delta(y))$ for some $y \in \str(\mathfrak{Y}_s, \mathfrak{G}_s)$ 
	and decomposing $\Delta(y)$ into its open faces.
	Then the claim is clear from Proposition~\ref{FaceProp}.
	
	In order to verify (ii) we assume $\Delta(x) \cap \Delta^{\circ}(x') \neq \emptyset$. By (i) we can decompose
	$\Delta(x)$ into open faces $\Delta^{\circ}(x'')$ with $x'' \in \str(\mathfrak{X}_s, \mathfrak{H}_s)$ 
	such that $x \leq x''$. Since the open faces are disjoint, we conclude $\Delta^{\circ}(x'') = \Delta^{\circ}(x')$
	for some $x''$, therefore $x'' = x'$ and $x \leq x'$. 
	
	Similarly (iii) is shown by decomposing into open faces and (iv) is clear.     
\end{proof}

\begin{proposition} \label{TheDualIntComplex}
	Let $\chi: \mathfrak{V} \to \mathfrak{Y}$ be a surjective \et morphism of admissible formal schemes
	and $\mathfrak{E} := \chi^{-1}(\mathfrak{G})$. We denote by $\mathfrak{W}$ the fiber product
	$\mathfrak{V} \times_{\mathfrak{X}} \mathfrak{V}$ via $\psi \circ \chi$
	and by $\mathfrak{D}$ the pullback of $\mathfrak{E}$ with respect to one of the 
	canonical projections $\mathfrak{W} \to \mathfrak{V}$.
	Then $C(\chi)$ induces a face respecting homeomorphism of coequalizers
	\[\coker(C(\mathfrak{W}, \mathfrak{D}) \rightrightarrows C(\mathfrak{V}, \mathfrak{E}))
	\xrightarrow{\sim} \coker(C(\mathfrak{Z}, \mathfrak{F}) \rightrightarrows C(\mathfrak{Y}, \mathfrak{G})).\]
\end{proposition}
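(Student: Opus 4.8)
The plan is to produce the map $\Psi$ directly from the universal property of the coequalizer, to show it is bijective by comparing the canonical decompositions of both sides into open faces, and finally to upgrade "continuous bijection" to "homeomorphism" via the fact that $C(\chi)$ is a topological quotient map. First I would record the basic compatibilities. The morphism $\psi\circ\chi\colon\mathfrak{V}\to\mathfrak{X}$ is again surjective and \et, we have $\mathfrak{E}=(\psi\circ\chi)^{-1}(\mathfrak{H})$, and by Proposition~\ref{EtaleMorProp}~(i) (applied to $\chi$ and to the \et projections $p_1,q_1$) all of $(\mathfrak{V},\mathfrak{E})$, $(\mathfrak{Z},\mathfrak{F})$, $(\mathfrak{W},\mathfrak{D})$ are strictly poly-stable, with $\mathfrak{D}=(\chi\times\chi)^{-1}(\mathfrak{F})$. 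Let $q_1,q_2\colon\mathfrak{W}\to\mathfrak{V}$ be the two projections and $\chi\times\chi\colon\mathfrak{W}\to\mathfrak{Z}$ the induced morphism; it is \et and satisfies $p_i\circ(\chi\times\chi)=\chi\circ q_i$, so by the functoriality in Lemma~\ref{IntersectionComplexMap} we get $C(p_i)\circ C(\chi\times\chi)=C(\chi)\circ C(q_i)$ for $i=1,2$. Writing $\pi_{\mathfrak{Y}}$ and $\pi_{\mathfrak{V}}$ for the quotient projections onto the two coequalizers, it follows that $\pi_{\mathfrak{Y}}\circ C(\chi)$ coequalizes $C(q_1)$ and $C(q_2)$, hence factors uniquely through $\pi_{\mathfrak{V}}$ as a continuous, face-respecting map
\[\Psi\colon\coker(C(\mathfrak{W},\mathfrak{D})\rightrightarrows C(\mathfrak{V},\mathfrak{E}))\longrightarrow\coker(C(\mathfrak{Z},\mathfrak{F})\rightrightarrows C(\mathfrak{Y},\mathfrak{G})),\]
which is the map induced by $C(\chi)$.

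Next I would prove that $\Psi$ is bijective. Applying Lemma~\ref{FaceFacts} and the structural results following it to the cover $\psi$ on one side and to the cover $\psi\circ\chi$ on the other, both the source and the target of $\Psi$ are the disjoint union of their open faces, and these open faces are indexed by $\str(\mathfrak{X}_s,\mathfrak{H}_s)$. Since $\chi$ is surjective and \et, $\chi_s$ maps $\str(\mathfrak{V}_s,\mathfrak{E}_s)$ onto $\str(\mathfrak{Y}_s,\mathfrak{G}_s)$ (use Proposition~\ref{StrataMap} together with the well-stratifiedness of Proposition~\ref{WellStrat}), and likewise $(\psi\circ\chi)_s$ is onto $\str(\mathfrak{X}_s,\mathfrak{H}_s)$. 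Fix $x\in\str(\mathfrak{X}_s,\mathfrak{H}_s)$, pick $v\in\str(\mathfrak{V}_s,\mathfrak{E}_s)$ with $(\psi\circ\chi)_s(v)=x$, and put $y:=\chi_s(v)$, so $\psi_s(y)=x$. By Lemma~\ref{FaceFacts}~(iv), $\pi_{\mathfrak{V}}$ restricts to a homeomorphism from $\Delta^{\circ}(v)$ onto the open face of the source indexed by $x$, and $\pi_{\mathfrak{Y}}$ restricts to a homeomorphism from $\Delta^{\circ}(y)$ onto the open face of the target indexed by $x$; moreover $C(\chi)$ restricts to a homeomorphism $\Delta^{\circ}(v)\xrightarrow{\sim}\Delta^{\circ}(y)$ by Lemma~\ref{IntersectionComplexMap}. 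Composing these three maps, $\Psi$ restricts to a homeomorphism between the open face of the source indexed by $x$ and the open face of the target indexed by $x$. As $x$ runs over $\str(\mathfrak{X}_s,\mathfrak{H}_s)$ this gives a partition of both spaces that is respected by $\Psi$ face by face, so $\Psi$ is a bijection.

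Finally I would show that $\Psi$ is open, hence a homeomorphism. The key point is that $C(\chi)\colon C(\mathfrak{V},\mathfrak{E})\to C(\mathfrak{Y},\mathfrak{G})$ is a topological quotient map: it is surjective by the face argument above, and if $W\subseteq C(\mathfrak{Y},\mathfrak{G})$ has $C(\chi)^{-1}(W)$ open, then for every $y\in\str(\mathfrak{Y}_s,\mathfrak{G}_s)$, choosing $v$ over $y$, the homeomorphism $C(\chi)|_{\Delta(v)}\colon\Delta(v)\xrightarrow{\sim}\Delta(y)$ identifies the open subset $C(\chi)^{-1}(W)\cap\Delta(v)$ of $\Delta(v)$ with $W\cap\Delta(y)$; since the faces are closed and $C(\mathfrak{Y},\mathfrak{G})$ carries the colimit topology determined by its faces, $W$ is open. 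Hence $\pi_{\mathfrak{Y}}\circ C(\chi)=\Psi\circ\pi_{\mathfrak{V}}$ is a composite of quotient maps, so it is itself a quotient map; combined with $\pi_{\mathfrak{V}}$ being a quotient map and $\Psi$ a bijection, this forces $\Psi$ to be open, because for $U$ open in the source one has $(\Psi\circ\pi_{\mathfrak{V}})^{-1}(\Psi(U))=\pi_{\mathfrak{V}}^{-1}(U)$, which is open, so $\Psi(U)$ is open. Thus $\Psi$ is a continuous open bijection respecting faces, i.e.\ the desired face-respecting homeomorphism; together with the fact that any two surjective \et covers of $(\mathfrak{X},\mathfrak{H})$ admit a common refinement, this establishes that $C(\mathfrak{X},\mathfrak{H})$ does not depend on the chosen cover up to canonical homeomorphism.

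The step I expect to be the main obstacle is the last one: passing from "$\Psi$ is a continuous bijection" to "$\Psi$ is a homeomorphism". It forces one to be explicit about the topologies involved -- that each dual intersection complex carries the colimit topology with respect to its closed faces and the coequalizer the corresponding quotient topology -- and to verify that $C(\chi)$, although only a local homeomorphism face by face, is globally a quotient map. The remaining ingredients, namely the open-face bookkeeping and the universal-property factorization, are essentially formal once Lemmas~\ref{IntersectionComplexMap} and~\ref{FaceFacts} are available.
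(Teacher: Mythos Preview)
Your proof is correct and rests on the same underlying fact as the paper's argument, namely that $C(\chi)$ restricts to a homeomorphism on each face. The execution, however, is organized differently. The paper's proof is very brief: it simply observes that for each $v\in\str(\mathfrak{V}_s,\mathfrak{E}_s)$ the map $C(\chi)$ induces a homeomorphism $\pi'(\Delta(v))\xrightarrow{\sim}\pi(\Delta(y))$ between the images in the two coequalizers, and then declares that the desired homeomorphism and its inverse are obtained by gluing these face homeomorphisms together. Your route instead builds $\Psi$ once and for all from the universal property of the coequalizer, checks bijectivity via the open-face decomposition indexed by $\str(\mathfrak{X}_s,\mathfrak{H}_s)$, and then upgrades to a homeomorphism by showing that $C(\chi)$ is a topological quotient map. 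The advantage of your approach is that the continuity of $\Psi$ and of $\Psi^{-1}$ is made fully explicit, whereas the paper leaves the topological compatibility of the glued face maps implicit; the cost is a longer argument. Both are valid, and your remark that the quotient-map step is the only place requiring genuine care is accurate.
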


\begin{proof}
	Denote by $\pi'$ the projection 
	$C(\mathfrak{V}, \mathfrak{E}) \to \coker(C(\mathfrak{W}, \mathfrak{D}) \rightrightarrows C(\mathfrak{V}, \mathfrak{E}))$.
	Let $v \in \str(\mathfrak{V}_s, \mathfrak{E}_s)$ and $x := \psi(\chi(v)) \in \str(\mathfrak{X}_s, \mathfrak{H}_s)$.
	Then $C(\chi)$ induces a homeomorphism from $\pi'(\Delta(v))$ to $\pi(\Delta(y))$,
	where $y$ is any element in $\str(\mathfrak{Y}_s, \mathfrak{G}_s)$ with $\psi(y) = x$.
	The homeomorphism between the coequalizers is obtained by gluing together these homeomorphisms
	between the faces. Accordingly, one gets the inverse by gluing together the inverse homeomorphisms between the faces.
\end{proof}

\begin{remark}
	So far we made all our considerations with respect to the choice of a 
	strictly poly-stable pair $(\mathfrak{Y}, \mathfrak{G})$ 
	and a surjective \et morphism $\psi: (\mathfrak{Y}, \mathfrak{G}) \to (\mathfrak{X}, \mathfrak{H})$. 
	However the dual intersection complex $C(\mathfrak{X}, \mathfrak{H})$
	only depends on this choice up to a canonical homeomorphism respecting the faces.
	One way to prove this, is to pick another pair $(\mathfrak{Y}', \mathfrak{G}')$ 
	and another morphism $\psi': (\mathfrak{Y}', \mathfrak{G}') \to (\mathfrak{X}, \mathfrak{H})$
	and then apply Proposition \ref{TheDualIntComplex} with $\mathfrak{V}$ being the
	fiber product $\mathfrak{Y} \times_{\mathfrak{X}} \mathfrak{Y}'$  
	via $\psi$ and $\psi'$. Then we can choose $\chi$ to be the first as well as the second projection. 
	This justifies speaking of “the” dual intersection complex
	of $(\mathfrak{X}, \mathfrak{H})$. 
\end{remark}

\begin{example} \label{NonStrict}
	We want to give a sketch of a non-strict example.
	There is an admissible formal scheme $\mathfrak{X}$ whose generic fiber is a smooth analytic curve
	and whose special fiber is a curve with a nodal singularity, as seen in the graphic below.
	It admits a strictly semi-stable formal scheme $\mathfrak{Y}$ and a surjective \et morphism 
	$\mathfrak{Y} \to \mathfrak{X}$. The generic fiber $\mathfrak{Y}_{\eta}$ is smooth 
	and the special fiber $\mathfrak{Y}_s$ has two ordinary double points at the intersection
	of the irreducible components. 
	\begin{center}\includegraphics[scale=1]{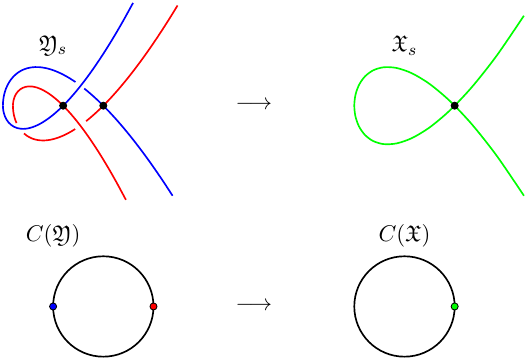}\end{center}
\end{example}

%---------------------------------------------------------------------------------------------------------------------------------------------------------------------------
%---------------------------------------------------------------------------------------------------------------------------------------------------------------------------

\section{Extended Skeletons}

We approach the main purpose of this paper, namely the development and investigation
of the “extended skeleton”. Its definition relies on the classical skeleton described by Berkovich,
which is why we give a brief outline of his method from \cite[§\,5]{Ber99}.

In the case of a non-trivially valued field $K$, our technique for defining the extended skeleton
of a standard pair and later of a building block, is essentially the $\varepsilon$-approximation method
already used in \cite[4.2 and 4.3]{GRW16}. One can think of taking successively growing classical skeletons
and taking their union in order to obtain the extended skeleton.

Following \cite[4.6]{GRW16}, the extended skeleton of a strictly poly-stable pair 
is then made up of the building block skeletons,
as their name already suggests. 

In the arbitrary poly-stable case we apply the procedure from Step 9 of the Proof of Theorems 5.2–5.4 in \cite[§\,5]{Ber99}
to our situation. We are then able to formulate and prove our main theorems.

Finally the case of a trivially valued field $K$ has to be considered separately, since here our $\varepsilon$-approximation method
is not available. However it is possible by passing to a non-trivially valued extension of $K$, to extend all of our results
from before to this situation as well.  

%---------------------------------------------------------------------------------------------------------------------------------------------------------------------------

\subsection{Classical construction}

In this subsection we give an overview of the construction of the skeleton 
as performed in \cite[§\,5]{Ber99}.

\begin{noname} \label{ClassicStart}
	First we describe how to construct the skeleton of a formal scheme 
	$\mathfrak{X} := \mathfrak{S}(\textbf{n}, \textbf{a})$, where
	$p \in \Nn$, $\textbf{n} = (n_1,\dots,n_p)$ and $\textbf{a} = (a_1,\dots,a_p)$ with $n_i \in \Nn$ and
	$a_i \in K^{\circ}$ for all $i \in \dotsbra{1}{p}$.  %$a_i \in K^{\circ} \setminus \{0\}$
	
	Let $\mathscr{A}$ be the $K$-affinoid algebra $K\{T_{10},\dots ,T_{1n_1}, \dots , T_{p0},\dots ,T_{pn_p}\} / \mathfrak{a}$,
	where $\mathfrak{a}$ is the ideal generated by $T_{i0} \cdots T_{in_i} - a_i$ for all $i \in \dotsbra{1}{p}$.
	Then we set $X := \mathfrak{X}_{\eta} = \mathscr{M}(\mathscr{A})$
	and $\textbf{r} := \val(\textbf{a}) = \dotsroundbra{\val(a_1)}{\val(a_p)}$.
	We define the \emph{tropicalization map} as
	\begin{align*}
	\trop: X &\to \Delta(\mathbf{n}, \mathbf{r}) \\
	x &\mapsto \dotsroundbra{-\log|T_{10}(x)|}{-\log|T_{pn_p}(x)|}.
	\end{align*}
	Recall that $\Delta(\mathbf{n}, \mathbf{r})$ is the geometric poly-simplex from Definition~\ref{GeomSimplex}. 
	The map $\trop$ is well-defined and continuous.
	Furthermore we define a map $\sigma: \Delta(\mathbf{n}, \mathbf{r}) \to X$, $\textbf{s} \mapsto |\enspace|_{\textbf{s}}$, where
	\[\Big|\sum_{\boldsymbol{\mu}} c_{\boldsymbol{\mu}} T^{\boldsymbol{\mu}}\Big|_{\textbf{s}} := 
	\max\limits_{\boldsymbol{\mu}} |c_{\boldsymbol{\mu}}|\exp(-\boldsymbol{\mu} \cdot \textbf{s})\]
	and $\boldsymbol{\mu} \cdot \textbf{s} := \mu_{10} s_{10} + \dots + \mu_{pn_p} s_{pn_p}$ for all 
	$\boldsymbol{\mu} = \dotsroundbra{\mu_{10}}{\mu_{pn_p}}$.
	Note here that every element $f \in \mathscr{A}$ can be uniquely represented by a formal power series % strictly convergent 
	$\sum_{\boldsymbol{\mu}} c_{\boldsymbol{\mu}} T^{\boldsymbol{\mu}}$ such that $c_{\boldsymbol{\mu}} = 0$ for all 
	$\boldsymbol{\mu}$ with $T^{\boldsymbol{\mu}}$ being divisible by $T_{i0} \cdots T_{in_i}$ for some $i \in \dotsbra{1}{p}$.
	This is the representative for $f$ one has to choose in the above definition.
	It is easy to show that $\sigma$ is well-defined and continuous.
	
	A straightforward calculation confirms that $\sigma$ is a right inverse to $\trop$ and that $\tau := \sigma \circ \trop$ is a retraction of 
	$X$ onto the closed compact subset $S := \sigma(\Delta(\textbf{n}, \textbf{r}))$, which is homeomorphic to
	$\Delta(\textbf{n}, \textbf{r})$ via $\trop$. We call $S$ the \emph{skeleton} of $\mathfrak{X}$. 
	%By pullback with respect to $\trop$ we can also equip $S$ with a sheaf of locally affine linear functions. 
\end{noname}

\begin{proposition} \label{StdRed}
	We use the same notations as above. 
	Let $x$ be the generic point of the minimal stratum of $\mathfrak{X}_s$. 
	Then $\trop$ restricts to a homeomorphism  
	\[S \cap \red^{-1}_{\mathfrak{X}}(x) \xrightarrow{\sim} \Delta^{\circ}(\textbf{n}, \textbf{r}). \]    
\end{proposition}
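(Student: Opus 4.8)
The plan is to leverage the structure already set up in \ref{ClassicStart}: since $\sigma$ is a right inverse of $\trop$ and $\tau=\sigma\circ\trop$ is a retraction of $X$ onto $S$, the maps $\trop|_S\colon S\to\Delta(\mathbf{n},\mathbf{r})$ and $\sigma\colon\Delta(\mathbf{n},\mathbf{r})\to S$ are mutually inverse homeomorphisms. Hence it suffices to identify the subset $\sigma^{-1}\!\big(S\cap\red^{-1}_{\mathfrak{X}}(x)\big)=\{\mathbf{s}\in\Delta(\mathbf{n},\mathbf{r}) : \red_{\mathfrak{X}}(\sigma(\mathbf{s}))=x\}$ of $\Delta(\mathbf{n},\mathbf{r})$ and show it equals $\Delta^{\circ}(\mathbf{n},\mathbf{r})$: from $\sigma(\trop(y))=y$ for $y\in S$ one then gets that $\sigma$ restricts to a homeomorphism $\Delta^{\circ}(\mathbf{n},\mathbf{r})\xrightarrow{\sim} S\cap\red^{-1}_{\mathfrak{X}}(x)$ with continuous inverse $\trop$, which is the assertion.

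Next I would make the minimal stratum and the reduction explicit. For a standard scheme (all $a_i\in K^{\circ\circ}$) we have $\mathfrak{X}_s=\Spec(\widetilde{A})$ with $\widetilde{A}=\bigotimes_{i}\widetilde{K}[T_{i0},\dots,T_{in_i}]/(T_{i0}\cdots T_{in_i})$, and the minimal stratum of the elementary special fiber is the point cut out by all the coordinate functions; its generic point $x$ is therefore the maximal ideal $\mathfrak{m}=(T_{ij}: i\in\dotsbra{1}{p},\ j\in\dotsbra{0}{n_i})$, with $\widetilde{A}/\mathfrak{m}=\widetilde{K}$. For $y\in X$ one has $\red_{\mathfrak{X}}(y)=\ker(\widetilde{\chi}_y)$, and since $\mathfrak{m}$ is maximal while $\ker(\widetilde{\chi}_y)$ is a proper prime ideal, $\red_{\mathfrak{X}}(y)=x$ is equivalent to $\mathfrak{m}\subseteq\ker(\widetilde{\chi}_y)$, i.e.\ to $|T_{ij}(y)|<1$ for all $i,j$.

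Then I would specialize to $y=\sigma(\mathbf{s})$. By the description of $\sigma$ in \ref{ClassicStart}, the coordinate $T_{ij}$ is already in normalized form (the monomial $T^{\boldsymbol{\mu}}$ with $\boldsymbol{\mu}=e_{ij}$ is of degree $1$, hence not divisible by any $T_{k0}\cdots T_{kn_k}$ since $n_k\geq 1$), so $|T_{ij}(\sigma(\mathbf{s}))|=\exp(-s_{ij})$. Thus $|T_{ij}(\sigma(\mathbf{s}))|<1\iff s_{ij}>0$, and combining with the previous step, $\red_{\mathfrak{X}}(\sigma(\mathbf{s}))=x$ holds precisely when $s_{ij}>0$ for all $i,j$, which by Definition \ref{GeomSimplex} is exactly the condition $\mathbf{s}\in\Delta^{\circ}(\mathbf{n},\mathbf{r})$ (each factor $\Delta^{\circ}(n_i,r_i)$ being the locus of strictly positive coordinates, including the degenerate case $r_i=\infty$ under the convention $\exp(-\infty)=0$). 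This gives the required identification.

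There is no genuinely hard step; the one place demanding care is the bookkeeping for non-honest standard factors. If one allows $a_i\in K^{\circ}$ as in \ref{ClassicStart} — for instance a torus factor with $a_i$ a unit, as in Berkovich's setup — the minimal stratum is no longer a point: its generic point $x$ is the prime $(T_{ij}: a_i\in K^{\circ\circ})$, and one additionally has to check that $\ker(\widetilde{\chi}_{\sigma(\mathbf{s})})$ is not strictly larger than this ideal, i.e.\ that the coordinates $T_{ij}$ with $a_i$ a unit reduce at $\sigma(\mathbf{s})$ to algebraically independent elements of $\widetilde{\mathscr{H}}(\sigma(\mathbf{s}))$. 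This is the statement that $\sigma(\mathbf{s})$ is a monomial valuation, which follows at once from the formula $|\sum_{\boldsymbol{\mu}}c_{\boldsymbol{\mu}}T^{\boldsymbol{\mu}}|_{\mathbf{s}}=\max_{\boldsymbol{\mu}}|c_{\boldsymbol{\mu}}|\exp(-\boldsymbol{\mu}\cdot\mathbf{s})$; in that case $r_i=0$, the factor $\Delta(n_i,r_i)$ is a single vertex, so the $\Delta^{\circ}$ condition is vacuous in those directions and the argument above applies verbatim.
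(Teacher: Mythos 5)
Your proof is correct and follows essentially the same route as the paper's: reduce to computing $\red_{\mathfrak{X}}(\sigma(\mathbf{s}))$ via the explicit formulas for $\sigma$ and for the generic point of the minimal stratum, and observe that the condition $|T_{ij}(\sigma(\mathbf{s}))|<1$ matches $s_{ij}>0$. The one place where you are slightly more careful than the paper is in spelling out why, in the mixed case with some $|a_i|=1$, the reduction of $\sigma(\mathbf{s})$ is not strictly larger than $x$ (the monomial-valuation / algebraic-independence point); the paper's proof handles this more tersely by noting $\Delta(n_i,0)=\{\mathbf{0}\}$.
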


\begin{proof}
	It is enough to prove that for all $\textbf{s} \in \Delta^{\circ}(\textbf{n}, \textbf{r})$
	we have $\red_{\mathfrak{X}}(\sigma(\textbf{s})) = x$. Let us write
	\[\widetilde{A} := \bigotimes\limits_{i=1}^p \widetilde{K}[T_{i0},\dots,T_{in_i}]/(T_{i0} \cdots T_{in_i} - \widetilde{a}_i),\]
	where $\widetilde{a}_i$ is the reduction of $a_i$ modulo $K^{\circ\circ}$.
	Then $x \in \mathfrak{X}_s = \Spec(\widetilde{A})$ is the ideal generated by $T_{ij}$ 
	for all $i \in \dotsbra{1}{p}$ and $j \in \dotsbra{1}{n_i}$ such that $\widetilde{a}_i = 0$,
	in other words $|a_i| < 1$. It is now easy to see from the definitions that 
	$\red_{\mathfrak{X}}(\sigma(\textbf{s})) \in \mathfrak{X}_s$ is an ideal containing $x$,
	because $|T_{ij}|_{\textbf{s}} < 1$.
	To show that it is also contained in $x$, note that for the indexes $i \in \dotsbra{1}{p}$
	with $|a_i| = 1$ we have $\Delta(n_i, -\log|a_i|) = \Delta(n_i,0) = \{\mathbf{0}\}$.
	%This is just a straightforward unwrapping of the definitions.
\end{proof}

\begin{example} \label{AnnulusSkeleton}
	Let $a \in K^{\circ\circ} \setminus \{0\}$ and $\varepsilon := |a|$. 
	We consider the standard scheme $\mathfrak{X} = \mathfrak{S}(1, a)$.
	Below there is a sketch of the Berkovich unit disc $B$. If one removes 
	the gray part, we obtain the generic fiber $X = \mathfrak{S}(1, a)_{\eta}$, 
	which is the closed annulus with outer radius $1$ and inner radius $\varepsilon$.
	Then the skeleton $S$ is homeomorphic to $\Delta(1, -\log(\varepsilon))$,
	which is visualized as the red segment. Its upper endpoint corresponds to the Gauß norm
	whereas the lower endpoint corresponds to the “modified Gauß norm” given by sending
	a strictly convergent power series $\sum_{n = 0}^{\infty} c_n T^n \in K\{T\}$ 
	to $\max\limits_{n \in \N} |c_n|\varepsilon^n$.
	\begin{center}\includegraphics[scale=1]{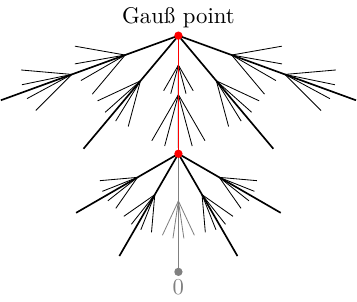}\end{center}
\end{example}

\begin{noname} \label{GroupAction}
	Next we construct a proper strong deformation retraction $\Phi: X \times [0, 1] \to X$.
	Let $\mathscr{B}$ be the $K$-affinoid algebra $K\{T_{10},\dots ,T_{1n_1}, \dots , T_{p0},\dots ,T_{pn_p}\} / \mathfrak{b}$,
	where $\mathfrak{b}$ is the ideal generated by $T_{i0} \cdots T_{in_i} - 1$ for all $i \in \dotsbra{1}{p}$.  
	We denote the $K$-affinoid space $G := \mathscr{M}(\mathscr{B})$, which has a canonical structure of a $K$-analytic group induced by
	the multiplication of coordinates. We call $G$ the \emph{$K$-affinoid torus} and note that it is the generic fiber of
	$\mathfrak{S}(\textbf{n}, \textbf{b})$ with $\textbf{b} = (1, \dots, 1)$.
	There is an obvious $K$-analytic group action $\mu: G \times X \to X$. 
	For every $t \in [0, 1]$ consider the $K$-analytic subgroups
	\[G_t := \bracket{x \in G}{|(T_{ij} - 1)(x)| \leq t \text{ for all } i \in \dotsbra{1}{p} \text{ and } j \in \dotsbra{0}{n_i}}.\]
	Each subgroup $G_t$ has a unique maximal point, which we will denote by $g_t$. Since these $g_t$ are peaked points, 
	we can consider for all $x \in X$ the $\ast$-product $g_t \ast x$ as introduced in \cite[p.\,100]{Ber90}.
	This provides us a continuous map $\Phi: X \times [0, 1] \rightarrow X$, $(x, t) \mapsto g_t \ast x$.
	%there exists for every $x \in X$ a unique $y \in X$ such that there is an element $w \in G \times X$ 
	%with $p_1(w) = g_t$, $p_2(w) = x$ and $\mu(w) = y$, where the $p_i$ are the canonical projections from the fiber product.
	%We define $g_t \ast x$ as this unique $y$ and get a continuous map $\Phi: X \times [0, 1] \rightarrow X$, $(x, t) \mapsto g_t \ast x$.
	For details see \cite[§\,5.2 and §\,6.1]{Ber90}.  
	
	In this situation, the $\ast$-product can be explicitly described, namely if 
	$f =\sum_{\boldsymbol{\mu}} c_{\boldsymbol{\mu}} T^{\boldsymbol{\mu}}$ we have
	\[|f(g_t \ast x)| = \max\limits_{\boldsymbol{\nu}} |\partial_{\boldsymbol{\nu}}f(x)|t^{\boldsymbol{\nu}} 
	\text{, where } \partial_{\boldsymbol{\nu}}f := \sum\limits_{\boldsymbol{\mu} \geq 
		\boldsymbol{\nu}} \binom{\boldsymbol{\mu}}{\boldsymbol{\nu}} c_{\boldsymbol{\mu}}T^{\boldsymbol{\mu}}\]
	with $\boldsymbol{\nu} = \dotsroundbra{\nu_{10}}{\nu_{pn_p}}$ and the usual multi-index notations. 
	It now becomes easy to check that $\Phi(\cdot, 0) = \id_{X}$, 
	$\Phi(\cdot, 1) = \tau$ and $\Phi(\cdot, t)|_S = \id_{S}$ for all $t \in [0, 1]$. 
	This means that $\Phi$ is a strong deformation retraction of $X$ onto $S$. 
	It is clear from $X$ being a compact space, that $\Phi$ is proper. 
\end{noname}

\begin{proposition} \label{InjProp}
	We use the same notations as above. Then for every $x \in X$ there exists an element
	$t' \in [0, 1]$ such that $x = \Phi(x, t)$ for all $t \in [0, t']$ and the 
	map $[t', 1] \to X$, $t \mapsto \Phi(x, t)$ is injective.
\end{proposition}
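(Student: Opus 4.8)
The plan is to set $t':=\max\{\,t\in[0,1]:\Phi(x,t)=x\,\}$; this is legitimate because the set $\{t:\Phi(x,t)=x\}$ is closed ($\Phi$ is continuous and $X$ is Hausdorff) and contains $0$. Two things then have to be shown: that $\Phi(x,t)=x$ for \emph{all} $t\in[0,t']$, and that $t\mapsto\Phi(x,t)$ is injective on $[t',1]$. The first amounts to saying that $\{t:\Phi(x,t)=x\}$ is downward closed. For $s\le t$ one has $G_s\subseteq G_t$, so the peaked point $g_s$ lies in $G_t$, whose unique maximal point is $g_t$; as in \cite[\S\S\,5.2, 6.1]{Ber90} this gives $g_s\ast g_t=g_t$ (translation by any point of $G_t$ is an automorphism fixing the sup-norm, hence fixing the maximal point). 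Since the $\ast$-product is associative on peaked points (\emph{loc.\ cit.}), if $\Phi(x,t)=g_t\ast x=x$ and $s\le t$ then $\Phi(x,s)=g_s\ast x=g_s\ast(g_t\ast x)=(g_s\ast g_t)\ast x=g_t\ast x=x$, as desired.

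For the injectivity I would use the explicit formula for the $\ast$-product recalled in \ref{GroupAction}. For indices $i\in\{1,\dots,p\}$, $j\in\{0,\dots,n_i\}$ and any $b\in K^{\circ}$, the element $f:=T_{ij}-b\in\mathscr{A}$ satisfies $\partial_{\boldsymbol{\nu}}f=0$ except for $\boldsymbol{\nu}\in\{\mathbf{0},\mathbf{e}_{ij}\}$, with $\partial_{\mathbf{0}}f=T_{ij}-b$ and $\partial_{\mathbf{e}_{ij}}f=T_{ij}$, so that
\[|(T_{ij}-b)(\Phi(x,t))|=\max\bigl\{\,|(T_{ij}-b)(x)|\,,\ |T_{ij}(x)|\cdot t\,\bigr\}\]
(here $|T_{ij}(x)|>0$ since $\prod_{l}|T_{il}(x)|=|a_i|$ is nonzero). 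If now $t'\le t_1<t_2$ and $\Phi(x,t_1)=\Phi(x,t_2)$, then evaluating this identity at $t_1$ and at $t_2$ and using $t_1<t_2$ together with $|T_{ij}(x)|>0$ forces $|T_{ij}(x)|\cdot t_2\le|(T_{ij}-b)(x)|$ for all $i,j$ and all $b\in K^{\circ}$; taking the infimum over $b$ yields $t_2\le\delta_{ij}/|T_{ij}(x)|$ for all $i,j$, where $\delta_{ij}:=\inf_{b\in K^{\circ}}|(T_{ij}-b)(x)|$.

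The argument then concludes once one proves the converse implication: if $t\le\delta_{ij}/|T_{ij}(x)|$ for all $i,j$, then $g_t\ast x=x$; applied to $t=t_2$ this contradicts $t_2>t'$ and the maximality of $t'$. This converse is the main obstacle. By the $\ast$-formula it is equivalent to the bounds $|\partial_{\boldsymbol{\nu}}f(x)|\,t^{|\boldsymbol{\nu}|}\le|f(x)|$ for every $f\in\mathscr{A}$ and every $\boldsymbol{\nu}\neq\mathbf{0}$; writing $\partial_{\mathbf{e}_{ij}}f=T_{ij}\cdot\partial f/\partial T_{ij}$ and iterating, this reduces to a single non-archimedean Cauchy-type estimate, namely $\delta_{ij}\cdot|(\partial f/\partial T_{ij})(x)|\le|f(x)|$, which expresses that the ``radius of $x$ in the $T_{ij}$-direction'' dominates the value of the derivative at $x$. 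I would establish it by choosing a shift $b_0\in K^{\circ}$ realizing (or approaching) the infimum $\delta_{ij}$ and Taylor-expanding $f$ about $b_0$ in the variable $T_{ij}$; estimates of exactly this kind underlie Berkovich's proof of Theorems~5.2--5.4 in \cite[\S\,5]{Ber99}, from which the analytic input can be quoted. The only further care needed is in passing from $\boldsymbol{\nu}=\mathbf{e}_{ij}$ to general $\boldsymbol{\nu}$, which is routine in residue characteristic $0$ and otherwise requires keeping track of the binomial coefficients $\binom{\boldsymbol{\mu}}{\boldsymbol{\nu}}$.
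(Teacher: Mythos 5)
Your $t'$ agrees with the paper's: from the explicit formula in \ref{GroupAction}, $g_t \ast x = x$ holds if and only if $|\partial_{\boldsymbol{\nu}}f(x)|\,t^{\boldsymbol{\nu}} \leq |f(x)|$ for every $f \in \mathscr{A}$ and every $\boldsymbol{\nu}$, and the set of such $t$ is a closed interval $[0,t']$; your group-theoretic argument for downward-closedness is valid but redundant given this description. The gap lies in the injectivity half, and you have flagged it yourself. After testing only against the linear functions $T_{ij}-b$ you obtain $|T_{ij}(x)|\,t_2 \leq \delta_{ij}$ for all $i,j$, and you then need the converse — that these finitely many first-order conditions already force $g_{t_2}\ast x = x$. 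That converse is a nontrivial Cauchy-type estimate ranging over all of $\mathscr{A}$ and all multi-indices $\boldsymbol{\nu}$: the passage from $\boldsymbol{\nu}=\mathbf{e}_{ij}$ to general $\boldsymbol{\nu}$ is obstructed by the binomial coefficients $\binom{\boldsymbol{\mu}}{\boldsymbol{\nu}}$ in positive residue characteristic exactly as you note, the infimum $\delta_{ij}$ need not be attained over the possibly non-algebraically-closed and possibly trivially-valued $K$, and you do not actually prove the estimate but only gesture at it. As written, the proof is incomplete.

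The argument intended by the paper's ``one can check'' avoids the reduction to linear functions entirely. For each $f\in\mathscr{A}$ set $\psi_f(t):=|f(g_t\ast x)|=\max_{\boldsymbol{\nu}}|\partial_{\boldsymbol{\nu}}f(x)|\,t^{\boldsymbol{\nu}}$. This is a non-decreasing function of $t$, being the upper envelope of the constant $|f(x)|$ (the $\boldsymbol{\nu}=\mathbf{0}$ term) together with monomials $c\,t^{k}$, $k\geq 1$, which are strictly increasing whenever $c>0$. In particular, if $\psi_f(t_0)>|f(x)|$ then the maximum at $t_0$ is realised by such a strictly increasing monomial, so $\psi_f(t)>\psi_f(t_0)$ for all $t>t_0$. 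Now suppose $t'\leq t_1<t_2$ and $\Phi(x,t_1)=\Phi(x,t_2)$. Then $\psi_f(t_1)=\psi_f(t_2)$, and monotonicity forces $\psi_f$ to be constant on $[t_1,t_2]$; by the previous sentence this constant must be $|f(x)|$, i.e.\ $|\partial_{\boldsymbol{\nu}}f(x)|\,t_2^{\boldsymbol{\nu}}\leq|f(x)|$ for all $\boldsymbol{\nu}$. Since this holds for every $f\in\mathscr{A}$, the defining inequality for $t'$ is satisfied at $t_2$, so $t_2\leq t'$, contradicting $t'\leq t_1<t_2$. No auxiliary Cauchy estimate, no $\delta_{ij}$, and no restriction on the residue characteristic enter this argument.
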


\begin{proof}
	We use the explicit description of $\Phi(x, t) = g_t \ast x$ from above.
	Pick $t'$ to be the maximum value for $t \in [0, 1]$ such that 
	$|\partial_{\boldsymbol{\nu}}f(x)|t^{\boldsymbol{\nu}} \leq |f(x)|$
	for all $\boldsymbol{\nu}$ and $f \in \mathscr{A}$.
	One can check that this $t'$ satisfies the claim.
\end{proof}

\begin{noname} \label{BuildingBlockSkeleton}
	Suppose we are given a formal scheme $\mathfrak{S} := \mathfrak{S}(\textbf{n}, \textbf{a})$, where 
	$p \in \Nn$, $\textbf{n} = (n_1,\dots,n_p)$ and $\textbf{a} = (a_1,\dots,a_p)$ with $n_i \in \Nn$ and
	$a_i \in K^{\circ}$ for all $i \in \dotsbra{1}{p}$.
	We consider $\mathfrak{G} := \Spf(K^{\circ}\{T_{10},\dots ,T_{1n_1}, \dots , T_{p0},\dots ,T_{pn_p}\} / \mathfrak{b})$,
	where $\mathfrak{b}$ is the ideal generated by $T_{i0},\dots ,T_{in_i} - 1$ for all $i \in \dotsbra{1}{p}$.
	Then $\mathfrak{G}$ can be considered as a group object in the category of formal $K^{\circ}$-schemes.
	Its generic fiber is the analytic group $G$ as introduced in \ref{GroupAction}. 
	We denote by $\widehat{\mathfrak{G}}$ the formal completion of $\mathfrak{G}$ along its unit.
	The formal scheme $\widehat{\mathfrak{G}}$ is a formal group and it acts in the evident way on $\mathfrak{S}$. 
	Its generic fiber is $\widehat{\mathfrak{G}}_{\eta} = \bigcup_{t < 1} G_t$.
	
	Now let $\mathfrak{X} = \Spf(A)$ be an admissible affine formal scheme over $K^{\circ}$ and
	$\varphi: \mathfrak{X} \to \mathfrak{S}$ an \et morphism. We denote $X := \mathfrak{X}_{\eta} = \mathscr{M}(\mathscr{A})$,
	where $\mathscr{A} := A \otimes_{k^{\circ}} k$.
	By \cite[Lemma 5.5]{Ber99} the formal group action of $\widehat{\mathfrak{G}}$
	on $\mathfrak{S}$ extends uniquely to an action of $\widehat{\mathfrak{G}}$ on $\mathfrak{X}$.
	In particular this gives rise to an analytic group action of $\widehat{\mathfrak{G}}_{\eta}$ on $X$
	extending the one on $\mathfrak{S}_{\eta}$. 
	
	We now define a map $\tau: X \rightarrow X$ by sending a point $x \in X$ to the point $\tau(x)$ given by
	\[|f(\tau(x))| := \sup\limits_{t < 1} |f(g_t \ast x)|  \]
	for all $f \in \mathscr{A}$. It is clear that this yields a bounded seminorm on $\mathscr{A}$. 
	%Using \cite[Proposition 5.2.10]{Ber90} one sees that for all $x \in X$ 
	%the point $g_t \ast x$ is the maximal one in the orbit $G_t x$. 
	%In particular for all $t, t' \in [0, 1)$ with $t \leq t'$ we have $g_t \ast x \leq g_{t'} \ast x$.
	%Therefore this seminorm is also multiplicative and $\tau$ is well-defined. 
	One can show that this seminorm is also multiplicative and thus $\tau$ is well-defined.
	
	Next we consider the map 
	$\Phi_{\mathfrak{X}}: X \times  [0, 1] \rightarrow X$,
	\[ (x, t) \mapsto \left\{ 
	\begin{array}{cl}
	g_t \ast x & \text{, if } t < 1 \\
	\tau(x) & \text{, else.}  	
	\end{array}\right.\]
	We define the \emph{skeleton} 
	$S(\mathfrak{X})$ of $\mathfrak{X}$ to be the image of the map $\tau$. One can show that the maps $\tau$ and $\Phi_{\mathfrak{X}}$
	are continuous and do not depend on the choice of the \et morphism $\varphi$, that $S(\mathfrak{X})$ 
	is a closed subset of $X$ and that $\Phi_{\mathfrak{X}}$ is a proper strong deformation retraction
	from $X$ onto $S(\mathfrak{X})$, see \cite[§5, Steps 5--7]{Ber99}. 
	
	We also note that if $\mathfrak{Y}$ is an admissible affine formal scheme over $K^{\circ}$ and
	$\psi: \mathfrak{Y} \to \mathfrak{X}$ an \et morphism, then \cite[Lemma 5.5]{Ber99} shows that
	$\psi_{\eta} \circ \Phi_{\mathfrak{Y}} = \Phi_{\mathfrak{X}} \circ (\psi_{\eta} \times \id)$.
	
	A particularly important special case used in the proof of these claims is the following:
\end{noname}

\begin{lemma} \label{BuildingBlockLemma} 
	%Let $\varphi: \mathfrak{X} \to \mathfrak{S}$ be an \et morphism as above with $\mathfrak{X}_s$ elementary such that 
	%$\varphi_s$ induces a bijection $\irr(\mathfrak{X}_s) \xrightarrow{\sim} \irr(\mathfrak{S}_s)$.
	Let $\varphi: \mathfrak{X} \to \mathfrak{S}$ be a building block, 
	in particular $\varphi_s$ induces a bijection $\irr(\mathfrak{X}_s) \xrightarrow{\sim} \irr(\mathfrak{S}_s)$.
	Then $S(\mathfrak{X}) = \varphi_{\eta}^{-1}(S(\mathfrak{S}))$ and $\varphi_{\eta}$ induces a homeomorphism 
	$S(\mathfrak{X}) \xrightarrow{\sim} S(\mathfrak{S})$.
	
	Let $x$ resp. $\zeta$ be the generic point of the minimal stratum of $\mathfrak{X}_s$ resp. $\mathfrak{S}_s$.
	Then the homeomorphism from above restricts to a homeomorphism
	\[S(\mathfrak{X}) \cap \red^{-1}_{\mathfrak{X}}(x) \xrightarrow{\sim} S(\mathfrak{S}) \cap \red^{-1}_{\mathfrak{S}}(\zeta). \] 
	% where $\red_{\mathfrak{X}}: \mathfrak{X}_{\eta} \to \mathfrak{X}_s$ and 
	% $\red_{\mathfrak{S}}: \mathfrak{S}_{\eta} \to \mathfrak{S}_s$ denote the respective reduction maps. 
	
	Moreover there is a canonical bijection between $\irr(\mathfrak{X}_s)$ and the \emph{vertices} of $S(\mathfrak{X})$,
	by which we mean the points in $S(\mathfrak{X})$ corresponding to the vertices of 
	$S(\mathfrak{S}) = \Delta(\textbf{n}, \textbf{r})$ under $\varphi_{\eta}$. 
	The set of vertices of $S(\mathfrak{X})$ is independent of $\mathfrak{S}$ and $\varphi$.     
\end{lemma}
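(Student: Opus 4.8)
The plan is to follow Step~6 of the proof of \cite[Theorems~5.2--5.4]{Ber99}, and to add on top of it the two statements concerning the reduction map and the vertices, which go slightly beyond \emph{loc.cit.}

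First I would treat the ``in particular'' clause and the identity $S(\mathfrak{X})=\varphi_\eta^{-1}(S(\mathfrak{S}))$. Since $\varphi$ is a building block, $x$ lies on every irreducible component of $\mathfrak{X}_s$ (Proposition~\ref{PreImIrrComp}~(i)), every irreducible component of $\mathfrak{S}_s$ contains $\zeta$, and $\varphi_s^{-1}(\zeta)=\{x\}$ (Proposition~\ref{DimensionProp}, applied with $\mathfrak{H}=\emptyset$); together with Propositions~\ref{IrrIsometric} and~\ref{PreImIrrComp}~(ii) this yields the bijection $\irr(\mathfrak{X}_s)=\irr(\mathfrak{X}_s,x)\xrightarrow{\sim}\irr(\mathfrak{S}_s,\zeta)=\irr(\mathfrak{S}_s)$. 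For the skeletons, the functoriality recorded in~\ref{BuildingBlockSkeleton} gives $\varphi_\eta\circ\Phi_{\mathfrak{X}}=\Phi_{\mathfrak{S}}\circ(\varphi_\eta\times\id)$; evaluating at $t=1$ gives $\varphi_\eta(\tau(a))=\tau(\varphi_\eta(a))$ for $a\in X$ and hence $\varphi_\eta(S(\mathfrak{X}))\subseteq S(\mathfrak{S})$. Conversely, if $\varphi_\eta(a)\in S(\mathfrak{S})$ then the path $t\mapsto\varphi_\eta(\Phi_{\mathfrak{X}}(a,t))=\Phi_{\mathfrak{S}}(\varphi_\eta(a),t)$ is constant, because $\Phi_{\mathfrak{S}}$ is a strong deformation retraction onto $S(\mathfrak{S})$; thus $t\mapsto\Phi_{\mathfrak{X}}(a,t)$ runs in the fibre $\varphi_\eta^{-1}(\varphi_\eta(a))$, which is finite (as $\varphi_\eta$ is étale, hence quasi-finite, and $X=\mathfrak{X}_\eta$ is $K$-affinoid, hence compact), hence discrete, so the path is constant and $a=\tau(a)\in S(\mathfrak{X})$.

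Next I would obtain the homeomorphism $\varphi_\eta\colon S(\mathfrak{X})\xrightarrow{\sim}S(\mathfrak{S})$. This is essentially \cite[\S5, Step~6]{Ber99}: the bijection on irreducible components just established is the hypothesis making \emph{loc.cit.} applicable, and the argument there shows that $\varphi_\eta$ restricts to a homeomorphism of skeletons, using that $\varphi_\eta$ is an open and proper map between the compact spaces $S(\mathfrak{X})$ and $S(\mathfrak{S})\cong\Delta(\textbf{n},\textbf{r})$, that $\Delta(\textbf{n},\textbf{r})$ is connected and simply connected, and that $S(\mathfrak{X})$ is connected (because $\mathfrak{X}_s$ is elementary, hence connected, hence so are $\mathfrak{X}_\eta$ and its retract $S(\mathfrak{X})$). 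The statement on reductions then follows: $\varphi_s(x)=\zeta$ by the building block property and $\red_{\mathfrak{S}}\circ\varphi_\eta=\varphi_s\circ\red_{\mathfrak{X}}$, so the homeomorphism carries $S(\mathfrak{X})\cap\red^{-1}_{\mathfrak{X}}(x)$ into $S(\mathfrak{S})\cap\red^{-1}_{\mathfrak{S}}(\zeta)$; conversely, a point $s$ of the target has a unique preimage $\tilde s\in S(\mathfrak{X})$, and $\varphi_s(\red_{\mathfrak{X}}(\tilde s))=\red_{\mathfrak{S}}(s)=\zeta$ forces $\red_{\mathfrak{X}}(\tilde s)\in\varphi_s^{-1}(\zeta)=\{x\}$; hence the restriction is a bijection, and therefore a homeomorphism.

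Finally, for the vertices I would prove the intrinsic description: the vertex of $S(\mathfrak{X})$ assigned by $\varphi$ to a component $\mathcal{V}\in\irr(\mathfrak{X}_s)$ is the unique point of $S(\mathfrak{X})$ whose reduction is the generic point $\xi_{\mathcal{V}}$ of $\mathcal{V}$. As neither $S(\mathfrak{X})$ nor $\red_{\mathfrak{X}}$ involve $\mathfrak{S}$ or $\varphi$, this at once gives both the independence of the set of vertices and of the resulting bijection with $\irr(\mathfrak{X}_s)$. To prove the description I would transfer along the homeomorphism: under $\irr(\mathfrak{S}_s)\xrightarrow{\sim}[\textbf{n}]$ (Example~\ref{MetrEx}) the component $\mathcal{V}$ corresponds to some $\textbf{k}=(k_1,\dots,k_p)$, i.e.\ to $\mathcal{T}_{\textbf{k}}:=\{T_{1k_1}=0,\dots,T_{pk_p}=0\}\subseteq\mathfrak{S}_s$; the vertex of $\Delta(\textbf{n},\textbf{r})$ indexed by $\textbf{k}$ is $\sigma(\textbf{s})$ with $s_{ik_i}=r_i$ and $s_{ij}=0$ otherwise, and the computation in the proof of Proposition~\ref{StdRed} shows that $\red_{\mathfrak{S}}(\sigma(\textbf{s}))$ is the generic point of $\mathcal{T}_{\textbf{k}}$ and that $\textbf{s}$ is the only element of $\Delta(\textbf{n},\textbf{r})$ with this reduction. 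Since $\varphi_s^{-1}(\mathcal{T}_{\textbf{k}})=\mathcal{V}$ by Proposition~\ref{PreImIrrComp}~(ii), one gets $\varphi_s^{-1}(\xi_{\mathcal{T}_{\textbf{k}}})=\{\xi_{\mathcal{V}}\}$ for dimension reasons, and combining this with $\red_{\mathfrak{S}}\circ\varphi_\eta=\varphi_s\circ\red_{\mathfrak{X}}$ and the homeomorphism $\varphi_\eta|_{S(\mathfrak{X})}$ shows that $S(\mathfrak{X})\cap\red^{-1}_{\mathfrak{X}}(\xi_{\mathcal{V}})$ consists of precisely the point in question. The main obstacle is the homeomorphism claim---that is, the injectivity of $\varphi_\eta$ on $S(\mathfrak{X})$---which relies on the connectedness of $S(\mathfrak{X})$ and the simple connectedness of $\Delta(\textbf{n},\textbf{r})$ and is settled by \cite[\S5, Step~6]{Ber99}.
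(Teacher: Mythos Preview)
Your proof is correct and follows essentially the same approach as the paper. The paper's own proof is extremely terse: it defers the identity $S(\mathfrak{X})=\varphi_\eta^{-1}(S(\mathfrak{S}))$, the homeomorphism, and the reduction statement entirely to \cite[\S5, Step~6]{Ber99}, and for the vertices it gives precisely your intrinsic characterisation (the vertex attached to $\mathcal{V}\in\irr(\mathfrak{X}_s)$ is the unique point of $S(\mathfrak{X})$ reducing to the generic point of $\mathcal{V}$), deduced via the same reasoning---$\varphi_s^{-1}(\xi)=\{y\}$ for $\xi$ the generic point of an irreducible component of $\mathfrak{S}_s$, together with functoriality of reduction and the fact that $S(\mathfrak{S})\cap\red_{\mathfrak{S}}^{-1}(\xi)$ is a single vertex. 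Your added argument for the containment $\varphi_\eta^{-1}(S(\mathfrak{S}))\subseteq S(\mathfrak{X})$ via a continuous path in a finite fibre is a clean way to make that step self-contained, but the overall route is the paper's.
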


\begin{proof}
	For the first two claims see \cite[§5, Step 6]{Ber99}. 
	For the remaining claims concerning the vertices, consider the generic point $y$ of
	an irreducible component of $\mathfrak{X}_s$. Then $S(\mathfrak{X}) \cap \red^{-1}_{\mathfrak{X}}(y)$ 
	consists of a single point. The reason for this is, that $\xi := \varphi_s(y)$ is the generic point 
	of an irreducible component of $\mathfrak{S}_s$ and $\varphi^{-1}_s(\xi) = \{y\}$.
	Therefore we can conclude with $\red_{\mathfrak{S}} \circ \varphi_{\eta} = \varphi_s \circ \red_{\mathfrak{X}}$ 
	that $\varphi_{\eta}$ induces a homeomorphism 
	$S(\mathfrak{X}) \cap \red^{-1}_{\mathfrak{X}}(y) \xrightarrow{\sim} S(\mathfrak{S}) \cap \red^{-1}_{\mathfrak{S}}(\xi)$
	and we know that $S(\mathfrak{S}) \cap \red^{-1}_{\mathfrak{S}}(\xi)$ consists of a vertex point of $S(\mathfrak{S})$. 
\end{proof}

\begin{noname}
	In the upcoming step, we want to globalize these constructions and extend it to arbitrary poly-stable formal schemes. 
	Let $\mathfrak{X}$ be a poly-stable formal scheme over $K^{\circ}$. We denote $X := \mathfrak{X}_{\eta}$.
	By definition there exists a surjective \et morphism 
	$\psi: \mathfrak{Y} \rightarrow \mathfrak{X}$ where $\mathfrak{Y}$ is strictly poly-stable, meaning that every point in $\mathfrak{Y}$ 
	has an open neighborhood that admits an \et morphism to a standard scheme.
	Now we want to define a map $\Phi: X \times [0, 1] \rightarrow X$ as follows:
	
	Let $x \in X$ and $t \in [0, 1]$. Choose any point $y \in \mathfrak{Y}_{\eta}$ with $\psi_{\eta}(y) = x$,
	which exists by Lemma \ref{FactorMap}.
	Then we can choose an open affine subscheme $\mathfrak{U}$ of $\mathfrak{Y}$ such that $y \in \mathfrak{U}_{\eta}$ and
	which admits an \et morphism to a formal scheme as in \ref{BuildingBlockSkeleton}. In particular we have a deformation retraction
	$\Phi_{\mathfrak{U}}$ of $\mathfrak{U}_{\eta}$ onto $S(\mathfrak{U})$ and we set $\Phi(x, t) := \psi_{\eta}(\Phi_{\mathfrak{U}}(y, t))$. 
	
	This yields a well-defined continuous proper map $\Phi: X \times [0, 1] \rightarrow X$
	which is independent of the choice of $\mathfrak{Y}$ and $\psi$. 
	We use this to define a continuous map $\tau: X \rightarrow X$ by $\tau(x) := \Phi(x, 1)$.
	Furthermore we define the \emph{skeleton of $\mathfrak{X}$} to be $S(\mathfrak{X}) := \tau(X)$.
	According to \cite[Theorem 5.2]{Ber99} the skeleton $S(\mathfrak{X})$ 
	is a closed subset of $X$ and $\Phi$ is a proper strong deformation retraction
	of $X$ onto $S(\mathfrak{X})$. 
\end{noname}

\begin{example} \label{PatchTogether}
	Let $\mathfrak{X}$ be a strictly poly-stable formal scheme over $K^{\circ}$. Then there exists a cover
	$(\mathfrak{U}_i)_{i \in I}$ of $\mathfrak{X}$ such that every $\mathfrak{U}_i$ is of the form discussed
	in \ref{BuildingBlockSkeleton}. According to the above construction, where we set $\mathfrak{Y}$ to be 
	the disjoint union of all the $\mathfrak{U}_i$, the skeleton $S(\mathfrak{X})$ is equal to the union
	of all skeletons $S(\mathfrak{U}_i)$. 
\end{example}

\begin{example} \label{BallSkeleton}
	Let $\mathfrak{X} := \mathfrak{S}(\textbf{n}, \textbf{a}, d)$ be a standard scheme.
	We can use \ref{ToriCover} to get an open cover of $\mathfrak{X}$ 
	by copies of $\mathfrak{T} := \mathfrak{S}(\textbf{n}, \textbf{a}) \times \mathfrak{T}(d, 1)$ and  
	it is easy to see, that the skeleton $S(\mathfrak{X})$ is equal to the image of 
	$S(\mathfrak{T})$ under any of the induced Laurent domain embeddings 
	$\mathfrak{T}_{\eta} \to \mathfrak{X}_{\eta}$, in fact the restrictions of these embeddings to 
	$S(\mathfrak{T})$ are equal. In particular
	$S(\mathfrak{X})$ is homeomorphic to $\Delta(\textbf{n}, \textbf{r})$, where 
	$\textbf{r} := \val(\textbf{a})$. We denote the map 
	$\mathfrak{X}_{\eta} \to \Delta(\textbf{n}, \textbf{r})$ which is obtained by
	composition of the retraction $\mathfrak{X}_{\eta} \to S(\mathfrak{X})$ with the homeomorphism
	$\trop: S(\mathfrak{T}) \xrightarrow{\sim} \Delta(\textbf{n}, \textbf{r})$ 
	by $\trop$ as well. 
	%Moreover pullback with respect to $\trop$ provides $S(\mathfrak{X})$ 
	%with a sheaf of locally affine linear functions.
\end{example}

\begin{example} \label{BlockBallSkeleton}
	Let $\mathfrak{S} := \mathfrak{S}(\textbf{n}, \textbf{a}, d)$ be a standard scheme,
	$\mathfrak{X} = \Spf(A)$ an admissible affine formal scheme over $K^{\circ}$ and
	$\varphi: \mathfrak{X} \to \mathfrak{S}$ a building block.
	%an \et morphism with $\mathfrak{X}_s$ elementary such that 
	%$\varphi_s$ induces a bijection $\irr(\mathfrak{X}_s) \xrightarrow{\sim} \irr(\mathfrak{S}_s)$.
	We consider the open cover of $\mathfrak{S}$ as provided in the example above by copies of $\mathfrak{T}$. 
	Then the base changes of $\varphi$ with respect to these open immersions $\mathfrak{T} \to \mathfrak{S}$
	give an open cover of $\mathfrak{X}$. By Example \ref{PatchTogether} and Lemma \ref{BuildingBlockLemma} 
	we can conclude that the skeleton $S(\mathfrak{X})$ is equal to $\varphi^{-1}(S(\mathfrak{S}))$
	and that $\varphi_{\eta}$ induces a homeomorphism between $S(\mathfrak{X})$ and $S(\mathfrak{S})$.
	Moreover this homeomorphism restricts to 
	$S(\mathfrak{X}) \cap \red^{-1}_{\mathfrak{X}}(x) \xrightarrow{\sim} S(\mathfrak{S}) \cap \red^{-1}_{\mathfrak{S}}(\zeta)$
	just like in Lemma \ref{BuildingBlockLemma}. 	
\end{example}

%---------------------------------------------------------------------------------------------------------------------------------------------------------------------------

\subsection{Standard pairs}

Let $(\mathfrak{S}, \mathfrak{G}) := (\mathfrak{S}(\textbf{n}, \textbf{a}, d), \mathfrak{G}(s))$ be a standard pair
with $p$-tuples $\textbf{n},\textbf{a}$ and we denote $\textbf{r} := \val(\textbf{a})$. 
We assume in this subsection that $K$ is non-trivially valued.

\begin{noname} \label{SkelConstr1}
	We will adapt the $\varepsilon$-approximation procedure from \cite[4.2]{GRW16} to define 
	the skeleton of our standard pair.
	Let $\varepsilon \in |K^*|$ with $\varepsilon \leq 1$ and choose $b_{\varepsilon} \in K^*$ such that $|b_{\varepsilon}|=\varepsilon$.	
	We consider the formal scheme 
	$\mathfrak{S}_{\varepsilon} := \mathfrak{S}(\textbf{n}, \textbf{a}) \times \mathfrak{T}(1,b_{\varepsilon})^s \times \mathfrak{B}^{d-s}$
	with generic fiber $S_{\varepsilon}$ and associate with it the skeleton $S(\mathfrak{S}_{\varepsilon})$ as 
	in Example \ref{BallSkeleton}.
	These $S_{\varepsilon}$ are identified with subsets of 
	$Z := \mathfrak{S}_{\eta} \setminus \mathfrak{G}_{\eta} = 
	\mathfrak{S}(\textbf{n}, \textbf{a})_{\eta} \times (B \setminus \{0\})^s \times B^{d-s}$ via the affinoid domain embedding 
	$\mathfrak{T}(1,b_{\varepsilon})_{\eta} \to B$ induced by $T \mapsto T_1$, where $T$ denotes the coordinate of $\mathfrak{B}$ and $T_0$, $T_1$
	denote the coordinates of $\mathfrak{T}(1,b_{\varepsilon})$. 
	Then the $S_{\varepsilon}$ do not depend 
	on the choice of $b_{\varepsilon}$ but only on $\varepsilon$.
	For every $\varepsilon' \in |K^*|$ with $\varepsilon' \leq 1$
	and $\varepsilon' \leq \varepsilon$ we obviously have $S_{\varepsilon} \subseteq S_{\varepsilon'}$ and 
	$S(\mathfrak{S}_{\varepsilon}) = S(\mathfrak{S}_{\varepsilon'}) \cap S_{\varepsilon}$. % $S(\mathfrak{S}_{\varepsilon}) \subseteq S(\mathfrak{S}_{\varepsilon'})$ and 
	Applying $\trop$ to 
	$\mathfrak{T}(1,b_{\varepsilon})_{\eta} \subseteq \mathfrak{T}(1,b_{\varepsilon'})_{\eta}$ gives the inclusion 
	\[\Delta(1, -\log(\varepsilon)) \hookrightarrow \Delta(1, -\log(\varepsilon'))~,~(x_0, x_1) \mapsto (x_0 + \log(\varepsilon)-\log(\varepsilon'), x_1).\]
	Furthermore the union of the $S_{\varepsilon}$ over all $\varepsilon \in |K^*|$ 
	with $\varepsilon \leq 1$ is equal to $Z$. The union of all $\Delta(1, -\log(\varepsilon))$ is homeomorphic to
	$\R_{\geq 0}$ by projecting $(x_0, x_1) \mapsto x_1$.   
\end{noname}	

\begin{definition}
	We define the \emph{extended skeleton} $S(\mathfrak{S}, \mathfrak{G})$ of our standard pair as the union of the skeletons 
	$S(\mathfrak{S}_{\varepsilon})$ as subsets of $Z$ over all $\varepsilon$ as above.
	In the following we will often just say “skeleton” instead of “extended skeleton”, but it is clear that when
	we talk about the skeleton of a pair, that this means the extended skeleton
	as opposed to the classical skeleton, which is defined for a single formal scheme.   
\end{definition}

\begin{example}
	The graphic below illustrates this $\varepsilon$-approximation procedure in the case of
	$(\mathfrak{S}, \mathfrak{G}) := (\mathfrak{S}(1, a, 1), \mathfrak{G}(1))$, also see Example~\ref{AnnulusSkeleton}.
	The length of the horizontal segments of $S(\mathfrak{S}_{\varepsilon})$ is $-\log(\varepsilon)$.
	\begin{center}\includegraphics[scale=1]{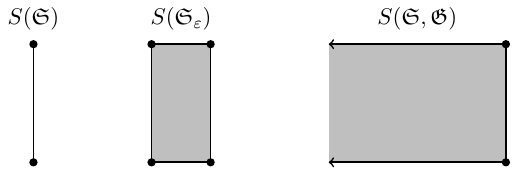}\end{center}
\end{example}

\begin{proposition} \label{SkeletonClosed}
	$S(\mathfrak{S}, \mathfrak{G})$ is a closed subset of $Z$.
\end{proposition}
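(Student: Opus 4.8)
The plan is to cover $Z$ by open subsets on each of which the a priori infinite union $S(\mathfrak{S},\mathfrak{G}) = \bigcup_{\varepsilon} S(\mathfrak{S}_{\varepsilon})$ reduces to a single member $S(\mathfrak{S}_{\delta})$, and then to observe that each such member is closed in $Z$. Since closedness is a local property, this will finish the proof.

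First, for $\delta \in |K^*|$ with $\delta < 1$ (so that $b_\delta \in K^{\circ\circ}$ and $\mathfrak{S}_\delta$ is an honest standard scheme) I would put $U_\delta := \{x \in Z : |T_i(x)| > \delta \text{ for all } i \in \dotsbra{1}{s}\}$. These are open in $Z$ because each $x \mapsto |T_i(x)|$ is continuous, and they cover $Z$: for $z \in Z$ one has $|T_i(z)| > 0$ for every $i \le s$ by the very definition $Z = \mathfrak{S}_\eta \setminus \mathfrak{G}_\eta$, and since $K$ is non-trivially valued there is $\delta \in |K^*|$ with $\delta < \min_{i \le s}|T_i(z)|$ and $\delta < 1$. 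Under the identification from \ref{SkelConstr1}, the domain embedding $\mathfrak{T}(1,b_\delta)_\eta \to B$, $T \mapsto T_1$, realizes $\mathfrak{T}(1,b_\delta)_\eta$ as the closed annulus $\{\delta \le |T| \le 1\}$; hence $S_\delta = \{x \in Z : |T_i(x)| \ge \delta \text{ for all } i \le s\}$, so in particular $U_\delta \subseteq S_\delta$.

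Next I would record the identity $S(\mathfrak{S},\mathfrak{G}) \cap S_\delta = S(\mathfrak{S}_\delta)$. For $\varepsilon \le \delta$ the relation $S(\mathfrak{S}_\delta) = S(\mathfrak{S}_\varepsilon) \cap S_\delta$ from \ref{SkelConstr1} gives $S(\mathfrak{S}_\varepsilon) \cap S_\delta = S(\mathfrak{S}_\delta)$, while for $\varepsilon \ge \delta$ the same relation, with $\varepsilon$ and $\delta$ interchanged, gives $S(\mathfrak{S}_\varepsilon) = S(\mathfrak{S}_\delta) \cap S_\varepsilon \subseteq S(\mathfrak{S}_\delta)$ and hence $S(\mathfrak{S}_\varepsilon) \cap S_\delta \subseteq S(\mathfrak{S}_\delta)$; taking the union over all $\varepsilon$ yields the identity. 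Consequently $S(\mathfrak{S},\mathfrak{G}) \cap U_\delta = S(\mathfrak{S},\mathfrak{G}) \cap S_\delta \cap U_\delta = S(\mathfrak{S}_\delta) \cap U_\delta$. Now $S(\mathfrak{S}_\delta)$ is homeomorphic via $\trop$ to a geometric poly-simplex $\Delta(\mathbf{n}',\mathbf{r}')$ with $\mathbf{n}' = (\mathbf{n},1,\dots,1)$ and $\mathbf{r}' = (\mathbf{r},-\log\delta,\dots,-\log\delta)$, which is compact because $-\log\delta < \infty$ (and each possibly degenerate factor $\Delta(n_i,\infty)$ is itself compact). As $Z$ is Hausdorff, $S(\mathfrak{S}_\delta)$ is closed in $Z$, so $S(\mathfrak{S},\mathfrak{G}) \cap U_\delta$ is closed in $U_\delta$. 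Since the $U_\delta$ form an open cover of $Z$, it follows that $S(\mathfrak{S},\mathfrak{G})$ is closed in $Z$.

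I expect the only real work here to be bookkeeping: choosing the open cover so that the nesting relations among the $S(\mathfrak{S}_\varepsilon)$ force the union to be locally eventually constant, and keeping careful track that each $S_\varepsilon$ sits inside $Z$ via the annulus coordinate $T_1$ of the degenerate factors, so that the inclusion $U_\delta \subseteq S_\delta$ holds on the nose. No deeper input — peaked points, the $K$-affinoid torus action, or the deformation retraction — is needed for this statement.
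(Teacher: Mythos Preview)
Your proof is correct. The bookkeeping with the nested relations $S(\mathfrak{S}_\varepsilon)=S(\mathfrak{S}_{\varepsilon'})\cap S_\varepsilon$ for $\varepsilon'\le\varepsilon$ from \ref{SkelConstr1} indeed forces $S(\mathfrak{S},\mathfrak{G})\cap S_\delta=S(\mathfrak{S}_\delta)$, and compactness of $S(\mathfrak{S}_\delta)\cong\Delta(\textbf{n},\textbf{r})\times\Delta(1,-\log\delta)^s$ (including the degenerate factors $\Delta(n_i,\infty)$, which are closed in the compact $\overline{\R}_{\ge 0}^{\,n_i+1}$) gives closedness in the Hausdorff space $Z$. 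The local-to-global step via the open cover $(U_\delta)$ is clean.

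The paper takes a different route: rather than exhausting by compact pieces, it identifies the skeleton globally as a product of closed subsets of the factors of $Z=\mathfrak{S}(\textbf{n},\textbf{a})_\eta\times(B\setminus\{0\})^s\times B^{d-s}$. The key observation there is that the set $Y=\{|\cdot|_v:v\in[0,\infty]\}\subseteq B$ is closed because its complement is the union $\bigcup_{a\in K^\circ\setminus\{0\}}B^\circ(a,|a|)$ of open discs; hence $Y\setminus\{0\}$ is closed in $B\setminus\{0\}$, and the skeleton is the product of this with the (compact, hence closed) classical skeleton in the first factor and a single point in each of the last $d-s$ ball factors. Your approach stays entirely within the $\varepsilon$-approximation framework and never needs an explicit pointwise description of the skeleton inside $B$; the paper's approach is shorter but relies on recognizing the skeleton of a punctured ball as the ``spine'' $Y\setminus\{0\}$ and giving an ad hoc argument for its closedness.
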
	

\begin{proof}
	Consider the subset $Y := \{|\enspace|_v ~|~ v \in [0, \infty]\} \subseteq B = \mathscr{M}(K\{T\})$.
	Here we define for $v \in \R_{\geq 0}$ the seminorm $|\enspace|_v$ as being given by   
	$|f|_v = \max_m |c_m| \exp(-vm)$ where $f = \sum_m c_mT^m$.
	The seminorm $|\enspace|_{\infty}$ is defined by $|f|_{\infty} = |f(0)|$,
	which corresponds to $0 \in B$.
	Note that $Y$ is closed in $B$. Indeed this follows from 
	$Y = B \setminus \bigcup_{a \in K^{\circ}\setminus\{0\}} B^{\circ}(a, |a|)$. % \mathring{B}
	We conclude that $Y \setminus \{|\enspace|_{\infty}\}$ is closed in $B \setminus \{0\}$,
	which also implies our claim.
\end{proof}

\begin{noname} \label{StdDeform}
	For every $\varepsilon$ we have a map 
	$\trop_{\varepsilon}: S_{\varepsilon} \to \Delta_{\varepsilon} := \Delta(\textbf{n}, \textbf{r}) \times \Delta(1, -\log(\varepsilon))^s$,
	its canonical right inverse $\sigma_{\varepsilon}$, the retraction $\tau_{\varepsilon} = \sigma_{\varepsilon} \circ \trop_{\varepsilon}$
	from $S(\mathfrak{S}_{\varepsilon})$ to $\Delta_{\varepsilon}$
	and the proper strong deformation retraction $\Phi_{\varepsilon}: S_{\varepsilon} \times [0, 1] \to S_{\varepsilon}$.
	These $\trop_{\varepsilon}$ extend to a continuous map
	\[\trop: Z \to \bigcup\limits_{\varepsilon \in |K^*| \cap (0,1)} \Delta_{\varepsilon}
	= \Delta(\textbf{n}, \textbf{r}, s),\] % \times \R_{\geq 0}^s
	whose restriction to $S(\mathfrak{S}, \mathfrak{G})$ is a homeomorphism. 
	We denote its inverse by $\sigma$. Then $\tau := \sigma \circ \trop$ is a retraction 
	from $Z$ to $S(\mathfrak{S}, \mathfrak{G})$ extending the retractions $\tau_{\varepsilon}$. Similarly we obtain a deformation retraction
	$\Phi: Z \times [0,1] \to Z$ from $Z$ onto $S(\mathfrak{S}, \mathfrak{G})$ extending the homotopies $\Phi_{\varepsilon}$. That these are compatible, follows immediately from their explicit description.
	
	From the construction we get an explicit description of 
	$\trop: S(\mathfrak{S}, \mathfrak{G}) \xrightarrow{\sim} \Delta(\textbf{n}, \textbf{r}, s)$: 
	We use our usual coordinates for the standard scheme. Then $\mathfrak{G}$ is equal to $\{T_1 \cdots T_s = 0\}$ 
	and for every $x \in S(\mathfrak{S}, \mathfrak{G})$ we have
	\begin{align*}
	\trop(x) = (&-\log|T_{10}(x)|,\dots,-\log|T_{1n_1}(x)|, \dots ,-\log|T_{p0}(x)|,\dots,-\log|T_{pn_p}(x)|, \\
	&-\log|T_1(x)|,\dots,-\log|T_s(x)|).
	\end{align*}	 	
\end{noname}

%\begin{noname} \label{SheafLimit}
%We enhance the skeleton $S$ from above with a sheaf of locally affine linear functions py pullback with respect to $\trop$.
%It can also be obtained by a limiting process in the following sense: Denote by $\mathcal{O}$ resp. $\mathcal{O}_{\varepsilon}$ the  
%sheaf of locally affine linear functions on $S$ resp. $S_{\varepsilon}$. Let $U$ be an open subset of $S$. 
%Using the $\trop$ homeomorphisms and Lemma \ref{FunctionApprox} it becomes clear that $\mathcal{O}(U)$ 
%is canonically isomorphic to the projective limit 
%$\lim\limits_{\leftarrow}\mathcal{O}_{\varepsilon}(U \cap S_{\varepsilon})$ in the category of monoids via the morphisms 
%$\mathcal{O}(U) \to \mathcal{O}_{\varepsilon}(U \cap S_{\varepsilon})$ induced by the inclusions $S_{\varepsilon} \hookrightarrow S$. 
%\end{noname}

\begin{proposition} \label{StdRedExtended}
	We denote $(\mathfrak{S}, \mathfrak{G}) := (\mathfrak{S}(\textbf{n}, \textbf{a}, d), \mathfrak{G}(s))$.
	% and consider the reduction map $\red: \mathfrak{S}_{\eta} \to \mathfrak{S}_s$.
	Let $x$ be the generic point of the minimal stratum of $(\mathfrak{S}_s, \mathfrak{G}_s)$.
	Then the homeomorphism $\trop: S(\mathfrak{S}, \mathfrak{G}) \xrightarrow{\sim} \Delta(\textbf{n}, \textbf{r}, s)$
	from above retricts to a homeomorphism
	\[S(\mathfrak{S}, \mathfrak{G}) \cap \red^{-1}_{\mathfrak{S}}(x) \xrightarrow{\sim} \Delta^{\circ}(\textbf{n}, \textbf{r}, s). \]
\end{proposition}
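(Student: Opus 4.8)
The plan is to combine the explicit formula for $\trop$ on $S(\mathfrak S, \mathfrak G)$ recorded in \ref{StdDeform} with a direct computation of the reduction map on points of the skeleton. Let $\widetilde A$ denote the coordinate ring of $\mathfrak S_s$, so that $\widetilde A = \bigotimes_{i=1}^p \widetilde K[T_{i0},\dots,T_{in_i}]/(T_{i0}\cdots T_{in_i}) \otimes_{\widetilde K} \widetilde K[T_1,\dots,T_d]$ (here $T_{i0}\cdots T_{in_i} = \widetilde a_i = 0$ because $a_i \in K^{\circ\circ}$). The generic point $x$ of the minimal stratum of $(\mathfrak S_s,\mathfrak G_s)$ is the prime ideal $\mathfrak p \subseteq \widetilde A$ generated by all $T_{ij}$ (for $i \in \dotsbra{1}{p}$, $j \in \dotsbra{0}{n_i}$) together with $T_1,\dots,T_s$, and quotienting out $\mathfrak p$ gives $\widetilde A/\mathfrak p \cong \widetilde K[T_{s+1},\dots,T_d]$. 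Recall that $\red_{\mathfrak S}(y) = \ker(\widetilde\chi_y) \in \Spec(\widetilde A) = \mathfrak S_s$, and that for each coordinate function one has $\widetilde\chi_y(T_{ij}) = 0 \iff |T_{ij}(y)| < 1$ and $\widetilde\chi_y(T_k) = 0 \iff |T_k(y)| < 1$.

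The first step is to show that for every $y \in S(\mathfrak S, \mathfrak G)$ the character $\widetilde\chi_y$ is injective on the subalgebra $\widetilde K[T_{s+1},\dots,T_d] \subseteq \widetilde A$. Indeed $S(\mathfrak S,\mathfrak G) = \bigcup_\varepsilon S(\mathfrak S_\varepsilon)$, each $\mathfrak S_\varepsilon$ is a standard scheme having $\mathfrak B^{d-s}$ as a direct factor, and by the construction recalled in Example \ref{BallSkeleton} the formation of the skeleton is compatible with products of standard schemes while $S(\mathfrak B^{d-s})$ is the single point $\xi_0$, the Gauss point of $B^{d-s}$; hence the projection $Z \to B^{d-s}$ maps $S(\mathfrak S,\mathfrak G)$ onto $\{\xi_0\}$. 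Since $\widetilde{\mathscr H}(\xi_0) = \widetilde K(T_{s+1},\dots,T_d)$, the restriction of $\widetilde\chi_y$ to $\widetilde K[T_{s+1},\dots,T_d]$ factors as the canonical inclusion $\widetilde K[T_{s+1},\dots,T_d] \hookrightarrow \widetilde{\mathscr H}(\xi_0)$ followed by the field embedding $\widetilde{\mathscr H}(\xi_0) \hookrightarrow \widetilde{\mathscr H}(y)$, so it is injective. Consequently, for $y \in S(\mathfrak S, \mathfrak G)$ one has $\red_{\mathfrak S}(y) = x$ if and only if $\mathfrak p \subseteq \ker(\widetilde\chi_y)$: the forward implication is obvious, and conversely $\ker(\widetilde\chi_y)/\mathfrak p$ is a prime ideal of $\widetilde A/\mathfrak p \cong \widetilde K[T_{s+1},\dots,T_d]$ on which $\widetilde\chi_y$ is injective, hence zero. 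Finally, $\mathfrak p \subseteq \ker(\widetilde\chi_y)$ holds precisely when $|T_{ij}(y)| < 1$ for all $i,j$ and $|T_k(y)| < 1$ for all $k \in \dotsbra{1}{s}$.

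The second step translates this into a condition on $\trop(y)$ by means of the explicit description from \ref{StdDeform},
\[\trop(y) = (-\log|T_{10}(y)|,\dots,-\log|T_{pn_p}(y)|,\,-\log|T_1(y)|,\dots,-\log|T_s(y)|).\]
Since $a_i \in K^{\circ\circ}$ we have $r_i = \val(a_i) > 0$, so by Definition \ref{GeomSimplex} the point $\trop(y) \in \Delta(\mathbf n, \mathbf r, s)$ lies in $\Delta^\circ(\mathbf n, \mathbf r, s) = \prod_{i=1}^p \Delta(n_i, r_i)^\circ \times \R_{>0}^s$ if and only if every coordinate $-\log|T_{ij}(y)|$ and every coordinate $-\log|T_k(y)|$ with $k \le s$ is strictly positive, that is, if and only if $|T_{ij}(y)| < 1$ for all $i,j$ and $|T_k(y)| < 1$ for all $k \le s$. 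Combining this with the first step yields, for $y \in S(\mathfrak S, \mathfrak G)$, the equivalence $\red_{\mathfrak S}(y) = x \iff \trop(y) \in \Delta^\circ(\mathbf n, \mathbf r, s)$. As $\trop$ is already a homeomorphism $S(\mathfrak S, \mathfrak G) \xrightarrow{\sim} \Delta(\mathbf n, \mathbf r, s)$ by \ref{StdDeform}, restricting it to $S(\mathfrak S,\mathfrak G) \cap \red_{\mathfrak S}^{-1}(x)$ produces the asserted homeomorphism onto $\Delta^\circ(\mathbf n, \mathbf r, s)$.

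The step I expect to be the main obstacle is the injectivity of $\widetilde\chi_y$ on the ball coordinates $\widetilde K[T_{s+1},\dots,T_d]$, i.e. the fact that points of the extended skeleton project to the Gauss point in the $\mathfrak B^{d-s}$-direction; without it one obtains only the inclusion $\red_{\mathfrak S}(y) \supseteq x$. This is precisely the feature that distinguishes the present situation from Proposition \ref{StdRed}, where $d = 0$. The remaining arguments are routine bookkeeping, apart from the harmless case $r_i = \infty$: there $\Delta^\circ(n_i, \infty)$ is again the locus where all coordinates are nonzero, and the computation above goes through unchanged.
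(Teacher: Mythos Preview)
Your proof is correct and is precisely the argument the paper has in mind: the paper's own proof merely says this is ``an easy exercise requiring nothing but the definitions and the same arguments as in the proof of Proposition~\ref{StdRed}'', and your write-up spells out those arguments in full. The one genuinely new ingredient beyond Proposition~\ref{StdRed}---that skeleton points project to the Gauss point in the $\mathfrak{B}^{d-s}$-direction, forcing $\ker(\widetilde\chi_y)\subseteq\mathfrak p$---is exactly the step you flag as the main obstacle, and your justification via Example~\ref{BallSkeleton} is the intended one.
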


\begin{proof}
	This is an easy exercise requiring nothing but the definitions
	and the same arguments as in the proof of Proposition~\ref{StdRed}.
\end{proof}

%---------------------------------------------------------------------------------------------------------------------------------------------------------------------------

\subsection{Building blocks}

We continue to assume that $K$ is non-trivially valued.

Again the $\varepsilon$-approximation method helps us to translate 
results from the classical case to our situation. In particular
we show that the homeomorphism between the building block skeleton
and the extended poly-simplex via the tropicalization map is 
in some sense “combinatorial”, see Proposition~\ref{CoordChange}.

\begin{noname} \label{ExtendedStep5}
	Let $\mathfrak{X}$ be an affine admissible formal $K^{\circ}$-scheme,
	$\mathfrak{H} \subseteq \mathfrak{X}$ a closed subscheme,
	$(\mathfrak{S}, \mathfrak{F}) := (\mathfrak{S}(\textbf{n}, \textbf{a}, d), \mathfrak{G}(s))$  
	and $\varphi: (\mathfrak{X}, \mathfrak{H}) \to (\mathfrak{S}, \mathfrak{F})$ an \et morphism. 
	Denote $Z := \mathfrak{X}_{\eta} \setminus \mathfrak{H}_{\eta}$.
	We set $\varepsilon$ and $\mathfrak{S}_{\varepsilon}$ as in \ref{SkelConstr1}. 
	Using the same technique as in Example \ref{BlockBallSkeleton}, we may assume $d = s$.
	Consider the base change $\varphi_{\varepsilon}: \mathfrak{X}_{\varepsilon} \to \mathfrak{S}_{\varepsilon}$
	of $\varphi$ with respect to $\mathfrak{S}_{\varepsilon} \to \mathfrak{S}$ which we obtain via
	$\mathfrak{T}(1,b_{\varepsilon}) \to \mathfrak{B}$ induced by $T \mapsto T_1$. 
	Now let $\varepsilon' \in |K^*|$ with $\varepsilon' \leq 1$ and $\varepsilon' \leq \varepsilon$.
	We have a morphism $\mathfrak{T}(1,b_{\varepsilon}) \to \mathfrak{T}(1,b_{\varepsilon'})$
	induced by $T_0 \mapsto \frac{b_{\varepsilon'}}{b_{\varepsilon}} T_0$ and $T_1 \mapsto T_1$.
	It gives rise to morphisms $\mathfrak{S}_{\varepsilon} \to \mathfrak{S}_{\varepsilon'}$ and 
	$\iota: \mathfrak{X}_{\varepsilon} \to \mathfrak{X}_{\varepsilon'}$.
	Have a look at the following diagram of formal group actions by $\widehat{\mathfrak{G}}$
	as described in \ref{BuildingBlockSkeleton}:
	\begin{center}
		\begin{tikzpicture}[scale = 2]
		
		\node (1) at (0, 1) {$\widehat{\mathfrak{G}} \times \mathfrak{X}_{\varepsilon}$};
		\node (2) at (1, 1) {$\mathfrak{X}_{\varepsilon}$};
		\node (3) at (0, 0) {$\widehat{\mathfrak{G}} \times \mathfrak{X}_{\varepsilon'}$};
		\node (4) at (1, 0) {$\mathfrak{X}_{\varepsilon'}$};		
		
		\draw[->, thick] (1)--(2); 
		\draw[->, thick] (1)--(3) node[pos=0.5, left] {$\id \times \iota$};
		\draw[->, thick] (2)--(4) node[pos=0.5, right] {$\iota$};
		\draw[->, thick] (3)--(4); 
		
		\end{tikzpicture}
	\end{center}
	The explicit description of the extension of the formal group action from 
	$\mathfrak{S}_{\varepsilon}$ to $\mathfrak{X}_{\varepsilon}$, obtained in the proof of \cite[Lemma 5.5]{Ber99},
	shows that the diagram commutes. We can now apply \cite[Proposition 5.2.8 (ii)]{Ber90}
	and infer $\iota_{\eta}(g_t \ast y) = g_t \ast \iota_{\eta}(y)$ for all $y \in \mathfrak{X}_{\varepsilon, \eta}$
	and $t \in [0, 1)$. Note that $\iota_{\eta}: \mathfrak{X}_{\varepsilon, \eta} \to \mathfrak{X}_{\varepsilon', \eta}$ 
	is a Laurent domain embedding and 
	$Z = \bigcup_{\varepsilon > 0} \mathfrak{X}_{\varepsilon, \eta}$. 
	
	In particular we can glue together all homotopies 
	$\Phi_{\mathfrak{X}_{\varepsilon}}: \mathfrak{X}_{\varepsilon, \eta} \times [0, 1] \to \mathfrak{X}_{\varepsilon, \eta}$
	to obtain a homotopy $\Phi_{(\mathfrak{X}, \mathfrak{H})}: Z \times [0, 1] \to Z$. One easily concludes from \ref{BuildingBlockSkeleton},
	that $\Phi_{(\mathfrak{X}, \mathfrak{H})}$ is a proper strong deformation retraction from $Z$ onto 
	$\bigcup_{\varepsilon > 0} S(\mathfrak{X}_{\varepsilon})$ and that it is
	independent of the choice of $(\mathfrak{S}, \mathfrak{F})$ and $\varphi$.
	
	Moreover if $\mathfrak{X}'$ is an admissible affine formal $K^{\circ}$-scheme
	with a closed subset $\mathfrak{H}' \subseteq \mathfrak{X}'$ and an \et morphism
	$\psi: (\mathfrak{X}', \mathfrak{H}') \to (\mathfrak{X}, \mathfrak{H})$, 
	then the composition $\varphi \circ \psi$ induces a homotopy $\Phi_{(\mathfrak{X}', \mathfrak{H}')}$
	which satisfies
	$\psi_{\eta} \circ \Phi_{(\mathfrak{X}', \mathfrak{H}')} = \Phi_{(\mathfrak{X}, \mathfrak{H})} \circ (\psi_{\eta} \times \id)$.
\end{noname}

\noindent Let for the rest of this subsection $(\mathfrak{X}, \mathfrak{H})$ 
be a strictly poly-stable pair and $x \in \str(\mathfrak{X}_s, \mathfrak{H}_s)$.
For convenience we will also write $s$ instead of $s_x$.

\begin{definition}
	Let $\mathfrak{U}$ together with $\varphi: (\mathfrak{U}, \mathfrak{H} \cap \mathfrak{U}) \to (\mathfrak{S}, \mathfrak{G})$ 
	be a building block of $(\mathfrak{X}, \mathfrak{H})$ in $x$, where
	$(\mathfrak{S}, \mathfrak{G}) = (\mathfrak{S}(\textbf{n}, \textbf{a}, d), \mathfrak{G}(s))$. 
	We define the \emph{extended skeleton} $S(\mathfrak{U}, \mathfrak{H} \cap \mathfrak{U})$ of this building block 
	as the preimage of $S(\mathfrak{S}, \mathfrak{G})$ under $\varphi_{\eta}$.
\end{definition}

\begin{proposition} \label{BuildingBlockProp}
	Assume we are in the same situation as in the above definition and we denote $Z := \mathfrak{U}_{\eta} \setminus \mathfrak{H}_{\eta}$.
	Let us denote the generic point of the minimal stratum of $(\mathfrak{S}_s, \mathfrak{G}_s)$ by $\zeta$.
	Then the following statements hold:
	\begin{enumerate}
		\item $S(\mathfrak{U}, \mathfrak{H} \cap \mathfrak{U})$ is a closed subset of $Z$. 
		\item $\varphi_{\eta}$ induces a homeomorphism 
		$S(\mathfrak{U}, \mathfrak{H} \cap \mathfrak{U}) \xrightarrow{\sim} S(\mathfrak{S}, \mathfrak{G})$.
		Moreover it restricts to a homeomorphism
		\[S(\mathfrak{U}, \mathfrak{H} \cap \mathfrak{U}) \cap \red^{-1}_{\mathfrak{U}}(x) 
		\xrightarrow{\sim} S(\mathfrak{S}, \mathfrak{G}) \cap \red^{-1}_{\mathfrak{S}}(\zeta). \] 
		\item There is a retraction $\tau: Z \to S(\mathfrak{U}, \mathfrak{H} \cap \mathfrak{U})$
		and a proper strong deformation retraction $\Phi: Z \times [0,1] \to Z$ 
		onto $S(\mathfrak{U}, \mathfrak{H} \cap \mathfrak{U})$.
		\item $S(\mathfrak{U}, \mathfrak{H} \cap \mathfrak{U})$, $\tau$ and $\Phi$ do not depend on the choice of $\varphi$.
	\end{enumerate} 
\end{proposition}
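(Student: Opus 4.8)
The plan is to reduce everything to the known properties of the $\varepsilon$-approximation $\Phi_{(\mathfrak{U}, \mathfrak{H}\cap\mathfrak{U})}$ of \ref{ExtendedStep5}, applied to the building block $\varphi$, together with the classical building-block statement (Lemma~\ref{BuildingBlockLemma} and Example~\ref{BlockBallSkeleton}) applied level-by-level to the base changes $\varphi_\varepsilon\colon\mathfrak{U}_\varepsilon\to\mathfrak{S}_\varepsilon$. Recall that for a standard pair we defined $S(\mathfrak{S},\mathfrak{G})=\bigcup_\varepsilon S(\mathfrak{S}_\varepsilon)$ and that $\varphi$ induces via pullback the identification $S(\mathfrak{U},\mathfrak{H}\cap\mathfrak{U})=\varphi_\eta^{-1}(S(\mathfrak{S},\mathfrak{G}))$. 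First I would observe that, since the $\mathfrak{U}_{\varepsilon,\eta}$ form an increasing cover of $Z$ by Laurent domains and $\varphi_{\varepsilon,\eta}^{-1}(S(\mathfrak{S}_\varepsilon))=S(\mathfrak{U}_\varepsilon)$ by Example~\ref{BlockBallSkeleton}, we get $S(\mathfrak{U},\mathfrak{H}\cap\mathfrak{U})=\bigcup_\varepsilon S(\mathfrak{U}_\varepsilon)$, which is exactly the set onto which $\Phi_{(\mathfrak{U},\mathfrak{H}\cap\mathfrak{U})}$ retracts. This immediately delivers (iii): take $\Phi:=\Phi_{(\mathfrak{U},\mathfrak{H}\cap\mathfrak{U})}$ and $\tau:=\Phi(\cdot,1)$, and these are a proper strong deformation retraction and a retraction by the concluding sentences of \ref{ExtendedStep5}.

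For (i) and (ii), the cleanest route is to transport the corresponding statements for standard pairs — Propositions~\ref{SkeletonClosed}, \ref{StdRedExtended}, and the homeomorphism $\trop\colon S(\mathfrak{S},\mathfrak{G})\xrightarrow{\sim}\Delta(\mathbf{n},\mathbf{r},s)$ from~\ref{StdDeform} — across $\varphi_\eta$. For the homeomorphism $S(\mathfrak{U},\mathfrak{H}\cap\mathfrak{U})\xrightarrow{\sim}S(\mathfrak{S},\mathfrak{G})$, I would argue level by level: for each $\varepsilon$, Example~\ref{BlockBallSkeleton} (which packages Lemma~\ref{BuildingBlockLemma} applied to the building block $\varphi_\varepsilon$, noting that $\varphi_\varepsilon$ remains a building block since taking the product with $\mathfrak{T}(1,b_\varepsilon)^s$ and a ball and then base-changing does not disturb the stratification conditions) gives a homeomorphism $S(\mathfrak{U}_\varepsilon)\xrightarrow{\sim}S(\mathfrak{S}_\varepsilon)$, and these are compatible with the transition Laurent-domain embeddings for $\varepsilon'\le\varepsilon$ because those embeddings commute with the $\ast$-action by \ref{ExtendedStep5}. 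Passing to the union gives a continuous bijection $S(\mathfrak{U},\mathfrak{H}\cap\mathfrak{U})\to S(\mathfrak{S},\mathfrak{G})$; its inverse is continuous because it is continuous on each $S(\mathfrak{S}_\varepsilon)$ and these are a closed, locally finite cover of $S(\mathfrak{S},\mathfrak{G})$. For the restriction to the formal fibers, note $\red_{\mathfrak{S}}\circ\varphi_\eta=\varphi_s\circ\red_{\mathfrak{U}}$ and that $\varphi_s^{-1}(\zeta)=\{x\}$ by Proposition~\ref{DimensionProp}, whence $\varphi_\eta^{-1}(\red_{\mathfrak{S}}^{-1}(\zeta))=\red_{\mathfrak{U}}^{-1}(x)$; combined with the homeomorphism just established and Proposition~\ref{StdRedExtended}, this yields the asserted homeomorphism onto $S(\mathfrak{S},\mathfrak{G})\cap\red_{\mathfrak{S}}^{-1}(\zeta)\cong\Delta^\circ(\mathbf{n},\mathbf{r},s)$. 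Closedness in (i) then follows because $S(\mathfrak{U},\mathfrak{H}\cap\mathfrak{U})=\varphi_\eta^{-1}(S(\mathfrak{S},\mathfrak{G}))$ is the preimage of a closed set (Proposition~\ref{SkeletonClosed}) under the continuous map $\varphi_\eta$; alternatively, it is the image of the continuous retraction $\tau$ on the locally compact Hausdorff space $Z$, hence closed.

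Part (iv) — independence of the choices of $(\mathfrak{S},\mathfrak{G})$ and $\varphi$ — is where I expect the real work to lie. The strategy is to reduce two building blocks $\varphi\colon(\mathfrak{U},\mathfrak{H}\cap\mathfrak{U})\to(\mathfrak{S},\mathfrak{G})$ and $\varphi'\colon(\mathfrak{U},\mathfrak{H}\cap\mathfrak{U})\to(\mathfrak{S}',\mathfrak{G}')$ to a common refinement. The point is that $\Phi$, $\tau$ and $S(\mathfrak{U},\mathfrak{H}\cap\mathfrak{U})$ are assembled from the classical building-block data $\Phi_{\mathfrak{U}_\varepsilon}$, and the last paragraph of~\ref{ExtendedStep5} already records that the glued homotopy $\Phi_{(\mathfrak{X},\mathfrak{H})}$ is independent of the choice of standard pair and \'etale map at the $\varepsilon$-level; so the claim for $\Phi$ and $\tau$ is essentially inherited from the classical independence statement in \cite[\S5, Steps 5--7]{Ber99}, applied after base change, and then glued. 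For $S(\mathfrak{U},\mathfrak{H}\cap\mathfrak{U})$ one notes it equals the image of $\tau$, so independence of $\tau$ forces independence of the skeleton. The subtle point to check carefully is that the two \'etale maps $\varphi$ and $\varphi'$ can be compared compatibly with the $\varepsilon$-approximations: concretely, one restricts to a common open $\mathfrak{U}''$ as in Proposition~\ref{RespDivCoord} / \ref{SliceBuildingBlock}, uses that on $\mathfrak{U}''$ both induce the same $D_x$-structure and hence the same index set $\{1,\dots,s\}$ for $\mathfrak{G}$ up to permutation, and then invokes that permuting the coordinates of the $\mathfrak{T}(1,b_\varepsilon)$-factors does not change $S(\mathfrak{S}_\varepsilon)$ as a subset of $Z$. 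The main obstacle, then, is bookkeeping: making sure the permutation-of-coordinates ambiguity in the definition of a standard pair interacts correctly with the $\varepsilon$-approximation and the reduction $d=s$ used in~\ref{ExtendedStep5}, so that the two families $\{S(\mathfrak{U}_\varepsilon)\}$ and $\{S(\mathfrak{U}'_\varepsilon)\}$ literally coincide as subsets of $Z$ rather than merely being homeomorphic.
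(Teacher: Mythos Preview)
Your approach matches the paper's almost exactly: (i) via the preimage of the closed set $S(\mathfrak{S},\mathfrak{G})$ under $\varphi_\eta$, (ii) via the $\varepsilon$-approximation and Example~\ref{BlockBallSkeleton} applied level by level, and (iii)--(iv) by invoking \ref{ExtendedStep5}.

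The one point you gloss over is exactly where the paper does its only nontrivial work: verifying that $\varphi_\varepsilon\colon\mathfrak{U}_\varepsilon\to\mathfrak{S}_\varepsilon$ is again a building block. The special fiber $\mathfrak{U}_{\varepsilon,s}$ is \emph{not} the same as $\mathfrak{U}_s$ (it is the pullback along $\mathfrak{S}_{\varepsilon,s}\to\mathfrak{S}_s$, which inserts extra $\mathfrak{T}(1,b_\varepsilon)_s$-factors), so ``does not disturb the stratification conditions'' is not automatic. The paper checks that $\mathfrak{U}_{\varepsilon,s}$ is elementary by showing there is a unique stratum over the minimal $\zeta\in\str(\mathfrak{S}_{\varepsilon,s})$: it observes that the fiber of $\mathfrak{U}_{\varepsilon,s}\to\mathfrak{U}_s$ over $x$ is irreducible (citing \cite[Tag 0555]{stacks-project}) and then concludes by dimension. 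You should fill this in.

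Two smaller slips. First, the family $\{S(\mathfrak{S}_\varepsilon)\}_\varepsilon$ is an \emph{increasing} union, hence not locally finite; continuity of the inverse should instead be argued from the fact that every point of $S(\mathfrak{S},\mathfrak{G})\cong\Delta(\mathbf{n},\mathbf{r},s)$ lies in the interior of some $\Delta_\varepsilon$, so the level-wise homeomorphisms glue on an open cover. Second, your elaborate discussion of (iv) is unnecessary: the independence of $\Phi_{(\mathfrak{U},\mathfrak{H}\cap\mathfrak{U})}$ from $(\mathfrak{S},\mathfrak{G})$ and $\varphi$ is already stated in \ref{ExtendedStep5}, and since $\tau=\Phi(\cdot,1)$ and $S(\mathfrak{U},\mathfrak{H}\cap\mathfrak{U})=\tau(Z)$, the paper simply cites that and is done.
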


\begin{proof}
	Since $\varphi^{-1}(\mathfrak{G}) = \mathfrak{H} \cap \mathfrak{U}$ we get that 
	the continuous map $\varphi_{\eta}$ restricts to a map 
	$Z \to \mathfrak{S}_{\eta} \setminus \mathfrak{G}_{\eta}$.
	In \ref{SkeletonClosed} we saw that $S(\mathfrak{S}, \mathfrak{G})$ is a closed subset of 
	$\mathfrak{S}_{\eta} \setminus \mathfrak{G}_{\eta}$, thus (i) follows.
	
	In order to show (ii) we want to apply the $\varepsilon$-approximation procedure from \ref{SkelConstr1}. 
	Let $\varepsilon$, $b_{\varepsilon}$ and $\mathfrak{S}_{\varepsilon}$ be as there and consider the base change 
	$\varphi_{\varepsilon}: \mathfrak{U}_{\varepsilon} \to \mathfrak{S}_{\varepsilon}$ of $\varphi: \mathfrak{U} \to \mathfrak{S}$  % not an open immersion!
	with respect to the morphism $\mathfrak{S}_{\varepsilon} \to \mathfrak{S}$ which we obtain via
	$\mathfrak{T}(1,b_{\varepsilon}) \to \mathfrak{B}$ induced by $T \mapsto T_1$.
	Observe that there is a closed immersion $\mathfrak{S}_s \to \mathfrak{S}_{\varepsilon, s}$ which is a right inverse
	of $\mathfrak{S}_{\varepsilon, s} \to \mathfrak{S}_s$. 
	The induced maps from $\mathfrak{U}_{\varepsilon, \eta}$ to $\mathfrak{U}_{\eta}$ 
	and from $\mathfrak{S}_{\varepsilon, \eta}$ to $\mathfrak{S}_{\eta}$ are Laurent domain embeddings. 
	
	We claim that $\varphi_{\varepsilon}: \mathfrak{U}_{\varepsilon} \to \mathfrak{S}_{\varepsilon}$ is a building block. 
	First of all note that $x$ is the least element of 
	$\str(\mathfrak{U}_s, (\mathfrak{H} \cap \mathfrak{U})_s)$ and its image under $\varphi_s$ is $\zeta$. 
	%the generic point $\eta$ of % $\{T_{ij} = 0, T_k = 0 ~|~ i \in \dotsbra{1}{p}, j \in \dotsbra{0}{n_i}, k \in \dotsbra{1}{s}\} \subseteq \mathfrak{S}_s$.
	However $\zeta$ is also the minimal element in $\str(\mathfrak{S}_{\varepsilon, s})$.
	Now \cite[\href{https://stacks.math.columbia.edu/tag/0555}{Tag 0555}]{stacks-project}
	shows that the fiber of $\mathfrak{U}_{\varepsilon,s} \to \mathfrak{U}_s$ over $x$
	is irreducible, which together with dimensionality reasons implies 
	that there can only be one stratum of $\mathfrak{U}_{\varepsilon,s}$
	lying over $\zeta \in \mathfrak{S}_{\varepsilon, s}$. 
	This is the requested minimal stratum.
	
	Using Example \ref{BlockBallSkeleton} we infer that the skeleton $S(\mathfrak{U}_{\varepsilon})$
	is equal to $(\varphi_{\varepsilon, \eta})^{-1}(S(\mathfrak{S}_{\varepsilon}))$ 
	and furthermore $\varphi_{\varepsilon, \eta}$ induces a homeomorphism between it and $S(\mathfrak{S}_{\varepsilon})$. 
	These homeomorphisms are compatible and they glue together to give a homeomorphism 
	$S(\mathfrak{U}, \mathfrak{H} \cap \mathfrak{U}) \xrightarrow{\sim} S(\mathfrak{S}, \mathfrak{G})$, which proves the first claim of (ii).
	
	The other claim of (ii) follows from the equality $\red^{-1}_{\mathfrak{U}}(x) = \varphi_{\eta}^{-1}(\red^{-1}_{\mathfrak{S}}(\zeta))$,
	which we get from $\varphi^{-1}_s(\zeta) = \{x\}$, see Proposition \ref{DimensionProp}, and the 
	functoriality of the reduction map, in explicit terms $\red_{\mathfrak{S}} \circ \varphi_{\eta} = \varphi_s \circ \red_{\mathfrak{U}}$.  
	%commutativity of the following diagram:
	%begin{center}
	%\begin{tikzpicture}[scale = 2]
	%	\node (1) at (0, 1) {$\mathfrak{U}_{\eta}$};
	%	\node (2) at (1, 1) {$\mathfrak{S}_{\eta}$};
	%	\node (3) at (0, 0) {$\mathfrak{U}_s$};
	%	\node (4) at (1, 0) {$\mathfrak{S}_s$};		
	%	
	%	\draw[->, thick] (1)--(2) node[pos=0.5, above] {$\varphi_{\eta}$};
	%	\draw[->, thick] (1)--(3) node[pos=0.5, left] {$\red_{\mathfrak{U}}$};
	%	\draw[->, thick] (2)--(4) node[pos=0.5, right] {$\red_{\mathfrak{S}}$};
	%	\draw[->, thick] (3)--(4) node[pos=0.5, below] {$\varphi_s$};
	%\end{tikzpicture}
	%\end{center}
	With this we can conclude (ii).  
	
	The statements (iii) and (iv) are consequences of our considerations in \ref{ExtendedStep5}.	
\end{proof}

\begin{remark}
	We continue to use the above notions.
	Let $\Delta(\textbf{n}, \textbf{r}) \hookrightarrow 
	\Delta(\textbf{n}, \textbf{r}, s) = \Delta(\textbf{n}, \textbf{r}) \times \R_{\geq 0}^s$ be
	the inclusion given by $v \mapsto (v, \mathbf{0})$. Then by the above construction the classical skeleton
	$S(\mathfrak{U})$ is contained in $S(\mathfrak{U}, \mathfrak{H} \cap \mathfrak{U})$ such that the following diagram,
	involving the homeomorphisms induced by $\varphi_{\eta}$ and $\trop$, is commutative:
	\begin{center}
		\begin{tikzpicture}[scale = 2]
		\node (1) at (0, 1) {$S(\mathfrak{U})$};
		\node (2) at (1.4, 1) {$S(\mathfrak{S})$};
		\node (3) at (2.8, 1) {$\Delta(\textbf{n}, \textbf{r})$};
		\node (4) at (0, 0) {$S(\mathfrak{U}, \mathfrak{H} \cap \mathfrak{U})$};
		\node (5) at (1.4, 0) {$S(\mathfrak{S}, \mathfrak{G})$};
		\node (6) at (2.8, 0) {$\Delta(\textbf{n}, \textbf{r}, s)$};		
		
		\draw[->, thick] (1)--(2) node[pos=0.5, above] {$\sim$};
		\draw[->, thick] (2)--(3) node[pos=0.5, above] {$\sim$};
		\draw[->, thick] (4)--(5) node[pos=0.5, below] {$\sim$};
		\draw[->, thick] (5)--(6) node[pos=0.5, below] {$\sim$};
		\draw[right hook->, thick] (1)--(4);
		\draw[right hook->, thick] (2)--(5);
		\draw[right hook->, thick] (3)--(6);
		\end{tikzpicture}
	\end{center}	 
\end{remark}

\begin{noname} \label{KeyPoints}
	For later reference we further investigate the irreducible components of $\mathfrak{S}_{\varepsilon, s}$
	from the proof of Proposition~\ref{BuildingBlockProp} and their connection to the components of 
	$\mathfrak{U}_s$ and $(\mathfrak{H} \cap \mathfrak{U})_s$.
	
	%We will refer to the coordinates of 
	%$\mathfrak{S}_{\varepsilon} = \mathfrak{S}(\textbf{n}, \textbf{a}) \times \mathfrak{T}(1, b_{\varepsilon})^s \times \mathfrak{B}^{d-s}$ as follows:
	%\begin{align*}
	%prod^p_{i=1} &\Spf(K^{\circ}\{T_{i0},\dots,T_{in_i}\}/(T_{i0} \cdots T_{in_i} - a_i)) 
	%\times \prod^s_{i=1} \Spf(K^{\circ}\{U_{i0},U_{i1}\}/(U_{i0} U_{in_i} - b_{\varepsilon})) \\
	%\times &\Spf(K^{\circ}\{T_{s+1},\dots,T_d\}).
	%\end{align*}
	We have isometric bijections $[\textbf{n}] \xrightarrow{\sim} \irr(\mathfrak{S}_s)$
	and $[\textbf{n}] \times [1]^s \xrightarrow{\sim} \irr(\mathfrak{S}_{\varepsilon, s})$ 
	as introduced in Example \ref{MetrEx}.
	% where the coordinates of the factor 
	%$[1]^s = \{0, 1\}^s$ indicate for each of the $s$ factors 
	%$\mathfrak{T}(1,b_{\varepsilon})_s = \widetilde{K}[U_{i0},U_{i1}]/(U_{i0} U_{i1})$
	%whether $\mathcal{V}$ is contained in $\{U_{i0} = 0\}$ or in $\{U_{i1} = 0\}$.
	For every $\textbf{k} \in [\textbf{n}]$ and $i \in \dotsbra{1}{s}$ consider
	the element $\mathcal{W}_{\textbf{k}} \in \irr(\mathfrak{S}_s)$
	corresponding to the tuple $\textbf{k}$ and the element $\mathcal{W}_{\textbf{k}, i} \in \irr(\mathfrak{S}_{\varepsilon, s})$
	corresponding to the tuple $(\textbf{k}, e_i)$, where $e_i$
	is the tuple having $0$ at the $i$-th component and $1$ in every other component. 
	
	Then the image of $\mathcal{W}_{\textbf{k}, i}$ under $\mathfrak{S}_{\varepsilon, s} \to \mathfrak{S}_s$ 
	is equal to $\mathcal{W}_{\textbf{k}} \cap \{T_i = 0\} \in \irr(\mathfrak{G}_s)$. 
	It follows now from Theorem~\ref{BijThm} that the preimage of $\mathcal{W}_{\textbf{k}} \cap \{T_i = 0\}$
	under $\varphi_s$ is equal to 
	$\mathcal{V}_{\alpha_{\varphi}(\textbf{k}), \gamma_{\varphi}(i)} \cap \mathfrak{U}_s \in \irr((\mathfrak{H} \cap \mathfrak{U})_s)$
	with the notation used there. 
	By dimensionality reasons the $\mathcal{W}_{\textbf{k}, i}$ are the only elements in $\irr(\mathfrak{S}_{\varepsilon, s})$
	which yield $\mathcal{V}_{\alpha_{\varphi}(\textbf{k}), \gamma_{\varphi}(i)}$ in this fashion.
	
	We also consider the element $\mathcal{W}_{\textbf{k}, \textbf{1}} \in \irr(\mathfrak{S}_{\varepsilon, s})$
	corresponding to the tuple $(\textbf{k}, \textbf{1})$, where $\textbf{1}$
	is the tuple having $1$ in every component. Then the image of $\mathcal{W}_{\textbf{k}, \textbf{1}}$
	under $\mathfrak{S}_{\varepsilon, s} \to \mathfrak{S}_s$ is equal to $\mathcal{W}_{\textbf{k}} \in \irr(\mathfrak{S}_s)$.  
	The preimage of $\mathcal{W}_{\textbf{k}}$ under $\varphi_s$ is equal to $\alpha_{\varphi}(\textbf{k}) \in \irr(\mathfrak{U}_s)$.
	Again by dimensionality reasons the $\mathcal{W}_{\textbf{k}, \textbf{1}}$ are the only elements in $\irr(\mathfrak{S}_{\varepsilon, s})$
	which yield $\alpha_{\varphi}(\textbf{k})$ in this fashion.      
\end{noname}

\begin{proposition} \label{DistStrIndep}
	The skeleton $S(\mathfrak{U}, \mathfrak{H} \cap \mathfrak{U})$ associated to a building block $\mathfrak{U}$
	together with $\varphi: (\mathfrak{U}, \mathfrak{H} \cap \mathfrak{U}) \to (\mathfrak{S}, \mathfrak{G})$ 
	depends only on the generic point of the minimal stratum of $\mathfrak{U}_s$. 
\end{proposition}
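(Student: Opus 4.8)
The plan is to deduce the statement from Proposition~\ref{BuildingBlockProp}~(iv) (independence of the building block skeleton from the chosen \et morphism) by proving that $S(\mathfrak U,\mathfrak H\cap\mathfrak U)$ is already contained in $\mathfrak V_\eta$ for \emph{every} open affine neighbourhood $\mathfrak V$ of the relevant stratum point inside $\mathfrak U$, so that the skeleton may be computed on an arbitrarily small building block. As a preliminary normalization it suffices to compare two building blocks $\mathfrak U$ with $\varphi\colon(\mathfrak U,\mathfrak H\cap\mathfrak U)\to(\mathfrak S,\mathfrak G)$ and $\mathfrak U'$ with $\varphi'\colon(\mathfrak U',\mathfrak H\cap\mathfrak U')\to(\mathfrak S',\mathfrak G')$ \emph{in the same point} $x\in\str(\mathfrak X_s,\mathfrak H_s)$, namely the generic point of the minimal stratum of the corresponding special pair, which exists by Proposition~\ref{PreImIrrComp}~(iii); indeed, as in the verification of condition~(ii) of Proposition~\ref{BuildingBlockExist}, a building block in an arbitrary point $x_0$ is automatically a building block in the generic point $x$ of the minimal stratum of its special pair, using that $\varphi(x)=\varphi(x_0)$ is the least element $\zeta$ of $\str(\mathfrak S_s,\mathfrak G_s)$. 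Note $x\in\mathfrak U\cap\mathfrak U'$.

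The central point is the inclusion $S(\mathfrak U,\mathfrak H\cap\mathfrak U)\subseteq\mathfrak V_\eta$ for any open affine neighbourhood $\mathfrak V$ of $x$ in $\mathfrak U$. Since $\red_{\mathfrak U}$ is anti-continuous, $\mathfrak V_\eta=\red_{\mathfrak U}^{-1}(\mathfrak V_s)$ is a closed analytic domain of $\mathfrak U_\eta$, so it is enough to show $\red_{\mathfrak U}\bigl(S(\mathfrak U,\mathfrak H\cap\mathfrak U)\bigr)\subseteq\mathfrak V_s$. By Proposition~\ref{BuildingBlockProp}~(ii) the map $\varphi_\eta$ restricts to a homeomorphism $S(\mathfrak U,\mathfrak H\cap\mathfrak U)\xrightarrow{\sim}S(\mathfrak S,\mathfrak G)$, and $\red_{\mathfrak S}\circ\varphi_\eta=\varphi_s\circ\red_{\mathfrak U}$; combining this with the explicit description of $S(\mathfrak S,\mathfrak G)$ from \ref{StdDeform} together with a face-by-face application of Proposition~\ref{StdRedExtended} (equivalently, a direct computation of $\red_{\mathfrak S}(\sigma(\mathbf s))$ as the prime ideal generated by the coordinates whose $\trop$-value is positive, exactly as in the proof of Proposition~\ref{StdRed}), one sees that $\red_{\mathfrak S}(S(\mathfrak S,\mathfrak G))$ consists only of generic points of strata of $(\mathfrak S_s,\mathfrak G_s)$. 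Using Proposition~\ref{StrataMap} and that $\mathfrak U_s$ is connected and equidimensional of the same dimension as $\mathfrak S_s$ (cf.\ the proof of Proposition~\ref{DimensionProp}), it follows that $\red_{\mathfrak U}(S(\mathfrak U,\mathfrak H\cap\mathfrak U))$ consists only of generic points of strata of $(\mathfrak U_s,(\mathfrak H\cap\mathfrak U)_s)$. Since that pair is elementary with least element $x$, every such generic point $z$ satisfies $x\in\overline{\{z\}}$, i.e.\ $z$ is a generization of $x$; and every open subset containing $x$ contains all generizations of $x$, so $z\in\mathfrak V_s$. Hence $S(\mathfrak U,\mathfrak H\cap\mathfrak U)\subseteq\mathfrak V_\eta$.

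With this in hand the proof concludes formally. Choose an open affine neighbourhood $\mathfrak V\subseteq\mathfrak U\cap\mathfrak U'$ of $x$. Restricting $\varphi$ to $\mathfrak V$ changes neither the minimal stratum of the special pair (Remark~\ref{OpenStratRem}) nor its image $\zeta$, so $\varphi|_{\mathfrak V}\colon(\mathfrak V,\mathfrak H\cap\mathfrak V)\to(\mathfrak S,\mathfrak G)$ is again a building block in $x$, and likewise $\varphi'|_{\mathfrak V}$. Because $(\varphi|_{\mathfrak V})_\eta=\varphi_\eta|_{\mathfrak V_\eta}$ and $S(\mathfrak U,\mathfrak H\cap\mathfrak U)=\varphi_\eta^{-1}(S(\mathfrak S,\mathfrak G))$, we obtain
\[S(\mathfrak V,\mathfrak H\cap\mathfrak V)=(\varphi|_{\mathfrak V})_\eta^{-1}\bigl(S(\mathfrak S,\mathfrak G)\bigr)=S(\mathfrak U,\mathfrak H\cap\mathfrak U)\cap\mathfrak V_\eta=S(\mathfrak U,\mathfrak H\cap\mathfrak U),\]
where the skeleton on the left is the one attached to the building block $\varphi|_{\mathfrak V}$ and the last equality is the inclusion just established. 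The same reasoning applied to $\mathfrak U'$ identifies $S(\mathfrak U',\mathfrak H\cap\mathfrak U')$ with the skeleton of $\mathfrak V$ attached to the building block $\varphi'|_{\mathfrak V}$. Finally, by Proposition~\ref{BuildingBlockProp}~(iv) the skeleton of $\mathfrak V$ does not depend on the chosen building block morphism with domain $\mathfrak V$, so the two expressions agree and $S(\mathfrak U,\mathfrak H\cap\mathfrak U)=S(\mathfrak U',\mathfrak H\cap\mathfrak U')$.

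I expect the reduction-map step to be the main obstacle: one must check carefully that $\red_{\mathfrak S}(S(\mathfrak S,\mathfrak G))$, and therefore $\red_{\mathfrak U}(S(\mathfrak U,\mathfrak H\cap\mathfrak U))$, meets only generic points of strata — this amounts to showing that $\red$ carries each open face of the geometric extended poly-simplex to a single stratum, a face-by-face strengthening of Proposition~\ref{StdRedExtended} that requires unwinding the $\varepsilon$-approximation description of $S(\mathfrak S,\mathfrak G)$. Everything else is a formal manipulation with open formal subschemes, the anti-continuity of the reduction map, and Proposition~\ref{BuildingBlockProp}~(iv).
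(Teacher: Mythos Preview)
Your proof is correct and follows essentially the same strategy as the paper: reduce to a nested pair of building blocks, observe that the restricted morphism $\varphi|_{\mathfrak V}$ is again a building block, and invoke Proposition~\ref{BuildingBlockProp}~(iv). The paper carries this out more directly, though: after reducing to $\mathfrak U'\subseteq\mathfrak U$, it simply notes that both $\varphi_\eta|_{S(\mathfrak U,\mathfrak H\cap\mathfrak U)}$ and $(\varphi|_{\mathfrak U'})_\eta|_{S(\mathfrak U',\mathfrak H\cap\mathfrak U')}$ are homeomorphisms onto the \emph{same} set $S(\mathfrak S,\mathfrak G)$, the second being a restriction of the first; hence the two skeletons coincide as subsets of $\mathfrak U_\eta$. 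This bypasses your reduction-map analysis entirely --- the inclusion $S(\mathfrak U,\mathfrak H\cap\mathfrak U)\subseteq\mathfrak V_\eta$ falls out of the bijectivity rather than requiring a separate argument. Your detour through $\red_{\mathfrak U}$ is correct and in fact anticipates the content of Proposition~\ref{SkeletonBlocksProp}~(iv), but it is not needed here.
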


\begin{proof}
	%See \cite[4.5]{GRW16}.
	Let $\mathfrak{U}'$ together with $\varphi': (\mathfrak{U}', \mathfrak{H} \cap \mathfrak{U}') \to (\mathfrak{S}', \mathfrak{G}')$  
	be another building block of $(\mathfrak{X}, \mathfrak{H})$ such that the generic points of the minimal strata of 
	$\mathfrak{U}_s$  and $\mathfrak{U}'_s$ agree. Since we can work with the intersection $\mathfrak{U} \cap \mathfrak{U}'$, 
	it is enough to consider the case that $\mathfrak{U}' \subseteq \mathfrak{U}$.
	Let us show that $S(\mathfrak{U}, \mathfrak{H} \cap \mathfrak{U}) = S(\mathfrak{U}', \mathfrak{H} \cap \mathfrak{U}')$. 
	Observe that $\mathfrak{U}'$ is also a building block via the composition $\psi: \mathfrak{U}' \hookrightarrow \mathfrak{U} \to \mathfrak{S}$
	using $\varphi$. The skeleton $S(\mathfrak{U}', \mathfrak{H} \cap \mathfrak{U}')$ does not depend on the choice of the \et morphism, as 
	stated in Proposition \ref{BuildingBlockProp} (iv). Since $\psi_{\eta}$ resp. $\varphi_{\eta}$ induces a homeomorphism 
	$S(\mathfrak{U}', \mathfrak{H} \cap \mathfrak{U}') \xrightarrow{\sim} S(\mathfrak{S}, \mathfrak{G})$ 
	resp. $S(\mathfrak{U}, \mathfrak{H} \cap \mathfrak{U}) \xrightarrow{\sim} S(\mathfrak{S}, \mathfrak{G})$
	and $\psi_{\eta}$ is the restriction of $\varphi_{\eta}$, we finished the proof.  	
\end{proof}

\begin{proposition} \label{CoordChange}
	Let $\mathfrak{U}$ resp. $\mathfrak{U}'$ be a building block of $(\mathfrak{X}, \mathfrak{H})$ in $x$ together with 
	$\varphi: (\mathfrak{U}, \mathfrak{H} \cap \mathfrak{U}) \to (\mathfrak{S}, \mathfrak{G}(s))$ resp.
	$\varphi': (\mathfrak{U}', \mathfrak{H} \cap \mathfrak{U}') \to (\mathfrak{S}', \mathfrak{G}'(s))$.
	Then the homeomorphism
	\[h: \Delta(\textbf{n}, \textbf{r}, s) \xrightarrow{\sim} S(\mathfrak{S}, \mathfrak{G}(s)) \xrightarrow{\sim} 
	S(\mathfrak{U}, \mathfrak{H} \cap \mathfrak{U}) = S(\mathfrak{U}', \mathfrak{H} \cap \mathfrak{U}') \xrightarrow{\sim} S(\mathfrak{S}', \mathfrak{G}'(s))
	\xrightarrow{\sim} \Delta(\textbf{n}', \textbf{r}', s) \]
	via $\varphi_{\eta}$, $\varphi'_{\eta}$ and $\trop$ is the geometric realization of
	the isomorphism $h_{C_{\varphi}, C_{\varphi'}}$ induced by the combinatorial charts
	$C_{\varphi} = ([\mathbf{n}, \mathbf{r}, s], \alpha_{\varphi}, \gamma_{\varphi})$
	and $C_{\varphi'} = ([\mathbf{n}', \mathbf{r}', s], \alpha_{\varphi'}, \gamma_{\varphi'})$ in $x$,
	see Proposition \ref{ChartIsom}.
\end{proposition}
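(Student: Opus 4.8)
The plan is to reduce, by the $\varepsilon$-approximation procedure, the comparison of the two trivializations of the skeleton to a statement about the absolute values of coordinate functions of plain standard schemes along the skeleton, and then to evaluate those functions at the vertices.

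\emph{Reduction to a single building block.} First I would reduce to the case $\mathfrak{U} = \mathfrak{U}'$. Given building blocks $\varphi$ on $\mathfrak{U}$ and $\varphi'$ on $\mathfrak{U}'$ in $x$, pass to $\mathfrak{U} \cap \mathfrak{U}'$ and then remove the closures of all strata of $(\mathfrak{X}_s, \mathfrak{H}_s)$ not containing $x$; by the construction in Proposition~\ref{BuildingBlockExist} and \ref{SliceBuildingBlock} the restrictions of $\varphi$ and $\varphi'$ are again building blocks in $x$, and since $\alpha_\bullet$, $\gamma_\bullet$ and $\mathbf{r} = \mathbf{r}_{\alpha_\bullet}$ are built only from the intrinsic data $\irr(\mathfrak{X}_s, x)$, $D_x$ (cf. Proposition~\ref{ColorProp}), the combinatorial charts $C_\varphi$, $C_{\varphi'}$ and hence $h_{C_\varphi, C_{\varphi'}}$ do not change, while by Proposition~\ref{DistStrIndep} and Proposition~\ref{BuildingBlockProp}~(iv) neither do the skeletons nor the homeomorphism $h$. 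So assume $\mathfrak{U} = \mathfrak{U}'$ with $\varphi\colon (\mathfrak{U}, \mathfrak{H} \cap \mathfrak{U}) \to (\mathfrak{S}, \mathfrak{G}(s))$ and $\varphi'\colon (\mathfrak{U}, \mathfrak{H} \cap \mathfrak{U}) \to (\mathfrak{S}', \mathfrak{G}'(s))$.

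\emph{Unwinding the claim.} Using the explicit formula for $\trop$ on $S(\mathfrak{S}, \mathfrak{G})$ from \ref{StdDeform} together with the equality $\varphi_\eta^{-1}(S(\mathfrak{S},\mathfrak{G})) = S(\mathfrak{U}, \mathfrak{H}\cap\mathfrak{U}) = (\varphi')_\eta^{-1}(S(\mathfrak{S}',\mathfrak{G}'))$ (Proposition~\ref{DistStrIndep}), for $u \in \Delta(\mathbf{n},\mathbf{r},s)$ and $z := \varphi_\eta^{-1}(\sigma(u)) \in S(\mathfrak{U}, \mathfrak{H}\cap\mathfrak{U})$ the coordinates of $u$ are $-\log|\varphi^{*}T_{ij}(z)|$ and $-\log|\varphi^{*}T_{m}(z)|$, whereas those of $h(u)$ are $-\log|(\varphi')^{*}T'_{kl}(z)|$ and $-\log|(\varphi')^{*}T'_{m'}(z)|$. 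On the other hand, $h_{C_\varphi, C_{\varphi'}}$ is, by Proposition~\ref{ChartIsom}, induced by $(\alpha_{\varphi'})^{-1}\circ\alpha_\varphi$ and $(\gamma_{\varphi'})^{-1}\circ\gamma_\varphi$, so its geometric realization merely permutes the coordinates according to these bijections. Hence it suffices to prove: for every $z \in S(\mathfrak{U}, \mathfrak{H}\cap\mathfrak{U})$ one has $|\varphi^{*}T_{ij}(z)| = |(\varphi')^{*}T'_{kl}(z)|$ whenever $\varphi_s^{-1}(\{T_{ij}=0\}) = (\varphi')_s^{-1}(\{T'_{kl}=0\})$, and $|\varphi^{*}T_{m}(z)| = |(\varphi')^{*}T'_{m'}(z)|$ whenever $\gamma_\varphi(m) = \gamma_{\varphi'}(m')$; indeed, unwinding the definition of $\alpha_\bullet$ (via preimages of the $\mathcal{W}_{\mathbf{k}}$) and of $\gamma_\bullet$, these two conditions are exactly the conditions that the index pairs correspond under $(\alpha_{\varphi'})^{-1}\circ\alpha_\varphi$ resp. $(\gamma_{\varphi'})^{-1}\circ\gamma_\varphi$, which is precisely what $\Delta(h_{C_\varphi, C_{\varphi'}})$ prescribes.

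\emph{The $\varepsilon$-approximation and the classical core.} To prove the displayed equalities I would invoke the $\varepsilon$-approximation of \ref{SkelConstr1} and \ref{ExtendedStep5}: for $\varepsilon \in |K^{*}|$, $\varepsilon < 1$, one obtains the plain standard schemes $\mathfrak{S}_\varepsilon = \mathfrak{S}(\mathbf{n}^{+}, \mathbf{a}^{+})$ with $\mathbf{n}^{+} = (\mathbf{n}, 1, \dots, 1)$, $\mathbf{a}^{+} = (\mathbf{a}, b_\varepsilon, \dots, b_\varepsilon)$ and $\mathbf{r}^{+} := \val(\mathbf{a}^{+})$ (and likewise $\mathfrak{S}'_\varepsilon$), together with building blocks $\varphi_\varepsilon\colon \mathfrak{U}_\varepsilon \to \mathfrak{S}_\varepsilon$, $\varphi'_\varepsilon\colon \mathfrak{U}'_\varepsilon \to \mathfrak{S}'_\varepsilon$ as in the proof of Proposition~\ref{BuildingBlockProp}, and $S(\mathfrak{U}, \mathfrak{H}\cap\mathfrak{U}) = \bigcup_\varepsilon S(\mathfrak{U}_\varepsilon) = \bigcup_\varepsilon S(\mathfrak{U}'_\varepsilon)$; so it suffices to treat a point $z$ lying in $S(\mathfrak{U}_\varepsilon) \cap S(\mathfrak{U}'_\varepsilon)$, reducing to the analogous comparison for $\varphi_\varepsilon$ and $\varphi'_\varepsilon$. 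In this classical situation $\trop \circ (\varphi_\varepsilon)_\eta$ identifies $S(\mathfrak{U}_\varepsilon)$ with $\Delta(\mathbf{n}^{+}, \mathbf{r}^{+})$ so that $-\log|\varphi_\varepsilon^{*}T^{+}_{ab}|$ becomes the affine-linear coordinate $x_{ab}$; the classical analysis in \cite[§\,5, Step~6]{Ber99} shows that $-\log|(\varphi'_\varepsilon)^{*}T'^{+}_{cd}|$ is affine-linear for the same integral affine structure, and an affine-linear function on $\Delta(\mathbf{n}^{+}, \mathbf{r}^{+})$ is determined by its values at the vertices. By Lemma~\ref{BuildingBlockLemma} the vertices of $S(\mathfrak{U}_\varepsilon)$ are canonically indexed by $\irr(\mathfrak{U}_{\varepsilon,s})$, independently of the building block, and $(\varphi_\varepsilon)_\eta$ resp. $(\varphi'_\varepsilon)_\eta$ send the vertex $\mathcal{E}$ to the vertex $v_{\mathbf{k}}$ of $S(\mathfrak{S}_\varepsilon)$ with $\mathcal{E} = (\varphi_\varepsilon)_s^{-1}(\mathcal{W}_{\mathbf{k}})$ resp.\ $v_{\mathbf{k}'}$ with $\mathcal{E} = (\varphi'_\varepsilon)_s^{-1}(\mathcal{W}_{\mathbf{k}'})$. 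At such a vertex $-\log|\varphi_\varepsilon^{*}T^{+}_{ab}|$ equals the corresponding $r^{+}_{a}$ if $\mathcal{E} \subseteq (\varphi_\varepsilon)_s^{-1}(\{T^{+}_{ab}=0\})$ and $0$ otherwise, and the same for $\varphi'_\varepsilon$; the two subsets of $\irr(\mathfrak{U}_{\varepsilon,s})$ agree by hypothesis — here one uses \ref{KeyPoints} to translate the condition into the components $\mathcal{W}_{\mathbf{k}}$ and $\mathcal{V}_{\alpha_\bullet(\mathbf{k}), \gamma_\bullet(i)}$ and the disjointness of $\irr(\mathfrak{U}_s)$ and $\irr((\mathfrak{H}\cap\mathfrak{U})_s)$ from Proposition~\ref{IrredCompProp} — while the relevant colors coincide by Proposition~\ref{ColorProp} and Proposition~\ref{ColorChangeProp} (an isomorphism of colored poly-simplices preserves colors). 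Hence the vertex values agree, so the functions agree, as required; re-threading this through the identifications of the previous paragraph gives $h = \Delta(h_{C_\varphi, C_{\varphi'}})$.

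\emph{Main obstacle.} The hard part is the classical core above: establishing that $-\log|(\varphi'_\varepsilon)^{*}T'^{+}_{cd}|$ is affine-linear for the integral affine structure that $\varphi_\varepsilon$ puts on the skeleton, i.e.\ that the transition homeomorphism between two building-block trivializations of the skeleton of a plain standard scheme is integral-affine. Once this combinatoriality of the transition is available, the remainder is a matter of carefully matching indices via $\alpha_\bullet$, $\gamma_\bullet$ and \ref{KeyPoints}, and of the intrinsicness of the colors; the bookkeeping with the $\varepsilon$-levels (ensuring a given point and a given coordinate sit at compatible levels for both $\varphi$ and $\varphi'$) also needs a little care but is routine once one knows $h$ permutes coordinates.
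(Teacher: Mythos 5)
Your proposal follows essentially the same route as the paper: reduce to $\mathfrak{U}=\mathfrak{U}'$, pass to the $\varepsilon$-level where the problem becomes the classical one of comparing two building-block trivializations of a plain standard skeleton, invoke Berkovich's result that the transition respects the integral-affine structure, and then read off the isomorphism from the vertex correspondence using Lemma~\ref{BuildingBlockLemma} and the bookkeeping of~\ref{KeyPoints}, with Propositions~\ref{ColorProp} and~\ref{ColorChangeProp} supplying the colors. One small slip: the affine-linearity of the transition is the content of \cite[\S\,5, Step~13]{Ber99} rather than Step~6 (Step~6 only gives the homeomorphism of skeletons as in Lemma~\ref{BuildingBlockLemma}), and the paper then invokes \cite[Lemma~4.1]{Ber99} to upgrade ``respects affine-linear functions'' to ``is the geometric realization of an isomorphism of colored poly-simplices''; your more hands-on substitute — that an affine-linear function is determined by its vertex values — does the same job, so this is a presentational difference, not a gap.
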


\begin{proof}
	We have $S(\mathfrak{U}, \mathfrak{H} \cap \mathfrak{U}) 
	= S(\mathfrak{U} \cap \mathfrak{U}', \mathfrak{H} \cap \mathfrak{U} \cap \mathfrak{U}') 
	= S(\mathfrak{U}', \mathfrak{H} \cap \mathfrak{U}')$ and $\varphi_{\eta}$ resp. $\varphi'_{\eta}$
	restrict to homeomorphisms 
	$S(\mathfrak{U} \cap \mathfrak{U}', \mathfrak{H} \cap \mathfrak{U} \cap \mathfrak{U}') \xrightarrow{\sim} S(\mathfrak{S}, \mathfrak{G}(s))$
	resp. 
	$S(\mathfrak{U} \cap \mathfrak{U}', \mathfrak{H} \cap \mathfrak{U} \cap \mathfrak{U}') \xrightarrow{\sim} S(\mathfrak{S}', \mathfrak{G}'(s))$.
	Moreover the restriction of $\varphi$ resp. $\varphi'$ to $\mathfrak{U} \cap \mathfrak{U}'$ still induces the
	combinatorial charts $C_{\varphi}$ resp. $C_{\varphi'}$.   
	Consequently we may assume that $\mathfrak{U} = \mathfrak{U}'$. 
	
	Again we make use of the $\varepsilon$-approximation argument, which helped us in the proof of Proposition~\ref{BuildingBlockProp}.
	The homeomorphism $h$ from above is obtained by gluing together the homeomorphisms
	\[h_{\varepsilon}: \Delta_{\varepsilon} \xrightarrow{\sim} S(\mathfrak{S}_{\varepsilon}) \xrightarrow{\sim} 
	S(\mathfrak{U}_{\varepsilon}) \xrightarrow{\sim} S(\mathfrak{S}'_{\varepsilon})
	\xrightarrow{\sim} \Delta'_{\varepsilon}.\] 
	
	As we have seen $\varphi_{\varepsilon}: \mathfrak{U}_{\varepsilon} \to \mathfrak{S}_{\varepsilon}$ and
	$\varphi'_{\varepsilon}: \mathfrak{U}_{\varepsilon} \to \mathfrak{S}'_{\varepsilon}$ are building blocks. 
	This is the classical situation treated by Berkovich and it follows from \cite[§\,5, Step~13]{Ber99}
	that $h_{\varepsilon}$ respects the affine linear functions as introduced in Definition~\ref{AffLinFct}.
	We can then infer from \cite[Lemma~4.1]{Ber99} that $h_{\varepsilon}$ is the geometric realization of an
	isomorphism of colored poly-simplices. 
	Consequently $h_{\varepsilon}$ is completely determined by the images of the vertices of $\Delta_{\varepsilon}$. 
	The vertices of $\Delta_{\varepsilon}$ resp. $\Delta'_{\varepsilon}$ correspond to the irreducible components of 
	$\mathfrak{S}_{\varepsilon, s}$ resp. $\mathfrak{S}'_{\varepsilon, s}$. 
	Moreover the vertices of $S(\mathfrak{U}_{\varepsilon})$ correspond to the irreducible components of 
	$\mathfrak{U}_{\varepsilon, s}$, see Lemma~\ref{BuildingBlockLemma}.
	We conclude from our considerations in~\ref{KeyPoints} that $h_{\varepsilon}$ agrees with the geometric realization 
	of an isomorphism $[\textbf{n}] \times [1]^s \xrightarrow{\sim} [\textbf{m}] \times [1]^s$ 
	which maps $(\textbf{k}, e_i)$ to 
	$((\alpha^{-1}_{\varphi'} \circ \alpha_{\varphi})(\textbf{k}), e_{(\gamma^{-1}_{\varphi'} \circ \gamma_{\varphi})(i)})$ 
	and $(\textbf{k}, \textbf{1})$ to 
	$((\alpha^{-1}_{\varphi'} \circ \alpha_{\varphi} )(\textbf{k}), \textbf{1})$ 
	for all $\textbf{k} \in [\textbf{n}]$ and $i \in \dotsbra{1}{s}$. Note that the isomorphism is already uniquely determined
	by the images of these tuples. This shows that the $h_{\varepsilon}$ are compatible and that $h$, which is obtained by
	gluing them together, is given by the data $\alpha^{-1}_{\varphi'} \circ \alpha_{\varphi}$ and $\gamma^{-1}_{\varphi'} \circ \gamma_{\varphi}$.    
	Altogether we have proven that $h$ is the geometric realization of the isomorphism $h_{C_{\varphi}, C_{\varphi'}}$.
\end{proof}

%---------------------------------------------------------------------------------------------------------------------------------------------------------------------------

\subsection{Strictly poly-stable pairs}

Let $(\mathfrak{X}, \mathfrak{H})$ again be a strictly poly-stable pair.
We continue to assume that $K$ is non-trivially valued.

In this subsection we explicitly describe the extended skeleton of $(\mathfrak{X}, \mathfrak{H})$ 
and its piecewise linear structure coming from the canonical homeomorphism to the dual intersection complex.
However we will not yet define the deformation retraction, this will be done
more generally for arbitrary poly-stable pairs in the next subsection.

\begin{definition}
	Let $x \in \str(\mathfrak{X}_s, \mathfrak{H}_s)$. Choose any building block $\mathfrak{U}$ together with 
	$\varphi: (\mathfrak{U}, \mathfrak{H} \cap \mathfrak{U}) \to (\mathfrak{S}(\textbf{n}, \textbf{a}, d), \mathfrak{G}(s))$ 
	of $(\mathfrak{X}, \mathfrak{H})$ in $x$. Let $\textbf{r} := \val(\textbf{a})$. We call the closed subset
	\[S(x) := S(\mathfrak{U}, \mathfrak{H} \cap \mathfrak{U}) \subseteq \mathfrak{U}_{\eta} 
	\setminus \mathfrak{H}_{\eta} \subseteq \mathfrak{X}_{\eta} \setminus \mathfrak{H}_{\eta}\] 
	the \emph{building block skeleton} associated to $x$. In light of Proposition~\ref{DistStrIndep} this is independent of
	the choice of the building block. 
	
	We have a homeomorphism 
	$S(x) \xrightarrow{\sim} S(\mathfrak{S}(\textbf{n}, \textbf{a}, d), \mathfrak{G}(s)) \xrightarrow{\sim} \Delta(\textbf{n}, \textbf{r}, s)$
	via $\varphi_{\eta}$ and $\trop$. Recall the canonical polyhedron $\Delta(x)$ from Definition \ref{CanPolyDef}.
	There is a homeomorphism $\Delta(\textbf{n}, \textbf{r}, s) \xrightarrow{\sim} \Delta(x)$
	via the combinatorial chart $([\mathbf{n}, \mathbf{r}, s], \alpha_{\varphi}, \gamma_{\varphi})$. 
	Altogether we obtain a homeomorphism
	\[S(x) \xrightarrow{\sim} \Delta(x) \]
	between the building block skeleton $S(x)$ and the canonical polyhedron $\Delta(x)$.
	It follows from Proposition~\ref{CoordChange} and the definition of the canonical polyhedron
	that this homeomorphism is independent of the choice of the building block. So it is justified to call it the
	\emph{canonical homeomorphism}.  
\end{definition}

\begin{definition}
	We define the \emph{extended skeleton} of $(\mathfrak{X}, \mathfrak{H})$ as the following subset of 
	$\mathfrak{X}_{\eta} \setminus \mathfrak{H}_{\eta}$:
	\[S(\mathfrak{X}, \mathfrak{H}) := \bigcup\limits_{x \in \str(\mathfrak{X}_s, \mathfrak{H}_s)} S(x).\]
	Note that it is possible that there are infinitely many elements in $\str(\mathfrak{X}_s, \mathfrak{H}_s)$, but
	in any case the collection of all $S(x)$ for $x \in \str(\mathfrak{X}_s, \mathfrak{H}_s)$ is a locally finite 
	family of closed subsets of $\mathfrak{X}_{\eta} \setminus \mathfrak{H}_{\eta}$,
	which implies that $S(\mathfrak{X}, \mathfrak{H})$ is closed.  
\end{definition}

\begin{proposition} \label{RedBlocks}
	Let $x \in \str(\mathfrak{X}_s, \mathfrak{H}_s)$. The canonical homeomorphism $S(x) \xrightarrow{\sim} \Delta(x)$
	restricts to a homeomorphism $S(x) \cap \red^{-1}_{\mathfrak{X}}(x) \xrightarrow{\sim} \Delta^{\circ}(x)$.  
\end{proposition}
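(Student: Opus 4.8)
The plan is to unwind the canonical homeomorphism $S(x) \xrightarrow{\sim} \Delta(x)$ into the three homeomorphisms from which it was built in the definition of the building block skeleton, and to observe that each of the three restricts appropriately to the preimage of the relevant minimal stratum. So first I would fix a building block $\mathfrak{U}$ together with $\varphi\colon (\mathfrak{U}, \mathfrak{H}\cap\mathfrak{U}) \to (\mathfrak{S},\mathfrak{G}) = (\mathfrak{S}(\textbf{n},\textbf{a},d),\mathfrak{G}(s))$ of $(\mathfrak{X},\mathfrak{H})$ in $x$, set $\textbf{r} := \val(\textbf{a})$ and let $\zeta$ be the generic point of the minimal stratum of $(\mathfrak{S}_s,\mathfrak{G}_s)$. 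By Proposition~\ref{PreImIrrComp}~(iii), $x$ is the generic point of the minimal stratum of $(\mathfrak{U}_s,(\mathfrak{H}\cap\mathfrak{U})_s)$; in particular $x\in\mathfrak{U}_s$. Since $\mathfrak{U}$ is an open formal subscheme of $\mathfrak{X}$ we have $\mathfrak{U}_\eta = \red^{-1}_{\mathfrak{X}}(\mathfrak{U}_s)$, so the reduction maps are compatible and $\red^{-1}_{\mathfrak{X}}(x)\cap\mathfrak{U}_\eta = \red^{-1}_{\mathfrak{U}}(x)$. As $S(x) = S(\mathfrak{U},\mathfrak{H}\cap\mathfrak{U}) \subseteq \mathfrak{U}_\eta$ this already gives $S(x)\cap\red^{-1}_{\mathfrak{X}}(x) = S(\mathfrak{U},\mathfrak{H}\cap\mathfrak{U})\cap\red^{-1}_{\mathfrak{U}}(x)$.

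Next I would chain the three restrictions. By Proposition~\ref{BuildingBlockProp}~(ii) the map $\varphi_\eta$ restricts to a homeomorphism $S(\mathfrak{U},\mathfrak{H}\cap\mathfrak{U})\cap\red^{-1}_{\mathfrak{U}}(x) \xrightarrow{\sim} S(\mathfrak{S},\mathfrak{G})\cap\red^{-1}_{\mathfrak{S}}(\zeta)$. By Proposition~\ref{StdRedExtended} the homeomorphism $\trop\colon S(\mathfrak{S},\mathfrak{G}) \xrightarrow{\sim} \Delta(\textbf{n},\textbf{r},s)$ restricts to $S(\mathfrak{S},\mathfrak{G})\cap\red^{-1}_{\mathfrak{S}}(\zeta) \xrightarrow{\sim} \Delta^{\circ}(\textbf{n},\textbf{r},s)$. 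Finally the combinatorial chart $C_\varphi = ([\mathbf{n},\mathbf{r},s],\alpha_\varphi,\gamma_\varphi)$ in $x$ is an isomorphism of extended poly-simplices, so its geometric realization $\Delta(\textbf{n},\textbf{r},s)\xrightarrow{\sim}\Delta(x)$ restricts, via the functor $\Delta^{\circ}$ (recall from Definition~\ref{CanPolyDef} that $\Delta^{\circ}(x)$ is the colimit of the $\Delta^{\circ}$-functor), to a homeomorphism $\Delta^{\circ}(\textbf{n},\textbf{r},s)\xrightarrow{\sim}\Delta^{\circ}(x)$. By construction the canonical homeomorphism $S(x)\xrightarrow{\sim}\Delta(x)$ is exactly the composite of these three maps, hence it carries $S(x)\cap\red^{-1}_{\mathfrak{X}}(x)$ homeomorphically onto $\Delta^{\circ}(x)$, as claimed.

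The only point that is more than bookkeeping is the compatibility of reduction maps along the open immersion $\mathfrak{U}\hookrightarrow\mathfrak{X}$, i.e.\ the equality $\mathfrak{U}_\eta = \red^{-1}_{\mathfrak{X}}(\mathfrak{U}_s)$, which follows from anti-continuity of the reduction map together with $\mathfrak{U}_s$ being open in $\mathfrak{X}_s$; and the remark that the canonical homeomorphism is literally the composite displayed above, so that independence of the chosen building block (Propositions~\ref{DistStrIndep} and~\ref{CoordChange}) is inherited by the restriction. Everything else is a direct invocation of Propositions~\ref{BuildingBlockProp}~(ii), \ref{StdRedExtended} and the functoriality of $\Delta^{\circ}$, so no serious obstacle is expected.
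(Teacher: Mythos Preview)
Your proposal is correct and follows exactly the paper's approach: the paper's proof is the single sentence ``This is an immediate consequence of Proposition~\ref{StdRedExtended} and Proposition~\ref{BuildingBlockProp}~(ii)'', and you have simply unpacked that sentence by writing out the chain of homeomorphisms and the reduction-map bookkeeping explicitly. The extra details you supply (compatibility of $\red_{\mathfrak{X}}$ and $\red_{\mathfrak{U}}$ along the open immersion, and the passage to $\Delta^{\circ}(x)$ via the chart) are precisely what is implicit in the paper's one-line citation.
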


\begin{proof}
	This is an immediate consequence of Proposition~\ref{StdRedExtended} and Proposition~\ref{BuildingBlockProp}~(ii).
\end{proof}

\begin{proposition} \label{SkeletonBlocksProp}
	Let $x, y \in \str(\mathfrak{X}_s, \mathfrak{H}_s)$.
	\begin{enumerate}
		\item If $x \leq y$, then $S(y) \subseteq S(x)$.
		More precisely, $S(y)$ is contained in $S(x)$ as a face, i.\,e. the following diagram,
		involving the canonical homeomorphisms and the face embedding $\iota_{y, x}$, commutes:
		\begin{center}
			\begin{tikzpicture}[scale = 2]
			\node (1) at (0, 1) {$S(y)$};
			\node (2) at (1, 1) {$\Delta(y)$};
			\node (3) at (0, 0) {$S(x)$};
			\node (4) at (1, 0) {$\Delta(x)$};	
			
			\draw[->, thick] (1)--(2) node[pos=0.5, above] {$\sim$};
			\draw[right hook->, thick] (1)--(3) ;
			\draw[->, thick] (3)--(4) node[pos=0.5, below] {$\sim$};
			\draw[right hook->, thick] (2)--(4) ;
			\end{tikzpicture}
		\end{center}
		% \item If neither $x \leq y$ nor $y \leq x$, then $S(x)$ and $S(y)$ are disjoint.
		\item $S(x)$ is equal to the union of all $S(z) \cap \red^{-1}_{\mathfrak{X}}(z)$, where
		$z \in \str(\mathfrak{X}_s, \mathfrak{H}_s)$ such that $x \leq z$,
		and this union is disjoint.  
		\item The intersection $S(x) \cap S(y)$ is given as the union of all $S(z)$, where
		$z \in \str(\mathfrak{X}_s, \mathfrak{H}_s)$ such that $x \leq z$ and $y \leq z$. % $\overline{\{z\}} \supseteq \overline{\{x\}} \cup \overline{\{y\}}$.
		\item $S(\mathfrak{X}, \mathfrak{H})$ is equal to the union of all $S(z) \cap \red^{-1}_{\mathfrak{X}}(z)$, where
		$z \in \str(\mathfrak{X}_s, \mathfrak{H}_s)$, and this union is disjoint. 
	\end{enumerate}
\end{proposition}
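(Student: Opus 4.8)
The crux is assertion (i); once it is established, properties (ii)--(iv) follow by formal manipulation with the reduction map, so I describe the plan for (i) first.

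Fix $x\leq y$ in $\str(\mathfrak X_s,\mathfrak H_s)$ and choose a building block $\mathfrak U$ together with $\varphi\colon(\mathfrak U,\mathfrak H\cap\mathfrak U)\to(\mathfrak S,\mathfrak G)$ of $(\mathfrak X,\mathfrak H)$ in $x$. Since $x\in\overline{\{y\}}$ forces $y\in\mathfrak U$, the slicing procedure of \ref{SliceBuildingBlock} produces an open $\mathfrak U'\subseteq\mathfrak U$ containing $y$, a standard pair $(\mathfrak S',\mathfrak G')$, an open subscheme $\mathfrak T$ of $\mathfrak S$ which embeds into $\mathfrak S'$ as a building block $(\mathfrak T,\mathfrak G'\cap\mathfrak T)\hookrightarrow(\mathfrak S',\mathfrak G')$, and a building block $\varphi'\colon(\mathfrak U',\mathfrak H\cap\mathfrak U')\to(\mathfrak S',\mathfrak G')$ of $(\mathfrak X,\mathfrak H)$ in $y$ obtained by restricting $\varphi$ to $\mathfrak U'$ and composing with $\mathfrak T\hookrightarrow\mathfrak S'$. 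By Proposition~\ref{DistStrIndep} we may use $\mathfrak U'$ and $\varphi'$ to compute $S(y)$, and unwinding the definition of the building block skeleton gives $S(y)=\varphi_\eta^{-1}\bigl(S(\mathfrak T,\mathfrak G'\cap\mathfrak T)\bigr)\cap\mathfrak U'_\eta$, where $S(\mathfrak T,\mathfrak G'\cap\mathfrak T)$ is viewed inside $\mathfrak S_\eta$ via the open immersion $\mathfrak T\hookrightarrow\mathfrak S$. Hence (i) reduces to two assertions about standard pairs: the set-theoretic inclusion $S(\mathfrak T,\mathfrak G'\cap\mathfrak T)\subseteq S(\mathfrak S,\mathfrak G)$ inside $\mathfrak S_\eta$; and the statement that, under the homeomorphisms $S(\mathfrak S,\mathfrak G)\xrightarrow{\sim}\Delta(\mathbf n,\mathbf r,s)$ and $S(\mathfrak S',\mathfrak G')\xrightarrow{\sim}\Delta(\mathbf n',\mathbf r',s')$ given by $\trop$, this inclusion is the geometric realization of the injective morphism $\iota_{C_{\varphi'},C_\varphi}$ of extended poly-simplices attached in \ref{FaceEmbDef} to the combinatorial charts $C_\varphi$ in $x$ and $C_{\varphi'}$ in $y$ --- which by definition induces the face embedding $\iota_{y,x}$.

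The first assertion is read off from the explicit tropical description of \ref{StdDeform}: by \ref{RemoveCoord}, passing from $\mathfrak S$ to the open subscheme $\mathfrak T$ deletes the vanishing loci of the torus coordinates not containing $\varphi(y)$; the torus factors $\mathfrak T(k+1,1)$ thereby created and the additional ball factor each contribute only a single point to the skeleton, whereas the surviving torus factors together with the divisor coordinates $T_1,\dots,T_{s'}$ cut out the sub-poly-simplex $\Delta(\mathbf n',\mathbf r')\times\mathbb R_{\geq0}^{s'}$ of $\Delta(\mathbf n,\mathbf r,s)$ --- that is, a face --- consisting precisely of the monomial seminorms supported on the corresponding coordinates. (Alternatively, one runs the $\varepsilon$-approximation of \ref{SkelConstr1} and quotes that the skeleton of a building block which is an open immersion is the preimage of the skeleton of its target, cf.\ \cite[§\,5, Step~6]{Ber99} and Lemma~\ref{BuildingBlockLemma}.) For the second assertion one argues as in the proof of Proposition~\ref{CoordChange}, reducing via $\varepsilon$-approximation to Berkovich's classical statement \cite[§\,5, Step~13]{Ber99} that a building block homeomorphism respects affine linear functions, whence it is the geometric realization of a morphism of colored poly-simplices and is determined by the images of the vertices, i.e.\ of the irreducible components of the special fibers; one then has to check that on irreducible components the slicing of \ref{SliceBuildingBlock} realizes exactly the inclusion $\irr(\mathfrak X_s,y)\hookrightarrow\irr(\mathfrak X_s,x)$ underlying $\alpha_\varphi^{-1}\circ\alpha_{\varphi'}$, and on the divisor the injection $j_{y,x}\colon D_y\hookrightarrow D_x$ of Proposition~\ref{DivInjection} --- which is exactly the data of $\iota_{C_{\varphi'},C_\varphi}$. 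Carrying out this matching of combinatorial data is the genuinely delicate step and the main obstacle; the set-theoretic containment, the continuity statements, and the cocycle relations for well-definedness are comparatively formal.

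Granting (i), the remaining assertions are quick. For (ii): given $z\in\str(\mathfrak X_s,\mathfrak H_s)$ with $x\leq z$, part (i) gives $S(z)\subseteq S(x)$ with the canonical homeomorphisms forming a commutative square with $\iota_{z,x}$, and together with Proposition~\ref{RedBlocks} this shows the canonical homeomorphism $S(x)\xrightarrow{\sim}\Delta(x)$ carries $S(z)\cap\red_{\mathfrak X}^{-1}(z)$ bijectively onto the open face $\Delta^\circ(z)\subseteq\Delta(x)$. By Lemma~\ref{DisjointUnion}(i) the open faces $\Delta^\circ(z)$ for $x\leq z$ partition $\Delta(x)$, so transporting this partition back through $S(x)\xrightarrow{\sim}\Delta(x)$ gives $S(x)=\bigsqcup_{x\leq z}S(z)\cap\red_{\mathfrak X}^{-1}(z)$. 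For (iii), the inclusion $\supseteq$ is immediate from (i); conversely, if $w\in S(x)\cap S(y)$ then applying (ii) to $x$ and to $y$ yields $z\geq x$ and $z'\geq y$ with $w\in S(z)\cap\red_{\mathfrak X}^{-1}(z)$ and $w\in S(z')\cap\red_{\mathfrak X}^{-1}(z')$, so $z=\red_{\mathfrak X}(w)=z'$ and $w\in S(z)$ with $x\leq z$ and $y\leq z$. Finally, for (iv) we have $S(\mathfrak X,\mathfrak H)=\bigcup_x S(x)=\bigcup_x\bigsqcup_{x\leq z}S(z)\cap\red_{\mathfrak X}^{-1}(z)$ by the definition of $S(\mathfrak X,\mathfrak H)$ and by (ii); since every $z$ occurs (take $x=z$) this equals $\bigcup_z S(z)\cap\red_{\mathfrak X}^{-1}(z)$, and the union is disjoint because the image of a point under $\red_{\mathfrak X}$ determines the unique term that can contain it.
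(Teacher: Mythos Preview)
Your proposal is correct and follows essentially the same approach as the paper: the slicing of \ref{SliceBuildingBlock} for (i), then Lemma~\ref{DisjointUnion}(i) with Proposition~\ref{RedBlocks} for (ii), the reduction map to pin down $z$ for (iii), and (iv) as an immediate consequence. Your treatment of (i) is in fact more explicit than the paper's, which simply asserts that the inclusion $S(\mathfrak T,\mathfrak G'\cap\mathfrak T)\hookrightarrow S(\mathfrak S,\mathfrak G)$ corresponds to $\iota_{C_{\varphi'},C_\varphi}$ without spelling out the vertex-tracking or the $\varepsilon$-approximation reduction you outline.
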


\begin{proof}
	For (i) let us assume that $x \leq y$ and let $\mathfrak{U}$ together with 
	$\varphi: (\mathfrak{U}, \mathfrak{H} \cap \mathfrak{U}) \to (\mathfrak{S}, \mathfrak{G})$  
	be a building block of $(\mathfrak{X}, \mathfrak{H})$ in $x$. 
	As explained in~\ref{SliceBuildingBlock} we obtain a building block
	$\varphi': (\mathfrak{U}', \mathfrak{H} \cap \mathfrak{U}') 
	\to (\mathfrak{T}, \mathfrak{G}' \cap \mathfrak{T}) \hookrightarrow (\mathfrak{S}', \mathfrak{G}')$ in $y$.  
	The homeomorphism $S(\mathfrak{T}, \mathfrak{G}' \cap \mathfrak{T}) \xrightarrow{\sim} S(\mathfrak{S}', \mathfrak{G}')$ 
	induced by the open immersion $\mathfrak{T} \to \mathfrak{S}'$ is actually an identity of subsets of 
	$\mathfrak{S}'_{\eta}$ as one can easily check, also see Example \ref{BallSkeleton}.
	Consider the diagram
	\begin{center}
		\begin{tikzpicture}[scale = 2]
		\node (1) at (0, 1) {$S(\mathfrak{U}', \mathfrak{H} \cap \mathfrak{U}')$};
		\node (2) at (1.5, 1) {$S(\mathfrak{S}', \mathfrak{G}')$};
		\node (3) at (3, 1) {$S(\mathfrak{T}, \mathfrak{G}' \cap \mathfrak{T})$};
		\node (4) at (0, 0) {$S(\mathfrak{U}, \mathfrak{H} \cap \mathfrak{U})$};	
		\node (5) at (3, 0) {$S(\mathfrak{S}, \mathfrak{G})$};	
		
		\draw[->, thick] (1)--(2) node[pos=0.5, above] {$\sim$};
		\draw[double] (2)--(3); % node[pos=0.5, above] {$\sim$}
		\draw[->, thick] (4)--(5) node[pos=0.5, below] {$\sim$};
		\draw[right hook->, thick] (3)--(5);
		\draw[->, thick] (1)--(5) node[pos=0.5, above] {$\varphi_{\eta}$};;
		\end{tikzpicture}
	\end{center}	
	where the upper resp. lower homeomorphism is induced by $\varphi'_{\eta}$ resp. $\varphi_{\eta}$. The inclusion 
	$S(\mathfrak{T}, \mathfrak{G}' \cap \mathfrak{T}) \hookrightarrow S(\mathfrak{S}, \mathfrak{G})$
	is induced by the open immersion $\mathfrak{T} \to \mathfrak{S}$ and corresponds to an inclusion
	$\Delta(\mathbf{n}', \mathbf{r}', s') \hookrightarrow \Delta(\mathbf{n}, \mathbf{r}, s)$
	induced by the injective morphism $\iota_{C_{\varphi'}, C_{\varphi}}$ from~\ref{FaceEmbDef}. 
	It becomes clear that 
	\[S(y) = S(\mathfrak{U}', \mathfrak{H} \cap \mathfrak{U}') 
	\subseteq S(\mathfrak{U}, \mathfrak{H} \cap \mathfrak{U}) = S(x) \]
	and this inclusion corresponds to $\iota_{C_{\varphi'}, C_{\varphi}}$
	and therefore to the face embedding $\iota_{y, x}$. 
	
	The second part (ii) immediately follows from~(i) by using 
	Lemma~\ref{DisjointUnion}~(i) and Proposition~\ref{RedBlocks}.
	
	For the third part (iii) it suffices to show that for every point $v \in S(x) \cap S(y)$
	there exists an element $z \in \str(\mathfrak{X}_s, \mathfrak{H}_s)$ with $x \leq z$ and $y \leq z$ such that
	$v \in S(z)$. Let $z$ be the generic point of the stratum of $(\mathfrak{X}_s, \mathfrak{H}_s)$ which contains
	$\red_{ \mathfrak{X}}(v)$. As a consequence of~(ii) we get $x \leq z$, $y \leq z$ and 
	$v \in S(z) \cap \red^{-1}_{\mathfrak{X}}(z)$.
	
	At last (iv) is obvious from (ii), which finishes the proof. % disjoint since preimages of different points are disjoint
\end{proof}

\begin{theorem} \label{StrictHomeoThm}
	The skeleton $S(\mathfrak{X}, \mathfrak{H})$ is homeomorphic to the dual intersection complex
	$C(\mathfrak{X}, \mathfrak{H})$ via the canonical homeomorphisms on faces.
\end{theorem}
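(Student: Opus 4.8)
The plan is to exhibit the homeomorphism directly, by transporting the colimit presentation of $C(\mathfrak{X}, \mathfrak{H})$ through the building block skeletons. Recall that for every $x \in \str(\mathfrak{X}_s, \mathfrak{H}_s)$ we have a canonical homeomorphism $S(x) \xrightarrow{\sim} \Delta(x)$ onto the face of $C(\mathfrak{X}, \mathfrak{H})$ associated to $x$. First I would observe that the family $(S(x))_x$ together with the inclusions $S(y) \hookrightarrow S(x)$ for $x \leq y$ forms a diagram over the poset $\str(\mathfrak{X}_s, \mathfrak{H}_s)$, and that the canonical homeomorphisms constitute an isomorphism of diagrams onto $F_{(\mathfrak{X}, \mathfrak{H})}$: indeed Proposition~\ref{SkeletonBlocksProp}~(i) says precisely that for $x \leq y$ the square formed by the inclusion $S(y) \hookrightarrow S(x)$, the face embedding $\iota_{y, x}$ and the two canonical homeomorphisms commutes. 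So it remains to identify $S(\mathfrak{X}, \mathfrak{H})$ with the colimit of the $S$-diagram.

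On the level of sets this is the analogue of Lemma~\ref{EquivRel}: $S(\mathfrak{X}, \mathfrak{H}) = \bigcup_x S(x)$ by definition, and by Proposition~\ref{SkeletonBlocksProp}~(iii) two points $a \in S(x)$, $b \in S(y)$ coincide as points of $\mathfrak{X}_\eta \setminus \mathfrak{H}_\eta$ if and only if there is $z \in \str(\mathfrak{X}_s, \mathfrak{H}_s)$ with $x \leq z$, $y \leq z$ and $a = b \in S(z)$; hence the tautological map from the set-theoretic colimit $\bigsqcup_x S(x)/{\sim}$ to $S(\mathfrak{X}, \mathfrak{H})$ is a bijection. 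On the level of topology, since the $S(x)$ form a locally finite family of closed subsets covering $S(\mathfrak{X}, \mathfrak{H})$, a subset of $S(\mathfrak{X}, \mathfrak{H})$ is closed if and only if its intersection with each $S(x)$ is closed, so the subspace topology coincides with the weak topology, which is the colimit topology of the $S$-diagram. Combining, $S(\mathfrak{X}, \mathfrak{H})$ is the colimit of the $S$-diagram, and the isomorphism of diagrams from the previous step yields a homeomorphism $C(\mathfrak{X}, \mathfrak{H}) \xrightarrow{\sim} S(\mathfrak{X}, \mathfrak{H})$ restricting on each face $\Delta(x)$ to the inverse of the canonical homeomorphism, as required. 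Equivalently, one may glue the canonical homeomorphisms into a map $\Phi \colon C(\mathfrak{X}, \mathfrak{H}) \to S(\mathfrak{X}, \mathfrak{H})$ — well-defined by Proposition~\ref{SkeletonBlocksProp}~(i) and Corollary~\ref{FaceIntersection}, continuous by the universal property of the colimit, surjective since $S(\mathfrak{X}, \mathfrak{H}) = \bigcup_x S(x)$, and injective because the disjoint decomposition $C(\mathfrak{X}, \mathfrak{H}) = \bigsqcup_x \Delta^\circ(x)$ of Corollary~\ref{OpenFacesUnion} corresponds under the canonical homeomorphisms to the disjoint decomposition $S(\mathfrak{X}, \mathfrak{H}) = \bigsqcup_x (S(x) \cap \red^{-1}_{\mathfrak{X}}(x))$ of Proposition~\ref{SkeletonBlocksProp}~(iv), by Proposition~\ref{RedBlocks} — and then $\Phi$ is a homeomorphism since it restricts to homeomorphisms on the locally finite closed covers $(\Delta(x))_x$ of the source and $(S(x))_x$ of the target.

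The main obstacle I expect is the purely topological bookkeeping in the middle step: verifying that the subspace topology on $S(\mathfrak{X}, \mathfrak{H}) \subseteq \mathfrak{X}_\eta \setminus \mathfrak{H}_\eta$ agrees with the colimit topology of the face diagram. This rests on the local finiteness of the family $(S(x))_x$ — which itself comes from the local finiteness of the stratification of $\mathfrak{X}_s$, recorded in the definition of $S(\mathfrak{X}, \mathfrak{H})$ — together with the standard fact that a space carried by a locally finite family of closed subsets has the weak topology relative to that family. Once this is in place, every remaining assertion is a transport of the structural results of Section~\ref{DualIntComplexChap} and of Proposition~\ref{SkeletonBlocksProp} along the canonical homeomorphisms, and the promised compatibility with face embeddings is immediate from the construction.
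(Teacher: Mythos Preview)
Your proposal is correct and follows essentially the same approach as the paper: glue the canonical homeomorphisms $S(x)\xrightarrow{\sim}\Delta(x)$ using that they agree on overlaps (Proposition~\ref{SkeletonBlocksProp}) and that both sides carry the weak topology with respect to their face covers. The paper's proof is a one-line version of exactly this argument; you have carefully unpacked the topological bookkeeping (local finiteness of the $S(x)$, colimit topology on $C(\mathfrak{X},\mathfrak{H})$) that the paper leaves implicit in the phrase ``glue together''.
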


\begin{proof}
	We just glue together the canonical homeomorphisms $S(x) \xrightarrow{\sim} \Delta(x)$ for all 
	$x \in \str(\mathfrak{X}_s, \mathfrak{H}_s)$. The $S(x)$ are closed subsets of $S(\mathfrak{X}, \mathfrak{H})$,
	the $\Delta(x)$ are closed subsets of $C(\mathfrak{X}, \mathfrak{H})$
	and the $S(x) \xrightarrow{\sim} \Delta(x)$ agree on the intersections as can be seen from
	Proposition~\ref{SkeletonBlocksProp}.   
\end{proof}

\begin{lemma} \label{StrictSkeletonMap}
	Let $\psi: \mathfrak{Y} \to \mathfrak{X}$ be an \et morphism. 
	Consider $\mathfrak{G} := \psi^{-1}(\mathfrak{H})$. 
	Then the induced morphism $\psi: (\mathfrak{Y}, \mathfrak{G}) \to (\mathfrak{X}, \mathfrak{H})$
	of strictly poly-stable pairs induces a continuous map $S(\psi): S(\mathfrak{Y}, \mathfrak{G}) \to S(\mathfrak{X}, \mathfrak{H})$
	of skeletons, which restricts to a homeomorphism on the building block skeletons.
	
	Moreover this construction is functorial, i.\,e. if $\psi': \mathfrak{Z} \to \mathfrak{Y}$ is another \et morphism,
	then $S(\psi \circ \psi') = S(\psi) \circ S(\psi')$.
\end{lemma}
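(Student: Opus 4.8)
The plan is to reduce the statement to the building block level, where we have already established the required homeomorphisms, and then to glue. First I would fix a point $y \in \str(\mathfrak{Y}_s, \mathfrak{G}_s)$ and set $x := \psi_s(y)$. By Proposition~\ref{StrataMap} we have $x \in \str(\mathfrak{X}_s, \mathfrak{H}_s)$, and by Proposition~\ref{EtaleMorProp}~(i) the pair $(\mathfrak{Y}, \mathfrak{G})$ is strictly poly-stable, so both extended skeletons and all building block skeletons are defined. Choose a building block $\mathfrak{U}$ together with $\varphi: (\mathfrak{U}, \mathfrak{H} \cap \mathfrak{U}) \to (\mathfrak{S}, \mathfrak{F})$ of $(\mathfrak{X}, \mathfrak{H})$ in $x$, and as in the proof of Proposition~\ref{EtaleMorProp}~(iii) define $\mathfrak{U}'$ to be $\psi^{-1}(\mathfrak{U})$ with the closures $\overline{\{z\}}$ removed for all $z \in \str(\mathfrak{Y}_s, \mathfrak{G}_s)$ with $y \notin \overline{\{z\}}$; then $\mathfrak{U}'$ together with $\varphi' := \varphi \circ \psi$ is a building block of $(\mathfrak{Y}, \mathfrak{G})$ in $y$. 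By definition $S(\mathfrak{U}', \mathfrak{G} \cap \mathfrak{U}') = (\varphi'_{\eta})^{-1}(S(\mathfrak{S}, \mathfrak{F})) = \psi_{\eta}^{-1}(\varphi_{\eta}^{-1}(S(\mathfrak{S}, \mathfrak{F}))) = \psi_{\eta}^{-1}(S(\mathfrak{U}, \mathfrak{H} \cap \mathfrak{U}))$. Since by Proposition~\ref{BuildingBlockProp}~(ii) the maps $\varphi_{\eta}$ and $\varphi'_{\eta}$ both induce homeomorphisms onto $S(\mathfrak{S}, \mathfrak{F})$, the restriction $\psi_{\eta}|_{S(y)}: S(y) \to S(x)$ is a homeomorphism. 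This defines $S(\psi)$ on each building block skeleton.

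Next I would check that these building block homeomorphisms are compatible with face embeddings, so that they glue to a well-defined continuous map on $S(\mathfrak{Y}, \mathfrak{G}) = \bigcup_{y} S(y)$. Concretely, if $y' \in \str(\mathfrak{Y}_s, \mathfrak{G}_s)$ with $y \leq y'$, then $x' := \psi_s(y')$ satisfies $x \leq x'$ (because $\psi_s$ is continuous), and one needs $\psi_{\eta}|_{S(y')}$ to agree with $\psi_{\eta}|_{S(y)}$ on $S(y') \subseteq S(y)$ — this is automatic since both are restrictions of the single map $\psi_{\eta}$, and compatibility with the face-embedding diagrams of Proposition~\ref{SkeletonBlocksProp}~(i) follows from the analogous compatibility at the level of $\mathfrak{S}$, $\mathfrak{S}'$ via the open immersions $\mathfrak{T} \hookrightarrow \mathfrak{S}$ described in~\ref{SliceBuildingBlock} together with Proposition~\ref{EtaleMorProp}~(iii) (which says $\psi$ carries $D_{y'}$ to $D_{x'}$ compatibly with the inclusions $D_{y'} \hookrightarrow D_y$ and $D_{x'} \hookrightarrow D_x$). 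Since the $S(y)$ form a locally finite closed cover of $S(\mathfrak{Y}, \mathfrak{G})$ and the maps agree on overlaps (all being restrictions of $\psi_{\eta}$), the gluing lemma gives a continuous $S(\psi): S(\mathfrak{Y}, \mathfrak{G}) \to S(\mathfrak{X}, \mathfrak{H})$; in fact $S(\psi)$ is simply the restriction of $\psi_{\eta}$ to $S(\mathfrak{Y}, \mathfrak{G})$, landing in $S(\mathfrak{X}, \mathfrak{H})$ because $\psi_{\eta}(S(y)) = S(x)$ for every $y$.

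Finally, functoriality is immediate once $S(\psi)$ is identified with the restriction of $\psi_{\eta}$: for another \'etale $\psi': \mathfrak{Z} \to \mathfrak{Y}$ we have $(\psi \circ \psi')_{\eta} = \psi_{\eta} \circ \psi'_{\eta}$, and restricting to $S(\mathfrak{Z}, \psi'^{-1}(\mathfrak{G}))$ gives $S(\psi \circ \psi') = S(\psi) \circ S(\psi')$. The main obstacle I anticipate is the well-definedness of $S(\psi)$ on overlapping building blocks in a way that genuinely respects the face structure: one must verify that the building block of $(\mathfrak{Y}, \mathfrak{G})$ in $y$ obtained from $\varphi \circ \psi$, and the one obtained by first passing to a building block in $y'$ and then pulling back, yield the same inclusion $S(y') \hookrightarrow S(y)$ as faces. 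This is where Proposition~\ref{DistStrIndep} (independence of the building block skeleton from the chosen morphism) and Proposition~\ref{CoordChange} (the homeomorphism to the extended poly-simplex is combinatorial) do the real work, reducing everything to bookkeeping with the combinatorial charts $C_{\varphi'} = \varphi^*C_{\varphi}$ and the maps $\alpha$, $\delta$ of Proposition~\ref{EtaleMorProp}.
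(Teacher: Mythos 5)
Your proof is correct and follows essentially the same approach as the paper: reduce to building blocks via $\mathfrak{U}' = \psi^{-1}(\mathfrak{U})$ minus the irrelevant stratum closures, use Proposition~\ref{BuildingBlockProp}~(ii) to see $\psi_\eta$ restricts to a homeomorphism $S(y) \xrightarrow{\sim} S(x)$, and observe that $S(\psi)$ is just the restriction of $\psi_\eta$. The compatibility-on-overlaps and gluing discussion in your middle paragraph is more elaborate than necessary — once you have noted (as you do at the end of that paragraph) that every candidate map is a restriction of the single globally continuous map $\psi_\eta$, continuity and well-definedness are automatic and the face-embedding bookkeeping you flag as "the main obstacle" evaporates, which is why the paper's proof simply says "by the definition of the skeleton."
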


\begin{proof}
	Let $y \in \str(\mathfrak{Y}_s, \mathfrak{G}_s)$. We consider $x := \psi_s(y) \in \str(\mathfrak{X}_s, \mathfrak{H}_s)$. 
	Let $\mathfrak{U}$ together with 
	$\varphi: (\mathfrak{U}, \mathfrak{H} \cap \mathfrak{U}) \to (\mathfrak{S}, \mathfrak{G}')$  
	be a building block of $(\mathfrak{X}, \mathfrak{H})$ in $x$.
	Now we remove from $\psi^{-1}(\mathfrak{U})$ all closures $\overline{\{z\}}$ for all 
	$z \in \str(\mathfrak{Y}_s, \mathfrak{G}_s)$ such that $y \notin \overline{\{z\}}$
	and pass to an affine open neighborhood of $y$. We denote the resulting open neighborhood of $y$
	by $\mathfrak{V}$.
	Then $\mathfrak{V}$	together with $\varphi \circ \psi|_{\mathfrak{V}}$ is a building block
	of $(\mathfrak{Y}, \mathfrak{G})$ in $y$. We see now that $\psi_{\eta}$ restricts to a homeomorphism 
	$S(y) \xrightarrow{\sim} S(x)$. By the definition of the skeleton we obtain that $\psi_{\eta}$ induces
	a continuous map $S(\psi): S(\mathfrak{Y}, \mathfrak{G}) \to S(\mathfrak{X}, \mathfrak{H})$
	restricting to a homeomorphism on the building block skeletons. 
	The functoriality of this procedure is evident.
\end{proof}

\begin{lemma} \label{MapsAgree}
	Under the identification of skeletons and dual intersection complexes from Theorem \ref{StrictHomeoThm}
	the constructed maps in Lemma~\ref{IntersectionComplexMap} and Lemma~\ref{StrictSkeletonMap} agree. 
\end{lemma}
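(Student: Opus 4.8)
Both maps are constructed in exactly the same way — on faces — and so it suffices to show that for every $y \in \str(\mathfrak{Y}_s, \mathfrak{G}_s)$ with $x := \psi_s(y) \in \str(\mathfrak{X}_s, \mathfrak{H}_s)$, the homeomorphism $h_{y,x} \colon \Delta(y) \xrightarrow{\sim} \Delta(x)$ constructed in the proof of Lemma~\ref{IntersectionComplexMap} agrees with the homeomorphism $S(y) \xrightarrow{\sim} S(x)$ from the proof of Lemma~\ref{StrictSkeletonMap}, when the latter is transported through the canonical homeomorphisms $S(y) \xrightarrow{\sim} \Delta(y)$ and $S(x) \xrightarrow{\sim} \Delta(x)$ of Theorem~\ref{StrictHomeoThm}. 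Since the dual intersection complex, the skeleton, and both morphisms are glued from faces along face embeddings (which are respected by both constructions), once this face-wise agreement is established the two global maps coincide by the universal property of the colimit.

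**Key steps.** First I would fix a building block $\mathfrak{U}$ together with $\varphi \colon (\mathfrak{U}, \mathfrak{H} \cap \mathfrak{U}) \to (\mathfrak{S}, \mathfrak{G}')$ of $(\mathfrak{X}, \mathfrak{H})$ in $x$, and — following the proof of Lemma~\ref{StrictSkeletonMap} — shrink $\psi^{-1}(\mathfrak{U})$ to a building block $\mathfrak{V}$ together with $\varphi' := \varphi \circ \psi|_{\mathfrak{V}}$ of $(\mathfrak{Y}, \mathfrak{G})$ in $y$. Crucially the \emph{same} standard pair $(\mathfrak{S}, \mathfrak{G}')$ and the same tuples $\textbf{n}$, $\textbf{r}$, $s$ occur for both building blocks. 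Next I would spell out that $\varphi'_\eta = \varphi_\eta \circ \psi_\eta|_{\mathfrak{V}_\eta}$, so the homeomorphism $S(y) \xrightarrow{\sim} S(\mathfrak{S}, \mathfrak{G}')$ induced by $\varphi'_\eta$ equals the composite $S(y) \xrightarrow{\psi_\eta} S(x) \xrightarrow{\varphi_\eta} S(\mathfrak{S}, \mathfrak{G}')$; this is precisely the statement that the skeleton map $S(\psi)$ becomes, in the coordinates provided by $\varphi$ and $\varphi'$, the identity on $S(\mathfrak{S}, \mathfrak{G}')$, hence the identity on $\Delta(\textbf{n}, \textbf{r}, s)$ after applying $\trop$. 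Then on the dual-intersection-complex side I would recall from the proof of Lemma~\ref{IntersectionComplexMap} (via Proposition~\ref{EtaleMorProp}) that $\psi$ induces the isometric bijection $\alpha \colon \irr(\mathfrak{Y}_s, y) \xrightarrow{\sim} \irr(\mathfrak{X}_s, x)$ and the bijection $\delta \colon D_y \xrightarrow{\sim} D_x$ satisfying $\alpha_\varphi = \alpha \circ \alpha_{\varphi'}$ and the analogous identity $\gamma_\varphi = \delta \circ \gamma_{\varphi'}$; consequently the combinatorial charts $C_\varphi$ in $x$ and $C_{\varphi'}$ in $y$ have \emph{equal} underlying extended poly-simplex $[\mathbf{n}, \mathbf{r}, s]$ and $h_{C_{\varphi'}, C_\varphi'}$-type transition maps make the diagram commute. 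Finally I would invoke Proposition~\ref{CoordChange}: the canonical homeomorphisms $S(y) \xrightarrow{\sim} \Delta(y)$ and $S(x) \xrightarrow{\sim} \Delta(x)$ are, in these charts, exactly the homeomorphisms $\trop \circ \varphi'_\eta$ and $\trop \circ \varphi_\eta$ composed with the chart identifications, so chasing the resulting square shows $h_{y,x}$ coincides with the transported skeleton map on $\Delta(y)$.

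**The main obstacle.** The genuinely delicate point is bookkeeping the compatibility of the two bijections $\alpha, \delta$ (which describe how $\psi_s$ permutes irreducible components and divisor-classes) with the geometric content of Proposition~\ref{CoordChange} (which describes how $\trop$ behaves under a change of building block \emph{within a fixed stratum}). Here $\psi$ changes the stratum — it moves $y$ to $x$ — but the key observation is that because $\varphi' = \varphi \circ \psi$, the composite is still a building block \emph{over the same standard pair}, so Proposition~\ref{CoordChange}'s proof strategy (the $\varepsilon$-approximation reducing to Berkovich's classical Step~13, where everything is determined by the images of vertices, i.e.\ by the induced maps on $\irr$) applies verbatim and pins down the map on vertices as $\alpha_\varphi^{-1} \circ \alpha_{\varphi'}$ on the simplex part and $\gamma_\varphi^{-1} \circ \gamma_{\varphi'}$ on the $\R_{\geq 0}^s$-part — which is exactly the data defining $h_{y,x}$. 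Once this identification of the vertex maps is in place, both sides are geometric realizations of the same morphism of extended poly-simplices, and agreement is immediate. The remaining verifications — compatibility with face embeddings $\iota_{y', y}$ versus $\iota_{x', x}$ for $y' \leq y$, and the colimit gluing — are routine, using that both $S(\psi)$ and $C(\psi)$ are defined so as to respect faces, together with Lemma~\ref{FaceFacts} and Proposition~\ref{SkeletonBlocksProp}.
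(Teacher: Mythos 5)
Your proposal is correct and follows the same approach the paper has in mind: the paper's proof is a one-liner appealing to "the constructions and Proposition~\ref{SkeletonBlocksProp}," and you have simply spelled out the underlying face-wise verification. The key facts you identify — that $\varphi' = \varphi \circ \psi|_{\mathfrak{V}}$ forces $\varphi'_\eta = \varphi_\eta \circ \psi_\eta$, that the combinatorial data satisfies $\alpha_\varphi = \alpha \circ \alpha_{\varphi'}$ and the analogous identity for $\gamma$, and that these together make both maps equal the identity in the common chart $\Delta(\textbf{n},\textbf{r},s)$ — are exactly the "constructions" the paper refers to, and the compatibility with face embeddings and the colimit gluing is what Proposition~\ref{SkeletonBlocksProp} supplies.
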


\begin{proof}
	This is immediately clear from the constructions and Proposition \ref{SkeletonBlocksProp}.
\end{proof}

\begin{proposition} \label{SkeletonPreimage}
	With the notations from Lemma \ref{StrictSkeletonMap} we have %and additionally $\psi$ surjective,
	$\psi^{-1}_{\eta}(S(\mathfrak{X}, \mathfrak{H})) = S(\mathfrak{Y}, \mathfrak{G})$.
\end{proposition}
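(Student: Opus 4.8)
The plan is to reduce the statement to a local computation on building blocks, exploiting that the skeleton of a strictly poly-stable pair is a locally finite union of building block skeletons, each of which is cut out by a standard pair via an \'etale morphism. First I would note that it suffices to check the equality of subsets locally on $\mathfrak{X}_{\eta}$, since both sides are unions of closed pieces indexed by strata and the reduction map $\red_{\mathfrak{X}}$ is compatible with $\psi$. So fix a point $w \in \mathfrak{Y}_{\eta}$ and let $v := \psi_{\eta}(w)$, and set $y := \red_{\mathfrak{Y}}(w)$, $x := \psi_s(y) = \red_{\mathfrak{X}}(v)$. By Proposition~\ref{EtaleMorProp} the pair $(\mathfrak{Y}, \mathfrak{G})$ is strictly poly-stable and $\psi$ maps the stratum with generic point $y'$ (the generic point of the stratum of $(\mathfrak{Y}_s, \mathfrak{G}_s)$ containing $y$) to the stratum with generic point $x'$ (the one containing $x$) via an \'etale morphism, and $x' = \psi_s(y')$.

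Next I would invoke the construction in the proof of Lemma~\ref{StrictSkeletonMap}: choose a building block $\mathfrak{U}$ together with $\varphi\colon (\mathfrak{U}, \mathfrak{H}\cap\mathfrak{U}) \to (\mathfrak{S}, \mathfrak{G}')$ of $(\mathfrak{X}, \mathfrak{H})$ in $x'$, and from it produce a building block $\mathfrak{V}$ together with $\varphi \circ \psi|_{\mathfrak{V}}$ of $(\mathfrak{Y}, \mathfrak{G})$ in $y'$, where $\mathfrak{V}$ is obtained from $\psi^{-1}(\mathfrak{U})$ by removing the closures $\overline{\{z\}}$ for $z \in \str(\mathfrak{Y}_s, \mathfrak{G}_s)$ with $y' \notin \overline{\{z\}}$ and passing to an affine open neighborhood of $y'$. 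Then $S(y') = S(\mathfrak{V}, \mathfrak{G}\cap\mathfrak{V})$ is by definition the preimage of $S(\mathfrak{S}, \mathfrak{G}')$ under $(\varphi\circ\psi)_{\eta} = \varphi_{\eta} \circ \psi_{\eta}$ restricted to $\mathfrak{V}_{\eta}$, while $S(x') = S(\mathfrak{U}, \mathfrak{H}\cap\mathfrak{U})$ is the preimage of $S(\mathfrak{S}, \mathfrak{G}')$ under $\varphi_{\eta}$ restricted to $\mathfrak{U}_{\eta}$. Hence on $\mathfrak{V}_{\eta}$ we get $\psi_{\eta}^{-1}(S(x')) \cap \mathfrak{V}_{\eta} = S(y')$, which is exactly the local form of the desired equality near $w$ — provided the building block skeletons $S(x')$ exhaust $S(\mathfrak{X}, \mathfrak{H})$ near $v$ and the $S(y')$ exhaust $S(\mathfrak{Y}, \mathfrak{G})$ near $w$, which they do by definition of the extended skeleton as the union over all strata.

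There is one subtlety to address, which I expect to be the main obstacle: I must ensure that $\psi_{\eta}^{-1}(S(x'))$ does not pick up extra points coming from strata of $\mathfrak{X}_s$ \emph{other} than $x'$, i.e. that globally $\psi_{\eta}^{-1}\bigl(\bigcup_{x \in \str} S(x)\bigr) = \bigcup_{y \in \str} S(y)$ rather than just an inclusion on each local chart. For the inclusion $\supseteq$: if $w \in S(y')$ for some $y'$, then $\psi_s(y') =: x'$ is a stratum generic point of $(\mathfrak{X}_s, \mathfrak{H}_s)$ by Proposition~\ref{StrataMap}, and by the compatibility $\psi_{\eta}(S(y')) \subseteq S(x')$ established in Lemma~\ref{StrictSkeletonMap}, so $\psi_{\eta}(w) \in S(\mathfrak{X}, \mathfrak{H})$. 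For $\subseteq$: suppose $\psi_{\eta}(w) \in S(\mathfrak{X}, \mathfrak{H})$, so $\psi_{\eta}(w) \in S(x) \cap \red_{\mathfrak{X}}^{-1}(x)$ for the unique stratum $x$ containing $\red_{\mathfrak{X}}(\psi_{\eta}(w))$, using the disjoint decomposition of Proposition~\ref{SkeletonBlocksProp}(iv). Then $x = \psi_s(y')$ where $y'$ is the generic point of the stratum of $(\mathfrak{Y}_s, \mathfrak{G}_s)$ containing $y = \red_{\mathfrak{Y}}(w)$ (here I use that $\red$ commutes with $\psi$ and that $\psi_s$ sends the stratum of $y'$ onto that of $x$, since $\psi$ is surjective \'etale hence strata map to strata). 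Now the local argument with the building blocks $\mathfrak{U}$ in $x$ and $\mathfrak{V}$ in $y'$ above shows $w \in \mathfrak{V}_{\eta}$ — because $w \in \psi^{-1}(\mathfrak{U})_{\eta}$ and $w$ is not in any removed closure $\overline{\{z\}}_{\eta}$ since $\red_{\mathfrak{Y}}(w) = y$ lies in the stratum of $y'$, so $y' \in \overline{\{z\}}$ for every $z$ whose closure contains $y$ — and then $\varphi_{\eta}(\psi_{\eta}(w)) \in S(\mathfrak{S}, \mathfrak{G}')$ forces $w \in \varphi_{\eta}^{-1}\psi_{\eta}^{-1}(S(\mathfrak{S},\mathfrak{G}')) \cap \mathfrak{V}_{\eta} = S(y') \subseteq S(\mathfrak{Y}, \mathfrak{G})$. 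Assembling these two inclusions completes the proof; the bookkeeping with which strata of $\mathfrak{Y}_s$ lie over a given stratum of $\mathfrak{X}_s$, and checking that removing the superfluous closures does not remove $w$ itself, is the part requiring the most care.
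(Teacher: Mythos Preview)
Your proposal is correct and follows essentially the same approach as the paper: the inclusion $\supseteq$ is immediate from Lemma~\ref{StrictSkeletonMap}, and for $\subseteq$ one takes $w$ with $\psi_\eta(w)\in S(\mathfrak{X},\mathfrak{H})$, uses Proposition~\ref{SkeletonBlocksProp}(iv) to identify the stratum generic point $x=\red_{\mathfrak{X}}(\psi_\eta(w))$, constructs a building block $\mathfrak{V}$ of $(\mathfrak{Y},\mathfrak{G})$ over a building block $\mathfrak{U}$ of $(\mathfrak{X},\mathfrak{H})$ in $x$ via $\varphi\circ\psi$, and concludes $w\in S(\mathfrak{V},\mathfrak{G}\cap\mathfrak{V})$ from $\varphi_\eta(\psi_\eta(w))\in S(\mathfrak{S},\mathfrak{G}')$. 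The paper streamlines one step by observing directly (via Proposition~\ref{StrataMap}) that $\widetilde{y}:=\red_{\mathfrak{Y}}(w)$ is itself already a stratum generic point, so your auxiliary $y'$ coincides with $\widetilde{y}$ and need not be introduced separately.
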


\begin{proof}
	The inclusion “$\supseteq$” is obvious from Lemma \ref{StrictSkeletonMap}. 
	For the other inclusion let $x \in S(\mathfrak{X}, \mathfrak{H})$ and $y \in \psi^{-1}_{\eta}(x)$.
	We denote $\widetilde{x} := \red_{\mathfrak{X}}(x) \in \mathfrak{X}_s$
	and $\widetilde{y} := \red_{\mathfrak{Y}}(y) \in \mathfrak{Y}_s$. 
	By Proposition \ref{SkeletonBlocksProp} (iv) we know that $\widetilde{x} \in \str(\mathfrak{X}_s, \mathfrak{H}_s)$.
	Since $\psi_s(\widetilde{y}) = \widetilde{x}$ it follows from Proposition \ref{StrataMap} that
	$\widetilde{y} \in \str(\mathfrak{Y}_s, \mathfrak{G}_s)$.
	Now let $\mathfrak{U}$ together with 
	$\varphi: (\mathfrak{U}, \mathfrak{H} \cap \mathfrak{U}) \to (\mathfrak{S}, \mathfrak{G}')$
	be a building block of $(\mathfrak{X}, \mathfrak{H})$ in $\widetilde{x}$.
	Note that $x \in S(\widetilde{x}) = S(\mathfrak{U}, \mathfrak{H} \cap \mathfrak{U}) \subseteq \mathfrak{U}_{\eta} \setminus \mathfrak{H}_{\eta}$.
	We define $\mathfrak{V}$ to be $\psi^{-1}(\mathfrak{U})$ 
	minus the closed subsets $\overline{\{z\}}$ for all $z \in \str(\mathfrak{Y}_s, \mathfrak{G}_s)$
	with $\widetilde{y} \notin \overline{\{z\}}$. 
	It holds $y \in \mathfrak{V}_{\eta}$ and we may replace $\mathfrak{V}$ by an affine open subset of 
	$\mathfrak{V}$ whose generic fiber contains $y$.
	Then $\mathfrak{V}$ together with
	$\varphi \circ \psi: (\mathfrak{V}, \mathfrak{G} \cap \mathfrak{V}) \to (\mathfrak{S}, \mathfrak{G}')$
	is a building block of $(\mathfrak{Y}, \mathfrak{G})$ in $\widetilde{y}$.   
	We conclude from $\varphi_{\eta}(\psi_{\eta}(y)) = \varphi_{\eta}(x) \in S(\mathfrak{S}, \mathfrak{G}')$ 
	that $y \in S(\widetilde{y}) = S(\mathfrak{V}, \mathfrak{G} \cap \mathfrak{V})$, in particular
	$y \in S(\mathfrak{Y}, \mathfrak{G})$. 
	%We denote $\mathbf{x} := \red_{\mathfrak{X}}(x) \in \str(\mathfrak{X}_s, \mathfrak{H}_s)$.
	%Because $\psi$ is surjective, there exists at least one $\mathbf{y} \in \str(\mathfrak{Y}_s, \mathfrak{G}_s)$
	%such that $\psi_s(\mathbf{y}) = \mathbf{x}$. In particular we find such an element $\mathbf{y}$ with 
	%$y \in \red^{-1}_{\mathfrak{Y}}(\mathbf{y})$. 
	%By increasing the ground field we may assume that $\mathbf{x}$ 
	%and $\mathbf{y}$ are $\widetilde{K}$-rational. Then \cite[Lemma~4.4]{Ber99}
	%yields that $\psi$ induces an isomorphism 
	%$\red^{-1}_{\mathfrak{Y}}(\mathbf{y}) \xrightarrow{\sim} \red^{-1}_{\mathfrak{X}}(\mathbf{x})$.
	%Since $\psi$ also induces a homeomorphism $S(\mathbf{y}) \xrightarrow{\sim} S(\mathbf{x})$,  
	%this isomorphism restricts to 
	%$S(\mathfrak{Y},  \mathfrak{G}) \cap \red^{-1}_{\mathfrak{Y}}(\mathbf{y}) \xrightarrow{\sim} 
	%S(\mathfrak{X},  \mathfrak{H}) \cap \red^{-1}_{\mathfrak{X}}(\mathbf{x})$.
	%As $x \in S(\mathfrak{X},  \mathfrak{H}) \cap \red^{-1}_{\mathfrak{X}}(\mathbf{x})$, 
	%we conclude $y \in S(\mathfrak{Y},  \mathfrak{G})$. 
\end{proof}

%---------------------------------------------------------------------------------------------------------------------------------------------------------------------------

\subsection{Poly-stable pairs} \label{PolyStableParagraph}

Let now $(\mathfrak{X}, \mathfrak{H})$ be a poly-stable pair.
We denote $Z := \mathfrak{X}_{\eta} \setminus \mathfrak{H}_{\eta}$.
We continue to assume that $K$ is non-trivially valued.

Our goal is to define a canonical subset $S(\mathfrak{X}, \mathfrak{H})$ of $Z$, which we will call the “extended skeleton”
of the pair $(\mathfrak{X}, \mathfrak{H})$, and a proper strong deformation retraction $\Phi: Z \times [0, 1] \to Z$
onto $S(\mathfrak{X}, \mathfrak{H})$. Let us start with the construction of the map $\Phi$.

\begin{definition}
	We call a continuous map $f: Y \to X$ a \emph{quotient map}, if the induced map $Y/{\sim} \to X$ is a homeomorphism, 
	where $\sim$ is the equivalence relation on $Y$ identifying
	two elements $y, y' \in Y$ iff $f(y) = f(y')$. 
	Obviously quotient maps are surjective. Moreover one easily checks that $f: Y \to X$ is a quotient map if and only if the induced map 
	$\coker(Y \times_X Y \rightrightarrows Y) \rightarrow X$ is a homeomorphism, where the coequalizer is considered
	with respect to the canonical projections from the fibered product $Y \times_X Y$.
\end{definition}

\begin{proposition} \label{QuotMapProp}
	We state some important properties of quotient maps. Let $f: Y \to X$ be a quotient map.
	Then the following hold:
	\begin{enumerate}
		\item Let $U \subseteq X$. Then $U$ is open resp. closed in $X$ 
		if and only if $f^{-1}(U)$ is open resp. closed in $Y$.
		\item A map $g: X \to W$ is continuous if and only if $g \circ f$ is continuous.
		\item For any subset $U \subseteq X$ which is open or closed, the restriction $f: f^{-1}(U) \to U$
		is a quotient map.
	\end{enumerate}
\end{proposition}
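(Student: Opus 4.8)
The statement to prove is Proposition~\ref{QuotMapProp}, collecting three standard properties of quotient maps. These are all elementary point-set topology, so the proof is a routine verification; the plan is simply to unwind the definition of quotient map given just above and apply it carefully.

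For part (i), the plan is to use the definition directly: $f$ being a quotient map means that $U \subseteq X$ is open if and only if $f^{-1}(U)$ is open, since $X$ carries the final topology with respect to $f$ (this is exactly what it means for $Y/{\sim} \to X$ to be a homeomorphism). The closed case follows by taking complements, using $f^{-1}(X \setminus U) = Y \setminus f^{-1}(U)$. For part (ii), the plan is the standard argument: if $g \circ f$ is continuous and $W' \subseteq W$ is open, then $(g\circ f)^{-1}(W') = f^{-1}(g^{-1}(W'))$ is open in $Y$, so by part (i) $g^{-1}(W')$ is open in $X$, hence $g$ is continuous; the converse is trivial since compositions of continuous maps are continuous.

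For part (iii), the plan is to check the universal-property characterization. Let $U \subseteq X$ be open or closed and set $V := f^{-1}(U)$, so $f|_V : V \to U$ is continuous and surjective. To see it is a quotient map I would verify that a subset $A \subseteq U$ is open (resp. closed) in $U$ whenever $(f|_V)^{-1}(A) = f^{-1}(A)$ is open (resp. closed) in $V$. When $U$ is open: if $f^{-1}(A)$ is open in $V$, then since $V$ is open in $Y$ it is open in $Y$, so by part (i) $A$ is open in $X$, hence open in $U$. When $U$ is closed the same argument works with ``closed'' in place of ``open'', using that $V$ is closed in $Y$. The remaining implication (images of opens/closeds pull back to opens/closeds) is immediate from continuity of $f|_V$ together with the fact that the subspace topology on $V$ is induced from $Y$.

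There is no real obstacle here; the only point requiring a moment's care is part (iii), where one must keep track of the difference between ``open/closed in $V$'' versus ``open/closed in $Y$'', and this is precisely why the hypothesis that $U$ is open \emph{or} closed (rather than arbitrary) is needed — it guarantees that the subspace $V = f^{-1}(U)$ is itself open or closed in $Y$, so that openness/closedness in $V$ transfers to $Y$. I would state this explicitly and then the verification is immediate.
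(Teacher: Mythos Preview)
Your proof is correct and complete. The paper itself does not give a proof but simply remarks that these are elementary exercises and cites Engelking's \emph{General Topology} (Propositions 2.4.3 and 2.4.15), so your explicit verification is in fact more detailed than what appears in the paper.
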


\begin{proof}
	These are simple exercises in topology. We refer to \cite[Proposition 2.4.3 and Proposition 2.4.15]{Eng89}.
\end{proof}

\begin{lemma} \label{FactorMap}
	Let $\psi: \mathfrak{Y} \rightarrow \mathfrak{X}$ be a surjective \et morphism of admissible formal schemes. % 
	Then $\psi_{\eta}: \mathfrak{Y}_{\eta} \rightarrow \mathfrak{X}_{\eta}$ is a quotient map.  
\end{lemma}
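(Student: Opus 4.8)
The statement is that a surjective étale morphism $\psi: \mathfrak{Y} \to \mathfrak{X}$ of admissible formal $K^\circ$-schemes induces a topological quotient map $\psi_\eta: \mathfrak{Y}_\eta \to \mathfrak{X}_\eta$ on generic fibers. Since being a quotient map is local on the target (Proposition~\ref{QuotMapProp}~(iii): if $(\mathfrak{U}_i)$ is an open cover of $\mathfrak{X}_\eta$ and each restriction $\psi_\eta^{-1}(\mathfrak{U}_i) \to \mathfrak{U}_i$ is a quotient map, then so is $\psi_\eta$), I would first reduce to the affine case, covering $\mathfrak{X}$ by open affines and pulling back. So assume $\mathfrak{X} = \Spf(A)$ is affine admissible; then $\mathfrak{Y}$ is a locally finite union of open affines $\Spf(B_j)$, each étale over $A$, and since $\mathfrak{Y}_\eta$ maps onto the quasicompact space $\mathfrak{X}_\eta = \mathscr{M}(\mathscr{A})$ one can extract a finite subcover, reducing to $\mathfrak{Y} = \Spf(B)$ with $B$ admissible and étale over $A$ (replacing $\mathfrak{Y}$ by a finite disjoint union of such, which does not affect the quotient-map question since a finite disjoint union of quotient-covering maps is a quotient-covering map).

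**Key steps.** In the affine case, set $\mathscr{A} = A \otimes_{K^\circ} K$, $\mathscr{B} = B \otimes_{K^\circ} K$. The first thing to record is surjectivity of $\psi_\eta$: by the anti-continuity and surjectivity properties of the reduction map together with $\psi_s: \mathfrak{Y}_s \to \mathfrak{X}_s$ being surjective étale, every point of $\mathfrak{X}_\eta$ lies over a point of $\mathfrak{X}_s$ that is hit by $\psi_s$, and a standard lifting argument (or directly: $\mathscr{B}$ is finite étale over $\mathscr{A}$ after suitable localization, so faithfully flat on the relevant locus) produces a preimage in $\mathfrak{Y}_\eta$. The heart of the matter is showing $\psi_\eta$ is a \emph{closed} map — once we know $\psi_\eta$ is a continuous, surjective, closed map, it is automatically a quotient map. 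Here I would invoke that $\psi: \mathfrak{Y} \to \mathfrak{X}$ étale implies, in the rigid-analytic picture via \cite[Theorem 1.6.1]{Ber93}, that $\psi_\eta$ is an étale morphism of strictly $K$-analytic spaces, hence (being a morphism of \emph{paracompact, locally compact Hausdorff} spaces, cf. the discussion of generic fibers being paracompact locally compact) one reduces closedness to properness. To get properness: $\psi$ is a finite morphism after the affine reduction above up to shrinking — more carefully, an étale morphism need not be finite, but one can cover $\mathfrak{Y}$ by opens that are \emph{finite} étale over opens of $\mathfrak{X}$ (this is the local structure of étale morphisms), and then use quasicompactness of $\mathfrak{X}_\eta$ to cover by finitely many, so that $\psi_\eta$ is, locally on the target, a finite map of affinoids, hence proper; properness is local on the target, so $\psi_\eta$ is proper, hence closed (as the spaces are Hausdorff and the target is locally compact, per the conventions fixed in the excerpt).

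**Main obstacle.** The delicate point is that an étale morphism of formal schemes is only \emph{locally} finite, so one cannot globally present $\psi_\eta$ as a finite map; the argument must combine the local finite structure with the quasicompactness of the affinoid target $\mathfrak{X}_\eta$ to get a finite cover by affinoid opens over which the map is finite, and then glue back the conclusion using that "quotient map", "closed map", and "proper map" are all local on the target. A secondary subtlety is ensuring the reductions preserve the étale and admissibility hypotheses — in particular that passing to a finite disjoint union of affine opens of $\mathfrak{Y}$ still gives a surjective étale map whose generic fiber being a quotient map implies the original one is (this uses that a surjection which is a quotient map and factors as $\mathfrak{Y}_\eta' \to \mathfrak{Y}_\eta \to \mathfrak{X}_\eta$ with $\mathfrak{Y}_\eta' \to \mathfrak{X}_\eta$ a quotient map forces $\mathfrak{Y}_\eta \to \mathfrak{X}_\eta$ to be one, via Proposition~\ref{QuotMapProp}~(ii)). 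Once closedness/properness is in hand, the conclusion is immediate: a continuous surjective closed map is a quotient map, so $\psi_\eta$ is a quotient map, as claimed.
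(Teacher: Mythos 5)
The paper's proof is a two-line citation: \cite[\S\S\,2--3]{Ber96} shows that $\psi_\eta$ is a \emph{quasi-\etNS} \emph{covering} of $\mathfrak{X}_\eta$, and \cite[Lemma 5.11]{Ber99} shows that quasi-\et coverings are quotient maps. Your bare-hands route --- localize on the target, reduce to $\mathfrak{X}$ affine and $\mathfrak{Y}$ quasicompact, then argue for closedness --- is viable in outline, but as written it rests on two incorrect claims about the nature of $\psi_\eta$.

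First, $\psi_\eta$ is \emph{not} an \et morphism of strictly $K$-analytic spaces; it is \emph{quasi-\etNS}. The distinction is genuine: the open immersion $\mathfrak{T}(1,1) \hookrightarrow \mathfrak{B}^1$ of formal schemes (hence \etNS) has generic fiber the inclusion of the ``unit circle'' $\{|T|=1\}$ into the closed unit disc $B$, which is a closed affinoid domain embedding --- quasi-\et but not \et (in particular, not open). Passing from open immersions of formal schemes to closed analytic domains of generic fibers is exactly what produces the boundary that distinguishes quasi-\et from \etNS, and deducing the quasi-\et covering structure is the nontrivial content of \cite{Ber96} that the paper cites. Second, ``one can cover $\mathfrak{Y}$ by opens that are finite \et over opens of $\mathfrak{X}$'' is not the local structure theorem for \et morphisms: what is true is that locally on the source, $\mathfrak{Y}$ is an \emph{open formal subscheme} of a scheme finite \et over an open of $\mathfrak{X}$, and such an open piece is typically not finite over the target. (On generic fibers the open formal subscheme becomes a closed analytic domain, which, composed with the finite map, does produce a proper map --- but that is a different argument than the one you stated.)

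The salvageable core of your approach is the reduction in your opening paragraph. After reducing to $\mathfrak{X}$ affine, if you can replace $\mathfrak{Y}$ by a \emph{finite} disjoint union of affine opens while keeping $\psi_\eta$ surjective (this requires surjectivity of generic fibers of surjective \et maps of formal schemes and an extraction of a finite subfamily --- both true but not proved by ``$\mathscr{B}$ is finite \et over $\mathscr{A}$ after localization,'' which is not the case), then $\mathfrak{Y}_\eta$ is compact and $\psi_\eta$, as a continuous surjection from a compact space to a Hausdorff space, is proper, hence closed, hence a quotient map. This renders the flawed local-finite detour unnecessary. As it stands, though, the proof as written cannot be accepted: the \et/quasi-\et confusion and the misstatement of the local structure theorem are substantive gaps, and the surjectivity step is only gestured at.
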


\begin{proof}
	It is shown in \cite[§§\,2-3]{Ber96} that in this case $\psi_{\eta}$ is a quasi-\et covering of $\mathfrak{X}_{\eta}$.
	Then the claim follows from \cite[Lemma 5.11]{Ber99}. 
\end{proof}

\begin{noname}
	We choose a strictly poly-stable pair $(\mathfrak{Y}, \mathfrak{G})$
	and a surjective \et morphism $\psi: (\mathfrak{Y}, \mathfrak{G}) \to (\mathfrak{X}, \mathfrak{H})$. 
	Let $x \in Z$ and $t \in [0, 1]$. 
	According to Lemma \ref{FactorMap} the map $\psi_{\eta}$ is a quotient map. Then Proposition \ref{QuotMapProp} (iii) 
	tells us that the restriction $\mathfrak{Y}_{\eta} \setminus \mathfrak{G}_{\eta} \to Z$ 
	of $\psi_{\eta}$ is a quotient map as well. 
	In particular there exists an element $y \in \mathfrak{Y}_{\eta} \setminus \mathfrak{G}_{\eta}$ with $\psi_{\eta}(y) = x$. 
	Now choose a building block $\mathfrak{U}$ of $(\mathfrak{Y}, \mathfrak{G})$ such that $y \in \mathfrak{U}_{\eta}$.
	We denote $W := \mathfrak{U}_{\eta} \setminus \mathfrak{G}_{\eta}$.
	As seen in Proposition \ref{BuildingBlockProp} (iii) there is a homotopy 
	$\Phi_{\mathfrak{U}}: W \times [0, 1] \to W$ and we define  
	$\Phi(x, t) := \psi_{\eta}(\Phi_{\mathfrak{U}}(y, t))$. 
\end{noname}

\begin{proposition} \label{PhiProp}
	The map $\Phi: Z \times [0, 1] \to Z$ is a well-defined continuous proper map
	which is independent of the choice of $(\mathfrak{Y}, \mathfrak{G})$ and $\psi$.
\end{proposition}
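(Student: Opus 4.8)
The plan is to verify well-definedness, continuity, properness, and independence of choices, reducing each assertion to the corresponding property already established for building blocks in Proposition~\ref{BuildingBlockProp} together with the fact (Lemma~\ref{FactorMap}) that $\psi_\eta$ and its restriction $\mathfrak{Y}_\eta \setminus \mathfrak{G}_\eta \to Z$ are quotient maps. First I would address well-definedness: given $x \in Z$, one must show that $\psi_\eta(\Phi_{\mathfrak{U}}(y,t))$ does not depend on the choice of $y \in \mathfrak{Y}_\eta \setminus \mathfrak{G}_\eta$ over $x$, nor on the building block $\mathfrak{U}$ containing $y$ in its generic fiber. For the independence of $\mathfrak{U}$ with $y$ fixed: two building blocks around $y$ share a common refinement (as in the proof of Proposition~\ref{DistStrIndep}), and by Proposition~\ref{BuildingBlockProp}~(iv) together with the compatibility $\psi_\eta \circ \Phi_{\mathfrak{V}} = \Phi_{\mathfrak{U}} \circ (\psi_\eta \times \id)$ for an \et inclusion $\mathfrak{V} \hookrightarrow \mathfrak{U}$ noted in~\ref{ExtendedStep5}, the value agrees. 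For the independence of the choice of $y$: if $y, y'$ both map to $x$, one uses the fiber product $\mathfrak{Z} := \mathfrak{Y} \times_{\mathfrak{X}} \mathfrak{Y}$ and picks a point $w \in \mathfrak{Z}_\eta \setminus \mathfrak{F}_\eta$ with $p_{1,\eta}(w) = y$, $p_{2,\eta}(w) = y'$; choosing a building block $\mathfrak{W}$ of $(\mathfrak{Z}, \mathfrak{F})$ around $w$ and using that $p_1, p_2$ are \et, the compatibility of the homotopies under \et morphisms from~\ref{ExtendedStep5} forces $\psi_\eta(\Phi_{\mathfrak{U}}(y,t)) = \psi_\eta(p_{1,\eta}(\Phi_{\mathfrak{W}}(w,t))) = \psi_\eta(p_{2,\eta}(\Phi_{\mathfrak{W}}(w,t))) = \psi_\eta(\Phi_{\mathfrak{U}'}(y',t))$, where I also invoke the independence of the building block already proved.

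Next I would handle continuity. By Proposition~\ref{QuotMapProp}~(ii) applied to the quotient map $(\mathfrak{Y}_\eta \setminus \mathfrak{G}_\eta) \times [0,1] \to Z \times [0,1]$ — which is a quotient map because the product of a quotient map with the identity on the locally compact space $[0,1]$ is again a quotient map — it suffices to check that the composite $(\mathfrak{Y}_\eta \setminus \mathfrak{G}_\eta) \times [0,1] \to Z$, $(y,t) \mapsto \psi_\eta(\Phi_{\mathfrak{U}}(y,t))$, is continuous. This is a local question on $\mathfrak{Y}_\eta \setminus \mathfrak{G}_\eta$: around any $y$ pick a building block $\mathfrak{U}$, on whose generic fiber minus $\mathfrak{G}_\eta$ the map is $\psi_\eta \circ \Phi_{\mathfrak{U}}$, continuous by Proposition~\ref{BuildingBlockProp}~(iii). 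Since $\mathfrak{Y}_\eta \setminus \mathfrak{G}_\eta$ is covered by generic fibers of building blocks and continuity is local on the source, we are done. For properness, I would argue exactly as in Berkovich's globalization step: write $\mathfrak{Y}$ as a locally finite union of building blocks $\mathfrak{U}_i$; each $\Phi_{\mathfrak{U}_i}$ is proper, and properness of $\Phi$ follows from the corresponding property of $\psi_\eta$ (a finite \et covering map is proper) together with the fact that $Z$ is locally compact and $\mathfrak{Y}_\eta \setminus \mathfrak{G}_\eta$ is paracompact, so proper maps are closed and the preimage of a quasicompact set meets only finitely many of the closed pieces $\mathfrak{U}_{i,\eta}$; I expect this to mirror verbatim the argument in the classical construction~\ref{BuildingBlockSkeleton}.

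Finally, independence of the choice of $(\mathfrak{Y}, \mathfrak{G})$ and $\psi$ follows by the same fiber-product trick used for the dual intersection complex (cf.\ the remark after Proposition~\ref{TheDualIntComplex}): given a second pair $(\mathfrak{Y}', \mathfrak{G}')$ with $\psi'$, set $\mathfrak{V} := \mathfrak{Y} \times_{\mathfrak{X}} \mathfrak{Y}'$ with its projections $\chi: \mathfrak{V} \to \mathfrak{Y}$, $\chi': \mathfrak{V} \to \mathfrak{Y}'$, both surjective \et, and note that $\mathfrak{V}$ is strictly poly-stable with pullback divisor. Computing $\Phi$ via $(\mathfrak{V}, \chi^{-1}(\mathfrak{G}))$ using a lift through $\chi$ recovers the value computed via $(\mathfrak{Y}, \mathfrak{G})$ by the \et-compatibility $\psi_\eta \circ \chi_\eta \circ \Phi_{\mathfrak{W}} = \psi_\eta \circ \Phi_{\mathfrak{U}} \circ (\chi_\eta \times \id)$ from~\ref{ExtendedStep5}; symmetrically for $(\mathfrak{Y}', \mathfrak{G}')$ via $\chi'$. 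Hence both constructions agree with the one coming from $(\mathfrak{V}, \chi^{-1}(\mathfrak{G}))$, proving independence. The main obstacle I anticipate is the well-definedness step across different preimages $y$ of $x$: one must carefully arrange a single building block of the fiber product over a point $w$ lying over the given $y, y'$ and feed it consistently into the two projections, using that the building-block homotopies are compatible with \et morphisms; everything else is a routine transfer of the building-block properties through the quotient map $\psi_\eta$.
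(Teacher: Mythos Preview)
Your approach is essentially the same as the paper's: the fiber-product argument for well-definedness and independence, and the quotient-map argument for continuity, match the paper's proof almost exactly. The paper in fact merges your two well-definedness steps and the independence-of-$(\mathfrak{Y},\mathfrak{G},\psi)$ step into a single fiber-product computation over $\mathfrak{X}$, but this is only organizational.

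The one genuine gap is in your properness sketch. You write that $\psi_\eta$ is ``a finite \et covering map'' and hence proper, but a surjective \et morphism $\psi:\mathfrak{Y}\to\mathfrak{X}$ of admissible formal schemes need not be finite, and the induced $\psi_\eta$ is only a quasi-\et covering with zero-dimensional fibers, not a proper map in general. The paper therefore does not invoke properness of $\psi_\eta$; instead it first treats the case where $\mathfrak{X}$ itself is a disjoint union of building blocks (so one may take $\psi=\id$), and then for general $\mathfrak{X}$ it constructs a specific $\mathfrak{Y}$ as follows: take a locally finite open affine cover $(\mathfrak{U}_i)$ of $\mathfrak{X}$, over each $\mathfrak{U}_i$ choose a surjective \et $\psi_i:\mathfrak{Y}_i\to\mathfrak{U}_i$ with $\mathfrak{Y}_i$ a disjoint union of building blocks $\mathfrak{V}_{ij}$, and use compactness of $\mathfrak{U}_{i,\eta}$ to reduce to finitely many $\mathfrak{V}_{ij}$ for each $i$. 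For a compact $A\subseteq Z$, local finiteness of the cover $(\mathfrak{U}_{i,\eta}\setminus\mathfrak{H}_\eta)$ forces $A$ to meet only finitely many pieces, and on each piece $\psi_{i,\eta}^{-1}(A\cap\mathfrak{U}_{i,\eta})$ is a closed subset of a finite union of affinoid (hence compact) spaces. This gives compactness of $\psi_\eta^{-1}(A)$, after which the first case applies. Your deferral to ``the classical construction'' is in the right spirit, but the specific mechanism you propose does not work without this finiteness arrangement.
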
	

\begin{proof}
	First let us check that $\Phi$ is well-defined and 
	independent of the choice of $(\mathfrak{Y}, \mathfrak{G})$ and $\psi$.
	So assume we have another strictly poly-stable pair $(\mathfrak{Y}', \mathfrak{G}')$,
	a surjective \et morphism $\psi': (\mathfrak{Y}', \mathfrak{G}') \to (\mathfrak{X}, \mathfrak{H})$
	an element $y' \in \mathfrak{Y}'_{\eta} \setminus \mathfrak{G}'_{\eta}$ with $\psi'_{\eta}(y') = x$
	and a building block $\mathfrak{U}'$ of $(\mathfrak{Y}', \mathfrak{G}')$ such that $y' \in \mathfrak{U}'_{\eta}$. 
	Consider the fiber product $\mathfrak{V} := \mathfrak{U} \times_{\mathfrak{X}} \mathfrak{U}'$. %is strictly poly-stable.
	Denoting by $\pi_1$ and $\pi_2$ the canonical projections on $\mathfrak{V}$ 
	we get an element $z \in \mathfrak{V}_{\eta}$ such
	that $\pi_{1, \eta}(z) = y$ and $\pi_{2, \eta}(z) = y'$. Passing to an open affine subset $\mathfrak{W}$ of $\mathfrak{V}$ whose
	generic fiber contains $z$, we can conclude from \ref{ExtendedStep5}
	that $\pi_{1, \eta}(\Phi_{\mathfrak{W}}(z, t)) = \Phi_{\mathfrak{U}}(y, t)$ and 
	$\pi_{2, \eta}(\Phi_{\mathfrak{W}}(z, t)) = \Phi_{\mathfrak{U}}(y', t)$.
	Consequently $\psi_{\eta}(\Phi_{\mathfrak{U}}(y, t)) = \psi'_{\eta}(\Phi_{\mathfrak{U'}}(y', t))$. 
	
	Next we show that $\Phi$ is continuous. Note that we may replace $\mathfrak{Y}$
	by a disjoint union of building blocks $(\mathfrak{U}_i)_{i \in I}$ for $(\mathfrak{Y}, \mathfrak{G})$
	which cover $\mathfrak{Y}$. Since $\psi_{\eta}$ is a quotient map, see Lemma \ref{FactorMap}, and 
	we already know continuity of the $\Phi_{\mathfrak{U}_i}$, we are done by Proposition \ref{QuotMapProp} (ii).    
	
	Finally we convince ourselves that $\Phi$ is proper.
	First we consider the case where $\mathfrak{X}$ is a disjoint union of $(\mathfrak{U}_i)_{i \in I}$ of building blocks. 
	Then we can choose $\mathfrak{Y} = \mathfrak{X}$ and $\psi = \id$. Let $A \subseteq Z$
	be a compact subset. Note that $Z$ is the disjoint union of the $\mathfrak{U}_{i, \eta} \setminus \mathfrak{H}_{\eta}$,
	in particular the $\mathfrak{U}_{i, \eta}$ form an open and closed cover of $Z$ 
	and $A$ intersects only finitely many of them. 
	Since we already know properness in the building block situation and the union of finitely many 
	compact subsets is again compact, we get that $\Phi^{-1}(A)$ is compact. Therefore $\Phi$ is proper. 
	
	Now let $\mathfrak{X}$ be arbitrary and $A \subseteq Z$ be compact. 
	Let $(\mathfrak{U}_i)_{i \in I}$ be a locally finite open affine cover of $\mathfrak{X}$.
	Each $\mathfrak{U}_i$ is poly-stable as well, therefore we can find surjective \'{e}tale morphisms
	$\psi_i: \mathfrak{Y}_i \rightarrow \mathfrak{U}_i$ where the $\mathfrak{Y}_i$ are disjoint unions
	of $(\mathfrak{V}_{ij})_{j \in J_i}$ as above. As \'{e}tale morphisms are open, the images of the  
	$\mathfrak{V}_{ij}$ under $\psi_i$ are open. Consequently the generic fiber $\mathfrak{U}_{i, \eta}$ is a union
	of closed analytic domains $\psi_i(\mathfrak{V}_{ij})_{\eta}$ and because $\mathfrak{U}_{i, \eta}$ is compact
	we may assume that $J_i$ is finite, using that every point in an analytic domain has a neighborhood.
	We take $\mathfrak{Y}$ to be the disjoint union of all the $\mathfrak{V}_{ij}$ and get a 
	surjective \'{e}tale morphism $\psi: \mathfrak{Y} \rightarrow \mathfrak{X}$. Note that 
	$(\mathfrak{U}_{i, \eta} \setminus \mathfrak{H}_{\eta})_{i \in I}$ is a locally finite cover of $Z$,
	in particular we can choose for every point in $Z$ an open neighborhood 
	which intersects only finitely many $\mathfrak{U}_{i, \eta} \setminus \mathfrak{H}_{\eta}$. 
	Since $A$ is compact, it can covered by finitely many of those neighborhoods. 
	In particular $A$ intersects only finitely many of the
	closed analytic domains $\mathfrak{U}_{i, \eta}  \setminus \mathfrak{H}_{\eta}$ in $Z$ 
	and the intersection is closed. Now $\psi_{\eta}^{-1}(A)$
	is given as a finite union of compact sets, namely the preimages of said intersections under the corresponding $\psi_{i, \eta}$.
	Consequently $\psi_{\eta}^{-1}(A)$ is compact. The claim now follows by applying properness in the first case.  	
	%Again we may choose $\mathfrak{Y}$
	%to be a disjoint union of building blocks $(\mathfrak{U}_i)_{i \in I}$ and by quasi-compactness of $\mathfrak{X}$ we 
	%may assume this union to be finite. We used here, that \et morphisms are open.
	%Since the $\mathfrak{U}_{i, \eta}$ are compact spaces, this implies that $\psi_{\eta}$ is a proper map.
	%Now the claim is evident from the fact, that the $\Phi_{\mathfrak{U}_i}$ are proper 
	%and that $\psi_{\eta}$, as a quotient map, is surjective.   
\end{proof}	

\begin{definition}
	Consider the continuous map $\tau: Z \to Z$ defined by $\tau(x) := \Phi(x, 1)$ for all $x \in Z$.
	We define the \emph{extended skeleton} of $(\mathfrak{X}, \mathfrak{H})$ to be $S(\mathfrak{X}, \mathfrak{H}) := \tau(Z)$.
	Immediately from the definitions it is clear, that $S(\mathfrak{X},  \mathfrak{H}) = \psi_{\eta}(S(\mathfrak{Y}, \mathfrak{G}))$. 
\end{definition}

\begin{lemma} \label{PhiLemma}
	The map $\tau: Z \to S(\mathfrak{X}, \mathfrak{H})$ is a retraction.
	Moreover the map $\Phi: Z \times [0, 1] \to Z$ is a proper strong deformation retraction onto $S(\mathfrak{X}, \mathfrak{H})$.
	In particular $S(\mathfrak{X}, \mathfrak{H})$ is a closed subset of $Z$.	
\end{lemma}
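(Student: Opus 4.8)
The plan is to check the defining properties of a proper strong deformation retraction one at a time, reducing everything to the building block case treated in Proposition~\ref{BuildingBlockProp} together with the well-definedness, continuity, properness and independence of $\Phi$ already established in Proposition~\ref{PhiProp}. Recall that for $x \in Z$ and $t \in [0,1]$ the value $\Phi(x, t)$ is computed by choosing any $y \in \mathfrak{Y}_{\eta} \setminus \mathfrak{G}_{\eta}$ with $\psi_{\eta}(y) = x$ and any building block $\mathfrak{U}$ of $(\mathfrak{Y}, \mathfrak{G})$ with $y \in \mathfrak{U}_{\eta}$, and then setting $\Phi(x, t) = \psi_{\eta}(\Phi_{\mathfrak{U}}(y, t))$, where $\Phi_{\mathfrak{U}}$ is the homotopy from Proposition~\ref{BuildingBlockProp}~(iii) on $\mathfrak{U}_{\eta} \setminus \mathfrak{G}_{\eta}$ onto the building block skeleton $S(\mathfrak{U}, \mathfrak{G} \cap \mathfrak{U})$.

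First I would verify $\Phi(\cdot, 0) = \id_Z$: as $\Phi_{\mathfrak{U}}$ is a strong deformation retraction we have $\Phi_{\mathfrak{U}}(y, 0) = y$, hence $\Phi(x, 0) = \psi_{\eta}(y) = x$. Next, by definition $\tau = \Phi(\cdot, 1)$ and $S(\mathfrak{X}, \mathfrak{H}) = \tau(Z)$; moreover $\tau(x) = \psi_{\eta}(\Phi_{\mathfrak{U}}(y, 1)) \in \psi_{\eta}(S(\mathfrak{U}, \mathfrak{G} \cap \mathfrak{U})) \subseteq \psi_{\eta}(S(\mathfrak{Y}, \mathfrak{G})) = S(\mathfrak{X}, \mathfrak{H})$, so $\tau$ maps $Z$ onto $S(\mathfrak{X}, \mathfrak{H})$, consistently with the definition of the latter.

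The main step is the strong property $\Phi(a, t) = a$ for all $a \in S(\mathfrak{X}, \mathfrak{H})$ and $t \in [0,1]$. Using $S(\mathfrak{X}, \mathfrak{H}) = \psi_{\eta}(S(\mathfrak{Y}, \mathfrak{G}))$, pick $b \in S(\mathfrak{Y}, \mathfrak{G})$ with $\psi_{\eta}(b) = a$. Since the skeleton of the strictly poly-stable pair $(\mathfrak{Y}, \mathfrak{G})$ is the union of its building block skeletons, $b$ lies in $S(\mathfrak{U}, \mathfrak{G} \cap \mathfrak{U})$ for some building block $\mathfrak{U}$ of $(\mathfrak{Y}, \mathfrak{G})$; in particular $b \in \mathfrak{U}_{\eta} \setminus \mathfrak{G}_{\eta}$. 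As $\Phi_{\mathfrak{U}}$ is a strong deformation retraction onto $S(\mathfrak{U}, \mathfrak{G} \cap \mathfrak{U})$ and $b$ is a point of that skeleton, $\Phi_{\mathfrak{U}}(b, t) = b$ for all $t$. Evaluating the recipe for $\Phi(a, t)$ with the admissible choices $y := b$ and this $\mathfrak{U}$ — here the independence statement of Proposition~\ref{PhiProp} is used crucially — yields $\Phi(a, t) = \psi_{\eta}(\Phi_{\mathfrak{U}}(b, t)) = \psi_{\eta}(b) = a$. Taking $t = 1$ shows $\tau$ restricts to the identity on $S(\mathfrak{X}, \mathfrak{H})$, so $\tau$ is a retraction onto $S(\mathfrak{X}, \mathfrak{H})$; together with $\Phi(\cdot, 0) = \id_Z$ and the properness from Proposition~\ref{PhiProp}, this shows $\Phi$ is a proper strong deformation retraction onto $S(\mathfrak{X}, \mathfrak{H})$.

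Finally, $Z$ is Hausdorff as an open subspace of the Hausdorff space $\mathfrak{X}_{\eta}$, and for the continuous retraction $\tau$ one has $S(\mathfrak{X}, \mathfrak{H}) = \tau(Z) = \{x \in Z : \tau(x) = x\}$, the inclusion "$\subseteq$" being exactly the statement that $\tau$ fixes $S(\mathfrak{X}, \mathfrak{H})$ and "$\supseteq$" being obvious; this set is the equalizer of $\tau$ and $\id_Z$, hence closed in $Z$. I do not expect a serious obstacle here; the only delicate point is forcing the auxiliary data entering the definition of $\Phi$ to be a building block whose skeleton contains the chosen preimage, which is legitimate precisely because of the independence recorded in Proposition~\ref{PhiProp}.
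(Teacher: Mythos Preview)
Your argument is correct and, for the deformation retraction properties, follows the same route as the paper: reduce to the building block case via Proposition~\ref{BuildingBlockProp} and use the well-definedness and independence from Proposition~\ref{PhiProp} to force the auxiliary choices. The one genuine difference is in the closedness of $S(\mathfrak{X}, \mathfrak{H})$: the paper argues that $Z$ is locally compact, so the proper map $\Phi$ is closed, and hence $S(\mathfrak{X}, \mathfrak{H}) = \Phi(Z \times \{1\})$ is closed; you instead identify $S(\mathfrak{X}, \mathfrak{H})$ with the equalizer $\{x \in Z : \tau(x) = x\}$ and use that $Z$ is Hausdorff. Your route is slightly more elementary in that it does not invoke properness at this point, while the paper's route makes the closedness an immediate by-product of the properness already on hand; both are perfectly fine.
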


\begin{proof}
	We use that we already know these properties in the building block situation,
	see Proposition~\ref{BuildingBlockProp}.
	Then the properties in the general situation easily follow from Proposition \ref{PhiProp} and the construction.
	In particular $\Phi(\cdot, 0) = \id_{Z}$, 
	$\Phi(\cdot, 1) = \tau$ and $\Phi(\cdot, t)|_S = \id_{S}$ for all $t \in [0, 1]$,
	if we write $S := S(\mathfrak{X}, \mathfrak{H})$. 
	
	Next we show that $S(\mathfrak{X}, \mathfrak{H})$ is a closed subset of $Z$.
	Since $Z$ is locally compact, the proper continuous map $\Phi$ 
	is closed. Now $S(\mathfrak{X}, \mathfrak{H})$ is the image of the closed subset $Z \times \{1\}$
	under $\Phi$, which finishes the proof.
	%From $S(\mathfrak{Y}, \mathfrak{G}) \subset \mathfrak{Y}_{\eta} \setminus \mathfrak{G}_{\eta}$ being
	%a closed subset, we conclude the claim from $S(\mathfrak{X}, \mathfrak{H}) = \psi_{\eta}(S(\mathfrak{Y}, \mathfrak{G}))$.
\end{proof}

\begin{proposition} \label{ClosedProp} 
	It holds $\psi^{-1}_{\eta}(S(\mathfrak{X}, \mathfrak{H})) = S(\mathfrak{Y},  \mathfrak{G})$. 
\end{proposition}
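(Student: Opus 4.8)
The plan is to reduce the assertion to the strictly poly-stable case, which has just been settled in Proposition~\ref{SkeletonPreimage}, by passing to the fiber product $\mathfrak{Z} := \mathfrak{Y} \times_{\mathfrak{X}} \mathfrak{Y}$ with respect to $\psi$ --- the same device used in the definition of the dual intersection complex in the general case and in the proof of Proposition~\ref{PhiProp}. Write $p_1, p_2 \colon \mathfrak{Z} \to \mathfrak{Y}$ for the two projections and $\mathfrak{F} := p_1^{-1}(\mathfrak{G}) = p_2^{-1}(\mathfrak{G})$, viewed as a closed subscheme of $\mathfrak{Z}$. The inclusion ``$\supseteq$'' is immediate: from $S(\mathfrak{X}, \mathfrak{H}) = \psi_\eta(S(\mathfrak{Y}, \mathfrak{G}))$ one gets $S(\mathfrak{Y}, \mathfrak{G}) \subseteq \psi_\eta^{-1}(\psi_\eta(S(\mathfrak{Y}, \mathfrak{G}))) = \psi_\eta^{-1}(S(\mathfrak{X}, \mathfrak{H}))$.

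For ``$\subseteq$'' I would first collect the structural facts. Since $p_1$ and $p_2$ are \et morphisms (base changes of $\psi$), Proposition~\ref{EtaleMorProp}~(i), applied to $p_1 \colon \mathfrak{Z} \to \mathfrak{Y}$ and the strictly poly-stable pair $(\mathfrak{Y}, \mathfrak{G})$, shows that $(\mathfrak{Z}, \mathfrak{F})$ is strictly poly-stable; hence Proposition~\ref{SkeletonPreimage} applies to both $p_1$ and $p_2$, yielding $(p_1)_\eta^{-1}(S(\mathfrak{Y}, \mathfrak{G})) = S(\mathfrak{Z}, \mathfrak{F}) = (p_2)_\eta^{-1}(S(\mathfrak{Y}, \mathfrak{G}))$, and Lemma~\ref{StrictSkeletonMap} shows that $(p_1)_\eta$ restricts to a map $S(\mathfrak{Z}, \mathfrak{F}) \to S(\mathfrak{Y}, \mathfrak{G})$. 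Now take $y \in \psi_\eta^{-1}(S(\mathfrak{X}, \mathfrak{H}))$; then $y \in \mathfrak{Y}_\eta \setminus \mathfrak{G}_\eta$ since $\psi^{-1}(\mathfrak{H}) = \mathfrak{G}$. As $S(\mathfrak{X}, \mathfrak{H}) = \psi_\eta(S(\mathfrak{Y}, \mathfrak{G}))$, there is $\bar{y} \in S(\mathfrak{Y}, \mathfrak{G})$ with $\psi_\eta(\bar{y}) = \psi_\eta(y)$, and --- using $\mathfrak{Z}_\eta = \mathfrak{Y}_\eta \times_{\mathfrak{X}_\eta} \mathfrak{Y}_\eta$ together with the fact that a nonzero Berkovich spectrum has a point, exactly as in the proof of Proposition~\ref{PhiProp} --- there is $\zeta \in \mathfrak{Z}_\eta$ with $(p_1)_\eta(\zeta) = y$ and $(p_2)_\eta(\zeta) = \bar{y}$. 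Since $(p_2)_\eta(\zeta) = \bar{y} \in S(\mathfrak{Y}, \mathfrak{G})$, we obtain $\zeta \in (p_2)_\eta^{-1}(S(\mathfrak{Y}, \mathfrak{G})) = S(\mathfrak{Z}, \mathfrak{F})$, and therefore $y = (p_1)_\eta(\zeta) \in (p_1)_\eta(S(\mathfrak{Z}, \mathfrak{F})) \subseteq S(\mathfrak{Y}, \mathfrak{G})$, as required.

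The only slightly delicate point --- and the only input beyond the quoted propositions --- is the existence of the point $\zeta$ above: this rests on the surjectivity of $\mathfrak{Z}_\eta = \mathfrak{Y}_\eta \times_{\mathfrak{X}_\eta} \mathfrak{Y}_\eta$ onto the set-theoretic fiber product $\{(u,v) : \psi_\eta(u) = \psi_\eta(v)\}$, equivalently $\mathscr{H}(y)\,\widehat{\otimes}_{\mathscr{H}(\psi_\eta(y))}\,\mathscr{H}(\bar{y}) \neq 0$; this is standard and is precisely the step already carried out in the proof of Proposition~\ref{PhiProp}, so I do not anticipate a genuine obstacle --- the real content is just recognizing that one can work over $\mathfrak{Z}$ and invoke the strict case. (An alternative, avoiding $\mathfrak{Z}$ altogether, would be to use that $\Phi$ is a \emph{strong} deformation retraction onto $S(\mathfrak{X}, \mathfrak{H})$: for $y \in \psi_\eta^{-1}(S(\mathfrak{X}, \mathfrak{H}))$ and a building block $\mathfrak{U}$ of $(\mathfrak{Y}, \mathfrak{G})$ with $y \in \mathfrak{U}_\eta$, the path $t \mapsto \Phi_{\mathfrak{U}}(y,t)$ from the construction of $\Phi$ satisfies $\psi_\eta(\Phi_{\mathfrak{U}}(y,t)) = \Phi(\psi_\eta(y),t) = \psi_\eta(y)$, so it maps the connected set $[0,1]$ into the fiber $\psi_\eta^{-1}(\psi_\eta(y))$; as $\psi_\eta$ is a quasi-\et covering this fiber is discrete, the path is constant, and $y = \Phi_{\mathfrak{U}}(y,1) \in S(\mathfrak{U}, \mathfrak{G} \cap \mathfrak{U}) \subseteq S(\mathfrak{Y}, \mathfrak{G})$. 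This route, however, requires discreteness of the fibers of $\psi_\eta$, which is not among the results recorded above, so I would favour the first approach.)
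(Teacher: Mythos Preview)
Your main argument via the fiber product $\mathfrak{Z} = \mathfrak{Y} \times_{\mathfrak{X}} \mathfrak{Y}$ is correct and is genuinely different from the paper's proof. The paper in fact takes precisely your \emph{alternative} route: it assumes $y \notin S(\mathfrak{Y}, \mathfrak{G})$, invokes Proposition~\ref{InjProp} (extended from standard schemes to building blocks via the $\varepsilon$-approximation and the arguments of \S3.3) to produce a $t' \in [0,1)$ such that $t \mapsto \Phi_{\mathfrak{U}}(y,t)$ is injective on $[t',1]$, observes that this entire path lies in the fiber $\psi_\eta^{-1}(x)$, and derives a contradiction from the fact that quasi-\'etale morphisms have $0$-dimensional fibers (citing \cite{Duc18}). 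So your worry that ``discreteness of fibers is not among the results recorded above'' is exactly what forces the paper to import an external reference.

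Your fiber-product approach buys self-containment: it uses only Proposition~\ref{SkeletonPreimage}, Lemma~\ref{StrictSkeletonMap}, and the existence of a point $\zeta$ over $(y,\bar y)$, all of which are already in hand (the last one being the same step carried out in the proof of Proposition~\ref{PhiProp}). The paper's approach, by contrast, avoids passing through $\mathfrak{Z}$ but needs the extra analytic input on fibers of quasi-\'etale maps and the extension of the injectivity statement~\ref{InjProp} to building blocks. Both are valid; yours is arguably cleaner given the machinery already assembled.
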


\begin{proof}
	%In the classical situation, i.\,e. when $\mathfrak{H}$ and $\mathfrak{G}$ are empty,
	%the equality follows from \cite[Theorem 5.2 (vii)]{Ber99}. We can use the same proof in our 
	%situation. Let $z \in S(\mathfrak{X}, \mathfrak{H})$ and $w \in \psi^{-1}_{\eta}(z)$. 
	%By increasing the ground field we may assume that $x := \red_{\mathfrak{X}}(z)$ 
	%and all points $y \in \psi^{-1}_s(x)$ are $\widetilde{K}$-rational. Then \cite[Lemma~4.4]{Ber99}
	%yields that $\psi$ induces isomorphisms $\red^{-1}_{\mathfrak{Y}}(y) \xrightarrow{\sim} \red^{-1}_{\mathfrak{X}}(x)$.
	%Since these isomorphisms restrict to 
	%$S(\mathfrak{Y},  \mathfrak{G}) \cap \red^{-1}_{\mathfrak{Y}}(y) \xrightarrow{\sim} 
	%S(\mathfrak{X},  \mathfrak{H}) \cap \red^{-1}_{\mathfrak{X}}(x)$, see \ref{SkeletonIntComplex}
	%and Lemma \ref{FaceFacts} (iv), 
	%and $w \in \red^{-1}_{\mathfrak{Y}}(y)$ for some $y$, we conclude $w \in S(\mathfrak{Y},  \mathfrak{G})$. 
	The inclusion “$\supseteq$” is clear. For the other inclusion
	let $x \in S(\mathfrak{X}, \mathfrak{H})$ and $y \in \psi^{-1}_{\eta}(x)$. Assume for contradiction
	that $y \notin S(\mathfrak{Y}, \mathfrak{G})$. 
	Choose a building block $\mathfrak{U}$ of $(\mathfrak{Y}, \mathfrak{G})$ whose generic fiber contains $y$.
	Then there exists an element $t' \in [0, 1)$ such that the function 
	$[t', 1] \to \mathfrak{U}_{\eta}$, $t \mapsto \Phi_{\mathfrak{U}}(y, t)$
	is injective. This is easily shown for standard pairs using Proposition \ref{InjProp} 
	and the $\varepsilon$-approximation procedure, and can then be generalized to building blocks
	with the arguments from §\,$3.3$. % Alternatively see \cite[Theorem~5.2~(iii)]{Ber99}.
	Now by construction and the fact that $\Phi$ leaves elements of the skeleton fixed,
	all images of this function are mapped to $x$ under $\psi_{\eta}$.
	But this is not possible since $\psi_{\eta}$ is quasi-\etNS,
	which means its fibers are $0$-dimensional. 
	Details concerning the properties of quasi-\et morphisms can be found in \cite[§\,5]{Duc18}.
	With that we have shown the equality.
	%
	%We can now deduce that $S(\mathfrak{X}, \mathfrak{H})$ is a closed subset of $Z$ from the fact
	%that $\psi_{\eta}$ is a quotient map, that $S(\mathfrak{Y}, \mathfrak{G}) \subset \mathfrak{Y}_{\eta} \setminus \mathfrak{G}_{\eta}$ 
	%is closed and Proposition \ref{QuotMapProp} (i).
\end{proof}

%\begin{remark}
%	Under the additional assumption that $\mathfrak{X}$ is separated, the assertion that 
%	$S(\mathfrak{X}, \mathfrak{H})$ is a closed subset of $Z$ is easier to see: \newline
%	Note that in this case $Z$ is a Hausdorff space. Therefore the proper continuous map $\Phi$ 
%	is closed. Since $S(\mathfrak{Y}, \mathfrak{G}) \subset \mathfrak{Y}_{\eta} \setminus \mathfrak{G}_{\eta}$ is 
%	a closed subset, we conclude the claim from $S(\mathfrak{X}, \mathfrak{H}) = \psi_{\eta}(S(\mathfrak{Y}, \mathfrak{G}))$. 		
%\end{remark}

\begin{noname} \label{SkeletonIntComplex}
	Again we consider the fiber product $\mathfrak{Z} := \mathfrak{Y} \times_{\mathfrak{X}} \mathfrak{Y}$ via $\psi$
	and the two canonical projections $p_1$ and $p_2$ from $\mathfrak{Z}$ to $\mathfrak{Y}$.
	Let us denote $\mathfrak{F} := p^{-1}_1(\mathfrak{G}) = p^{-1}_2(\mathfrak{G})$.
	
	From Proposition \ref{ClosedProp} and Proposition \ref{QuotMapProp} (iii) we infer
	that $\psi_{\eta}$ restricts to a quotient map $S(\mathfrak{Y}, \mathfrak{G}) \to S(\mathfrak{X}, \mathfrak{H})$.
	Then Proposition \ref{SkeletonPreimage} implies 
	that $S(\mathfrak{X}, \mathfrak{H})$ is canonically homeomorphic to the coequalizer
	\[\coker(S(\mathfrak{Z}, \mathfrak{F}) \rightrightarrows S(\mathfrak{Y}, \mathfrak{G}))\]
	with respect to $S(p_1)$ and $S(p_2)$. By Lemma \ref{MapsAgree} and the definition
	of the dual intersection complex we then obtain a canonical homeomorphism between
	$S(\mathfrak{X}, \mathfrak{H})$ and $C(\mathfrak{X}, \mathfrak{H})$.
	By functoriality of the constructions, the coequalizers themselves only depend 
	on the choice of $(\mathfrak{Y}, \mathfrak{G})$ and $\psi$ up to canonical homeomorphism.  
	This means that our canonical homeomorphism 
	$S(\mathfrak{X}, \mathfrak{H}) \xrightarrow{\sim} C(\mathfrak{X}, \mathfrak{H})$
	is independent from these choices.
	
	Now we consider an element $x \in \str(\mathfrak{X}_s, \mathfrak{H}_s)$.
	Observe that $\psi_{\eta}$ also restricts to a quotient map 
	$S(\mathfrak{Y}, \mathfrak{G}) \cap \red^{-1}_{\mathfrak{Y}}(\psi^{-1}_s(x)) \to 
	S(\mathfrak{X}, \mathfrak{H}) \cap \red^{-1}_{\mathfrak{X}}(x)$.
	The preimage $\psi^{-1}_s(x)$ consists of all points $y \in \str(\mathfrak{Y}_s, \mathfrak{G}_s)$
	over $x$. We recall from Proposition \ref{RedBlocks} that 
	$S(y) \cap \red^{-1}_{\mathfrak{Y}}(y)$
	is canonically identified with $\Delta^{\circ}(y)$. 
	Proposition \ref{SkeletonBlocksProp} (iv) implies that 
	$S(y) \cap \red^{-1}_{\mathfrak{Y}}(y) = S(\mathfrak{Y}, \mathfrak{G}) \cap \red^{-1}_{\mathfrak{Y}}(y)$.
	Now it becomes clear from the construction
	of the face $\Delta^{\circ}(x)$, see Lemma \ref{FaceFacts}, that the above homeomorphism
	$S(\mathfrak{X}, \mathfrak{H}) \xrightarrow{\sim} C(\mathfrak{X}, \mathfrak{H})$
	restricts to a homeomorphism 
	$S(\mathfrak{X}, \mathfrak{H}) \cap \red^{-1}_{\mathfrak{X}}(x) \xrightarrow{\sim} \Delta^{\circ}(x)$.
\end{noname}

\begin{theorem} \label{MainThmA}
	Let $(\mathfrak{X}, \mathfrak{H})$ be a poly-stable pair
	and $Z := \mathfrak{X}_{\eta} \setminus \mathfrak{H}_{\eta}$.
	The skeleton $S(\mathfrak{X}, \mathfrak{H})$ has the following properties:
	\begin{enumerate}
		\item $S(\mathfrak{X}, \mathfrak{H})$ is a closed subset of $Z$.
		\item There is a proper strong deformation retraction $\Phi: Z \times [0, 1] \to Z$ onto $S(\mathfrak{X}, \mathfrak{H})$.
		\item $S(\mathfrak{X}, \mathfrak{H})$ is canonically homeomorphic to the
		dual intersection complex $C(\mathfrak{X}, \mathfrak{H})$.
		\item For every generic point $x \in \str(\mathfrak{X}_s, \mathfrak{H}_s)$ of a stratum this canonical homeomorphism
		restricts to a homeomorphism $S(\mathfrak{X}, \mathfrak{H}) \cap \red^{-1}_{\mathfrak{X}}(x) \xrightarrow{\sim} \Delta^{\circ}(x)$.  
	\end{enumerate} 
\end{theorem}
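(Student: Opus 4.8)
The plan is to assemble the theorem from the results built up throughout this subsection; since we are assuming $K$ non-trivially valued, every ingredient is already in place. Recall that $\Phi \colon Z \times [0,1] \to Z$ was constructed by choosing a surjective \'etale morphism $\psi \colon (\mathfrak{Y}, \mathfrak{G}) \to (\mathfrak{X}, \mathfrak{H})$ with $(\mathfrak{Y}, \mathfrak{G})$ strictly poly-stable, lifting a point of $Z$ to $\mathfrak{Y}_\eta \setminus \mathfrak{G}_\eta$, running the building-block homotopy $\Phi_{\mathfrak{U}}$ there, and pushing forward along $\psi_\eta$; the skeleton is then $S(\mathfrak{X}, \mathfrak{H}) := \tau(Z)$ with $\tau := \Phi(\cdot, 1)$, so that $S(\mathfrak{X}, \mathfrak{H}) = \psi_\eta(S(\mathfrak{Y}, \mathfrak{G}))$.

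First I would dispatch (i) and (ii) together. Proposition \ref{PhiProp} says $\Phi$ is a well-defined continuous proper map independent of the choice of $(\mathfrak{Y}, \mathfrak{G})$ and $\psi$, and Lemma \ref{PhiLemma} upgrades this to the statement that $\tau$ is a retraction onto $S(\mathfrak{X}, \mathfrak{H})$ and that $\Phi$ is a proper strong deformation retraction onto $S(\mathfrak{X}, \mathfrak{H})$; since $Z$ is locally compact the proper map $\Phi$ is closed, whence $S(\mathfrak{X}, \mathfrak{H}) = \Phi(Z \times \{1\})$ is closed in $Z$. That is precisely (i) and (ii).

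For (iii) and (iv) I would invoke the analysis in \ref{SkeletonIntComplex}. By Proposition \ref{ClosedProp} we have $\psi^{-1}_\eta(S(\mathfrak{X}, \mathfrak{H})) = S(\mathfrak{Y}, \mathfrak{G})$, so by Lemma \ref{FactorMap} and Proposition \ref{QuotMapProp} (iii) the map $\psi_\eta$ restricts to a quotient map $S(\mathfrak{Y}, \mathfrak{G}) \to S(\mathfrak{X}, \mathfrak{H})$; combined with Proposition \ref{SkeletonPreimage} this exhibits $S(\mathfrak{X}, \mathfrak{H})$ as the coequalizer of $S(p_1), S(p_2) \colon S(\mathfrak{Z}, \mathfrak{F}) \rightrightarrows S(\mathfrak{Y}, \mathfrak{G})$, matching the coequalizer presentation of $C(\mathfrak{X}, \mathfrak{H})$ via Lemma \ref{MapsAgree} and Theorem \ref{StrictHomeoThm}. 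This gives the canonical homeomorphism $S(\mathfrak{X}, \mathfrak{H}) \xrightarrow{\sim} C(\mathfrak{X}, \mathfrak{H})$, independent of all choices by functoriality, proving (iii). For (iv) I would restrict $\psi_\eta$ to the formal fibers over a stratum: it yields a quotient map $S(\mathfrak{Y}, \mathfrak{G}) \cap \red^{-1}_{\mathfrak{Y}}(\psi^{-1}_s(x)) \to S(\mathfrak{X}, \mathfrak{H}) \cap \red^{-1}_{\mathfrak{X}}(x)$, and since for each $y \in \str(\mathfrak{Y}_s, \mathfrak{G}_s)$ over $x$ the set $S(y) \cap \red^{-1}_{\mathfrak{Y}}(y)$ equals $S(\mathfrak{Y}, \mathfrak{G}) \cap \red^{-1}_{\mathfrak{Y}}(y)$ (Proposition \ref{SkeletonBlocksProp} (iv)) and is canonically $\Delta^{\circ}(y)$ (Proposition \ref{RedBlocks}), the description of $\Delta^{\circ}(x)$ in Lemma \ref{FaceFacts} gives the restricted homeomorphism $S(\mathfrak{X}, \mathfrak{H}) \cap \red^{-1}_{\mathfrak{X}}(x) \xrightarrow{\sim} \Delta^{\circ}(x)$.

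There is no genuine obstacle left at this point: the theorem is a packaging of earlier work. The real weight was carried by Proposition \ref{PhiProp}, where well-definedness and independence of $\Phi$ rested on the quotient-map property of $\psi_\eta$ (Lemma \ref{FactorMap}) and the compatibility of the building-block homotopies across fiber products from \ref{ExtendedStep5}, and by Proposition \ref{ClosedProp}, which used that $\psi_\eta$ is quasi-\'etale with $0$-dimensional fibers so that the $\Phi$-orbit of a point outside $S(\mathfrak{Y}, \mathfrak{G})$ cannot be collapsed by $\psi_\eta$; the present proof merely has to cite these and glue the conclusions.
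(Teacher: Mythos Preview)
Your proposal is correct and follows exactly the same approach as the paper: the paper's proof is a one-line citation of Proposition~\ref{ClosedProp}, Lemma~\ref{PhiLemma}, and the construction in~\ref{SkeletonIntComplex}, and you have simply unpacked those references in detail. There is nothing to add or correct.
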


\begin{proof}
	This is just a collection of our results from Proposition~\ref{ClosedProp}, Lemma~\ref{PhiLemma} and 
	the construction in~\ref{SkeletonIntComplex}.
\end{proof}

\begin{remark}
	For any poly-stable scheme $\mathfrak{X}$ the extended skeleton $S(\mathfrak{X}, \emptyset)$ defined above coincides with the 
	classical skeleton $S(\mathfrak{X})$ introduced by Berkovich in \cite[§\,5]{Ber99}. 
	This is immediately clear from the construction. 
\end{remark}

%---------------------------------------------------------------------------------------------------------------------------------------------------------------------------

\subsection{The trivially valued case} \label{TrivialSection}

In this subsection we consider the situation where $K$ is trivially valued.
Let $(\mathfrak{X}, \mathfrak{H})$ be a poly-stable pair.

We extend our results from above by passing to a suitable non-trivially valued extension of $K$.
This strategy was brought to my attention after my advisor Walter Gubler 
suggested to me the considerations in \cite[§\,1.6]{BJ18}.

\begin{noname} \label{TrivFieldExt}
	Let us fix an element $r \in (0, 1)$ and let $F := K_r$ be the 
	complete non-archimedean field as defined in \cite[§\,2.1]{Ber90}.
	One can show that it is a $K$-affinoid algebra and canonically isomorphic to the field $K(\!(t)\!)$ 
	of formal Laurent series over $K$ with variable $t$,
	equipped with the $t$-adic absolute value, i.\,e. the absolute value of a non-zero
	formal Laurent series $\sum^{\infty}_{k=n} a_k t^k$, with $n \in \Z$ such that $a_n \neq 0$,
	is given as $r^n$. Note that $F$ is non-trivially valued, is peaked over $K$ and has residue field $K$.
	
	Consider now another element $r' \in (0, 1)$. If $r$ and $r'$ are $\Q$-linearly independent in
	the $\Q$-vector space $\R_{>0}$ (with multiplicative group law), then $K_r$ and $K_{r'}$ can
	be embedded in the complete non-archimedean field $K_{r, r'} := K_r \widehat{\otimes}_K K_{r'}$,
	Otherwise there exist numbers $n, n' \in \N_{>0}$ such that $r^n = (r')^{n'}$. 
	Then put $\rho := r^{1/n'} = (r')^{1/n}$ and we can embed $K_r$ resp. $K_{r'}$
	in $K_{\rho}$ via $t \mapsto t^{n'}$ resp. $t \mapsto t^n$, using the formal Laurent series expression. 
	In any case, the extension field is non-trivially valued, peaked over $K$ and has residue field $K$ as well. 
	However beware that in the second case, the field $K_{\rho}$ is in general not peaked over $K_r$ and $K_{r'}$.    
\end{noname}

\begin{noname} \label{TrivPrep}
	%Let us fix an element $r \in (0, 1)$ and
	%let $F := K(\!(t)\!)$ be the field of formal Laurent series over $K$ with variable $t$
	%equipped with the $t$-adic absolute value, i.\,e. the absolute value of a non-zero
	%formal Laurent series $\sum^{\infty}_{k=n} a_k t^k$ with $n \in \Z$ such that $a_n \neq 0$
	%is given as $r^n$. Then $F$ is a non-archimedean field extension of $K$ which is 
	%non-trivially valued. 
	We pass to the base change 
	$(\mathfrak{X}', \mathfrak{H}') := 
	(\mathfrak{X} \times_{\Spf(K^{\circ})} \Spf(F^{\circ}), \mathfrak{H} \times_{\Spf(K^{\circ})} \Spf(F^{\circ}))$ % (\mathfrak{X}_{F^{\circ}}, \mathfrak{H}_{F^{\circ}})
	of $(\mathfrak{X}, \mathfrak{H})$ with respect to $\Spf(F^{\circ}) \to \Spf(K^{\circ})$,
	which gives a poly-stable pair over $F^{\circ}$.  
	
	One important observation is, that $F$ and $K$ have the same residue field, namely $\widetilde{F} = \widetilde{K} = K$.
	Consequently the special fibers $(\mathfrak{X}'_s, \mathfrak{H}'_s)$ and $(\mathfrak{X}_s, \mathfrak{H}_s)$ agree,
	in particular $\str(\mathfrak{X}'_s, \mathfrak{H}'_s) = \str(\mathfrak{X}_s, \mathfrak{H}_s)$.
	In the case of a strictly poly-stable pair $(\mathfrak{X}, \mathfrak{H})$, the building blocks of 
	$(\mathfrak{X}', \mathfrak{H}')$ are just the base changes of the building blocks of $(\mathfrak{X}, \mathfrak{H})$.
	Now we see that the dual intersection complexes $C(\mathfrak{X}', \mathfrak{H}')$ and
	$C(\mathfrak{X}, \mathfrak{H})$ coincide. This also holds true for an arbitrary poly-stable pair $(\mathfrak{X}, \mathfrak{H})$,
	since the construction of the dual intersection complex, by taking the coequalizer, reduces 
	the problem to the strict case.
	
	We will consider the map $\pi: \mathfrak{X}'_{\eta} \to \mathfrak{X}_{\eta}$ induced by
	the natural morphism $\mathfrak{X}' \to \mathfrak{X}$ of formal $K^{\circ}$-schemes.
	Note that $\pi$ restricts to a map 
	$\mathfrak{X}'_{\eta} \setminus \mathfrak{H}'_{\eta} \to \mathfrak{X}_{\eta} \setminus \mathfrak{H}_{\eta}$
	and that $\pi$ is a proper map, see for instance \cite[p.\,9]{CT19}.
	
	Since $F$ is a peaked valuation field over $K$, we get
	a canonical right inverse $\sigma: \mathfrak{X}_{\eta} \to \mathfrak{X}_{\eta}'$ to the map $\pi$.
	According to \cite[§\,5.2]{Ber90} it is given as follows: Let $x \in \mathfrak{X}_{\eta}$.
	Then $\sigma$ maps $x$ to the point in $\pi^{-1}(x) = \mathscr{M}(\mathscr{H}(x) \widehat{\otimes} F)$,
	which corresponds to the multiplicative norm on the Banach $F$-algebra $\mathscr{H}(x) \widehat{\otimes} F$. 
	It follows from \emph{loc.cit.}, Corollary 5.2.7, that $\sigma$ is continuous.
	This right inverse map $\sigma$ also exists in the case of a non-peaked extension, but then
	it is only defined on the peaked points.
	%We want to mention that the image of $\sigma$ is a closed subset of $\mathfrak{X}_{\eta}'$.
	We also want to mention that $\sigma$ is a closed map.
	In the case, where $\mathfrak{X}$ is affine, this is clear. One easily reduces the general case to the affine case,
	using that the generic fiber $\mathfrak{X}_{\eta}$ is obtained by gluing together the generic fibers of affines and these
	form a locally finite closed cover of $\mathfrak{X}_{\eta}$.
	
	We use this to conclude that $\sigma$ is actually a proper map, since its image is a closed subset of $\mathfrak{X}_{\eta}'$
	and $\sigma$ is a right inverse to the continuous map $\pi$. 
\end{noname}

%\begin{noname}
%	For any $K$-affinoid algebra $\mathscr{A}$ one can
%\end{noname}

\begin{definition}
	We define the \emph{extended skeleton} $S(\mathfrak{X}, \mathfrak{H})$ as the image of
	the skeleton $S(\mathfrak{X}', \mathfrak{H}')$ from the non-trivially valued case
	under the map $\pi: \mathfrak{X}'_{\eta} \to \mathfrak{X}_{\eta}$.
	Since $S(\mathfrak{X}', \mathfrak{H}') \subseteq \mathfrak{X}'_{\eta} \setminus \mathfrak{H}'_{\eta}$,
	we get that $S(\mathfrak{X}, \mathfrak{H}) \subseteq \mathfrak{X}_{\eta} \setminus \mathfrak{H}_{\eta}$. 
	One also easily verifies that this definition of the skeleton in the trivially valued
	case agrees with the skeleton from the classical construction where $\mathfrak{H} = \emptyset$.
\end{definition}

\begin{proposition} \label{TrivHomeom}
	The map $\pi: \mathfrak{X}'_{\eta} \to \mathfrak{X}_{\eta}$ restricts to a homeomorphism
	$S(\mathfrak{X}', \mathfrak{H}') \xrightarrow{\sim} S(\mathfrak{X}, \mathfrak{H})$,
	whose inverse is induced by $\sigma$.
	In particular $\sigma(S(\mathfrak{X}, \mathfrak{H})) = S(\mathfrak{X}', \mathfrak{H}')$.
	Moreover $S(\mathfrak{X}, \mathfrak{H})$ is independent of the choice of $r$. 
\end{proposition}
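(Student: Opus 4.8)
The plan is to reduce everything to the single claim $(\star)$: \emph{every point of $S(\mathfrak{X}', \mathfrak{H}')$ lies in the image of $\sigma$, i.e. $\sigma(\pi(x')) = x'$ for all $x' \in S(\mathfrak{X}', \mathfrak{H}')$} (equivalently $\mathscr{H}(x') = \mathscr{H}(\pi(x'))\,\widehat{\otimes}_K F$). Granting $(\star)$, the proposition follows formally: by \ref{TrivPrep} we have $\pi\circ\sigma = \id_{\mathfrak{X}_\eta}$ with $\pi,\sigma$ continuous, and $S(\mathfrak{X}, \mathfrak{H}) = \pi(S(\mathfrak{X}', \mathfrak{H}'))$ by definition; hence $\pi$ is injective on $S(\mathfrak{X}', \mathfrak{H}')$ (if $\pi(x') = \pi(y')$ then $x' = \sigma\pi(x') = \sigma\pi(y') = y'$), it surjects onto $S(\mathfrak{X}, \mathfrak{H})$, and for $y = \pi(x') \in S(\mathfrak{X}, \mathfrak{H})$ one has $\sigma(y) = x' \in S(\mathfrak{X}', \mathfrak{H}')$ by $(\star)$, so $\sigma$ restricts to a continuous two-sided inverse. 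Thus $\pi$ restricts to a homeomorphism $S(\mathfrak{X}', \mathfrak{H}') \xrightarrow{\sim} S(\mathfrak{X}, \mathfrak{H})$ with inverse induced by $\sigma$, and $\sigma(S(\mathfrak{X}, \mathfrak{H})) = \sigma\pi(S(\mathfrak{X}', \mathfrak{H}')) = S(\mathfrak{X}', \mathfrak{H}')$.

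To prove $(\star)$ I would descend along the structure of the skeleton. Since $\sigma$ is functorial for morphisms of admissible formal $K^\circ$-schemes and $S(\mathfrak{X}', \mathfrak{H}')$ is a locally finite union of building block skeletons $S(\mathfrak{U}', \mathfrak{H}' \cap \mathfrak{U}') = (\varphi'_\eta)^{-1}(S(\mathfrak{S}', \mathfrak{G}'))$, where $(\mathfrak{U}', \varphi')$ is the base change to $F^\circ$ of a building block $(\mathfrak{U}, \varphi)$ of the strictly poly-stable pair underlying $(\mathfrak{X}, \mathfrak{H})$, it suffices to prove $(\star)$ for a standard pair $(\mathfrak{S}', \mathfrak{G}') = (\mathfrak{S}(\mathbf{n}, \mathbf{0}, d), \mathfrak{G}(s))$ over $F^\circ$ (note $\mathbf{a} = \mathbf{0}$ since $K$ is trivially valued), and then transport back along $\varphi'_\eta$, using that $\varphi'$ is étale, that $\mathfrak{U}' = \mathfrak{U} \times_{\mathfrak{S}} \mathfrak{S}'$, that an étale morphism induces finite separable extensions of complete residue fields, and that $\sigma$ is compatible with such base change. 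For the standard pair itself I would use the $\varepsilon$-approximation of \ref{SkelConstr1}: $S(\mathfrak{S}', \mathfrak{G}')$ is the union over $\varepsilon \in |F^*|$ with $\varepsilon \le 1$ of the skeletons $S(\mathfrak{S}'_\varepsilon)$, and by Examples \ref{BallSkeleton} and \ref{BlockBallSkeleton} together with \ref{ClassicStart} the points of $S(\mathfrak{S}'_\varepsilon)$ are exactly the monomial seminorms $|\cdot|_{\mathbf{s}}$ in the standard coordinates. Such a monomial point over $F$ is a peaked point whose complete residue field is obtained from $\widetilde{F} = \widetilde{K} = K$ by adjoining coordinates of prescribed values; since the same monomial recipe defines the corresponding point over $K$ and $\widetilde{F} = \widetilde{K}$, one has $|\cdot|_{\mathbf{s}} = \sigma(\pi(|\cdot|_{\mathbf{s}}))$, which is $(\star)$.

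For the independence of $r$, let $r, r' \in (0, 1)$ give $F = K_r$, $F' = K_{r'}$ and choose, as in \ref{TrivFieldExt}, a common non-trivially valued extension $L$ — equal to $K_{r, r'}$ when $r, r'$ are $\Q$-linearly independent and to $K_\rho$ otherwise — which is peaked over $K$, has residue field $K$, and into which the valuations of $F$ and $F'$ embed compatibly. Writing $\pi_F, \pi_{F'}, \pi_L$ for the projections onto $\mathfrak{X}_\eta$ and $\pi_{F, L}, \pi_{F', L}$ for those of $\mathfrak{X}_{L, \eta}$ onto $\mathfrak{X}_{F, \eta}$ and $\mathfrak{X}_{F', \eta}$, we have $\pi_F \circ \pi_{F, L} = \pi_L = \pi_{F'} \circ \pi_{F', L}$. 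The extended skeleton in the non-trivially valued case is stable under such value-group-preserving base changes — the building blocks, the tuples $\val(\mathbf{a})$ and hence the building block skeletons are unchanged, and the canonical section of $\mathfrak{X}_{L, \eta} \to \mathfrak{X}_{F, \eta}$ restricts on the image of $\mathfrak{X}_{F,\eta}\to\mathfrak{X}_\eta$ to that latter section — so $\pi_{F, L}(S(\mathfrak{X}_L, \mathfrak{H}_L)) = S(\mathfrak{X}_F, \mathfrak{H}_F)$, and likewise for $F'$. Consequently the $r$-version $\pi_F(S(\mathfrak{X}_F, \mathfrak{H}_F)) = \pi_L(S(\mathfrak{X}_L, \mathfrak{H}_L)) = \pi_{F'}(S(\mathfrak{X}_{F'}, \mathfrak{H}_{F'}))$ equals the $r'$-version.

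The main obstacle is $(\star)$: pinning down that the building block skeleton over $F$ really lies inside the image of the peaked section $\sigma$. This rests on the explicit monomial description of these skeletons and on checking that $\sigma$ is compatible with all the operations used to build them — open immersions, étale morphisms, Laurent and affinoid domain embeddings, and analytic base change of the Gauss-type points $g_t$ — the étale case being the delicate one, since a priori an étale cover could separate points in a $\pi$-fibre; here one uses $\mathfrak{U}' = \mathfrak{U} \times_{\mathfrak{S}} \mathfrak{S}'$ and separability of residue field extensions. Establishing the base-change compatibility invoked in the last paragraph is of the same nature.
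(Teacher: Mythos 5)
Your overall strategy matches the paper's: reduce everything to the single claim $(\star)$ that $\sigma\circ\pi = \id$ on $S(\mathfrak{X}',\mathfrak{H}')$, prove it for standard pairs via the explicit monomial description of skeleton points, and then transport to building blocks, finishing by gluing in the strictly poly-stable case and taking coequalizers in the general case. That reduction is exactly the content of the commutative diagram the paper draws, and the monomial-seminorm computation for standard pairs is the same.

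Where you diverge is the resolution of what you correctly flag as the ``delicate'' étale step, and there your proposed tool does not close the gap. The situation is: from the standard-pair case and functoriality of $\sigma$ one gets $\varphi'_{\eta}(\sigma_{\mathfrak{U}}(x)) = \varphi'_{\eta}(x')$, and the question is why $\sigma_{\mathfrak{U}}(x) = x'$. Invoking $\mathfrak{U}' = \mathfrak{U}\times_{\mathfrak{S}}\mathfrak{S}'$ and separability of the residue field extension only tells you that the set of points of $\mathfrak{U}'_{\eta}$ lying over both $x\in\mathfrak{U}_\eta$ and $\varphi'_{\eta}(x')\in\mathfrak{S}'_{\eta}$ — namely $\mathscr{M}\bigl(\mathscr{H}(x)\,\widehat{\otimes}_{\mathscr{H}(\varphi_\eta(x))}\,\mathscr{H}(\varphi'_\eta(x'))\bigr)$ — is finite; it does not force it to be a single point, so it does not identify $\sigma_{\mathfrak{U}}(x)$ with $x'$. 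The paper's argument sidesteps this entirely: by functoriality of $\sigma$ and the standard-pair case, $\varphi'_{\eta}(\sigma_{\mathfrak{U}}(x))\in S(\mathfrak{S}',\mathfrak{G}')$, hence $\sigma_{\mathfrak{U}}(x)\in(\varphi'_{\eta})^{-1}(S(\mathfrak{S}',\mathfrak{G}'))=S(\mathfrak{U}',\mathfrak{H}'\cap\mathfrak{U}')$; and then Proposition~\ref{BuildingBlockProp}~(ii) (already established over the non-trivially valued field $F$) says $\varphi'_{\eta}$ is \emph{injective} on $S(\mathfrak{U}',\mathfrak{H}'\cap\mathfrak{U}')$, which together with $\varphi'_{\eta}(\sigma_{\mathfrak{U}}(x))=\varphi'_{\eta}(x')$ forces $\sigma_{\mathfrak{U}}(x)=x'$. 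So the missing ingredient in your plan is not a separability argument but the combination of (a) the containment $\sigma_{\mathfrak{U}}(S(\mathfrak{U},\mathfrak{H}\cap\mathfrak{U}))\subseteq S(\mathfrak{U}',\mathfrak{H}'\cap\mathfrak{U}')$ coming from the commuting square, and (b) the already-proved bijectivity of $\varphi'_{\eta}$ on building block skeletons from Proposition~\ref{BuildingBlockProp}~(ii).

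Your argument for independence of $r$ via a common non-trivially valued extension $L$ is correct and valid; it differs from the paper's, which instead reads independence off the explicit monomial description of the standard-pair skeleton and then transports it to building blocks via $\varphi^{-1}_{\eta}(S(\mathfrak{S},\mathfrak{G}))=S(\mathfrak{U},\mathfrak{H}\cap\mathfrak{U})$. Your route is somewhat more explicit about the comparison mechanism; the paper's is terser but relies only on material already on the table. Either works.
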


\begin{proof}
	We make use of the fact that for standard pairs and building blocks over non-trivially valued fields, the extended skeleton
	was constructed from the classical skeleton via the $\varepsilon$-approximation technique. 
	In the case of a standard pair $(\mathfrak{X}, \mathfrak{H}) = (\mathfrak{S}, \mathfrak{G})$,
	the claimed properties are easily verified using the explicit description of the norms from~\ref{ClassicStart}. 
	
	For a building block $\varphi: (\mathfrak{U}, \mathfrak{H} \cap \mathfrak{U}) \to (\mathfrak{S}, \mathfrak{G})$
	of a strictly poly-stable pair $(\mathfrak{X}, \mathfrak{H})$, we have the following commutative diagrams
	involving the obvious base changes to $F$:
	\begin{center}\begin{tikzpicture}[scale = 2]
		\node (1) at (0, 1) {$S(\mathfrak{U}', \mathfrak{H}' \cap \mathfrak{U}')$};
		\node (2) at (2, 1) {$S(\mathfrak{S}', \mathfrak{G}')$};
		\node (3) at (0, 0) {$S(\mathfrak{U}, \mathfrak{H} \cap \mathfrak{U})$};
		\node (4) at (2, 0) {$S(\mathfrak{S}, \mathfrak{G})$};	
		
		\draw[->, thick] (1)--(2) node[pos=0.5, below] {$\varphi'_{\eta}$} node[pos=0.5, above] {$\sim$};
		\draw[->, thick] (1)--(3) node[pos=0.5, right] {$\pi_{\mathfrak{U}}$};
		\draw[->, thick] (3)--(4) node[pos=0.5, above] {$\varphi_{\eta}$};
		\draw[->, thick] (2)--(4) node[pos=0.5, left] {$\pi_{\mathfrak{S}}$} node[pos=0.5, above, sloped] {$\sim$};
		
		\draw[->, thick] (3) .. controls (-0.5, 0.5) .. (1) node[pos=0.5, left] {$\sigma_{\mathfrak{U}}$};
		\draw[->, thick] (4) .. controls (2.5, 0.5) .. (2) node[pos=0.5, right] {$\sigma_{\mathfrak{S}}$};
		
		\begin{scope}[shift={(-3,0)}]
		
		\node (1) at (0, 1) {$\mathfrak{U}'_{\eta}$};
		\node (2) at (1, 1) {$\mathfrak{S}'_{\eta}$};
		\node (3) at (0, 0) {$\mathfrak{U}_{\eta}$};
		\node (4) at (1, 0) {$\mathfrak{S}_{\eta}$};	
		
		\draw[->, thick] (1)--(2) node[pos=0.5, above] {$\varphi'_{\eta}$};
		\draw[->, thick] (3)--(1) node[pos=0.5, left] {$\sigma_{\mathfrak{U}}$};
		\draw[->, thick] (3)--(4) node[pos=0.5, below] {$\varphi_{\eta}$};
		\draw[->, thick] (4)--(2) node[pos=0.5, right] {$\sigma_{\mathfrak{S}}$};
		
		\end{scope}
		\end{tikzpicture}\end{center}
	From $(\varphi'_{\eta})^{-1}(S(\mathfrak{S}', \mathfrak{G}')) = S(\mathfrak{U}', \mathfrak{H}' \cap \mathfrak{U}')$
	we can infer, that $\sigma_{\mathfrak{U}}$ actually maps $S(\mathfrak{U}, \mathfrak{H} \cap \mathfrak{U})$
	into $S(\mathfrak{U}', \mathfrak{H}' \cap \mathfrak{U}')$.
	This readily implies that 
	$\pi_{\mathfrak{U}}: S(\mathfrak{U}', \mathfrak{H}' \cap \mathfrak{U}') \to S(\mathfrak{U}, \mathfrak{H} \cap \mathfrak{U})$
	is a homeomorphism with inverse $\sigma_{\mathfrak{U}}$. 
	In particular $\varphi_{\eta}$ induces a homeomorphism
	$S(\mathfrak{U}, \mathfrak{H} \cap \mathfrak{U}) \xrightarrow{\sim} S(\mathfrak{S}, \mathfrak{G})$ 
	and $\varphi^{-1}_{\eta}(S(\mathfrak{S}, \mathfrak{G})) = S(\mathfrak{U}, \mathfrak{H} \cap \mathfrak{U})$,
	which also shows that the skeleton does not depend on $r$.  
	%The building block situation is checked via the explicit description in~\ref{BuildingBlockSkeleton}
	%and the definition of the $\ast$-product. 
	
	Now the skeletons of strictly poly-stable pairs 
	are glued together from building block skeletons. The arbitrary poly-stable case is
	then obtained from the coequalizer description of the skeleton, see~\ref{SkeletonIntComplex}.
\end{proof}

\begin{proposition} \label{TrivDeform}
	Let us denote $Z := \mathfrak{X}_{\eta} \setminus \mathfrak{H}_{\eta}$ and $Z' := \mathfrak{X}'_{\eta} \setminus \mathfrak{H}'_{\eta}$. 
	Consider the deformation retraction $\Phi': Z' \times [0, 1] \to Z'$ onto $S(\mathfrak{X}', \mathfrak{H}')$
	from the non-trivially valued case. Then the map $\Phi: Z \times [0, 1] \to Z$,
	given by sending $(z, t)$ to $\pi(\Phi'(\sigma(z), t))$, is a proper strong deformation retraction onto $S(\mathfrak{X}, \mathfrak{H})$
	not depending on the choice of $r$. 	
\end{proposition}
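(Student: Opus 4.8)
The plan is to verify each claimed property of $\Phi$ by transporting the corresponding property of $\Phi'$ across the homeomorphism $\pi\colon S(\mathfrak{X}',\mathfrak{H}') \xrightarrow{\sim} S(\mathfrak{X},\mathfrak{H})$ from Proposition~\ref{TrivHomeom}, together with the facts that $\pi$ is proper and that $\sigma$ is a proper continuous right inverse to $\pi$, as recorded in~\ref{TrivPrep}. First I would establish that $\Phi$ is well-defined and continuous: continuity is immediate since it is the composite of the continuous maps $\sigma$, $\Phi'$ and $\pi$ (after pairing $\sigma$ with $\id_{[0,1]}$), and well-definedness requires nothing. Then I would check the three defining identities of a strong deformation retraction. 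For $\Phi(\cdot,0)$: since $\Phi'(\cdot,0) = \id_{Z'}$ we get $\Phi(z,0) = \pi(\sigma(z)) = z$ because $\sigma$ is a right inverse to $\pi$. For $\Phi(\cdot,t)$ restricted to $S(\mathfrak{X},\mathfrak{H})$: by Proposition~\ref{TrivHomeom} we have $\sigma(S(\mathfrak{X},\mathfrak{H})) = S(\mathfrak{X}',\mathfrak{H}')$, and $\Phi'$ leaves $S(\mathfrak{X}',\mathfrak{H}')$ pointwise fixed, so $\Phi(s,t) = \pi(\sigma(s)) = s$ for all $s \in S(\mathfrak{X},\mathfrak{H})$ and $t \in [0,1]$; this simultaneously shows that $\Phi(\cdot,1) = \tau$ really does retract onto $S(\mathfrak{X},\mathfrak{H})$.

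The remaining identity, that $\tau := \Phi(\cdot,1)$ has image exactly $S(\mathfrak{X},\mathfrak{H})$, follows from $\Phi'(\cdot,1)$ having image $S(\mathfrak{X}',\mathfrak{H}')$ and from $S(\mathfrak{X},\mathfrak{H})$ being \emph{defined} as $\pi(S(\mathfrak{X}',\mathfrak{H}'))$: indeed $\tau(Z) = \pi(\Phi'(\sigma(Z),1)) \subseteq \pi(S(\mathfrak{X}',\mathfrak{H}')) = S(\mathfrak{X},\mathfrak{H})$, and the reverse inclusion is the retraction property just shown. Next I would verify properness. Both $\pi$ and $\sigma$ are proper by~\ref{TrivPrep}, and $\Phi'$ is proper by the non-trivially valued case (Theorem~\ref{MainThmA}~(ii), already available over $F$). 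Since $Z$ and $Z'$ are Hausdorff and locally compact, properness is stable under composition, so $\Phi = \pi \circ \Phi' \circ (\sigma \times \id_{[0,1]})$ is proper; alternatively one notes directly that $\Phi^{-1}(A) \subseteq (\sigma\times\id)^{-1}\big((\Phi')^{-1}(\pi^{-1}(A))\big)$ is a closed subset of a quasicompact set for quasicompact $A \subseteq Z$.

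Finally, independence of the choice of $r$: by Proposition~\ref{TrivHomeom} the subset $S(\mathfrak{X},\mathfrak{H})$ itself is independent of $r$, and the two-field trick of~\ref{TrivFieldExt} — embedding $K_r$ and $K_{r'}$ into a common non-trivially valued extension $K_{r,r'}$ (or $K_\rho$) peaked over $K$ — lets one compare the homotopies $\Phi'_r$ and $\Phi'_{r'}$ by base-changing both to that common field; since the section $\sigma$ and projection $\pi$ are compatible with these base changes and the building-block homotopies from~\ref{ExtendedStep5} commute with \et base change, the induced maps on $Z$ coincide. I expect the routine part — continuity and the three retraction identities — to be genuinely immediate, and the only point needing a little care to be properness, where one must invoke that $Z$ is locally compact Hausdorff so that the composition of proper maps is proper (using the conventions fixed in the preliminaries); the $r$-independence reduces cleanly to the already-established $r$-independence of the skeleton and of the building-block homotopies, so it is not a serious obstacle either.

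\begin{proof}
	By~\ref{TrivPrep} the maps $\pi\colon \mathfrak{X}'_\eta \to \mathfrak{X}_\eta$ and $\sigma\colon \mathfrak{X}_\eta \to \mathfrak{X}'_\eta$ are continuous and proper, $\pi \circ \sigma = \id$, and both restrict to maps between $Z = \mathfrak{X}_\eta \setminus \mathfrak{H}_\eta$ and $Z' = \mathfrak{X}'_\eta \setminus \mathfrak{H}'_\eta$. The map $\Phi$ is the composite $Z \times [0,1] \xrightarrow{\sigma \times \id} Z' \times [0,1] \xrightarrow{\Phi'} Z' \xrightarrow{\pi} Z$, hence continuous. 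For $z \in Z$ we have $\Phi(z,0) = \pi(\Phi'(\sigma(z),0)) = \pi(\sigma(z)) = z$, so $\Phi(\cdot,0) = \id_Z$. By Proposition~\ref{TrivHomeom} we have $\sigma(S(\mathfrak{X},\mathfrak{H})) = S(\mathfrak{X}',\mathfrak{H}')$, and $\Phi'$ is a strong deformation retraction onto $S(\mathfrak{X}',\mathfrak{H}')$ by Theorem~\ref{MainThmA}~(ii) applied over $F$; thus for $s \in S(\mathfrak{X},\mathfrak{H})$ and $t \in [0,1]$ we get $\Phi(s,t) = \pi(\Phi'(\sigma(s),t)) = \pi(\sigma(s)) = s$. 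In particular $\tau := \Phi(\cdot,1)$ satisfies $\tau|_{S(\mathfrak{X},\mathfrak{H})} = \id$, and $\tau(Z) = \pi(\Phi'(\sigma(Z),1)) \subseteq \pi(S(\mathfrak{X}',\mathfrak{H}')) = S(\mathfrak{X},\mathfrak{H})$, so that $\tau$ is a retraction onto $S(\mathfrak{X},\mathfrak{H})$ and $\Phi$ is a strong deformation retraction onto $S(\mathfrak{X},\mathfrak{H})$.

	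Since $Z$ and $Z'$ are Hausdorff and locally compact, and $\pi$, $\sigma$ and $\Phi'$ are proper, the composition $\Phi = \pi \circ \Phi' \circ (\sigma \times \id)$ is proper as well. Finally, by Proposition~\ref{TrivHomeom} the subset $S(\mathfrak{X},\mathfrak{H}) \subseteq Z$ does not depend on $r$; moreover, given another choice $r'$, embedding $K_r$ and $K_{r'}$ into a common non-trivially valued field peaked over $K$ as in~\ref{TrivFieldExt} and using that the sections $\sigma$, projections $\pi$ and the building-block homotopies from~\ref{ExtendedStep5} are all compatible with this base change, one sees that the induced maps $\Phi$ on $Z$ coincide.
\end{proof}
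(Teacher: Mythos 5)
Your argument for the strong-deformation-retraction identities and properness is essentially the same as the paper's: transport the identities via $\pi \circ \sigma = \id$ and the fact that $\sigma$ carries $S(\mathfrak{X},\mathfrak{H})$ onto $S(\mathfrak{X}',\mathfrak{H}')$, then compose the proper maps $\sigma\times\id$, $\Phi'$, $\pi$. That part is fine.

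The $r$-independence step, however, glosses over a genuine subtlety that the paper handles explicitly. When you embed $K_r$ and $K_{r'}$ into a common non-trivially valued extension $L$ as in~\ref{TrivFieldExt}, that field $L$ is peaked over $K$ but \emph{not} in general peaked over $F = K_r$ (the paper points this out). Consequently the intermediate section $\sigma'\colon \mathfrak{X}'_\eta \to \mathfrak{X}''_\eta$ is only defined on the set of points of $\mathfrak{X}'_\eta$ that are peaked over $F$, and one cannot simply say ``the sections are compatible with this base change.'' The paper's argument needs two specific ingredients from \cite{Ber90} to bridge this: Proposition~5.2.8~(i) to show that the $\ast$-product (hence the building-block homotopy) commutes with the partial section $\sigma'$ on its domain of definition, and Corollary~5.2.3 to know that $\sigma(Z)$ actually lands in the peaked points of $Z'$, so that the comparison diagram makes sense on all of $Z$. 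Your citation of~\ref{ExtendedStep5} for the needed compatibility is misdirected — that paragraph treats \'{e}tale base change of building blocks, not a change of base field — and without the peaked-point bookkeeping the claim that $\Phi_r = \Phi_{r'}$ is not established. You should replace the appeal to~\ref{ExtendedStep5} with the $\ast$-product compatibility from \cite[Proposition~5.2.8~(i)]{Ber90}, restricted to the peaked locus, and invoke \cite[Corollary~5.2.3]{Ber90} to justify that the section $\sigma$ from the $K$-level actually has its image in that locus.
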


\begin{proof}
	It follows from $\Phi'$ being a strong deformation retraction onto $S(\mathfrak{X}', \mathfrak{H}')$ and $\pi \circ \sigma = \id$, 
	that $\Phi$ is a strong deformation retraction onto $S(\mathfrak{X}, \mathfrak{H})$.
	
	We already know properness of $\pi$, $\sigma$ and $\Phi'$,
	thus altogether we conclude that $\Phi$ is proper as well. 
	
	Now for the independence with respect to $r$: Again it is enough to consider the cases
	of standard pairs and building blocks for $(\mathfrak{X}, \mathfrak{H})$.
	Let $L$ be a complete non-archimedean field extension of $F$ peaked over $K$
	with residue field $K$ and let $\pi': \mathfrak{X}''_{\eta} \to \mathfrak{X}'_{\eta}$
	be the corresponding base change map.
	We do not assume that $L$ is peaked over $F$, but there still is a right inverse 
	$\sigma: (\mathfrak{X}'_{\eta})_p \to \mathfrak{X}''_{\eta}$ of $\pi'$,
	which is defined on the set $(\mathfrak{X}'_{\eta})_p$ of peaked points over $F$ of $\mathfrak{X}'_{\eta}$. 
	Consider the deformation retraction $\Phi'': Z'' \times [0, 1] \to Z''$ onto $S(\mathfrak{X}'', \mathfrak{H}'')$,
	where $Z'' := \mathfrak{X}''_{\eta} \setminus \mathfrak{H}''_{\eta}$.
	See \ref{GroupAction} and \ref{BuildingBlockSkeleton} for the definition 
	of the deformation retractions with the help of $\ast$-products.
	Again we make use the the $\varepsilon$-approximation technique. 
	Because the $\ast$-product is compatible with $\sigma'$,
	see \cite[Proposition 5.2.8~(i)]{Ber90}, the following diagram commutes:
	\begin{center}\begin{tikzpicture}[scale = 2]
		\node (1) at (0, 1) {$Z'' \times [0, 1]$};
		\node (2) at (2, 1) {$Z''$};
		\node (3) at (0, 0) {$(Z')_p \times [0, 1]$};
		\node (4) at (2, 0) {$Z'$};	
		
		\draw[->, thick] (1)--(2) node[pos=0.5, below] {$\Phi''$};
		\draw[->, thick] (3)--(1) node[pos=0.5, right] {$\sigma' \times \id$};
		\draw[->, thick] (2)--(4) node[pos=0.5, left] {$\pi'$};
		\draw[->, thick] (3)--(4) node[pos=0.5, above] {$\Phi'$};
		\end{tikzpicture}\end{center}
	Here $(Z')_p$ denotes the set of peaked points over $F$ in $Z'$. 
	Recall that for two different choices of $r$, we can embed the resulting two fields $F$ in
	a common extension $L$ as in~\ref{TrivFieldExt}. Since the image of every point of $Z$
	under $\sigma$ is a peaked point over $F$ in $Z'$, see \cite[Corollary 5.2.3]{Ber90}, the above diagram suffices 
	in order to see the independence of the deformation retraction $\Phi$ with respect to $r$. 
\end{proof}

\begin{theorem}
	The statements from Theorem \ref{MainThmA} stay true for the skeleton $S(\mathfrak{X}, \mathfrak{H})$
	in the trivially valued case. The canonical homeomorphism between $S(\mathfrak{X}, \mathfrak{H})$
	and $C(\mathfrak{X}, \mathfrak{H})$ does not depend on the choice of $r$.
\end{theorem}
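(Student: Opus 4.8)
The plan is to reduce everything to the non-trivially valued case, where the corresponding statements have already been established, and verify that the reduction machinery from Propositions \ref{TrivHomeom} and \ref{TrivDeform} transports each of the four assertions of Theorem \ref{MainThmA}. First I would recall the setup: we fix $r \in (0,1)$, set $F := K_r$, and pass to the base change $(\mathfrak{X}', \mathfrak{H}')$ over $F^\circ$; this is a poly-stable pair over a \emph{non-trivially valued} field, so Theorem \ref{MainThmA} applies to it. We have the proper map $\pi \colon \mathfrak{X}'_\eta \to \mathfrak{X}_\eta$ and its continuous, proper right inverse $\sigma$ from \ref{TrivPrep}, and by definition $S(\mathfrak{X}, \mathfrak{H}) = \pi(S(\mathfrak{X}', \mathfrak{H}'))$.

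For assertion (i), closedness: by Proposition \ref{TrivHomeom} the map $\pi$ restricts to a homeomorphism $S(\mathfrak{X}', \mathfrak{H}') \xrightarrow{\sim} S(\mathfrak{X}, \mathfrak{H})$ with inverse induced by $\sigma$, and $\sigma$ is a closed map (shown in \ref{TrivPrep}); since $S(\mathfrak{X}', \mathfrak{H}')$ is closed in $Z' := \mathfrak{X}'_\eta \setminus \mathfrak{H}'_\eta$ by the non-trivially valued case, applying $\sigma$ shows $\sigma(S(\mathfrak{X}', \mathfrak{H}'))$ is closed, and then $S(\mathfrak{X}, \mathfrak{H}) = \pi(\sigma(S(\mathfrak{X}, \mathfrak{H})))$ is the preimage under $\sigma$ of this closed set — alternatively, directly note $S(\mathfrak{X}, \mathfrak{H}) = \sigma^{-1}(S(\mathfrak{X}', \mathfrak{H}'))$ as $\sigma$ is a right inverse to $\pi$, which gives closedness in $Z$ by continuity of $\sigma$. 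For (ii), the deformation retraction is exactly the map $\Phi$ constructed in Proposition \ref{TrivDeform}, which is already shown there to be a proper strong deformation retraction of $Z$ onto $S(\mathfrak{X}, \mathfrak{H})$; so nothing new is needed. For (iii), the canonical homeomorphism $S(\mathfrak{X}, \mathfrak{H}) \xrightarrow{\sim} C(\mathfrak{X}, \mathfrak{H})$ is obtained by composing the homeomorphism $S(\mathfrak{X}, \mathfrak{H}) \xrightarrow{\sim} S(\mathfrak{X}', \mathfrak{H}')$ from Proposition \ref{TrivHomeom} with the canonical homeomorphism $S(\mathfrak{X}', \mathfrak{H}') \xrightarrow{\sim} C(\mathfrak{X}', \mathfrak{H}')$ of the non-trivially valued case, together with the identification $C(\mathfrak{X}', \mathfrak{H}') = C(\mathfrak{X}, \mathfrak{H})$ noted in \ref{TrivPrep}. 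For (iv), one uses that $\pi$ (being induced by a morphism of formal schemes) is compatible with the reduction maps, so $\pi$ carries $\red^{-1}_{\mathfrak{X}'}(x)$ to $\red^{-1}_{\mathfrak{X}}(x)$ for $x \in \str(\mathfrak{X}'_s, \mathfrak{H}'_s) = \str(\mathfrak{X}_s, \mathfrak{H}_s)$; intersecting with the skeletons and invoking assertion (iv) in the non-trivially valued case gives the claimed homeomorphism onto $\Delta^\circ(x)$.

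Finally, for the independence of the canonical homeomorphism from the choice of $r$: the independence of $S(\mathfrak{X}, \mathfrak{H})$ itself and of $\Phi$ from $r$ is already recorded in Propositions \ref{TrivHomeom} and \ref{TrivDeform}. For the homeomorphism to $C(\mathfrak{X}, \mathfrak{H})$, I would take two choices $r, r'$, embed the fields $F = K_r$ and $F' = K_{r'}$ into a common complete extension $L$ peaked over $K$ with residue field $K$ as in \ref{TrivFieldExt}, and observe that the corresponding base-change-and-$\sigma$ diagrams are compatible (exactly as in the proof of Proposition \ref{TrivDeform}, using \cite[Proposition 5.2.8]{Ber90}). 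Since the canonical homeomorphism $S(\mathfrak{X}'', \mathfrak{H}'') \xrightarrow{\sim} C(\mathfrak{X}'', \mathfrak{H}'')$ over $L$ restricts compatibly via $\sigma'$ to the ones over $F$ and over $F'$, and all three dual intersection complexes are canonically identified with $C(\mathfrak{X}, \mathfrak{H})$ by \ref{TrivPrep}, the two homeomorphisms obtained from $r$ and $r'$ coincide. The main obstacle is the $r$-independence argument: one must be careful that $L$ need not be peaked over $F$ or $F'$, so $\sigma'$ is only defined on peaked points — but since $\sigma(Z)$ consists of peaked points over $F$ by \cite[Corollary 5.2.3]{Ber90}, this restriction causes no trouble, exactly as in Proposition \ref{TrivDeform}. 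Everything else is a routine transport of structure along the homeomorphism $\pi|_{S(\mathfrak{X}', \mathfrak{H}')}$.
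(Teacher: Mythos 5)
Your proposal is correct and follows essentially the same reduction to the non-trivially valued case via $\pi$ and $\sigma$ that the paper uses, with assertions (i), (iii), (iv) transported along the homeomorphism $\pi|_{S(\mathfrak{X}',\mathfrak{H}')}$ and (ii) read off from Proposition~\ref{TrivDeform}. The only place where you are slightly more terse than the paper is the $r$-independence of the homeomorphism to $C(\mathfrak{X},\mathfrak{H})$: the paper explicitly reduces this to the observation that for standard pairs the further base-change $\pi'$ is compatible with the tropicalization maps (an easy check since $\trop$ only records absolute values of coordinates, which are preserved under $\pi'$), and then propagates this through building blocks; your appeal to \cite[Proposition~5.2.8]{Ber90} addresses the compatibility of the deformation retraction rather than of $\trop$ itself, so you should add a sentence noting the tropicalization compatibility to make the reduction watertight.
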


\begin{proof}
	The analytic part is clear from Propositions \ref{TrivHomeom} and \ref{TrivDeform}.
	The combinatorial part involving the dual intersection complex follows from our considerations
	in \ref{TrivPrep}. The canoncial homeomorphism 
	$S(\mathfrak{X}, \mathfrak{H}) \xrightarrow{\sim} C(\mathfrak{X}, \mathfrak{H})$ 
	is obviously obtained via $\pi$ and the canoncial homeomorphism 
	$S(\mathfrak{X}', \mathfrak{H}') \xrightarrow{\sim} C(\mathfrak{X}', \mathfrak{H}')$
	from the non-trivially valued case.
	\begin{center}\begin{tikzpicture}[scale = 2]
		\node (1) at (0, 1) {$S(\mathfrak{X}', \mathfrak{H}')$};
		\node (2) at (2, 1) {$S(\mathfrak{X}, \mathfrak{H})$};
		\node (3) at (0, 0) {$C(\mathfrak{X}', \mathfrak{H}')$};
		\node (4) at (2, 0) {$C(\mathfrak{X}, \mathfrak{H})$};	
		
		\draw[->, thick] (1)--(2) node[pos=0.5, below] {$\pi$} node[pos=0.5, above] {$\sim$};
		\draw[->, thick] (1)--(3) node[pos=0.5, below, sloped] {$\sim$};
		\draw[double, thick] (3)--(4);
		\draw[->, thick] (2)--(4) node[pos=0.5, above, sloped] {$\sim$};
		\end{tikzpicture}\end{center}
	In order to show the independence with respect to $r$, let us consider $L$ as in the proof
	of Proposition \ref{TrivDeform} and use similar notations as there.
	First one checks that in the standard pair case 
	$(\mathfrak{X}, \mathfrak{H}) := (\mathfrak{S}, \mathfrak{G}) := (\mathfrak{S}(\textbf{n}, \textbf{a}, d), \mathfrak{G}(s))$ 
	the map $\pi'$ induces a homeomorphism between $S(\mathfrak{S}'', \mathfrak{G}'')$ 
	and $S(\mathfrak{S}', \mathfrak{G}')$, which is compatible with the tropicalization maps.
	Then for a building block $\varphi: (\mathfrak{U}, \mathfrak{H} \cap \mathfrak{H}) \to (\mathfrak{S}, \mathfrak{G})$
	we obtain the following commutative diagram:
	\begin{center}\begin{tikzpicture}[scale = 2]
		\node (1) at (0, 1) {$S(\mathfrak{U}'', \mathfrak{H}'' \cap \mathfrak{U}'')$};
		\node (2) at (2.5, 1) {$S(\mathfrak{S}'', \mathfrak{G}'')$};
		\node (3) at (0, 0) {$S(\mathfrak{U}', \mathfrak{H}' \cap \mathfrak{U}')$};
		\node (4) at (2.5, 0) {$S(\mathfrak{S}', \mathfrak{G}')$};
		\node (5) at (5, 0.5) {$\Delta(\textbf{n}, \textbf{r}, s)$};	
		
		\draw[->, thick] (1)--(2) node[pos=0.5, below] {$\varphi''$} node[pos=0.5, above] {$\sim$};
		\draw[->, thick] (1)--(3) node[pos=0.5, left] {$\pi'_{\mathfrak{U}}$};
		\draw[->, thick] (3)--(4) node[pos=0.5, above] {$\varphi'$} node[pos=0.5, below] {$\sim$};;
		\draw[->, thick] (2)--(4) node[pos=0.5, above, sloped] {$\sim$} node[pos=0.5, left] {$\pi'_{\mathfrak{S}}$};
		
		\draw[->, thick] (2)--(5) node[pos=0.5, below, sloped] {$\sim$} node[pos=0.5, above, sloped] {$\trop''$};
		\draw[->, thick] (4)--(5) node[pos=0.5, above, sloped] {$\sim$} node[pos=0.5, below, sloped] {$\trop'$};
		\end{tikzpicture}\end{center}
	One now easily sees that the canonical homeomorphisms between skeleton and dual intersection complex
	are compatible with the base change maps for every arbitrary poly-stable pair $(\mathfrak{X}, \mathfrak{H})$.
	This finishes the proof. 	
\end{proof}

%---------------------------------------------------------------------------------------------------------------------------------------------------------------------------
%---------------------------------------------------------------------------------------------------------------------------------------------------------------------------

\section{Closure}

So far we have constructed the extended skeleton $S(\mathfrak{X}, \mathfrak{H})$ of a poly-stable pair
$(\mathfrak{X}, \mathfrak{H})$. We have seen in Proposition \ref{DivStrPst} that $\mathfrak{H}$ is a 
poly-stable formal scheme, therefore we can associate with it the classical skeleton $S(\mathfrak{H})$.
This can be used to form the closure of our extended skeleton as follows.

\begin{definition}
	Let us denote by $\overline{S}(\mathfrak{X}, \mathfrak{H})$ the union of 
	$S(\mathfrak{X}, \mathfrak{H}) \subseteq Z := \mathfrak{X}_{\eta} \setminus \mathfrak{H}_{\eta}$ 
	and $S(\mathfrak{H}) \subseteq \mathfrak{H}_{\eta} \subseteq \mathfrak{X}_{\eta}$.
	We call $\overline{S}(\mathfrak{X}, \mathfrak{H})$ 
	the \emph{closure} of $S(\mathfrak{X}, \mathfrak{H})$.
\end{definition}

\begin{theorem} \label{MainThmB}
	$\overline{S}(\mathfrak{X}, \mathfrak{H})$ is equal to
	the topological closure of $S(\mathfrak{X}, \mathfrak{H})$ in $\mathfrak{X}_{\eta}$.
	
	Moreover let $\Phi_{(\mathfrak{X}, \mathfrak{H})}: Z \times [0, 1] \to Z$ resp. 
	$\Phi_{\mathfrak{H}}: \mathfrak{H}_{\eta} \times [0, 1] \to \mathfrak{H}_{\eta}$ be the 
	deformation retraction to the extended resp. classical skeleton. They glue together 
	to a proper strong deformation retraction $\Phi: \mathfrak{X}_{\eta} \times [0, 1] \to \mathfrak{X}_{\eta}$ 
	to the closure $\overline{S}(\mathfrak{X}, \mathfrak{H})$.  
\end{theorem}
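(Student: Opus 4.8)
\emph{Approach.} I would prove the statement by the same inductive scheme used in Section~3: establish it first for standard pairs, transfer it to building blocks and strictly poly-stable pairs by gluing along a locally finite cover, then to arbitrary poly-stable pairs by descent along a surjective \'etale morphism, and finally reduce the trivially valued case to the non-trivially valued one by base change exactly as in §\,\ref{TrivialSection}. So assume for the moment that $K$ is non-trivially valued.

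\emph{Standard pairs.} Let $(\mathfrak{S}, \mathfrak{G}) = (\mathfrak{S}(\mathbf{n}, \mathbf{a}, d), \mathfrak{G}(s))$ and put $\mathbf{r} := \val(\mathbf{a})$. By Proposition~\ref{DivStrPst}, $\mathfrak{G}$ is the \emph{degenerate} standard scheme $\prod_i \mathfrak{T}(n_i, a_i) \times \mathfrak{T}(s-1, 0) \times \mathfrak{B}^{d-s}$, so its classical skeleton $S(\mathfrak{G})$ is described by \ref{ClassicStart} and, identifying the $s$ coordinates of $\mathfrak{T}(s-1,0)$ with $T_1, \dots, T_s$, is carried by $\trop$ homeomorphically onto $\prod_i \Delta(n_i, r_i) \times \Delta(s-1, \infty)$, where $\Delta(s-1,\infty) = \overline{\R}_{\geq 0}^s \setminus \R_{\geq 0}^s$; on the other hand $S(\mathfrak{S}, \mathfrak{G}) \cong \prod_i \Delta(n_i, r_i) \times \R_{\geq 0}^s$ by \ref{StdDeform}. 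I would introduce the continuous \emph{extended tropicalization} $\overline{\trop}\colon \mathfrak{S}_\eta \to \prod_i \Delta(n_i, r_i) \times \overline{\R}_{\geq 0}^s$, $x \mapsto (\trop(x); -\log|T_1(x)|, \dots, -\log|T_s(x)|)$, together with the continuous section $\overline{\sigma}$ obtained from the maps $\sigma$ of \ref{ClassicStart} by allowing the value $\infty$. A direct check shows that the image of $\overline{\sigma}$ is exactly $\overline{S}(\mathfrak{S}, \mathfrak{G}) = S(\mathfrak{S}, \mathfrak{G}) \cup S(\mathfrak{G})$, the two pieces being the preimages of $\prod_i \Delta(n_i, r_i) \times \R_{\geq 0}^s$ and of $\prod_i \Delta(n_i, r_i) \times \Delta(s-1, \infty)$. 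Since $\overline{\sigma}$ is a continuous bijection out of a compact space and $\overline{\trop}$ is a continuous left inverse, $\overline{S}(\mathfrak{S}, \mathfrak{G})$ is compact, hence closed in $\mathfrak{S}_\eta$ and homeomorphic to $\prod_i \Delta(n_i, r_i) \times \overline{\R}_{\geq 0}^s$; as $\R_{\geq 0}^s$ is dense in $\overline{\R}_{\geq 0}^s$, it follows that $\overline{S}(\mathfrak{S}, \mathfrak{G})$ equals the topological closure of $S(\mathfrak{S}, \mathfrak{G})$ in $\mathfrak{S}_\eta$. For the deformation retraction, I would glue the two given homotopies into a single map $\Phi\colon \mathfrak{S}_\eta \times [0,1] \to \mathfrak{S}_\eta$, equal to $\Phi_{(\mathfrak{S}, \mathfrak{G})}$ on $(\mathfrak{S}_\eta \setminus \mathfrak{G}_\eta) \times [0,1]$ and to the classical $\Phi_{\mathfrak{G}}$ on $\mathfrak{G}_\eta \times [0,1]$; by construction it restricts to the prescribed homotopies. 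The real content is the continuity of $\Phi$ at points $(w,t)$ with $w \in \mathfrak{G}_\eta$; since $\Phi|_{\mathfrak{G}_\eta \times [0,1]}$ is already continuous it suffices to control nets $(z_\lambda, t_\lambda) \to (w,t)$ with $z_\lambda \notin \mathfrak{G}_\eta$, and here one uses the explicit $\ast$-product formulas from \ref{GroupAction} and \ref{BuildingBlockSkeleton} underlying both homotopies (via the $\varepsilon$-approximation, \ref{StdDeform} and \ref{ExtendedStep5}): the decisive observations are that $\Phi_{(\mathfrak{S},\mathfrak{G})}$ leaves the absolute values $|T_1|, \dots, |T_s|$ unchanged and that, in the limit where these tend to $0$, the $\ast$-product formula for $\mathfrak{S}_\varepsilon$ specializes to the one for the degenerate standard scheme $\mathfrak{G}$. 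Granting this, $\Phi$ is a strong deformation retraction onto $\overline{S}(\mathfrak{S}, \mathfrak{G})$ because both of its pieces are, and it is proper since $\mathfrak{S}_\eta$ is compact. I expect this continuity check to be the main technical obstacle of the theorem.

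\emph{From standard pairs to the general case.} For a building block $\varphi\colon (\mathfrak{U}, \mathfrak{H} \cap \mathfrak{U}) \to (\mathfrak{S}, \mathfrak{G})$ one has $S(\mathfrak{U}, \mathfrak{H}\cap\mathfrak{U}) = \varphi_\eta^{-1}(S(\mathfrak{S}, \mathfrak{G}))$ by definition, and since $\varphi$ restricts to a building block $\mathfrak{H}\cap\mathfrak{U} \to \mathfrak{G}$ with $\varphi_\eta^{-1}(\mathfrak{G}_\eta) = (\mathfrak{H}\cap\mathfrak{U})_\eta$, also $S(\mathfrak{H}\cap\mathfrak{U}) = \varphi_\eta^{-1}(S(\mathfrak{G}))$ by Example~\ref{BlockBallSkeleton}; hence $\overline{S}(\mathfrak{U},\mathfrak{H}\cap\mathfrak{U}) = \varphi_\eta^{-1}(\overline{S}(\mathfrak{S},\mathfrak{G}))$ is closed, it equals the topological closure of $S(\mathfrak{U},\mathfrak{H}\cap\mathfrak{U})$ (using that $\varphi_\eta$ is open, being the generic fibre of an \'etale morphism, so that taking preimage commutes with closure), and the homotopy transfers via the compatibility $\varphi_\eta \circ \Phi_{(\mathfrak{U},\mathfrak{H}\cap\mathfrak{U})} = \Phi_{(\mathfrak{S},\mathfrak{G})}\circ(\varphi_\eta\times\id)$ of \ref{ExtendedStep5}. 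Covering a strictly poly-stable $\mathfrak{X}$ by building blocks $\mathfrak{U}_i$, the closed sets $\overline{S}(\mathfrak{U}_i, \mathfrak{H}\cap\mathfrak{U}_i)$ form a locally finite family with union $\overline{S}(\mathfrak{X},\mathfrak{H})$, which is therefore closed and equal to the topological closure of $S(\mathfrak{X},\mathfrak{H})$, and the local homotopies glue to a proper strong deformation retraction by the argument of Proposition~\ref{PhiProp}. Finally, for an arbitrary poly-stable pair pick a surjective \'etale $\psi\colon (\mathfrak{Y},\mathfrak{G}) \to (\mathfrak{X},\mathfrak{H})$ with $\mathfrak{Y}$ strictly poly-stable; then $\mathfrak{G} \to \mathfrak{H}$ is surjective \'etale as well, the map $\psi_\eta$ and its restriction $\mathfrak{G}_\eta \to \mathfrak{H}_\eta$ are quotient maps (Lemma~\ref{FactorMap}), and combining Proposition~\ref{ClosedProp} with its classical analogue for $S(\mathfrak{H})$ and $S(\mathfrak{G})$ gives $\psi_\eta^{-1}(\overline{S}(\mathfrak{X},\mathfrak{H})) = \overline{S}(\mathfrak{Y},\mathfrak{G})$ and $\psi_\eta(\overline{S}(\mathfrak{Y},\mathfrak{G})) = \overline{S}(\mathfrak{X},\mathfrak{H})$; closedness, the closure statement, the descent of the homotopy through the quotient map $\psi_\eta$, and its properness then all follow as in Proposition~\ref{PhiProp}. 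For trivially valued $K$, base change to $F = K_r$ and the proper maps $\pi$, $\sigma$ of §\,\ref{TrivialSection} restrict to mutually inverse homeomorphisms between $\overline{S}(\mathfrak{X}',\mathfrak{H}')$ and $\overline{S}(\mathfrak{X},\mathfrak{H})$, through which both assertions carry over, with the deformation retraction given by $\pi \circ \Phi' \circ (\sigma \times \id)$.
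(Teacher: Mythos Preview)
Your proposal is correct and follows essentially the same inductive architecture as the paper: standard pairs $\rightarrow$ building blocks $\rightarrow$ strictly poly-stable $\rightarrow$ poly-stable via quotient maps $\rightarrow$ trivially valued via base change to $K_r$. The descent steps, the use of Proposition~\ref{ClosedProp} and Lemma~\ref{FactorMap}, and the treatment of the trivially valued case all match the paper.

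The one substantive difference is how continuity of the glued homotopy $\Phi$ is established at points of $\mathfrak{G}_\eta$ in the standard pair case. You propose a direct limiting argument with nets, using invariance of $|T_1|,\dots,|T_s|$ and checking that the explicit $\ast$-product formula for $\mathfrak{S}_\varepsilon$ specializes to the one for $\mathfrak{G}$ as the $|T_j|$ tend to $0$. The paper instead observes that the action of the affinoid torus $\mathscr{M}(K\{T_0,T_1\}/(T_0T_1-1))$ on the annulus $\mathfrak{T}(1,b_\varepsilon)_\eta$ extends to an action on the whole ball $B$ (by multiplying the $T_1$-coordinate), and hence the group action on $\mathfrak{S}_\eta$ restricts to the one on $\mathfrak{G}_\eta$; continuity then follows from \cite[Proposition~5.2.8~(ii)]{Ber90} without any limiting computation. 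This is a cleaner and more conceptual route to the same conclusion; your approach should also work but requires more bookkeeping with the partial derivatives $\partial_{\boldsymbol{\nu}} f$ appearing in \ref{GroupAction}. A minor remark: in the building-block step you invoke openness of $\varphi_\eta$ to commute preimage with closure, whereas the paper avoids this by using directly that $\varphi_\eta$ restricts to a homeomorphism $\overline{S}(\mathfrak{U},\mathfrak{H}\cap\mathfrak{U}) \xrightarrow{\sim} \overline{S}(\mathfrak{S},\mathfrak{G})$ (combining Proposition~\ref{BuildingBlockProp} with Example~\ref{BlockBallSkeleton} applied to $\mathfrak{H}\cap\mathfrak{U}\to\mathfrak{G}$), from which density transfers immediately.
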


\begin{proof}
	% Basically the only thing we have to prove, is that $\Phi$ is continuous.
	For now let the absolute value on $K$ be non-trivial.
	
	Let us confirm the claims first in the case of a standard pair 
	$(\mathfrak{S}, \mathfrak{G}) := (\mathfrak{S}(\textbf{n}, \textbf{a}, d), \mathfrak{G}(s))$.
	%, where we more generally allow $\textbf{a} = (a_1, \dots, a_p)$ with $a_i \in K^{\circ}$ for all $i \in \dotsbra{1}{p}$.
	By covering the balls with tori as in Example \ref{BlockBallSkeleton}, we may assume $d = s$. 
	Recall that the extended skeleton was obtained via our $\varepsilon$-approximation procedure,
	where we exhausted the punctured ball $B^d \setminus \{0\}$ with annuli of inner radius $\varepsilon$.
	
	One easily sees from the explicit description given in \ref{ClassicStart}, that the elements
	of $S(\mathfrak{G})$ can be seen as limits of elements from $S(\mathfrak{S}, \mathfrak{G})$,
	which demonstrates the claim about the closure. We use here that the absolute value on $K$
	is non-trivial.
	
	Now we want to have a look at the deformation retraction.
	As explained in~\ref{GroupAction} and~\ref{StdDeform} the deformation retraction $\Phi_{(\mathfrak{S}, \mathfrak{G})}$
	from $\mathfrak{S}_{\eta} \setminus \mathfrak{G}_{\eta}$ to  $S(\mathfrak{S}, \mathfrak{G})$ 
	is given by $(x, t) \mapsto g_t \ast x$, where the $\ast$-multiplication comes from the described analytic group action
	of an affinoid torus associated with $\mathfrak{S}_{\varepsilon}$ and the $g_t$ are the 
	peaked points as introduced earlier.
	Recall that these group actions were compatible for different values of $\varepsilon$.
	Now observe that the action on $\mathfrak{T}(1, b_{\varepsilon})_{\eta}$ extends to an action
	on $B$, where we include the annulus in the unit disc $B$ as in~\ref{SkelConstr1} 
	and the action of $\mathscr{M}(K\{T_0, T_1\}/(T_0T_1 - 1))$ on $B$ is given by multiplying with the $T_1$ coordinate. 
	The resulting action on $\mathfrak{S}_{\eta}$ extends the one on $\mathfrak{G}_{\eta}$,
	see also \cite[Proposition 5.2.8 (ii)]{Ber90}. 
	This shows that $\Phi$ is continuous. %The remaining claims are straightforward to check.
	Properness of $\Phi$ is now an easy consequence of compactness of $\mathfrak{S}_{\eta}$.
	Consequently we are done with the standard pair situation.
	
	Next one can consider the case of buildings blocks 
	$\varphi: (\mathfrak{U}, \mathfrak{H} \cap \mathfrak{U}) \to (\mathfrak{S}, \mathfrak{G})$.
	From Proposition~\ref{BuildingBlockProp} we conclude that $\varphi$ induces a homeomorphism
	$\overline{S}(\mathfrak{U}, \mathfrak{H} \cap \mathfrak{U}) \xrightarrow{\sim} \overline{S}(\mathfrak{S}, \mathfrak{G})$
	and that $\varphi^{-1}_{\eta}(\overline{S}(\mathfrak{S}, \mathfrak{G})) = \overline{S}(\mathfrak{U}, \mathfrak{H} \cap \mathfrak{U})$.
	This shows that $\overline{S}(\mathfrak{U}, \mathfrak{H} \cap \mathfrak{U})$
	is closed and is equal to the closure of $S(\mathfrak{U}, \mathfrak{H} \cap \mathfrak{U})$.
	The continuity of $\Phi$ follows from the standard pair case and our constructions from~\ref{ExtendedStep5}.
	
	Finally the general case with a surjective \et morphism $\psi: (\mathfrak{Y}, \mathfrak{G}) \to (\mathfrak{X}, \mathfrak{H})$
	and a strictly poly-stable pair $(\mathfrak{Y}, \mathfrak{G})$
	is then deduced by going through the construction performed in §\,\ref{PolyStableParagraph}.
	Again we may replace $\mathfrak{Y}$ by a disjoint union of building blocks.
	It is then clear, that $\overline{S}(\mathfrak{Y}, \mathfrak{G})$ is a closed subset of $\mathfrak{Y}_{\eta}$.
	Then from Proposition \ref{ClosedProp} we learn that 
	$\psi^{-1}_{\eta}(\overline{S}(\mathfrak{X}, \mathfrak{H})) = \overline{S}(\mathfrak{Y}, \mathfrak{G})$ 
	and Proposition \ref{QuotMapProp} (i) tells us that $\overline{S}(\mathfrak{X}, \mathfrak{H})$ is
	closed in $\mathfrak{X}_{\eta}$. 
	
	Now we use that $S(\mathfrak{Y}, \mathfrak{G})$ is dense in $\overline{S}(\mathfrak{Y}, \mathfrak{G})$ 
	to show that $S(\mathfrak{X}, \mathfrak{H})$ is dense in $\overline{S}(\mathfrak{X}, \mathfrak{H})$,
	in particular the closure of $S(\mathfrak{X}, \mathfrak{H})$ in $\mathfrak{X}_{\eta}$ is 
	$\overline{S}(\mathfrak{X}, \mathfrak{H})$.	
	The continuity of $\Phi$ can be seen from the definition and the 
	continuity from the building block situation, using properties of quotient maps.
	
	In the case of a trivially valued field $K$, we consider the base change to $F$ as in Subsection~\ref{TrivialSection}
	and reduce all our problems to the non-trivially valued situation.
	We denote $\pi: \mathfrak{X}'_{\eta} \to \mathfrak{X}_{\eta}$ and $\sigma: \mathfrak{X}_{\eta} \to \mathfrak{X}'_{\eta}$ as there.
	Since $\pi$ is a surjective closed map, one also sees without difficulty that $\overline{S}(\mathfrak{X}, \mathfrak{H})$
	is equal to the closure of $S(\mathfrak{X}, \mathfrak{H})$ in $\mathfrak{X}_{\eta}$.  
	If we write $\Phi': \mathfrak{X}'_{\eta} \times [0, 1] \to \mathfrak{X}'_{\eta}$ for the proper strong deformation retraction 
	onto $\overline{S}(\mathfrak{X}', \mathfrak{H}')$, then we obtain the claimed proper strong deformation retraction
	$\Phi: \mathfrak{X}_{\eta} \times [0, 1] \to \mathfrak{X}_{\eta}$ onto $\overline{S}(\mathfrak{X}, \mathfrak{H})$
	by mapping $(x, t)$ to $\pi(\Phi'(\sigma(x), t))$. 	 
\end{proof}

\begin{remark}
	If $\mathfrak{X}$ is quasicompact, then going through the proof above one easily sees that
	$\overline{S}(\mathfrak{X}, \mathfrak{H})$ is compact.
\end{remark}

\begin{example}
	We want to illustrate the closure of the skeleton of $(\mathfrak{S}(2), \mathfrak{G}(2))$.
	Note that $\mathfrak{G}(2) = \mathfrak{T}(1, 0)$.
	The extended skeleton $S(\mathfrak{S}(2), \mathfrak{G}(2))$ resp. the classical skeleton $S(\mathfrak{T}(1, 0))$ is canonically homeomorphic
	to $\Delta(0, 0, 2) = \R^2_{\geq 0}$ resp. $\Delta(1, \infty)$ via the tropicalization map.
	It follows that $\overline{S}(\mathfrak{S}(2), \mathfrak{G}(2)) = \overline{\R}^2_{\geq 0}$. 
	\begin{center}\includegraphics[scale=1]{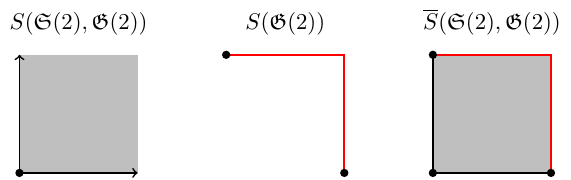}\end{center}
\end{example}

\begin{remark}
	It is possible by defining a suitable notion of the closure of the dual intersection complex $\overline{C}(\mathfrak{X}, \mathfrak{H})$
	and its faces, to establish a canonical homeomorphism 
	$\overline{S}(\mathfrak{X}, \mathfrak{H}) \to \overline{C}(\mathfrak{X}, \mathfrak{H})$
	in analogy to Theorem \ref{MainThmA} (iii). One then also recovers a correspondence between the strata of 
	$(\mathfrak{X}_s, \mathfrak{H}_s)$ and the faces of $\overline{C}(\mathfrak{X}, \mathfrak{H})$.
	Basically this is achieved by replacing $\R_{\geq 0}$ with $\overline{\R}_{\geq 0}$ in the construction
	from Section \ref{DualIntComplexChap}.      
\end{remark}

%---------------------------------------------------------------------------------------------------------------------------------------------------------------------------
%---------------------------------------------------------------------------------------------------------------------------------------------------------------------------

%---------------------------------------------------------------------------------------------------------------------------------------------------------------------------
%---------------------------------------------------------------------------------------------------------------------------------------------------------------------------

\end{document}